\providecommand{\keywords}[1]{\textbf{\textit{Keywords: }} #1}
\theoremstyle{plain}
\newtheorem{theorem}{Theorem}[section]
\newtheorem{lemma}[theorem]{Lemma}
\newtheorem{prop}[theorem]{Proposition}
\newtheorem{coro}[theorem]{Corollary}
\theoremstyle{definition}
\newtheorem{defn}[theorem]{Definition}
\theoremstyle{remark}
\newtheorem{remark}[theorem]{Remark}
\theoremstyle{example}
\newtheorem{example}[theorem]{Example}
\newcommand{\thmref}[1]{Theorem~\ref{#1}}
\newcommand{\coref}[1]{Corollary~\ref{#1}}
\newcommand{\propref}[1]{Proposition~\ref{#1}}
\newcommand{\lemref}[1]{Lemma~\ref{#1}}
\newcommand{\secref}[1]{Sec.~\ref{#1}}
\newcommand{\figref}[1]{Figure~\ref{#1}}
\newcommand{\ie}{\textit{i.e.}}
\newcommand{\eg}{\textit{e.g.}}
\newcommand*{\rom}[1]{\expandafter\@slowromancap\romannumeral #1@}
\newcommand{\wt}[1]{\widetilde{#1}}
\DeclareMathOperator{\tr}{Tr}
\DeclareMathOperator{\divop}{div}
\newcommand{\ud}{\,\mathrm{d}}
\newcommand{\Real}{\mathbb{R}}
\newcommand{\Complex}{\mathbb{C}}
\newcommand{\vect}[1]{\vec{#1}}
\newcommand{\eps}{\epsilon}
\newcommand{\abs}[1]{\lvert#1\rvert}
\newcommand{\Abs}[1]{\left\lvert#1\right\rvert}
\newcommand{\Norm}[1]{\left\lVert#1\right\rVert}
\newcommand{\eva}{\biggr\rvert}
\newcommand{\Inner}[2]{\left\langle#1, #2\right\rangle}
\def\bigl{\mathopen\big}
\def\bigr{\mathclose\big}
\newcommand{\unit}{\mathbb{I}}
\newcommand{\id}{\mathcal{I}}
\newcommand{\Anticomm}[2]{\left\{#1, #2\right\}}
\newcommand{\Comm}[2]{\left[#1, #2\right]}
\newcommand{\qms}{\mathcal{P}} 
\newcommand{\Bra}[1]{\left\langle#1\right\rvert}
\newcommand{\Ket}[1]{\left\lvert#1\right\rangle}
\newcommand{\hbtcn}{\Complex^n} 
\newcommand{\mset}{\mathfrak{M}}
\newcommand{\msetJ}{\mset^{\cardn}}
\newcommand{\aset}{\mathfrak{A}} 
\newcommand{\psetplus}{\mathfrak{P}_{+}}
\newcommand{\dset}{\mathfrak{D}}
\newcommand{\dsetplus}{\mathfrak{D}_{+}} 
\newcommand{\gradt}{\text{grad}}
\newcommand{\tang}{\mathcal{T}} 
\newcommand{\lbop}{\mathcal{L}}
\newcommand{\renyi}{R{\'e}nyi}
\newcommand{\renyid}{\renyi{} divergence}
\newcommand{\swch}{sandwiched R{\'e}nyi divergence}
\newcommand{\lb}{Lindblad}
\newcommand{\lsi}{log-Sobolev inequality}
\newcommand{\bbren}{Benamou-Brenier}
\newcommand{\fp}{Fokker-Planck}
\newcommand{\svi}{Stroock-Varopoulos inequality}
\newcommand{\relaentropy}[2]{D\left(#1 || #2\right)}
\newcommand{\renyientropy}[1]{H_{\alpha} (\rho)}
\newcommand{\renyidivg}[2]{D_{\alpha}\left(#1 || #2\right)}
\newcommand{\renyidivgalpha}[3]{D_{#3}\left(#1 || #2\right)}
\newcommand{\fdrenyidivg}[2]{\frac{\delta\renyidivg{#1}{#2}}{\delta #1}}
\newcommand{\fdrenyidivgalpha}[3]{\frac{\delta\renyidivgalpha{#1}{#2}{#3}}{\delta #1}}
\newcommand{\relafisheralpha}[3]{\mathscr{I}_{#3}\left(#1 || #2\right)}
\newcommand{\powop}[3]{I_{#1,#2}\left(#3\right)}
\newcommand{\entfun}[2]{\text{Ent}_{#1,\sigma}\left(#2\right)}
\newcommand{\dirichlet}[2]{\mathcal{E}_{#1,\lbop}(#2)}
\newcommand{\lpsp}{\mathbb{L}_{\alpha}}
\newcommand{\cardn}{\mathsf{J}} 
\newcommand{\sigmalpha}{\sigma^{\frac{1-\alpha}{2\alpha}}}
\newcommand{\mop}[2]{\left[#1\right]_{#2}}
\newcommand{\moprenyi}[2]{\left[#1\right]_{\alpha,#2}}
\newcommand{\moprenyialpha}[3]{\left[#1\right]_{#3,#2}}
\newcommand{\diffoptilde}[2]{\mathfrak{D}_{\alpha, #1,#2}}
\newcommand{\RealJ}{\Real^{\cardn}}
\newcommand{\rhodep}{{\alpha,\rho,\lbop}}
\newcommand{\InnerAlphaLB}[2]{\Inner{#1}{#2}_{\rhodep}}
\newcommand{\mani}{\mathcal{M}}
\newcommand{\rie}{Riemannian}
\newcommand{\specmax}[1]{\lambda_{\max}(#1)}
\newcommand{\specmin}[1]{\lambda_{\min}(#1)}
\newcommand{\poin}{Poincar{\'e}}
\newcommand{\lbopgap}{\lambda_{\lbop}}
\newcommand{\waittime}{\tau_{\alpha,\eps}}
\newcommand{\waittimealpha}[1]{\tau_{#1,\eps}}
\newcommand\myeq[1]{\mathrel{\stackrel{\makebox[0pt]{\mbox{\normalfont\tiny #1}}}{=}}}
\newcommand\myge[1]{\mathrel{\stackrel{\makebox[0pt]{\mbox{\normalfont\tiny #1}}}{\ge}}}
\newcommand{\revise}[1]{{#1}}
\begin{document}

	\title{Gradient flow structure and exponential decay of the sandwiched R{\'e}nyi divergence for primitive Lindblad equations with GNS-detailed balance}

	\author{Yu Cao}
	\email{yucao@math.duke.edu}
	\affiliation{Department of Mathematics, Duke University, Box 90320, Durham, NC 27708, USA}

	\author{Jianfeng Lu}
	\email{jianfeng@math.duke.edu}
	\affiliation{Department of Mathematics, Duke University, Box 90320, Durham, NC 27708, USA}
	\affiliation{Department of Physics and Department of Chemistry, Duke University, Box 90320, Durham, NC 27708, USA}

	\author{Yulong Lu}
	\email{yulonglu@math.duke.edu}
	\affiliation{Department
		of Mathematics, Duke University, Box 90320, Durham, NC 27708, USA}

\date{\today}

\begin{abstract}
  We study the entropy production of the sandwiched \renyi{} divergence under the primitive Lindblad equation with GNS-detailed balance.  We
  prove that the Lindblad equation can be identified as the
  gradient flow of the sandwiched \renyi{} divergence
  of any order
  $\alpha\in (0,\infty)$.
  This extends a previous result by Carlen and Maas
  [Journal of Functional Analysis, 273(5), 1810--1869] for the quantum relative entropy (\ie, $\alpha=1$). Moreover, we show that
  the sandwiched \renyi{} divergence of any order $\alpha\in (0,\infty)$
  decays exponentially fast under the time-evolution of such a \lb{}
  equation.
\end{abstract}

\keywords{
	gradient flow dynamics; exponential convergence; sandwiched \renyi{} divergence; Lindblad equation.}

\maketitle

\section{Introduction and main result}
The characterization of dynamics for open classical and quantum systems is an essential and fundamental topic. One approach to characterize the dynamics for open systems is entropy production \revise{\cite{spohn_entropy_1978,benatti_entropy_1988,breuer_quantum_2003,fagnola_entropy_2015, muller-hermes_entropy_2016,arnold_entropies_2004}}, from an information theory perspective.
For Markovian dynamics, the entropy production is  positive, which relates to the data processing inequality \cite{lindblad_completely_1975,frank_monotonicity_2013,muller-hermes_monotonicity_2017} and complies with the second law of thermodynamics \cite{cover2012elements,brandao_second_2015}.
The entropy production is also related with the gradient flow structure on well-chosen Riemannian manifolds for those Markov semigroups \revise{\cite{JKO,Otto01_porous,erbar_heat_2010,MAAS20112250, erbar_gradient_2014,carlen_analog_2014,carlen_gradient_2017,carlen_non-commutative_2018}}.
Furthermore, the validity of log-Sobolev inequalities \cite{gross_logarithmic_1975,gross_hypercontractivity_1975,diaconis_logarithmic_1996,otto_generalization_2000,guionnet2003lectures,CIT-064,bakry2014} helps to prove the exponential decay of those entropies under respective Markov semigroups.

In this paper, we focus on the entropy production of
sandwiched \renyi{} divergences, under primitive Lindblad equations with GNS-detailed balance, for
finite dimensional quantum systems.
Let $n$ be the dimension of the quantum system.
The Lindblad equation is an ordinary differential equation (ODE) $\dot{\rho}_t = \lbop^{\dagger}(\rho_t)$,
where $\rho_t$ are $n$-by-$n$ density matrices, and the Lindblad super-operator $\lbop^{\dagger}$ is a linear operator acting on the space of density matrices.
The \lb{} equation with GNS-detailed balance (see \thmref{thm::lindblad_detail_balance} below) has the form
\begin{equation}
\label{eqn::lb}
\dot{\rho}_t = \lbop^{\dagger}(\rho_t) = \sum_{j=1}^{\cardn} e^{-\omega_j/2} \left(\Comm{V_j \rho_t}{V_j^{*}} + \Comm{V_j}{\rho_t V_j^{*}}\right),
\end{equation}
where the positive integer $\cardn \le n^2-1$;
$\omega_j\in \Real$; $V_j$ are $n$-by-$n$ matrices;
constraints for $\omega_j$ and $V_j$ will be given in \thmref{thm::lindblad_detail_balance} below; the commutator of matrices $A$ and $B$ is defined as $\Comm{A}{B}:= AB - BA$.
The notion GNS-detailed balance will be explained in \secref{sec::qms_gns_detailed_balance} below.
The Lindblad equation is called \emph{primitive} (also sometimes referred as ergodic, \eg, see \cite{carlen_gradient_2017,burgarth_ergodic_2013}) if $\text{Ker}(\lbop)$ is spanned by $\unit$, which is the $n$-by-$n$ identity matrix.

Before delving more into Lindblad equations for open quantum systems, we
would like to review some existing results of Fokker-Planck
equations for open classical systems, for the purpose of comparison.  In
fact, almost all the conclusions in this paper are motivated by the classical results for Fokker-Planck equations.
Let us consider, for instance, a $n$-dimensional Brownian particle  under an external potential $\Psi(x)$ with $x\in \Real^n$. Then the time evolution of the probability density function $p_t(x)$ of the Brownian particle is described by the \fp{} equation
\begin{align}
\partial_t p_t(x) = \divop \left(p_t(x) \nabla \Psi(x) + \nabla p_t(x)\right).
\end{align}
It has been shown by Jordan,
Kinderlehrer, and Otto \cite{JKO} that such types of \fp{} equations can be viewed as the Wasserstein gradient flow dynamics of the relative entropy (or the free energy functional). 
Moreover, based on the method of entropy production and the log-Sobolev inequality
\cite{otto_generalization_2000},
one can prove that the solution of the
\fp{} equation converges to the Gibbs (stationary) distribution exponentially fast in the relative entropy.  In our recent
work \cite{prevpaper}, we generalized these results to the classical
\renyid{} of any order $\alpha\in(0,\infty)$: (1) the same \fp{} equation can
be formally identified as the gradient flow dynamics of the classical \renyid{} of any order $\alpha$
with respect to a transportation distance defined by a certain metric tensor;
(2) the solution of the \fp{} equation converges exponentially fast to the
Gibbs distribution in the classical \renyid{}. Apart from
Fokker-Planck equations, several other dissipative dynamics can also be
viewed as the gradient flow dynamics of various transport metrics, for instance,
the heat equation \cite{erbar_heat_2010}, finite Markov Chains
\cite{MAAS20112250} and porous medium equations
\cite{Otto01_porous, erbar_gradient_2014}.

As for open quantum systems, \lb{} equations \cite{lindblad_generators_1976, gorini_completely_1976}, generally regarded as the quantum analog of \fp{} equations, could be derived from the weak-coupling limit of the system and baths \cite{davies_markovian_1974, breuer_book}.
Due to the similar role that \lb{} and \fp{} equations play for open quantum and classical systems respectively, it is interesting to study \lb{} equations from the perspective of
gradient flows.
In a previous work by Carlen and Maas, they showed that fermionic Fokker-Planck equation can be identified as the gradient flow dynamics of the quantum relative entropy \cite{carlen_analog_2014};
in their more recent work,
they extended results from \cite{carlen_analog_2014} and showed that for finite dimensional quantum systems,
primitive \lb{} equations with GNS-detailed balance can indeed
be identified as the gradient flow dynamics of the quantum relative entropy
\cite{carlen_gradient_2017};
\revise{at almost the same time when the first preprint of this work appeared, they also established a unified picture of gradient flows including the previous two cases and Markov Chains in the setting of $\text{C}^*$-algebras \cite{carlen_non-commutative_2018}.}
The generalization of their results
from finite dimensional matrices
to infinite dimensional noncommutative algebras has been recently
considered in \cite{wirth_noncommutative_2018}.
In the present paper, we focus on the finite dimensional setting and
aim to characterize the gradient flow structure
of Lindblad equations in terms of a family of quantum divergences --- \swch{}s  \cite{muller-lennert_quantum_2013,wilde_strong_2014}, including
the quantum relative entropy as a specific instance. Moreover, we are also interested in quantifying the decaying property of the solution $\rho_t$  of the Lindblad equation \eqref{eqn::lb} in terms of the \swch{}.

The sandwiched \renyi{} divergence is defined as follows.

\begin{defn}[Sandwiched \renyi{} divergence \cite{muller-lennert_quantum_2013}]
	\label{def::sandwiched_renyi}
	Given two positive semi-definite matrices $\rho,\sigma$ such that $\rho \ll \sigma$ and $\rho\neq 0$, the \swch{}  $\renyidivg{\rho}{\sigma}$ of order $\alpha \in (0, \infty)$ is defined by
	\begin{align}
	\label{eqn::swch}
	\renyidivg{\rho}{\sigma} := \left\{
	\begin{aligned}
	\frac{1}{\alpha-1} \log\left(\  \tr \left[ \bigl(\sigma^{\frac{1-\alpha}{2\alpha}} \rho \sigma^{\frac{1-\alpha}{2\alpha}} \bigr)^{\alpha} \right]\ \right), & \qquad \alpha\in (0,1)\cup (1,\infty);\\
	\tr\left(\rho \log(\rho) - \rho \log(\sigma)\right), & \qquad \alpha = 1. \\
	\end{aligned}
	\right.
    	\end{align}
\end{defn}
The notation $\rho \ll \sigma$ means the kernel of $\sigma$ is a subspace of the kernel of $\rho$.
The term $\tr(\rho)$ in \cite[Definition 2]{muller-lennert_quantum_2013} is simply set as one, since we only consider $\rho$ to be a density matrix in this paper.
$\sigma$ is assumed to be a full-rank density matrix, so $\rho \ll \sigma$ is always satisfied for this paper.
For any fixed $\rho$ and full-rank $\sigma$, the sandwiched \renyi{} divergence is continuous with respect to the order $\alpha$, in particular, $\renyidivgalpha{\rho}{\sigma}{} = \lim_{\alpha\rightarrow 1} \renyidivgalpha{\rho}{\sigma}{\alpha}$.

The sandwiched \renyi{} divergence is a quantum analog of the classical \renyi{} divergence, and has recently received much attention due to its use in quantum information theory; see \eg, \cite{wilde_strong_2014,mosonyi_quantum_2015}.
The sandwiched
\renyi{} divergence unifies many entropy measures: when $\alpha=1$, it reduces to the quantum relative entropy; when
$\alpha \rightarrow \infty$, it converges to the quantum relative max-entropy;
when $\alpha=1/2$ and $\alpha=2$, it is closely connected to the quantum fidelity
and $\chi^2$ divergence respectively \cite{temme_2-divergence_2010}.
The sandwiched
\renyi{} divergence satisfies the data processing inequality for any order $\alpha \ge \frac{1}{2}$ and for any completely positive trace-preserving (CPTP) map \cite{frank_monotonicity_2013,muller-lennert_quantum_2013,beigi_sandwiched_2013}. For fixed density matrices $\rho$ and $\sigma$, it is monotonically increasing with respect to the order $\alpha\in (0,\infty)$ \cite[Theorem 7]{muller-lennert_quantum_2013}.
The sandwiched \renyi{} divergence has also been used to study the quantum second laws of thermodynamics \cite{brandao_second_2015}, which is a major motivation of our work. Its global convergence rate and mixing time under quantum Markov semigroups has been studied in \cite{muller-hermes_sandwiched_2018}.

We remark that there is another well-studied  family of quantum \renyi{} divergences  known as \emph{Petz-\renyi{}} divergence \cite{petz_quasi-entropies_1986}, which is defined by $\wt{D}_{\alpha}(\rho||\sigma) := \frac{1}{\alpha-1} \log\left(\tr\left(\rho^{\alpha}\sigma^{1-\alpha}\right)\right)$.
When $\rho$ and $\sigma$ commute,
the sandwiched \renyi{} divergence is identical to the Petz-\renyi{} divergence.
We also remark that \cite{audenaert_alpha-z-renyi_2015} has studied more generalized \emph{$(\alpha,z)$-quantum \renyi{} divergences},
which include sandwiched \renyi{} divergences and Petz-\renyi{} divergences as special instances;
the discussion on the data processing inequality of $(\alpha,z)$-quantum \renyi{} divergences can be found in a recent review paper \cite{carlen_inequalities_2018} and the references therein.

The first main result of this paper is the following theorem.
\begin{theorem}
	\label{thm::renyi_gradient}
	Consider a primitive \lb{} equation with GNS-detailed balance \eqref{eqn::lb} and suppose its unique stationary state $\sigma$ is full-rank.
	For any order $\alpha\in(0,\infty)$, consider the energy functional $\renyidivg{\rho}{\sigma}$ with respect to $\rho$.
	In the space of strictly positive density matrices
	$\dsetplus$ equipped with the metric tensor
	defined in the Definition~\ref{def::metric_tensor_renyi},
	the gradient flow dynamics of the sandwiched \renyi{} divergence $\renyidivg{\rho}{\sigma}$
	is exactly the \lb{} equation \eqref{eqn::lb}.
\end{theorem}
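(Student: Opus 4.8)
The plan is to follow the Otto-calculus strategy that Carlen and Maas used for the $\alpha=1$ case, adapted to the $\alpha$-deformed geometry of Definition~\ref{def::metric_tensor_renyi}. Proving that the gradient flow of $\renyidivg{\rho}{\sigma}$ in that metric coincides with \eqref{eqn::lb} amounts to the pointwise identity $\lbop^{\dagger}(\rho)=-\mathcal{M}^{\alpha}_{\rho}\!\left(\fdrenyidivg{\rho}{\sigma}\right)$ on $\dsetplus$, where $\mathcal{M}^{\alpha}_{\rho}$ denotes the mobility (inverse-metric) operator of that metric. The first step is to compute the chemical potential: the trace identity $\tfrac{d}{dt}\tr[f(A_t)]=\tr[f'(A_t)\dot A_t]$ with $f(x)=x^{\alpha}$ gives, for $\alpha\neq1$,
\begin{equation*}
\fdrenyidivg{\rho}{\sigma}=\frac{\alpha}{(\alpha-1)\,\tr\!\left[(\sigmalpha\rho\sigmalpha)^{\alpha}\right]}\;\sigmalpha\,(\sigmalpha\rho\sigmalpha)^{\alpha-1}\,\sigmalpha,
\end{equation*}
determined only up to an additive multiple of $\unit$, which is immaterial since tangent vectors at $\rho\in\dsetplus$ are traceless and $\comm{V_j}{\unit}=0$; letting $\alpha\to1$ recovers $\log\rho-\log\sigma$, the Carlen--Maas potential.

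The heart of the proof is a non-commutative chain rule. By \thmref{thm::lindblad_detail_balance} the $V_j$ are eigenvectors of the modular operator, $\sigma^{z}V_j=e^{-z\omega_j}V_j\sigma^{z}$, so commuting $V_j$ past the conjugating powers $\sigmalpha$ in $\fdrenyidivg{\rho}{\sigma}$ produces only scalar factors $e^{\pm\frac{1-\alpha}{2\alpha}\omega_j}$; mode by mode, $\fdrenyidivg{\rho}{\sigma}$ may therefore be treated as a function of $\rho$ alone. I would then compute the non-commutative gradient $\comm{V_j}{\fdrenyidivg{\rho}{\sigma}}$ from the double-operator-integral (Schur multiplier) representation of $\comm{V_j}{g(\rho)}$ for $g(x)=x^{\alpha-1}$, whose kernel is the divided difference $(g(a)-g(b))/(a-b)$ twisted by the modular factors above. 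The resulting chain rule reads $\moprenyi{\rho}{\omega_j}\!\left(\comm{V_j}{\fdrenyidivg{\rho}{\sigma}}\right)=V_j\rho$ (and $\rho V_j^{*}$ for the adjoint mode), where $\moprenyi{\rho}{\omega_j}$ is an $\alpha$-deformed operator mean --- an $\alpha$-dependent analog of the logarithmic-mean operator of \cite{carlen_gradient_2017}, to which it reduces at $\alpha=1$, and which also absorbs the mode-independent normalization $\tfrac{\alpha}{\alpha-1}\tr[(\sigmalpha\rho\sigmalpha)^{\alpha}]^{-1}$ carried by $\fdrenyidivg{\rho}{\sigma}$. The metric tensor of Definition~\ref{def::metric_tensor_renyi} is then, by design, the \bbren{}-type metric whose continuity equation reads $\dot\rho=-\sum_j e^{-\omega_j/2}\,\partial_j^{*}\!\left(\moprenyi{\rho}{\omega_j}\,\comm{V_j}{\Phi}\right)$ for a potential $\Phi$, with $\partial_j^{*}$ the adjoint divergence; setting $\Phi=\fdrenyidivg{\rho}{\sigma}$ and inserting the chain rule collapses the right-hand side to $\sum_j e^{-\omega_j/2}\!\left(\comm{V_j\rho}{V_j^{*}}+\comm{V_j}{\rho V_j^{*}}\right)=\lbop^{\dagger}(\rho)$, every trace of $\alpha$ --- the power $\alpha-1$, the $\sigmalpha$-conjugations, the normalization, and the deformation of the means --- cancelling out.

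Two auxiliary points finish the argument. First, one must verify that Definition~\ref{def::metric_tensor_renyi} genuinely defines a Riemannian metric on $\dsetplus$: each $\moprenyi{\rho}{\omega_j}$ is a positive operator (the divided difference of $x^{\alpha-1}$ carries the sign of $\alpha-1$, which cancels that of the $\tfrac{1}{\alpha-1}$ prefactor), and the quadratic form $\sum_j e^{-\omega_j/2}\inner{\comm{V_j}{\Phi}}{\moprenyi{\rho}{\omega_j}\,\comm{V_j}{\Phi}}$ is non-degenerate on traceless Hermitian matrices precisely because $\lbop$ is primitive, \ie{} $\{V_j\}$ together with $\unit$ span the full matrix algebra. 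Second, one checks $\alpha\to1$ continuity, so that \thmref{thm::renyi_gradient} indeed subsumes the Carlen--Maas theorem. I expect the main obstacle to be the chain rule of the second paragraph: pinning down the correct double-operator-integral kernel for general $\alpha$ and checking --- with both the modular eigenoperator relations and the $\rho$-dependent normalization $\tr[(\sigmalpha\rho\sigmalpha)^{\alpha}]$ in play --- that the $\alpha$-deformed mobility applied to $\fdrenyidivg{\rho}{\sigma}$ collapses onto the $\alpha$-independent generator $\lbop^{\dagger}(\rho)$; by comparison, the positivity of the metric and the limit $\alpha\to1$ are routine.
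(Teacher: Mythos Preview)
Your plan is essentially the paper's: compute $\fdrenyidivg{\rho}{\sigma}$, establish a chain rule that turns $\moprenyi{\rho}{\omega_j}\partial_j\bigl(\fdrenyidivg{\rho}{\sigma}\bigr)$ into the current in the Lindblad generator, and check that the metric is positive-definite using primitivity (\lemref{lem::null_sp_comm}). Two corrections are worth noting. First, the chain rule you state is not the right one: the paper's identity (\lemref{lem::fd}) is
\[
\moprenyi{\rho}{\omega_j}\,\partial_j\Bigl(\fdrenyidivg{\rho}{\sigma}\Bigr)=e^{-\omega_j/2}V_j\rho-e^{\omega_j/2}\rho V_j,
\]
with \emph{both} terms present for each $j$; the second is not supplied by the adjoint mode, and no $e^{-\omega_j/2}$ weight enters the continuity equation or the metric of Definition~\ref{def::metric_tensor_renyi} --- those factors live entirely in this identity, and applying $\divop$ then reproduces \eqref{eqn::sch_lb} directly. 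Second, the paper does not compute a divided-difference kernel for $x\mapsto x^{\alpha-1}$. It instead applies the Carlen--Maas logarithmic chain rule \eqref{eqn::chain_rule} \emph{twice}: once with $X=\rho_\sigma^{\alpha-1}$, $\omega=\tfrac{\alpha-1}{\alpha}\omega_j$ to pass from $\rho_\sigma^{\alpha-1}$ to $\log\rho_\sigma$, and once with $X=\rho_\sigma$, $\omega=\omega_j/\alpha$ to pass from $\log\rho_\sigma$ back to $\rho$. This produces the explicit factorization
\[
\moprenyi{\rho}{\omega}=\frac{\tr(\rho_\sigma^\alpha)}{\alpha}\,\Gamma_\sigma^{\frac{\alpha-1}{\alpha}}\circ\mop{\rho_\sigma}{\omega/\alpha}\circ\mop{\rho_\sigma^{\alpha-1}}{(\alpha-1)\omega/\alpha}^{-1}\circ\Gamma_\sigma^{\frac{\alpha-1}{\alpha}},
\]
from which strict positivity and the conjugation symmetry $(\moprenyi{\rho}{\omega}A)^*=\moprenyi{\rho}{-\omega}A^*$ (needed so that $\diffoptilde{\rho}{\vect\omega}$ restricts to $\aset_0$) follow immediately from the known properties of $\mop{X}{\omega}$. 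Your DOI approach would work too, but the factorization through the logarithm is cleaner and makes the $\alpha\to1$ limit obvious.
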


This theorem immediately implies the monotonicity of the sandwiched \renyi{} divergence under the evolution of such \lb{} equations, summarized in the following \coref{coro::monotonicity_renyi}. For $\alpha \ge \frac{1}{2}$, this corollary can be proved simply by applying the data processing inequality \cite{frank_monotonicity_2013}.  In our case, the monotonicity also holds for $\alpha\in (0,\frac{1}{2})$ for primitive \lb{} equations with GNS-detailed balance, \revise{though the data processing inequality does not generally hold for sandwiched \renyi{} divergences when $\alpha<\frac{1}{2}$ \cite[Sec. 4]{berta_variational_2017}}.
\begin{coro}
	\label{coro::monotonicity_renyi}
		Under the same conditions as in \thmref{thm::renyi_gradient}, suppose $\rho_{\cdot}: [0,\infty) \rightarrow \dsetplus$ is the solution of that Lindblad equation.
For any $\alpha\in(0,\infty)$,  the sandwiched \renyi{} divergence $\renyidivg{\rho_t}{\sigma}$, is non-increasing with respect to the time $t$, i.e. $\partial_t \renyidivg{\rho_t}{\sigma} \leq 0$.
\end{coro}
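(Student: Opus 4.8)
The plan is to obtain \coref{coro::monotonicity_renyi} as an essentially immediate consequence of \thmref{thm::renyi_gradient}, by unwinding what "gradient flow" means on the Riemannian manifold $(\dsetplus, g)$, where $g$ is the metric tensor of Definition~\ref{def::metric_tensor_renyi}. First I would record that, by \thmref{thm::renyi_gradient}, the solution curve $t\mapsto\rho_t$ of \eqref{eqn::lb} is the $g$-gradient flow of the energy functional $\rho\mapsto\renyidivg{\rho}{\sigma}$, i.e.\ $\dot\rho_t = \lbop^{\dagger}(\rho_t) = -\gradt_g\,\renyidivg{\rho_t}{\sigma}$. Equivalently, the Euclidean differential $\fdrenyidivg{\rho_t}{\sigma}$ and the velocity $\lbop^{\dagger}(\rho_t)$ are related through the weighted inner product of Definition~\ref{def::metric_tensor_renyi}: $\tr\!\big(\fdrenyidivg{\rho_t}{\sigma}\,X\big) = g(\rho_t)\big(\gradt_g\,\renyidivg{\rho_t}{\sigma},\,X\big)$ for all traceless Hermitian $X$.

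Next I would simply apply the chain rule along the flow:
\[
  \frac{\rd}{\rd t}\,\renyidivg{\rho_t}{\sigma}
  \;=\; \tr\!\left( \fdrenyidivg{\rho_t}{\sigma}\,\dot\rho_t \right)
  \;=\; g(\rho_t)\!\left( \gradt_g\,\renyidivg{\rho_t}{\sigma},\ \dot\rho_t \right)
  \;=\; -\, g(\rho_t)\!\left( \dot\rho_t,\ \dot\rho_t \right).
\]
Because the metric tensor of Definition~\ref{def::metric_tensor_renyi} is by construction positive semi-definite on the tangent space of traceless Hermitian matrices, the right-hand side is $\le 0$; indeed it equals $-\relafisheralpha{\rho_t}{\sigma}{\alpha}$, the associated relative Fisher information, which is non-negative. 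This gives $\partial_t\,\renyidivg{\rho_t}{\sigma}\le 0$ for every $\alpha\in(0,\infty)$, as claimed.

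The only matters requiring care are bookkeeping rather than genuine obstacles: one must ensure the solution $\rho_t$ remains in the open set $\dsetplus$ of full-rank density matrices for all $t\ge 0$, so that both $\fdrenyidivg{\rho_t}{\sigma}$ and the metric tensor $g(\rho_t)$ are well defined along the whole trajectory (this is part of the well-posedness discussion accompanying \thmref{thm::renyi_gradient}), and one must invoke the positivity of $g(\rho)$ established when Definition~\ref{def::metric_tensor_renyi} is introduced. I would close with the remark already flagged in the statement: for $\alpha\ge\tfrac12$ this monotonicity also follows from the data processing inequality applied to the CPTP semigroup $e^{t\lbop^{\dagger}}$, whereas the gradient-flow argument above additionally covers the range $\alpha\in(0,\tfrac12)$, where the data processing inequality need not hold.
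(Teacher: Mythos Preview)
Your proposal is correct and follows essentially the same approach as the paper: both exploit the gradient flow structure from \thmref{thm::renyi_gradient} to write $\partial_t\renyidivg{\rho_t}{\sigma}$ as minus a squared norm in the weighted inner product $\Inner{\cdot}{\cdot}_{\rhodep}$ (equivalently, minus $g_{\rhodep}(\dot\rho_t,\dot\rho_t)$), which is nonpositive. The paper simply unpacks this a step further by writing the potential $U_0 = -\fdrenyidivg{\rho}{\sigma}\big|_{\rho=\rho_0}$ explicitly and evaluating $-\Inner{\nabla U_0}{\nabla U_0}_{\rhodep}\le 0$, but the substance is identical.
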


\revise{\thmref{thm::renyi_gradient} only provides a sufficient detailed balance condition for Lindblad equations to be the gradient flow dynamics of sandwiched \renyi{} divergences. Later in \secref{sec::necessary_condition}, we will discuss in \thmref{thm::lb_necessary} the necessary condition for Lindblad equations to be possibly expressed as the gradient flow dynamics of sandwiched \renyi{} divergences, by adapting and extending the analysis in \cite[Theorem 2.9]{carlen_non-commutative_2018}. Loosely speaking, the family of Lindblad equations that can be possibly written as the gradient flow dynamics of sandwiched \renyi{} divergences of any order $\alpha\in (0,\infty)$ is not substantially larger than the class of Lindblad equations with GNS-detailed balance, which explains the reason why we restrict our attention to Lindblad equations with GNS-detailed balance in \thmref{thm::renyi_gradient}. However, identifying a sufficient and necessary condition still remains open.}

Next, we would like to study the exponential decay of sandwiched \renyi{} divergences.
For \fp{} equations in classical systems, the exponential decay of the relative entropy could be proved via the (classical) \lsi{}, namely, the relative entropy is bounded above by the relative Fisher information.
This approach has been generalized in our previous
work \cite{prevpaper} to obtain the exponential decay of
the classical \renyi{} divergence by introducing the \emph{relative $\alpha$-Fisher information}.
For quantum systems, in the same flavor, let us first define the \emph{quantum relative $\alpha$-Fisher information}
as follows.
\begin{defn}
Suppose $\lbop^{\dagger}$ is the generator of a primitive \lb{} equation with GNS-detailed balance and the unique stationary state $\sigma$ is full-rank. The \emph{quantum relative $\alpha$-Fisher information}
between $\rho\in\dsetplus$ and $\sigma$ is defined as
\begin{equation}
\label{eqn::q_relative_fisher}
\relafisheralpha{\rho}{\sigma}{\alpha} :=
\begin{cases} -\Inner{\fdrenyidivg{\rho}{\sigma}}{\lbop^{\dagger}(\rho)}, & \text{ if } \alpha \neq 1;\\
-\Inner{\log(\rho)-\log(\sigma)}{\lbop^{\dagger}(\rho)}, & \text{ if } \alpha = 1.
\end{cases}
\end{equation}
\end{defn}
When $\alpha = 1$, we denote $\relafisheralpha{\rho}{\sigma}{} \equiv \relafisheralpha{\rho}{\sigma}{1}$ and call it quantum relative Fisher information for simplicity.
The inner product $\Inner{\cdot}{\cdot}$ herein is the Hilbert-Schmidt inner product, defined by
$\Inner{A}{B} := \text{Tr}(A^\ast B)$ for all matrices $A, B$.
Notice that in the definition above, the quantum relative $\alpha$-Fisher information
depends on the Lindblad super-operator $\lbop^{\dagger}$, which is different from the
classical setting where the relative $\alpha$-Fisher information only depends on the Gibbs stationary distribution (see \cite{prevpaper}).  Also, observe that if $\rho_t$ solves the Lindblad equation \eqref{eqn::lb}, then $\relafisheralpha{\rho_t}{\sigma}{\alpha} = -\partial_t \renyidivg{\rho_t}{\sigma}$.
Since by \coref{coro::monotonicity_renyi} $ \partial_t \renyidivg{\rho_t}{\sigma}\leq 0$, the quantum relative $\alpha$-Fisher information $\relafisheralpha{\rho_t}{\sigma}{\alpha} $ is non-negative.

\begin{defn}
Suppose the density matrix $\sigma$ is the unique stationary state of the Lindblad equation \eqref{eqn::lb}. For a fixed order $\alpha\in (0,\infty)$, $\sigma$ is said to satisfy the \emph{quantum $\alpha$-log Sobolev inequality} (or quantum $\alpha$-LSI in short) if there exists $K_{\alpha} > 0$, such that
\begin{equation}
\label{eqn::lsi_alpha}
\renyidivgalpha{\rho}{\sigma}{\alpha}\le \frac{1}{2K_{\alpha}}\relafisheralpha{\rho}{\sigma}{\alpha},\qquad \forall \rho\in \dset,
\end{equation}
where $\dset$ is the space of density matrices. $K_{\alpha}$ is called the \emph{$\alpha$-log Sobolev constant}. When $\alpha=1$, we call \eqref{eqn::lsi_alpha} quantum log-Sobolev inequality and denote $K \equiv K_1$ for simplicity.
\end{defn}
The $\alpha$-log Sobolev constant $K_{\alpha}$ has been considered in \cite[Definition 3.1]{muller-hermes_sandwiched_2018}. When $\alpha=1$, the above form reduces to the quantum log-Sobolev inequality in \cite[Eq. (8.17)]{carlen_gradient_2017}.

\begin{remark}
Another definition of quantum $\alpha$-LSI was given
previously under the framework of noncommutative $\lpsp$ spaces (see \eg, \cite[Definition 3.5]{olkiewicz_hypercontractivity_1999}, \cite[Definition 11]{kastoryano_quantum_2013} and \cite[Sec. 2.5]{beigi_quantum_2018}). We will briefly revisit this concept in \secref{sec::noncommutative_Lp} and we shall denote the log-Sobolev constant in this setting up by $\kappa_{\alpha}$.
The essential difference is that for the definition of $\kappa_{\alpha}$, the entropy function uses the quantum relative entropy for a density matrix raising to the power of $\alpha$, instead of using sandwiched \renyi{} divergence (see \eqref{eqn::kappa_alpha_v2} for more details).
\end{remark}

For \lb{} equations with GNS-detailed balance, the primitivity of \lb{} equations is equivalent to the validity of the quantum LSI, summarized in the following \propref{prop::primitive_lsi}. It is important to note that by \lemref{lem::lb_quadratic} below, if the \lb{} equation satisfies GNS-detailed balance, then $-\lbop$ is a positive semi-definite operator, with respect to the inner product $\Inner{\cdot}{\cdot}_{1/2}$ which is defined by $\Inner{A}{B}_{1/2} := \tr\left(A^* \sigma^{1/2} B \sigma^{1/2}\right)$ for all matrices $A, B$. As a consequence,  primitivity of $\mathcal{L}$ implies that $-\mathcal{L}$ has a positive spectral gap, denoted by $\lbopgap$. The proposition below provides two-side estimates of $K$ in terms of $\lbopgap$.

\begin{prop}
	\label{prop::primitive_lsi}
	Consider the \lb{} equation with GNS-detailed balance \eqref{eqn::lb} and suppose a full-rank density matrix $\sigma$ is its stationary state.
\revise{Then,
$\sigma$ satisfies the quantum LSI iff the \lb{} equation is primitive.
More quantitatively, if the \lb{} equation is primitive, we have
\[ \frac{1}{1-\log(\sqrt{\specmin{\sigma}})} \lbopgap \le K \le \lbopgap,\]
where $\specmin{\sigma}$ is the smallest eigenvalue of $\sigma$.
}
\end{prop}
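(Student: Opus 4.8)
The plan is to split the assertion into three parts: (i) if the Lindblad equation is \emph{not} primitive, then no quantum $\lsi{}$ can hold; (ii) the upper bound $K\le\lbopgap$; and (iii) the lower bound $K\ge\lbopgap/(1-\log\sqrt{\specmin{\sigma}})$. Part (iii) in particular shows that primitivity---equivalently, by \lemref{lem::lb_quadratic} and finite dimensionality, $\lbopgap>0$---already forces a strictly positive log-Sobolev constant, which is the ``primitive $\Rightarrow$ LSI'' half of the equivalence; together with (i) this yields the stated equivalence, and (ii)--(iii) give the two-sided estimate. Part (i) is elementary and I would argue by contraposition: if the equation is not primitive then $\dim\text{Ker}(\lbop^{\dagger})=\dim\text{Ker}(\lbop)\ge2$, so one can pick a traceless Hermitian $Y\in\text{Ker}(\lbop^{\dagger})$ with $Y\notin\CC\sigma$; since $\sigma$ is full-rank, $\sigma_\delta:=\sigma+\delta Y$ lies in $\dsetplus$ for all small $|\delta|$, is stationary, and differs from $\sigma$, whence $\relafisheralpha{\sigma_\delta}{\sigma}{1}=-\Inner{\log\sigma_\delta-\log\sigma}{\lbop^{\dagger}(\sigma_\delta)}=0$ while $\renyidivgalpha{\sigma_\delta}{\sigma}{1}=D(\sigma_\delta\|\sigma)>0$, so \eqref{eqn::lsi_alpha} with $\alpha=1$ fails for every $K>0$.

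For the upper bound (ii) I would linearize around $\sigma$ along the spectral-gap direction. Let $h=h^{*}$ solve $-\lbop(h)=\lbopgap h$; being a gap eigenvector it is $\Inner{\cdot}{\cdot}_{1/2}$-orthogonal to $\unit\in\text{Ker}(\lbop)$, hence $\tr(\sigma h)=0$, so $\rho_\eps:=\sigma^{1/2}(\unit+\eps h)\sigma^{1/2}\in\dsetplus$ is a density matrix with $\rho_\eps\neq\sigma$ for small $\eps\neq0$. Set $W:=\sigma^{1/2}h\sigma^{1/2}$ and recall three facts: the Hessian of $D(\,\cdot\,\|\sigma)$ at $\sigma$ is the \kmb{} quadratic form $q_\sigma(W):=\Inner{W}{\Omega_\sigma^{-1}(W)}$, where $\Omega_\sigma(X):=\int_0^1\sigma^{s}X\sigma^{1-s}\,\rd s$; $\relafisheralpha{\rho_\eps}{\sigma}{1}=-\partial_t\big|_{t=0}\renyidivgalpha{\rho_t}{\sigma}{1}$ along the flow started at $\rho_\eps$; and the GNS-detailed balance intertwining $\lbop^{\dagger}(\sigma^{1/2}X\sigma^{1/2})=\sigma^{1/2}\lbop(X)\sigma^{1/2}$. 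A short Taylor expansion then yields $\renyidivgalpha{\rho_\eps}{\sigma}{1}=\frac{\eps^{2}}{2}q_\sigma(W)+O(\eps^{3})$ and $\relafisheralpha{\rho_\eps}{\sigma}{1}=\eps^{2}\lbopgap\, q_\sigma(W)+O(\eps^{3})$; since $q_\sigma(W)>0$, dividing and sending $\eps\to0$ forces, via \eqref{eqn::lsi_alpha}, $K\le\lbopgap$. The one delicate point is verifying that the \emph{same} form $q_\sigma$ appears in both leading terms---this is exactly where GNS- (not merely KMS-) detailed balance enters.

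The lower bound (iii) is the core of the proposition, and I would prove it by sandwiching the relative entropy and the entropic Fisher information between quadratic ($\chi^{2}$-type) quantities controlled by the spectral gap. A natural route, carried out in the geometry of $\Inner{\cdot}{\cdot}_{1/2}$ with $g$ a suitable noncommutative analogue of $\rho/\sigma$, is a chain of three inequalities: first, an entropy--variance estimate $\renyidivgalpha{\rho}{\sigma}{1}\le(1-\log\sqrt{\specmin{\sigma}})\,\mathrm{Var}(\sqrt{g})$, whose constant is produced by a scalar bound on $x\mapsto x\log x$ over the range dictated by $\spec(\sigma)\subset[\specmin{\sigma},1]$ (the noncommutative analogue of the classical $\mathrm{Ent}_\pi(g)\le(1+\log(1/\pi_{\min}))\,\mathrm{Var}_\pi(\sqrt{g})$, the half-power reflecting the $\sigma^{1/2}$ in $\Inner{\cdot}{\cdot}_{1/2}$); second, the \poin{} inequality furnished by \lemref{lem::lb_quadratic} and primitivity, $\mathrm{Var}(\sqrt{g})\le\frac{1}{2\lbopgap}\,\mathcal{E}_{\lbop}(\sqrt{g})$, with $\mathcal{E}_{\lbop}$ the Dirichlet form of $-\lbop$ in $\Inner{\cdot}{\cdot}_{1/2}$; and third, a noncommutative \svi{} $\mathcal{E}_{\lbop}(\sqrt{g})\le\relafisheralpha{\rho}{\sigma}{1}$. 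Concatenating the three gives $\renyidivgalpha{\rho}{\sigma}{1}\le\frac{1-\log\sqrt{\specmin{\sigma}}}{2\lbopgap}\,\relafisheralpha{\rho}{\sigma}{1}$, that is $K\ge\lbopgap/(1-\log\sqrt{\specmin{\sigma}})$.

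The principal obstacle is the noncommutativity in the first and third steps of (iii): the scalar inequalities behind the entropy--variance estimate and the \svi{} (for instance $4(\sqrt{a}-\sqrt{b})^{2}\le(a-b)(\log a-\log b)$) do not lift verbatim to matrices and must be replaced by operator-monotonicity arguments or by explicit double-operator-integral computations in the GNS representation; keeping all constants sharp enough to land exactly on $1-\log\sqrt{\specmin{\sigma}}$ is the delicate part, and one must also ensure that the $\chi^{2}$-type variance in the middle step is compatible with GNS-detailed balance so that the spectral-gap \poin{} inequality applies to it. I would cross-check the resulting constant against the quantum $\lsi{}$ in \cite[Sec.~8, Eq.~(8.17)]{carlen_gradient_2017}, which coincides with ours at $\alpha=1$.
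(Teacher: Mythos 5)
Your parts (i) and (ii) are sound: (i) is a valid elementary argument that non-primitivity rules out any positive $K$, and (ii) is essentially the linearization proof of $K\le\lbopgap$ that the paper simply cites from \cite[Theorem 16]{kastoryano_quantum_2013} (note that only the KMS intertwining $\Gamma_{\sigma}^{-1}\circ\lbop^{\dagger}\circ\Gamma_{\sigma}=\lbop$ is used there, not GNS specifically). The difficulty is part (iii), which carries the whole weight of the proposition, and there your chain has a genuine gap.

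The first link, $\relaentropy{\rho}{\sigma}\le\bigl(1-\log\sqrt{\specmin{\sigma}}\bigr)\mathrm{Var}(\sqrt{g})$, is false with that constant already in the commuting case: take $\rho$ to be the spectral projection onto the eigenvector of $\sigma$ with eigenvalue $\specmin{\sigma}$ (or an approximation of it in $\dsetplus$). Then $\relaentropy{\rho}{\sigma}=\log(1/\specmin{\sigma})$ while $\mathrm{Var}_{\sigma}(\sqrt{g})=1-\specmin{\sigma}$, and for small $\specmin{\sigma}$ one has $\log(1/\specmin{\sigma})>\bigl(1+\tfrac12\log(1/\specmin{\sigma})\bigr)(1-\specmin{\sigma})$; e.g.\ $\specmin{\sigma}=e^{-10}$ gives $10>6$. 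The classical inequality you appeal to holds with a constant of order $2+\log(1/\pi_{\min})$, i.e.\ twice yours. Your second link also overstates the \poin{} inequality: the spectral gap gives $\mathrm{Var}\le\mathcal{E}_{\lbop}(\cdot)/\lbopgap$, not $\mathcal{E}_{\lbop}(\cdot)/(2\lbopgap)$ (equality at the gap eigenvector). The bookkeeping can be repaired — entropy--variance with constant $2-\log\specmin{\sigma}$, \poin{} with $1/\lbopgap$, and the sharp quantum \svi{}-type comparison $4\,\mathcal{E}_{\lbop}(\sqrt{g})\le\relafisheralpha{\rho}{\sigma}{}$ — and the product of the corrected constants does land exactly on $K\ge\lbopgap/(1-\log\sqrt{\specmin{\sigma}})$. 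But then the entire content of the proposition resides in the noncommutative versions of the first and third links, which you only flag as ``delicate'' and do not establish; they are nontrivial matrix inequalities, and they are exactly what the paper imports rather than reproves. The paper's route is $K=2\kappa_1$ \eqref{eqn::kappa_1_K}, the quantum \svi{} $\kappa_1\ge\kappa_2$ \eqref{eqn::qsvi_1_2} from \cite[Corollary 16]{beigi_quantum_2018}, the mixing-time bound $\kappa_2\,t_2(e^{-1})\ge\tfrac12$ \eqref{eqn::t_2_gamma_2} from \cite[Theorem 5.3]{muller-hermes_sandwiched_2018}, and then only the elementary spectral estimate $t_2(e^{-1})\le(2-\log\specmin{\sigma})/(2\lbopgap)$ of \lemref{lem::t2_time_spec} is proved directly. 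As written, your lower-bound argument therefore does not go through: two of its three stated inequalities are quantitatively wrong, and the corrected quantum versions are left unproved.
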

\revise{Proposition \ref{prop::primitive_lsi} follows mainly from \cite[Theorem 5.3]{muller-hermes_sandwiched_2018}, \cite[Theorem 16]{kastoryano_quantum_2013} (or see \cite[Theorem 4.1]{muller-hermes_sandwiched_2018}), and the quantum \svi{} \cite[Corollary 16]{beigi_quantum_2018}.
The proof is postponed to \secref{sec::proof_qlsi}.
We remark that the lower bound above is not sharp and it remains an interesting question to find a tighter lower bound.}

It is not hard to see that if the  quantum $\alpha$-LSI \eqref{eqn::lsi_alpha} is valid,  then it follows immediately  from Gr\"onwall inequality that  (see also \cite[Theorem 3.2]{muller-hermes_sandwiched_2018})
\begin{equation}
\label{eqn::renyi_exp_decay}
\renyidivgalpha{\rho_t}{\sigma}{\alpha}\le \renyidivgalpha{\rho_0}{\sigma}{\alpha} e^{-2K_{\alpha}t},\qquad \forall t \ge 0.
\end{equation}

The validity of \eqref{eqn::lsi_alpha}  with a positive $K_\alpha$ is in general difficult to establish. For depolarizing semigroup, $K_2$ has been computed in \cite[Theorem 3.3]{muller-hermes_sandwiched_2018} and $K$ has been studied in \cite{muller-hermes_relative_2016}.
For the depolarizing semigroup with stationary state $\sigma$ being the maximally mixed state, both upper bound and lower bound of $K_{\alpha}$ with $\alpha \ge 2$ have been provided in \cite[Theorem 3.4]{muller-hermes_sandwiched_2018}.
Moreover, it has been proved
in  \cite[Theorem 4.1]{muller-hermes_sandwiched_2018} that the $\alpha$-log Sobolev constant cannot exceed the spectral gap, namely for any $\alpha \ge 1$, $K_{\alpha} \le \lbopgap$. In the present paper, instead of pursuing the  global decay of the sandwiched \renyi{} divergence $\renyidivgalpha{\rho_t}{\sigma}{\alpha}$ as in \eqref{eqn::renyi_exp_decay}, we focus on the exponential decay of  $\renyidivgalpha{\rho_t}{\sigma}{\alpha}$ in the large time regime. More specifically, our second main result (Theorem \ref{thm::decay_quantum_renyi}) shows that the  sandwiched \renyi{} divergence $\renyidivgalpha{\rho_t}{\sigma}{\alpha}$ decays exponentially with the sharp rate $2\lambda_\mathcal{L}$ when $t$ is large.

\begin{theorem}
	\label{thm::decay_quantum_renyi}
Under the same conditions as in \thmref{thm::renyi_gradient},
for any given solution $\rho_{\cdot}: [0,\infty)\rightarrow \dsetplus$ of the \lb{} equation and any $\eps\in \left(0,\frac{\specmin{\sigma}^2}{2}\right)$, there exist $C_{\alpha,\eps}$ and $\waittime$ such that
\begin{equation}
\label{eqn::decay_quantum_renyi}
	\renyidivg{\rho_t}{\sigma}\le C_{\alpha,\eps} \renyidivg{\rho_0}{\sigma} e^{-2\lbopgap t},\qquad \forall t\ge \waittime,
\end{equation}
where
\begin{equation*}
C_{\alpha,\eps} = \frac{e^{\renyidivgalpha{\rho_0}{\sigma}{2}} - 1}{\renyidivg{\rho_0}{\sigma}} \exp\left( \Theta(\alpha-2) 2\lbopgap  T_{\alpha,\eps}\right),
\end{equation*}
and
\begin{equation*}
\waittime = \left\{
\begin{split}
& 0,  \qquad & \alpha \le 2; \\
& T_{\alpha,\eps} + \max\left(0, \frac{1}{2K}  \log(\relaentropy{\rho_0}{\sigma}/\eps)\right),  \qquad & \alpha > 2. \\
\end{split}
\right.
\end{equation*}
In the above, $\specmin{\sigma}$ is the smallest eigenvalue of $\sigma$; $T_{\alpha,\eps}=\frac{1}{2K \eta} \log(\alpha-1)$ is a function depending on $\alpha$ and $\eps$;
$\Theta$ is the Heaviside function;
	\begin{align*}
	\eta &= \min\left(\frac{1}{2}, \min_{j=1}^{\cardn} \left\{\frac{2\sqrt{e^{\omega_j} \frac{1}{\Lambda}}}{1+e^{\omega_j} \Lambda}\right\}\right),\\
	\Lambda &= \frac{\specmax{\sigma}}{\specmin{\sigma}} \exp\left(2 \sqrt{2\eps}\frac{2\specmin{\sigma} - \sqrt{2\eps}}{\specmin{\sigma}(\specmin{\sigma} - \sqrt{2\eps})}\right),
	\end{align*}
where $\omega_j$ are \revise{Bohr frequencies determined by the stationary state $\sigma$} (see \secref{sec::qms_gns_detailed_balance}) \revise{and $\specmax{\sigma}$ is the largest eigenvalue of $\sigma$.}
\end{theorem}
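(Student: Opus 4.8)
The plan is to isolate the genuinely sharp part of the estimate — decay of an order‑$2$, $\chi^2$‑type divergence at exactly rate $2\lbopgap$ — and then reach $\renyidivg{\rho_t}{\sigma}$ for other $\alpha$ from it. The key remark is that $\renyidivgalpha{\rho_t}{\sigma}{2}$ is, up to the logarithm, a squared norm for $\Inner{\cdot}{\cdot}_{1/2}$: setting $Y_t:=\sigma^{-1/2}\rho_t\sigma^{-1/2}-\unit$, a direct computation gives $e^{\renyidivgalpha{\rho_t}{\sigma}{2}}-1=\Norm{Y_t}_{1/2}^2$, and — using $\Inner{A}{B}_{1/2}=\tr(A^*\sigma^{1/2}B\sigma^{1/2})$ together with the KMS($1/2$)‑symmetry of $\lbop$ implied by GNS‑detailed balance (the symmetry behind \lemref{lem::lb_quadratic}), equivalently $\lbop^{\dagger}(\sigma^{1/2}X\sigma^{1/2})=\sigma^{1/2}\lbop(X)\sigma^{1/2}$ — one checks that $Y_t$ solves the \emph{linear} equation $\dot Y_t=\lbop(Y_t)$. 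Since $\Inner{Y_0}{\unit}_{1/2}=\tr(\rho_0)-\tr(\sigma)=0$, and since $-\lbop$ is self‑adjoint, positive semi‑definite for $\Inner{\cdot}{\cdot}_{1/2}$, and (by primitivity) has spectral gap $\lbopgap$ on the orthogonal complement of $\unit$, which is $e^{t\lbop}$‑invariant, we obtain $\Norm{Y_t}_{1/2}^2\le\Norm{Y_0}_{1/2}^2 e^{-2\lbopgap t}$ for \emph{every} initial $\rho_0$, with no smallness assumption.

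\textbf{Case $\alpha\le 2$.} Monotonicity of $\renyidivgalpha{\rho}{\sigma}{\alpha}$ in $\alpha$ and $\log(1+x)\le x$ then give
\[
\renyidivg{\rho_t}{\sigma}\le\renyidivgalpha{\rho_t}{\sigma}{2}=\log\bigl(1+\Norm{Y_t}_{1/2}^2\bigr)\le\Norm{Y_t}_{1/2}^2\le\bigl(e^{\renyidivgalpha{\rho_0}{\sigma}{2}}-1\bigr)e^{-2\lbopgap t},
\]
which is precisely \eqref{eqn::decay_quantum_renyi} with $\waittime=0$, $\Theta(\alpha-2)=0$, and the stated $C_{\alpha,\eps}=\bigl(e^{\renyidivgalpha{\rho_0}{\sigma}{2}}-1\bigr)/\renyidivg{\rho_0}{\sigma}$. \textbf{Case $\alpha>2$.} Now $\renyidivg{\rho_t}{\sigma}\ge\renyidivgalpha{\rho_t}{\sigma}{2}$, so the comparison above is of no use, and I would run the flow through three windows. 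Window I, of length $\tau_1:=\max\bigl(0,\tfrac{1}{2K}\log(\relaentropy{\rho_0}{\sigma}/\eps)\bigr)$: by \propref{prop::primitive_lsi} the quantum LSI and Grönwall give $\relaentropy{\rho_{\tau_1}}{\sigma}\le\eps$, hence by quantum Pinsker $\Norm{\rho_{\tau_1}-\sigma}_1\le\sqrt{2\eps}$; because $\relaentropy{\rho_t}{\sigma}$ is nonincreasing (\coref{coro::monotonicity_renyi} at $\alpha=1$) this persists for all $t\ge\tau_1$, and combined with full‑rankness of $\sigma$ and the hypothesis $\eps<\specmin{\sigma}^2/2$ it confines the spectrum of $\sigma^{-1/2}\rho_t\sigma^{-1/2}$ away from $0$ and $\infty$, quantitatively by the explicit $\Lambda$ of the statement. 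Window II, of length $T_{\alpha,\eps}=\tfrac{1}{2K\eta}\log(\alpha-1)$: on the set of states obeying this $\Lambda$‑bound I would establish a \emph{localized} $\alpha$‑log‑Sobolev inequality $\renyidivg{\rho}{\sigma}\le\tfrac{1}{2K\eta}\relafisheralpha{\rho}{\sigma}{\alpha}$, where $\eta$ is the worst‑case distortion between the $\rho$‑dependent operator means that appear in $\relafisheralpha{\rho}{\sigma}{\alpha}$ (through the metric tensor of \thmref{thm::renyi_gradient}) and the $\Inner{\cdot}{\cdot}_{1/2}$ geometry carrying the Dirichlet form $-\Inner{\cdot}{\lbop(\cdot)}_{1/2}$; the factor $\min_j 2\sqrt{e^{\omega_j}/\Lambda}/(1+e^{\omega_j}\Lambda)$ is exactly this distortion, with $\eta<1$ reflecting the genuinely noncommutative loss ($\eta=1$ would force all $\omega_j=0$). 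Grönwall over this window shrinks $\renyidivg{\rho}{\sigma}$ by $(\alpha-1)^{-1}$, bringing $\rho$ into the regime where $\renyidivg{\rho}{\sigma}$ is comparable to its quadratic part, i.e. to $\Norm{Y}_{1/2}^2$ with an absolute constant. Window III: for $t\ge\waittime=\tau_1+T_{\alpha,\eps}$ one then feeds in the core estimate $\Norm{Y_t}_{1/2}^2\le(e^{\renyidivgalpha{\rho_0}{\sigma}{2}}-1)e^{-2\lbopgap t}$ to recover the sharp tail rate $2\lbopgap$, the slow $2K\eta$‑decay of Window II being paid for by the prefactor $e^{2\lbopgap T_{\alpha,\eps}}=(\alpha-1)^{\lbopgap/(K\eta)}$, which is exactly the $\Theta(\alpha-2)$‑term in $C_{\alpha,\eps}$.

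\textbf{Main obstacle.} The $\chi^2$ core and the $\alpha\le2$ case are routine; the real work is entirely in $\alpha>2$. The crux is the localized $\alpha$‑log‑Sobolev inequality of Window II, and in particular the identification of the sharp distortion constant $\eta$: this demands comparing, uniformly over all $\Lambda$‑confined states $\rho$, the noncommutative derivative/operator‑mean objects built from $\rho^{(1-\alpha)/\alpha}$‑type weights (those governing $\relafisheralpha{\rho}{\sigma}{\alpha}$) against the $\sigma^{1/2}$‑weighted ones governing $-\Inner{\cdot}{\lbop(\cdot)}_{1/2}$, which is where operator‑monotonicity and operator‑mean inequalities in the spirit of the quantum \svi{} do the heavy lifting. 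A secondary difficulty is the bookkeeping needed to chain Windows I–III — keeping the $\Lambda$‑confinement valid throughout, and passing from the $2K\eta$‑rate regime to the $2\lbopgap$‑rate regime — without degrading the exponent in the final tail estimate.
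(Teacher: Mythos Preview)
Your treatment of $\alpha\le 2$ is correct and essentially identical to the paper's: the identity $e^{\renyidivgalpha{\rho}{\sigma}{2}}-1=\Norm{Y}_{1/2}^2$ together with the linear spectral-gap estimate for $Y_t$ is exactly the content of \propref{prop::fisher_bound_2} and its consequence, and monotonicity in $\alpha$ finishes that case.

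For $\alpha>2$, however, your Window~II/III scheme has a genuine gap. A localized $\alpha$-LSI of the form $\renyidivg{\rho}{\sigma}\le\tfrac{1}{2K\eta}\relafisheralpha{\rho}{\sigma}{\alpha}$, even if true on the $\Lambda$-confined set, yields via Gr{\"o}nwall only $\renyidivg{\rho_t}{\sigma}\le\renyidivg{\rho_{\tau_1}}{\sigma}\,e^{-2K\eta(t-\tau_1)}$, i.e.\ decay at rate $2K\eta$. Since $K\le\lbopgap$ (\propref{prop::primitive_lsi}) and $\eta\le\tfrac12$, this rate is strictly below $2\lbopgap$, and nothing you do in Window~III can upgrade it: for $\alpha>2$ one has $\renyidivg{\rho}{\sigma}\ge\renyidivgalpha{\rho}{\sigma}{2}$, so there is no inequality of the shape $\renyidivg{\rho_t}{\sigma}\lesssim\Norm{Y_t}_{1/2}^2$ to ``feed in''. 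Your claim that shrinking $\renyidivg{\cdot}{\sigma}$ by the factor $(\alpha-1)^{-1}$ brings it into a regime where it is comparable to $\Norm{Y}_{1/2}^2$ with an absolute constant is unsubstantiated and in general false (the quadratic expansion of $\renyidivg{\rho}{\sigma}$ near $\sigma$ itself carries $\alpha$-dependent prefactors, and away from $\sigma$ there is no such comparison at all).

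What the paper actually does in place of your Window~II is prove a \emph{comparison theorem} (\propref{prop::comparison}): under $\relaentropy{\rho_0}{\sigma}\le\eps$, one has $\renyidivgalpha{\rho_T}{\sigma}{\alpha}\le\renyidivgalpha{\rho_0}{\sigma}{2}$ with $T=T_{\alpha,\eps}$. This is a hypercontractivity-type statement, proved by letting the order $\beta_t=1+(\alpha_0-1)e^{2K\eta t}$ increase in time and showing that $\tfrac{1}{\beta_t}\log\tr\bigl(\rho_{\sigma,t}^{\beta_t}\bigr)$ is nonincreasing. The constant $\eta$ arises not from comparing $\rho$-weighted and $\sigma$-weighted geometries as you suggest, but from comparing the two operator means $\eta\,\mop{\rho_{\sigma,t}^{\beta_t}}{\omega_j}$ and $\mop{\rho_{\sigma,t}^{\beta_t-1}}{(\beta_t-1)\omega_j/\beta_t}\circ\mop{\rho_{\sigma,t}}{\omega_j/\beta_t}$ that appear when the $\alpha$-Fisher term $T_1$ and the LSI term $T_2$ are written in a common form; see \lemref{lem::Gop_comparison}. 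Once $\renyidivg{\rho_t}{\sigma}\le\renyidivgalpha{\rho_{t-T_{\alpha,\eps}}}{\sigma}{2}$ is in hand, the $\alpha=2$ bound applied at time $t-T_{\alpha,\eps}$ gives the sharp rate $2\lbopgap$ directly, with the shift producing exactly the prefactor $e^{2\lbopgap T_{\alpha,\eps}}$ in $C_{\alpha,\eps}$. The missing idea in your proposal is precisely this order-changing comparison: a fixed-$\alpha$ LSI cannot substitute for it.
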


The proof of Theorem \ref{thm::decay_quantum_renyi} can be found in \secref{sec::exp_decay}.
Here we sketch the strategies used in the proof. First, we prove this theorem for the case $\alpha=2$ by using a uniform lower bound of $\relafisheralpha{\rho}{\sigma}{2}$ (see \eqref{eqn::fisher_bound_2}).
Next, we prove that the quantum LSI \eqref{eqn::lsi_alpha} holds (see \propref{prop::primitive_lsi}).
Based on the quantum LSI (see \eqref{eqn::lsi_alpha} for $\alpha =1$), we  prove a quantum comparison theorem  (see \propref{prop::comparison}), which implies the
exponential decay of the sandwiched \renyi{} divergence of any order $\alpha\in(0,\infty)$ (see \propref{prop::exp_decay_renyi}).

Some remarks of this theorem are in order:
\begin{remark}
\normalfont
\begin{enumerate}[1)]

\item The upper bounds for $\waittime$ and $C_{\alpha,\eps}$ may not be sharp; a better bound might be important if one wants to study the mixing time for quantum decoherence, which however will not be pursued here.
For fixed $\alpha > 2$, when $\eps$ becomes smaller, $T_{\alpha,\eps}$ decreases and we have a smaller prefactor $C_{\alpha,\eps}$, at the expense of waiting longer time $\waittime$ since the term $\relaentropy{\rho_0}{\sigma}/\eps$ blows up as $\eps\downarrow 0^{+}$.

\item The artificial $\eps$ and the complicated expression of
$T_{\alpha,\eps}$ come from \propref{prop::comparison}.
There is a different way to quantify $\eta$ under the assumption of the quantum $2$-LSI in the sense of noncommutative $\lpsp$ spaces; please see \secref{sec::noncommutative_Lp} for background and the discussion in \secref{sec::diss_q_comparison} for more details.
We prefer this version because the \lsi{} for $\alpha=1$ is also used for the classical result in \cite[Theorem 1.2]{prevpaper}. One could remove the dependence of the parameter $\eps$ by simply choosing, \eg, $\eps = \specmin{\sigma}^2/8$ in this theorem, then
\[\Lambda = \frac{\specmax{\sigma}}{\specmin{\sigma}} e^{3}, \qquad \eta = \min\left(\frac{1}{2}, \min_{j=1}^{\cardn} \left\{\frac{2e^{-3/2}\sqrt{e^{\omega_j} \frac{\specmin{\sigma}}{\specmax{\sigma}}}}{1+e^{\omega_j} \frac{\specmax{\sigma}}{\specmin{\sigma}} e^{3}}\right\}\right).\]
If we further assume that $\sigma$ is the maximally mixed state, \ie, $\sigma = \frac{1}{n}\unit$, then
\[\Lambda = e^3, \qquad \eta = \frac{2e^{-3/2}}{1+e^3}\]
are independent of the dimension $n$.
\end{enumerate}
\end{remark}

{\bf Our contribution.}
We summarize here contributions of the current work.
\begin{enumerate}[1)]
	\item We extend the work of Carlen and Maas in \cite{carlen_gradient_2017} by proving that primitive \lb{} equations with GNS-detailed balance can be identified as the gradient flow dynamics of the sandwiched \renyi{} divergence of any order $\alpha\in(0,\infty)$; see \thmref{thm::renyi_gradient}.
	\revise{In addition, we follow a recent work of Carlen and Maas \cite{carlen_non-commutative_2018} to study the necessary condition for Lindblad equations to be possibly written as the gradient flow dynamics of sandwiched \renyi{} divergences; see \thmref{thm::lb_necessary}.}

	\item We prove that the sandwiched \renyi{} divergence decays exponentially fast under the evolution of primitive \lb{} equations with GNS-detailed balance; see \thmref{thm::decay_quantum_renyi}.
	Our result \eqref{eqn::decay_quantum_renyi} shows that the sandwiched \renyi{} divergence decays with an asymptotic rate $2\lbopgap$, which is sharp according to \cite[Theorem 4.1]{muller-hermes_sandwiched_2018}.

	To prove \thmref{thm::decay_quantum_renyi}, we first prove a quantum comparison theorem (see \propref{prop::comparison}), which is of interest in its own right.
	It is {essentially} a hypercontractivity-type estimation of the solution of the \lb{} equation, enabling us to bound $D_{\alpha_1}(\rho_T || \sigma)$
	by $\renyidivgalpha{\rho_0}{\sigma}{\alpha_0}$ with some $T > 0$ when $\alpha_1 > \alpha_0$. As a direct consequence of the comparison theorem,  \propref{prop::exp_decay_renyi}
	shows that the sandwiched \renyi{} divergence exponentially decays for all orders $\alpha\in (0,\infty)$.
	Such a comparison result for classical \renyi{} divergences was proved for the Fokker-Planck equation in \cite[Theorem 1.2]{prevpaper}.
	For quantum dynamics, a similar comparison result was also obtained in \cite[Corollary 17]{beigi_quantum_2018}. However,
	a major difference between our result and theirs is that we use $K$ in the estimation of the waiting time, while they use $\kappa_2$ (see \eqref{eqn::lsic_Lp}). \revise{See a detailed discussion in \secref{sec::diss_q_comparison}.}
\end{enumerate}

The rest of the paper is organized as follows.
In \secref{sec::preliminaries}, we will recall several preliminary results on, \eg,
gradient flows, \lb{} equations with detailed balance,
and noncommutative $\mathbb{L}_\alpha$ spaces.
\secref{sec::proof} will be devoted to the proof of \thmref{thm::renyi_gradient} and \coref{coro::monotonicity_renyi}.
\revise{In \secref{sec::necessary_condition}, we will study the necessary detailed balance condition for Lindblad equations to be the gradient flow dynamics of the sandwiched \renyi{} divergence of order $\alpha\in (0,\infty)$; see \thmref{thm::lb_necessary}.}
\propref{prop::primitive_lsi} and \thmref{thm::decay_quantum_renyi} will be proved in \secref{sec::exp_decay}. Finally,  we will discuss in \secref{sec::outlook} some potential directions in the future, in particular, the generalization of quantum Wasserstein distance involving the sandwiched \renyi{} divergence.

\section{Preliminaries}
\label{sec::preliminaries}
In this section, we recall some technical notions: gradient flow
dynamics on the \rie{} manifold, \lb{} equations with GNS-detailed balance,
quantum analog of gradient and divergence operators, chain rule identity, and quantum noncommutative $\lpsp$ spaces.
Most results from \secref{sec::qms_gns_detailed_balance} to \secref{sec::chain_rule} follow from \cite{carlen_gradient_2017}; \secref{sec::noncommutative_Lp} is mostly based on \cite{beigi_quantum_2018}.

{\bf Notations:}
Throughout this section and the rest of this paper, we will use the following notations.
Denote $\mset$ as the space of all $n\times n$ complex-valued
matrices.  The space of Hermitian matrices is $\aset$;
the space of strictly positive matrices is $\psetplus$; the space of
traceless Hermitian matrices is $\aset_0$. The space of density
matrices is $\dset$ and the space of invertible density matrices is
$\dsetplus$.  Unity $\unit$ is the $n$-by-$n$ identity matrix; $\id$
is the identity super-operator on $\mset$.

Hilbert-Schmidt inner product is
$\Inner{A}{B} := \tr\left(A^* B\right)$ for $A, B\in \mset$.
We extend the inner product $\Inner{\cdot}{\cdot}$ to vector fields over $\mset$ by
$\bigl\langle \vect{A}, \vect{B} \bigr\rangle:= \sum_{j=1}^{\cardn} \Inner{A_j}{B_j}$, where $\vect{A}=\begin{pmatrix}A_1 & A_2 & \cdots A_\cardn\end{pmatrix}$,
$\vect{B}=\begin{pmatrix}B_1 & B_2 & \cdots B_\cardn\end{pmatrix}$
and $A_j, B_j\in \mset$ for all $1\le j \le \cardn$; here $\cardn$ is a fixed positive integer. The space of such vector fields is denoted by $\msetJ$.
Such a convention is also applied analogously to all inner products on $\mset$.

Denote $\specmin{\rho}$ as the smallest eigenvalue of a density matrix $\rho$ and $\specmax{\rho}$ as the largest one. Asterisk $*$ in the superscript means Hermitian-conjugate; dagger $\dagger$ means the adjoint operator with respect to the Hilbert-Schmidt inner product.

To simplify the notation, a \emph{weight operator} $\Gamma_{\sigma}: \mset\rightarrow \mset$ is defined by
\begin{equation}
\label{eqn::gamma_sigma}
\Gamma_{\sigma} (A) := \sigma^{\frac{1}{2}} A \sigma^{\frac{1}{2}},
\end{equation}
for any matrix $A\in\mset$. Hence, for any $\gamma\in \Real$,
\begin{align*}
\Gamma_{\sigma}^{\gamma} (A) = \sigma^{\gamma/2} A \sigma^{\gamma/2}.
\end{align*}

\subsection{Gradient flow dynamics}
\label{sec::preliminary_gradient}

Consider a \rie{} manifold $(\mani, g_{\rho})$: $\mani$ is a smooth manifold and $g_{\rho}(\cdot, \cdot)$ is an inner product on the tangent space $\tang_{\rho}\mani$ for any $\rho\in \mani$.
Given any (energy) functional $E$ on $\mani$,
the gradient, denoted by $\gradt{}E \vert_{\rho}$ at any state $\rho\in \mani$, is defined as the element in the tangent space $\tang_{\rho}\mani$ such that
\begin{equation}
\label{eqn::riemannian_grad}
g_{\rho}\left(\gradt{} E\eva_{\rho},  \nu\right) = \frac{\ud}{\ud \eps} E(\rho + \eps \nu) \eva_{\eps=0},\qquad \forall \nu \in \tang_{\rho}\mani.
\end{equation}
Then, the gradient flow dynamics refers to following autonomous differential equation
\begin{equation}
\label{eqn::riemannian_gradient_flow}
\frac{\ud}{\ud t}{\rho}_t = - \gradt{} E\eva_{\rho_t}.
\end{equation}
This gradient flow dynamics is essentially a generalization of
steepest descend dynamics in the Euclidean space.  In this paper, we will take $\mani$ to be the space of strictly positive density matrices $\dsetplus$. The choice of $g_{\rho}$ is more subtle and technical; thus we will explain it below in \secref{sec::proof}.

\subsection{Quantum Markov semigroup with GNS-detailed balance}
\label{sec::qms_gns_detailed_balance}
For a continuous time-parameterized semigroup $(\qms_t)_{t\ge 0}$, acting on the space of linear operators on a finite dimensional Hilbert space $\hbtcn$, it is called a \emph{quantum Markov semigroup} (QMS) if $\qms_t$ is completely positive and $\qms_t (\unit) = \unit$, for any $t\ge 0$. The generator of the QMS is denoted by $\lbop$, \ie, $\qms_t = e^{t\lbop}$.
Physically, this semigroup $\qms_t$ usually refers to the time evolution super-operator for observables in the Heisenberg picture.

To explain quantum detailed balance,
let us define inner products $\Inner{\cdot}{\cdot}_s$ parameterized by $s\in [0,1]$, for a given full-rank density matrix $\sigma$, in the following way: for any $A, B\in \mset$,
\begin{equation}
\label{eqn::inner_s}
\Inner{A}{B}_s := \tr\left(\sigma^s A^* \sigma^{1-s} B\right).
\end{equation}
Essentially, $\Inner{\cdot}{\cdot}_s$
is a $\sigma$-weighted Hilbert-Schmidt inner product.
The QMS is said to satisfy \emph{GNS-detailed balance}
if
$\qms_t$ is self-adjoint with respect to the inner product $\Inner{\cdot}{\cdot}_1$ for all $t\ge 0$, that is to say, $\Inner{\qms_t (A)}{B}_1 = \Inner{A}{\qms_t (B)}_1$ for all $A, B\in \mset$ and $t\ge 0$.
In some literature, the case $s = 1/2$ is considered instead and the
quantum Markov semigroup is said to satisfy \emph{KMS-detailed balance} if
$\qms_t$ is self-adjoint with respect to the inner product
$\Inner{\cdot}{\cdot}_{1/2}$. It is not hard to verify that the QMS satisfies KMS-detailed balance iff
\begin{equation}
\label{eqn::lbop_lbop_dag}
\Gamma_{\sigma}^{-1} \circ \lbop^{\dagger}\circ \Gamma_{\sigma} = \lbop.
\end{equation}
An important relation is that
GNS-detailed balance is a more restrictive situation and it implies
KMS-detailed balance; reversely, it is not true.  More discussion on
the relation between GNS and KMS detailed balance could be found in
\cite{carlen_gradient_2017}, in particular, Theorem 2.9 therein.
Another important consequence is that if $\qms_t$ satisfies detailed
balance (either KMS or GNS), then $\sigma$ is invariant under
dynamical evolution $\qms_t^{\dagger}$, that is to say,
$\qms_t^{\dagger} (\sigma) = \sigma$. We add the prefix GNS and KMS to explicitly distinguish these two conditions. \revise{More results on the quantum detailed balance can be found in \eg{} \cite{alicki_detailed_1976,fagnola_generators_2007,fagnola_generators_2010,temme_2-divergence_2010} and the references therein.}

The general form of quantum Markov semigroups with GNS-detailed balance
is given by the theorem below.
\begin{theorem}[{\cite[Theorem 3.1]{carlen_gradient_2017}}]
\label{thm::lindblad_detail_balance}
	Suppose that the QMS $\qms_t$ satisfies GNS-detailed balance, then
	\begin{equation}
	\label{eqn::heisenberg_lb}
	\begin{split}
	\lbop (A) &= \sum_{j=1}^{\cardn}  e^{-\omega_j/2} V_j^{*} \Comm{A}{V_j} + e^{\omega_j/2} \Comm{V_j}{A} V_j^{*}  \\
	&= \sum_{j=1}^{\cardn} e^{-\omega_j/2} \left( V_j^{*} \Comm{A}{V_j} + \Comm{V_j^*}{A}V_j\right),
	\end{split}
	\end{equation}
	where $\omega_j\in \Real$ and $\cardn \le n^2-1$. Moreover
    $\left\{V_j\right\}_{j=1}^{\cardn}$ satisfy the following:
	\begin{enumerate}
		\item $\tr(V_j) = 0$ and $\Inner{V_j}{V_k} = \delta_{j,k} c_j$ for all $1\le j, k \le \cardn$; constants $c_j > 0$ (for all $1\le j \le \cardn$) come from normalization;
		\item for each $j$, there exists $1\le j'\le \cardn$ such that $V_j^{*} = V_{j'}$;
		\item $\Delta_{\sigma} (V_j)  = e^{-\omega_j} V_j$, where the modular operator $\Delta_{\sigma}: \mset\rightarrow\mset$ is defined as
	\begin{align*}
	\Delta_{\sigma} (A) := \sigma A \sigma^{-1},\qquad \text{ for } A\in \mset;
	\end{align*}
		\item if $V_j^* = V_{j'}$ then $c_j  = c_{j'}$ and $\omega_j = -\omega_{j'}$.
	\end{enumerate}
\end{theorem}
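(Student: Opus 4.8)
The plan is to recover the stated form by starting from the general Gorini--Kossakowski--Sudarshan--Lindblad (GKLS) representation of the generator and then imposing GNS-detailed balance step by step. Write $\lbop(A) = i\Comm{H}{A} + \sum_k\bigl(L_k^*AL_k - \tfrac12\Anticomm{L_k^*L_k}{A}\bigr)$ for a Hermitian $H$ and matrices $L_k$, and recall that this representation is unique up to the gauge moves $L_k\mapsto\sum_\ell u_{k\ell}L_\ell + c_k\unit$ with $(u_{k\ell})$ unitary, together with a compensating shift of $H$, and that $\lbop$ is automatically real, $\lbop(A^*) = \lbop(A)^*$. Differentiating $\Inner{\qms_t(A)}{B}_1 = \Inner{A}{\qms_t(B)}_1$ at $t=0$, GNS-detailed balance is equivalent to $\lbop$ being self-adjoint for $\Inner{A}{B}_1 = \tr(\sigma A^*B)$; choosing $B=\unit$ gives $\lbop^{\dagger}(\sigma)=\sigma$, so $\sigma$ is the stationary state.

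The first substantive step is to show $\Comm{\lbop}{\Delta_{\sigma}}=0$. Self-adjointness for $\Inner{\cdot}{\cdot}_1$ is equivalent to $\lbop^{\dagger}\circ R_{\sigma} = R_{\sigma}\circ\lbop$ with $R_{\sigma}(X):=X\sigma$, while (using that GNS-detailed balance implies KMS-detailed balance, as recalled in \secref{sec::qms_gns_detailed_balance}) self-adjointness for $\Inner{\cdot}{\cdot}_{1/2}$ gives $\lbop^{\dagger}\circ\Gamma_{\sigma} = \Gamma_{\sigma}\circ\lbop$. Eliminating $\lbop^{\dagger}$ between these and using $\Gamma_{\sigma}^{-1}\circ R_{\sigma} = \Delta_{\sigma}^{-1/2}$ yields $\Delta_{\sigma}^{-1/2}\circ\lbop\circ\Delta_{\sigma}^{1/2} = \lbop$, hence $\Comm{\lbop}{\Delta_{\sigma}}=0$. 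Thus $\lbop$ preserves the eigenspace decomposition $\mset = \bigoplus_{\omega}\mset_{\omega}$, where $\Delta_{\sigma}$ acts by $e^{-\omega}$ on $\mset_{\omega}$ and $\omega$ runs over the differences of logarithms of the eigenvalues of $\sigma$; note that each $\mset_{\omega}$ with $\omega\neq0$ consists of traceless matrices.

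Next I would diagonalize the jump operators. The commutation $\Comm{\lbop}{\Delta_{\sigma}}=0$ makes the completely positive part of the GKLS form covariant under the modular group, so the standard description of the Kraus operators of a covariant channel, combined with the gauge freedom, lets me choose jump operators $V_j$ with $\Delta_{\sigma}(V_j) = e^{-\omega_j}V_j$; inside each frequency sector I use the residual unitary freedom to make them Hilbert--Schmidt orthogonal, and in the $\omega=0$ sector I subtract multiples of $\unit$ (absorbed into $H$) to make them traceless, the $\omega\neq0$ ones being traceless already. This gives $\tr(V_j)=0$, $\Inner{V_j}{V_k}=\delta_{jk}c_j$ with $c_j>0$, and $\Delta_{\sigma}(V_j)=e^{-\omega_j}V_j$; as they are independent traceless matrices, $\cardn\le n^2-1$. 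Since $\Delta_{\sigma}(V^*)=e^{\omega}V^*$ whenever $\Delta_{\sigma}(V)=e^{-\omega}V$ and the dissipative part is $*$-symmetric, $\{V_j\}$ can be taken closed under Hermitian conjugation, which is property (2), and pairing $V_j$ with $V_{j'}:=V_j^*$ forces $\omega_{j'}=-\omega_j$ and $c_{j'}=\Norm{V_j^*}^2=\Norm{V_j}^2=c_j$, which is property (4). Writing the dissipator as $\sum_{jk}a_{jk}\bigl(V_j^*AV_k-\tfrac12\Anticomm{V_j^*V_k}{A}\bigr)$ with $(a_{jk})\succeq0$, modular covariance forces $a_{jk}=0$ unless $\omega_j=\omega_k$; then imposing $\Inner{\lbop(A)}{B}_1=\Inner{A}{\lbop(B)}_1$ and using $\sigma V_j=e^{-\omega_j}V_j\sigma$, one finds after diagonalizing each frequency block that the ``$\omega$'' and ``$-\omega$'' channels must be weighted in the ratio $e^{\omega/2}:e^{-\omega/2}$, which a symmetric renormalization turns into the prefactor $e^{-\omega_j/2}$ on the $j$-th term; finally $\Comm{H}{\sigma}=0$ is forced, so $i\Comm{H}{\cdot}$ is \emph{skew}-adjoint for $\Inner{\cdot}{\cdot}_1$ and must vanish, leaving exactly \eqref{eqn::heisenberg_lb}.

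I expect the main obstacle to be the bookkeeping in the diagonalization step together with the $H=0$ claim: simultaneously arranging the $V_j$ to be modular eigenvectors, Hilbert--Schmidt orthonormal, and traceless requires carefully tracking which gauge moves survive inside each frequency sector and how $H$ transforms under the $\unit$-shifts, and one must verify that the leftover Hamiltonian part genuinely cancels rather than merely being skew-adjoint. Likewise, splitting the explicit $\Inner{\cdot}{\cdot}_1$-symmetry computation into the $\omega$ and $-\omega$ channels and checking that it pins the prefactor to precisely $e^{-\omega_j/2}$ (and not some other split of the modular factor) is the computational heart of the argument; the rest is linear algebra organized around the spectral decomposition of $\Delta_{\sigma}$.
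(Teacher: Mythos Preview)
The paper does not prove this theorem at all: it is stated with the citation ``\cite[Theorem 3.1]{carlen_gradient_2017}'' and used as a black box, with no proof or proof sketch anywhere in the paper. So there is nothing in this paper to compare your proposal against.

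Your outline is broadly the strategy of the original Carlen--Maas proof (GKLS form, modular covariance $\Comm{\lbop}{\Delta_\sigma}=0$, decomposition of the jump operators into modular eigenvectors, pairing under adjoints), and as a high-level plan it is sound. One point to be careful about is the elimination of the Hamiltonian part: your argument that $i\Comm{H}{\cdot}$ is skew-adjoint for $\Inner{\cdot}{\cdot}_1$ and hence must vanish only works once you have independently verified that the dissipative piece you constructed is \emph{itself} self-adjoint for $\Inner{\cdot}{\cdot}_1$; otherwise you are decomposing a self-adjoint operator into two pieces neither of which is known to be self-adjoint. In the original argument this is handled by first showing the dissipator alone (in the normalized form) satisfies detailed balance, which is exactly the ``computational heart'' you flagged. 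But for the purposes of this paper, none of that work is required---the result is simply imported.
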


Then it could be easily derived that (see also Eq. (3.7) in \cite{carlen_gradient_2017})
\begin{equation}
\label{eqn::sch_lb}
\begin{split}
\lbop^{\dagger} (A) &= \sum_{j=1}^{\cardn} \left(e^{-\omega_j/2} \Comm{V_j A}{V_j^*} + e^{\omega_j/2} \Comm{V_j^*}{A V_j} \right) \\
&= \sum_{j=1}^{\cardn} e^{-\omega_j/2} \left(\Comm{V_j A}{V_j^{*}} + \Comm{V_j}{A V_j^{*}}\right),
\end{split}
\end{equation}
which is well known as the \lb{} super-operator.

\begin{example}
Consider the depolarizing semigroup with the generator
$\lbop^{\dagger}_{\text{depol}}(A) = \gamma (\sigma\tr(A) - A)$, and $\gamma > 0$. It is not hard to verify that $\lbop_{\text{depol}}(A) = \gamma\left(\tr(\sigma A) \unit - A\right)$ and that $\lbop_{\text{depol}}$ is self-adjoint with respect to the inner product $\Inner{\cdot}{\cdot}_{1}$. Thus this depolarizing semigroup satisfies the GNS-detailed balance.
\end{example}

\revise{The following lemma is a special case of \cite[Lemma 2.5]{carlen_gradient_2017} and is enough for our purpose.}
\begin{lemma}
	\label{lem::commute_L_modular}
Suppose QMS $\qms_t$ satisfies the GNS-detailed balance, then
\begin{equation}
\Comm{\qms_t}{\Delta_{\sigma}} = 0,\quad \text{ and equivalently }\quad  \Comm{\lbop}{\Delta_{\sigma}} = 0.
\end{equation}
\end{lemma}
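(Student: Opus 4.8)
The plan is to read the commutation off from the detailed-balance identities written as relations between superoperators, and then pass between the semigroup $\qms_t$ and its generator $\lbop$ by differentiation and exponentiation.

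First, I would recast GNS- and KMS-detailed balance in superoperator form. Let $\mc{R}_\sigma$ denote right multiplication by $\sigma$, $\mc{R}_\sigma(A):=A\sigma$, which is invertible on $\mset$ because $\sigma$ is full-rank. By cyclicity of the trace, $\Inner{A}{B}_1=\tr(\sigma A^*B)=\tr(A^*(B\sigma))=\Inner{A}{\mc{R}_\sigma B}$ and likewise $\Inner{A}{B}_{1/2}=\Inner{A}{\Gamma_\sigma B}$. Hence self-adjointness of $\qms_t$ with respect to $\Inner{\cdot}{\cdot}_1$ (GNS-detailed balance) is equivalent to $\qms_t^\dagger\circ\mc{R}_\sigma=\mc{R}_\sigma\circ\qms_t$, and self-adjointness with respect to $\Inner{\cdot}{\cdot}_{1/2}$ (KMS-detailed balance) is equivalent to $\qms_t^\dagger\circ\Gamma_\sigma=\Gamma_\sigma\circ\qms_t$. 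Since GNS-detailed balance implies KMS-detailed balance (as recalled in \secref{sec::qms_gns_detailed_balance}), both identities hold for every $t\ge 0$.

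Next, I would eliminate $\qms_t^\dagger$ between the two identities, which gives $\mc{R}_\sigma\circ\qms_t\circ\mc{R}_\sigma^{-1}=\Gamma_\sigma\circ\qms_t\circ\Gamma_\sigma^{-1}$, equivalently $\Comm{\Gamma_\sigma^{-1}\circ\mc{R}_\sigma}{\qms_t}=0$. A direct computation, using that left and right multiplications commute, shows $\Gamma_\sigma^{-1}\circ\mc{R}_\sigma:A\mapsto\sigma^{-1/2}A\sigma^{1/2}$, which is precisely $\Delta_\sigma^{-1/2}$. Since $\Delta_\sigma=(\Delta_\sigma^{-1/2})^{-2}$, commuting with $\Delta_\sigma^{-1/2}$ implies commuting with $\Delta_\sigma$, so $\Comm{\qms_t}{\Delta_\sigma}=0$ for all $t\ge 0$. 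Differentiating this at $t=0$ yields $\Comm{\lbop}{\Delta_\sigma}=0$; conversely, $\Comm{\lbop}{\Delta_\sigma}=0$ implies $\Comm{\lbop^k}{\Delta_\sigma}=0$ for all $k$ and hence $\Comm{e^{t\lbop}}{\Delta_\sigma}=0$, which establishes the stated equivalence.

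As an alternative, one could argue directly from the structure theorem \thmref{thm::lindblad_detail_balance}: $\Delta_\sigma$ is a multiplicative automorphism of $\mset$ with $\Delta_\sigma(V_j)=e^{-\omega_j}V_j$ and (using $V_j^*=V_{j'}$, $\omega_{j'}=-\omega_j$) $\Delta_\sigma(V_j^*)=e^{\omega_j}V_j^*$; applying $\Delta_\sigma$ to each summand of $\lbop(A)=\sum_{j}e^{-\omega_j/2}\bigl(V_j^*\Comm{A}{V_j}+\Comm{V_j^*}{A}V_j\bigr)$ and using multiplicativity, the factors $e^{\pm\omega_j}$ cancel pairwise and one obtains $\Delta_\sigma(\lbop(A))=\lbop(\Delta_\sigma(A))$. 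Either way the computation is elementary; the one point that needs attention is that GNS-detailed balance by itself — through the single weighted inner product $\Inner{\cdot}{\cdot}_1$ — is not symmetric enough to force the commutation directly, so one genuinely has to bring in a second detailed-balance relation (here KMS, which is implied). I do not anticipate any real obstacle.
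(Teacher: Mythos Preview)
The paper does not give its own proof; it simply records the lemma as a special case of \cite[Lemma 2.5]{carlen_gradient_2017}. Your argument is correct as written, and the alternative via \thmref{thm::lindblad_detail_balance} is also fine.

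One point worth flagging: in the cited source the chain of implications runs the other way. Carlen--Maas first prove $[\qms_t,\Delta_\sigma]=0$ --- using that a CP unital map is $*$-preserving to pass from GNS self-adjointness ($s=1$) to self-adjointness for $s=0$, and then combining $s=0$ and $s=1$ to obtain commutation with $\Delta_\sigma=\mc{L}_\sigma\circ\mc{R}_\sigma^{-1}$ (where $\mc{L}_\sigma(A):=\sigma A$) --- and only afterwards deduce KMS-detailed balance and the structure theorem. So both of your routes take as input facts whose original proofs already use the conclusion you are after. Within this paper's framework, where those facts are imported as black boxes, your deductions are valid; but if you want a self-contained argument, simply replace KMS by $s=0$ in your first approach: from $\qms_t(A^*)=(\qms_t A)^*$ and $\Inner{A}{B}_0=\Inner{B^*}{A^*}_1$ one gets self-adjointness for $s=0$ directly from GNS, and then $\qms_t^\dagger\circ\mc{R}_\sigma=\mc{R}_\sigma\circ\qms_t$ together with $\qms_t^\dagger\circ\mc{L}_\sigma=\mc{L}_\sigma\circ\qms_t$ yields $[\Delta_\sigma,\qms_t]=0$ without any intermediary.
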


\subsection{Noncommutative analog of gradient and divergence}

Given those $V_j$ from \eqref{eqn::heisenberg_lb},
define the \emph{noncommutative partial derivative} $\partial_j$ by
\begin{equation}
\label{eqn::partial_grad}
\partial_j (A) := \Comm{V_j}{A},
\end{equation}
for any matrix $A\in \mset$;
 as a result, its adjoint operator $\partial_j^{\dagger}$ has the form
\begin{equation*}
\partial_j^{\dagger} (A) = \Comm{V_j^*}{A}.
\end{equation*}
Let $\msetJ$ be the vector fields over $\mset$, \ie, each element $\vect{A}\in \msetJ$ has the form $\vect{A} = \begin{pmatrix} A_1 & A_2 & \cdots & A_{\cardn}\end{pmatrix}$
where $A_j\in \mset$ for all $1\le j\le \cardn$.
The \emph{noncommutative gradient} $\nabla: \mset\rightarrow \msetJ$ is defined by
\begin{equation}
\nabla A := \begin{pmatrix}
\partial_1 A & \partial_2 A & \cdots & \partial_{\cardn} A
\end{pmatrix} \in \msetJ.
\end{equation}

The \emph{noncommutative divergence}, acting on $\msetJ$, is defined by
\begin{equation}
\label{eqn::divop}
\divop(\vect{A}) := -\sum_{j=1}^{\cardn}\partial_j^{\dagger} (A_j) \equiv \sum_{j=1}^{\cardn} \Comm{A_j}{V_j^{*}},
\end{equation}
for $\vect{A} = \begin{pmatrix}
A_1 & A_2 & \cdots A_{\cardn}
\end{pmatrix} \in \msetJ$. The divergence is defined as this so that
it is the adjoint operator of gradient, with a multiplicative factor $-1$, analogous to the classical case.

\begin{lemma}[{\cite[Eq. (5.3)]{carlen_gradient_2017}}]
	\label{lem::lb_quadratic}
	For a quantum Markov semigroup with GNS-detailed balance,
	\begin{equation}
	\label{eqn::lb_quadratic}
	\Inner{\nabla A}{\nabla A}_{1/2} = \Inner{A}{-\lbop (A)}_{1/2}.
	\end{equation}
	Thus $-\lbop$ is a positive semi-definite operator on the Hilbert space $(\mset, \Inner{\cdot}{\cdot}_{1/2})$.
\end{lemma}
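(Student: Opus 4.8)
The plan is to reduce the asserted identity to a purely algebraic identity between super-operators on $\mset$, verify that identity using the structure of $\{V_j,\omega_j\}$ provided by \thmref{thm::lindblad_detail_balance}, and then read off positive semi-definiteness.

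First I would strip off the weight $\Gamma_{\sigma}$ of \eqref{eqn::gamma_sigma} and pass to the Hilbert-Schmidt inner product. Since $\Inner{X}{Y}_{1/2}=\Inner{X}{\Gamma_{\sigma}(Y)}$ for all $X,Y\in\mset$ and the Hilbert-Schmidt adjoint of $\partial_j$ is $\partial_j^{\dagger}(\cdot)=\Comm{V_j^{*}}{\cdot}$, we have
\begin{align*}
\Inner{\nabla A}{\nabla A}_{1/2}=\sum_{j=1}^{\cardn}\Inner{\partial_jA}{\Gamma_{\sigma}(\partial_jA)}=\sum_{j=1}^{\cardn}\Inner{A}{\partial_j^{\dagger}\Gamma_{\sigma}\partial_j(A)},\qquad \Inner{A}{-\lbop(A)}_{1/2}=\Inner{A}{-\Gamma_{\sigma}\lbop(A)},
\end{align*}
so it suffices to establish the super-operator identity $\sum_{j=1}^{\cardn}\partial_j^{\dagger}\circ\Gamma_{\sigma}\circ\partial_j=-\,\Gamma_{\sigma}\circ\lbop$ on $\mset$. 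Since GNS-detailed balance implies KMS-detailed balance, \eqref{eqn::lbop_lbop_dag} gives $\Gamma_{\sigma}\circ\lbop=\lbop^{\dagger}\circ\Gamma_{\sigma}$, so, writing $B:=\Gamma_{\sigma}(A)$, this is equivalent to $\sum_j\partial_j^{\dagger}\Gamma_{\sigma}\partial_j(A)=-\lbop^{\dagger}(B)$ with $\lbop^{\dagger}$ as in \eqref{eqn::sch_lb}.

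The verification rests on the modular eigenrelation. Property~3 of \thmref{thm::lindblad_detail_balance} gives $\Delta_{\sigma}(V_j)=e^{-\omega_j}V_j$; since $\Delta_{\sigma}$ is the product of the two commuting positive operators ``left multiplication by $\sigma$'' and ``right multiplication by $\sigma^{-1}$'', its positive square root acts by $A\mapsto\sigma^{1/2}A\sigma^{-1/2}$, so $\sigma^{1/2}V_j\sigma^{-1/2}=e^{-\omega_j/2}V_j$ and $\sigma^{-1/2}V_j\sigma^{1/2}=e^{\omega_j/2}V_j$. Commuting the $\sigma^{1/2}$ factors past $V_j$ with these relations yields $\Gamma_{\sigma}(\partial_jA)=\sigma^{1/2}\Comm{V_j}{A}\sigma^{1/2}=e^{-\omega_j/2}V_jB-e^{\omega_j/2}BV_j$, hence
\begin{align*}
\partial_j^{\dagger}\Gamma_{\sigma}\partial_j(A)=-\,e^{-\omega_j/2}\Comm{V_jB}{V_j^{*}}-e^{\omega_j/2}\Comm{V_j^{*}}{BV_j}.
\end{align*}
Summing over $j$, the first term already coincides with the $-\sum_j e^{-\omega_j/2}\Comm{V_jB}{V_j^{*}}$ part of $-\lbop^{\dagger}(B)$; for the remaining term I would re-index the sum via the involution $j\mapsto j'$ of \thmref{thm::lindblad_detail_balance} (properties~2 and 4: $V_{j'}=V_j^{*}$, $\omega_{j'}=-\omega_j$), which turns $\sum_j e^{\omega_j/2}\Comm{V_j^{*}}{BV_j}$ into $\sum_j e^{-\omega_j/2}\Comm{V_j}{BV_j^{*}}$, matching the rest of $-\lbop^{\dagger}(B)$. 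This proves the identity, and therefore $\Inner{\nabla A}{\nabla A}_{1/2}=\Inner{A}{-\lbop(A)}_{1/2}$.

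For the final assertion, $\Inner{\cdot}{\cdot}_{1/2}$ is a genuine positive-definite inner product on $\mset$ (because $\sigma$ is full-rank), so $\Inner{A}{-\lbop(A)}_{1/2}=\sum_j\Inner{\partial_jA}{\partial_jA}_{1/2}\ge 0$ for every $A$; together with the self-adjointness of $\lbop$ with respect to $\Inner{\cdot}{\cdot}_{1/2}$ (equivalent to \eqref{eqn::lbop_lbop_dag}), this shows that $-\lbop$ is positive semi-definite on $(\mset,\Inner{\cdot}{\cdot}_{1/2})$, as claimed. The only real work is the index bookkeeping in the super-operator identity --- in particular the $\Delta_{\sigma}$-eigenrelations and the involution re-indexing of the ``cross'' terms --- but there is no conceptual obstacle.
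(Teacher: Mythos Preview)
Your proof is correct. The paper does not give its own proof of this lemma; it simply cites \cite[Eq.~(5.3)]{carlen_gradient_2017}. Your argument --- reducing to the super-operator identity $\sum_j \partial_j^{\dagger}\circ\Gamma_{\sigma}\circ\partial_j = -\Gamma_{\sigma}\circ\lbop$, verifying it via the modular eigenrelation $\sigma^{1/2}V_j\sigma^{-1/2}=e^{-\omega_j/2}V_j$ and the involution $j\mapsto j'$ --- is exactly the natural direct computation and matches the spirit of the Carlen--Maas derivation.
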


\begin{lemma}[{\cite[Theorem 5.3]{carlen_gradient_2017}}]
	\label{lem::null_sp_comm}
	Suppose the \lb{} equation satisfies GNS-detailed balance. Then the kernel of $\lbop$ equals to the commutant of $\{V_j\}_{j=1}^{\cardn}$. In other words, $\text{Ker}(\lbop) = \text{Ker}(\nabla)$.
\end{lemma}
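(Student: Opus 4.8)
The statement contains two assertions: the equality $\mathrm{Ker}(\lbop)=\mathrm{Ker}(\nabla)$, and the identification of $\mathrm{Ker}(\nabla)$ with the commutant of $\{V_j\}_{j=1}^{\cardn}$. The second is immediate from the definition $\partial_j(A)=\Comm{V_j}{A}$: $\nabla A=0$ is by definition the statement that $A$ commutes with every $V_j$. So the plan is to establish $\mathrm{Ker}(\nabla)=\mathrm{Ker}(\lbop)$ by proving the two inclusions.

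For $\mathrm{Ker}(\nabla)\subseteq\mathrm{Ker}(\lbop)$ I would argue directly from the explicit expression \eqref{eqn::heisenberg_lb}. If $\nabla A=0$ then $\Comm{V_j}{A}=0$ for all $j$; since by item (2) of \thmref{thm::lindblad_detail_balance} the set $\{V_j\}_{j=1}^{\cardn}$ is closed under taking adjoints, one also gets $\Comm{V_j^*}{A}=0$ for all $j$. Substituting $\Comm{A}{V_j}=-\Comm{V_j}{A}=0$ and $\Comm{V_j^*}{A}=0$ into $\lbop(A)=\sum_{j=1}^{\cardn} e^{-\omega_j/2}\bigl(V_j^*\Comm{A}{V_j}+\Comm{V_j^*}{A}V_j\bigr)$ gives $\lbop(A)=0$.

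For the reverse inclusion $\mathrm{Ker}(\lbop)\subseteq\mathrm{Ker}(\nabla)$ I would use the quadratic identity of \lemref{lem::lb_quadratic}. Because $\sigma$ is full-rank, $\Inner{\cdot}{\cdot}_{1/2}$ is a genuine (positive-definite) inner product on $\mset$ --- indeed $\Inner{B}{B}_{1/2}=\norm{\sigma^{1/4}B\sigma^{1/4}}_{\mathrm{HS}}^2$, which vanishes only for $B=0$ --- and hence so is its componentwise extension to $\msetJ$. Thus, if $\lbop(A)=0$, then $\Inner{\nabla A}{\nabla A}_{1/2}=\Inner{A}{-\lbop(A)}_{1/2}=0$ by \lemref{lem::lb_quadratic}, which forces $\nabla A=0$. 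Combining the two inclusions gives $\mathrm{Ker}(\lbop)=\mathrm{Ker}(\nabla)$, and with the first paragraph this is the commutant of $\{V_j\}_{j=1}^{\cardn}$.

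There is no real obstacle; the only points requiring care are the positive-definiteness of $\Inner{\cdot}{\cdot}_{1/2}$ --- which is precisely where the full-rank assumption on $\sigma$ enters --- and the observation that $\{V_j\}$ being self-adjoint as a set is what lets one pass from $\Comm{V_j}{A}=0$ to $\Comm{V_j^*}{A}=0$. One could instead note that GNS-detailed balance implies KMS-detailed balance, so $-\lbop$ is self-adjoint and nonnegative on $(\mset,\Inner{\cdot}{\cdot}_{1/2})$ with quadratic form $\norm{\nabla\,\cdot\,}_{1/2}^2$ by \lemref{lem::lb_quadratic}, whence $\mathrm{Ker}(\lbop)=\mathrm{Ker}(\nabla)$ by the usual argument for nonnegative self-adjoint operators; the direct reasoning above is slightly shorter.
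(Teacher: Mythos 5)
Your proof is correct: the inclusion $\mathrm{Ker}(\nabla)\subseteq\mathrm{Ker}(\lbop)$ from the explicit form \eqref{eqn::heisenberg_lb} together with adjoint-closedness of $\{V_j\}$, and the reverse inclusion from \lemref{lem::lb_quadratic} plus positive-definiteness of $\Inner{\cdot}{\cdot}_{1/2}$ (where full-rankness of $\sigma$ enters), is exactly the standard argument. The paper itself gives no proof but cites Carlen--Maas (Theorem 5.3 of \cite{carlen_gradient_2017}), whose proof proceeds in essentially the same way via the Dirichlet-form identity, so your route matches the intended one.
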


The lemma below gives a sufficient condition for \lb{} equations with GNS-detailed balance to be primitive, though the analysis in the rest of this manuscript does not rely on this result. However, the condition $\cardn = n^2-1$ is not necessary for \lb{} equations with GNS-detailed balance to be primitive. For example, the case $n = 2$, $\sigma = \frac{1}{2} \unit$, $V_1 = \sigma_X$ and $V_2 = \sigma_Z$ with $\cardn = 2$ still leads to a primitive Lindblad equation, though $\cardn < n^2-1$; $\sigma_{X}$ and $\sigma_{Z}$ here are Pauli matrices.
\begin{lemma}
	\label{lem::eg_primitive}
	If the \lb{} equation satisfies the GNS-detailed balance and $\cardn = n^2-1$, then the \lb{} equation is primitive.
\end{lemma}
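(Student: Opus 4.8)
The plan is to reduce primitivity to the triviality of the commutant of the jump operators $\{V_j\}_{j=1}^{\cardn}$, and then to settle that by a dimension count. Recall that the \lb{} equation is primitive exactly when $\text{Ker}(\lbop)$ is spanned by $\unit$. By \lemref{lem::null_sp_comm}, for a GNS-detailed balanced semigroup one has $\text{Ker}(\lbop) = \text{Ker}(\nabla)$; and since $\nabla A = \begin{pmatrix}\comm{V_1}{A} & \cdots & \comm{V_{\cardn}}{A}\end{pmatrix}$, the kernel of $\nabla$ is precisely the commutant $\{A\in\mset : \comm{V_j}{A} = 0 \text{ for all } 1\le j\le\cardn\}$. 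So it suffices to show that this commutant equals $\CC\unit$.

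Next I would invoke the structural constraints of \thmref{thm::lindblad_detail_balance}: each $V_j$ satisfies $\tr(V_j) = 0$, and $\Inner{V_j}{V_k} = \delta_{j,k} c_j$ with $c_j > 0$. Thus $\{V_j\}_{j=1}^{\cardn}$ is an orthogonal family of $\cardn$ nonzero matrices lying inside the subspace of traceless matrices $\{A\in\mset : \tr(A) = 0\}$, which has complex dimension $n^2 - 1$. When $\cardn = n^2 - 1$, orthogonality forces linear independence, so $\{V_j\}_{j=1}^{\cardn}$ is in fact a basis of the whole space of traceless matrices.

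Consequently, the commutant of $\{V_j\}_{j=1}^{\cardn}$ coincides with the commutant of all traceless matrices, which in turn coincides with the commutant of $\mset$ itself, because $\mset = \CC\unit \oplus \{A\in\mset : \tr(A) = 0\}$ and $\unit$ commutes with everything. Since the defining representation of $\mset$ on $\CC^n$ is irreducible, Schur's lemma gives that this commutant is $\CC\unit$. Hence $\text{Ker}(\lbop) = \CC\unit$, i.e., the \lb{} equation is primitive.

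I do not expect a genuine obstacle here; the step requiring the most care is the dimension bookkeeping --- verifying that $n^2 - 1$ mutually orthogonal nonzero traceless matrices span the $(n^2-1)$-dimensional space of traceless matrices --- together with the correct use of \lemref{lem::null_sp_comm} to pass from $\text{Ker}(\lbop)$ to the commutant. The remaining ingredient is the elementary fact that the only matrices commuting with all of $\mset$ are scalar multiples of $\unit$.
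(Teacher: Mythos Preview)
Your proof is correct and follows essentially the same approach as the paper: both reduce primitivity to triviality of the commutant via \lemref{lem::null_sp_comm}, use the orthogonality of the $V_j$ and the dimension count to conclude that $\{V_j\}$ spans the traceless matrices, and then show the commutant of all traceless matrices is $\CC\unit$. The only cosmetic difference is that the paper verifies this last fact by an explicit matrix-unit computation with $\Ket{j}\Bra{k}$, whereas you invoke Schur's lemma.
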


\begin{proof}
Note that $\{\unit, V_1, V_2, \cdots V_{\cardn}\}$ form a (non-normalized) basis of $\mset$. Hence for any matrix $A\in \mset$, one can decompose $A = \frac{\tr(A)}{n}\unit + \wt{A}$ where $\tr(\wt{A}) = 0$.
By the definition of primitive \lb{} equation and \lemref{lem::null_sp_comm}, it is equivalent to show that the commutant of $\{V_j\}_{j=1}^{\cardn}$ is spanned by $\unit$, that is, if $A$ is in the commutant of $\{V_j\}_{j=1}^{\cardn}$, then $\wt{A} = 0$.

If $A$ is in the commutant, then $\Comm{\wt{A}}{V_j} = 0$ for all $j$. Since $\{V_j\}_{j=1}^{\cardn}$ spans the space of traceless matrices, $0 = \Comm{\wt{A}}{\Ket{j}\Bra{k}}$ for all $j\neq k$, that is, $\wt{A}\Ket{j}\Bra{k} = \Ket{j}\Bra{k} \wt{A}$. Considering the matrix element $\Bra{a}\cdot \Ket{b}$, we have $\wt{A}_{a,j} \delta_{k,b} = \wt{A}_{k,b} \delta_{a,j}$. Case (1): take $a = b = k$, then we know that $\wt{A}_{k,j} = 0$, hence off-diagonal terms of $\wt{A}$ are all zero. Case (2): take $a = j$ and $b=k$, then we know that $\wt{A}_{j,j}=\wt{A}_{k,k}$; since $0 = \tr(\wt{A}) = \sum_{j} \wt{A}_{j,j}$, then $\wt{A}_{j,j} = 0$ for all $j$. In summary, $\wt{A} = 0$, thus completes the proof.
\end{proof}

\subsection{Chain rule identity}
\label{sec::chain_rule}
One of the key steps in connecting the \lb{} equation and the gradient flow dynamics of the quantum relative entropy is the following chain rule identity. In \secref{sec::proof} below, this chain rule identity is also the key in proving \lemref{lem::fd}.
\begin{lemma}[Chain rule identity {\cite[Lemma 5.5]{carlen_gradient_2017}}]
	\label{lem::chain_rule}
For any $V\in \mset$, $X\in \psetplus$ and $\omega\in \Real$,
\begin{equation}
\label{eqn::chain_rule}
\mop{X}{\omega} \left(V \log(e^{-\omega/2} X) - \log(e^{\omega/2} X) V \right) = e^{-\omega/2} V X - e^{\omega/2} X V,
\end{equation}
where the operator $\mop{X}{\omega}: \mset \rightarrow \mset$ is defined by
\begin{equation}
\label{eqn::kmb_op}
\mop{X}{\omega} (A) := \int_{0}^{1} e^{\omega(s-1/2)} X^s A X^{1-s}\ \ud s.
\end{equation}
\end{lemma}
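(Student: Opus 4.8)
The plan is to reduce the identity to an elementary computation in an eigenbasis of $X$. First I would use that $e^{\pm\omega/2}X\in\psetplus$ with $\log(e^{\pm\omega/2}X) = \log X \pm \tfrac{\omega}{2}\unit$, so that
\[
V \log(e^{-\omega/2} X) - \log(e^{\omega/2} X) V = V\log X - \log X\, V - \omega V ,
\]
and it then suffices to prove $\mop{X}{\omega}\bigl(V\log X - \log X\, V - \omega V\bigr) = e^{-\omega/2}VX - e^{\omega/2}XV$.

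Next I would diagonalize $X = \sum_i x_i \ket{i}\bra{i}$ with $x_i > 0$ and argue entrywise, using that both sides are linear in $V$ and that $\mop{X}{\omega}$ sends $\ket{i}\bra{j}$ to $m_{ij}\ket{i}\bra{j}$ with
\[
m_{ij} := \int_0^1 e^{\omega(s-1/2)} x_i^s x_j^{1-s}\ \ud s = \frac{e^{\omega/2}x_i - e^{-\omega/2}x_j}{\log x_i - \log x_j + \omega},
\]
the last equality coming from $\int_0^1 r^s\,\ud s = (r-1)/\log r$ applied with $r = e^{\omega}x_i/x_j$ (read as $1$ when $r=1$). Writing $V_{ij}=\Bra{i}V\Ket{j}$, the $(i,j)$ entry of the right-hand side is $\bigl(e^{-\omega/2}x_j - e^{\omega/2}x_i\bigr)V_{ij}$, while $\bigl(V\log X - \log X\,V - \omega V\bigr)_{ij} = \bigl(\log x_j - \log x_i - \omega\bigr)V_{ij}$, so the $(i,j)$ entry of the left-hand side is $m_{ij}\bigl(\log x_j - \log x_i - \omega\bigr)V_{ij} = -\bigl(e^{\omega/2}x_i - e^{-\omega/2}x_j\bigr)V_{ij}$, which is exactly the right-hand entry.

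The only point needing care is the locus where $\log x_i - \log x_j + \omega = 0$, i.e.\ $r=1$: there the closed form for $m_{ij}$ has a removable singularity, but directly $m_{ij} = e^{-\omega/2}x_j$ from the integral, and simultaneously $e^{\omega/2}x_i - e^{-\omega/2}x_j = 0$ and $\log x_j - \log x_i - \omega = 0$, so both sides of the entrywise identity vanish and agree. Equivalently, one may run the whole argument through the commuting positive superoperators $L_X,R_X$ of left and right multiplication by $X$, using $\mop{X}{\omega} = \int_0^1 e^{\omega(s-1/2)} L_X^{s} R_X^{1-s}\,\ud s$ and $V\log X - \log X\,V - \omega V = (\log R_X - \log L_X - \omega\id)(V)$ together with the joint spectral calculus; the multiplier computation is the same $m_{ij}$. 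I do not expect a genuine obstacle: the statement is a divided-difference identity, and the proof is essentially the bookkeeping above, the mildest subtlety being the removable-singularity case.
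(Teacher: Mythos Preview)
Your proof is correct. Note that the paper does not actually supply its own proof of this lemma; it is quoted verbatim from \cite[Lemma~5.5]{carlen_gradient_2017}. Your argument---reduce $\log(e^{\pm\omega/2}X)$ to $\log X\pm\tfrac{\omega}{2}\unit$, diagonalize $X$, compute the multiplier $m_{ij}=\int_0^1 e^{\omega(s-1/2)}x_i^s x_j^{1-s}\,\ud s$ on matrix units, and match entries, treating the removable singularity $e^{\omega}x_i=x_j$ separately---is the standard divided-difference verification and is essentially how the cited reference proceeds as well (there phrased, as in your closing remark, via the joint spectral calculus of the commuting left/right multiplication superoperators $L_X,R_X$).
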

This operator $\mop{X}{\omega}$ is a noncommutative multiplication of
the operator $X$;
\revise{for convenience, when $\omega = 0$, let $\mop{X}{} \equiv \mop{X}{0}$.}
\revise{The operator $\mop{X}{\omega}$} can be extended and defined for vector fields
$\msetJ$.  For
$\vect{A} = \begin{pmatrix} A_1 & A_2 & \cdots &
  A_{\cardn} \end{pmatrix}\in \msetJ$
and
$\vect{\omega} = \begin{pmatrix} \omega_1 & \omega_2 & \cdots &
  \omega_{\cardn}
\end{pmatrix} \in \RealJ$, define the operator $\mop{X}{\vect{\omega}}: \msetJ\rightarrow \msetJ$ by
\begin{equation}
\mop{X}{\vect{\omega}} (\vect{A}) :=
\begin{pmatrix}
\mop{X}{\omega_1}(A_1) &
\mop{X}{\omega_2}(A_2) &
\cdots &
\mop{X}{\omega_{\cardn}}(A_{\cardn})
\end{pmatrix}.
\end{equation}

As a reminder, these $\omega_j$ with $1\le j \le \cardn$ come from the spectrum of the modular operator $\Delta_{\sigma}$ (see \thmref{thm::lindblad_detail_balance}).

\begin{lemma}
	\label{lem::positive_X_op}
	If $X\in \psetplus$, then for any $\omega\in \Real$, $\mop{X}{\omega}$ is a strictly positive operator on $\mset$. As a consequence, $\mop{X}{\omega}^{-1}$ is a well-defined strictly positive operator.
\end{lemma}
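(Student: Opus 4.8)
The plan is to show directly that the operator $\mop{X}{\omega}$ is self-adjoint and strictly positive with respect to the Hilbert-Schmidt inner product on $\mset$. First, I would diagonalize: since $X\in\psetplus$, write $X = \sum_{k} x_k \ket{k}\bra{k}$ with all $x_k > 0$, where $\{\ket{k}\}$ is an orthonormal eigenbasis. Then for matrix units $E_{kl} := \ket{k}\bra{l}$, one computes
\begin{equation*}
\mop{X}{\omega}(E_{kl}) = \left(\int_0^1 e^{\omega(s-1/2)} x_k^s x_l^{1-s}\ \ud s\right) E_{kl} =: m_{kl}\, E_{kl},
\end{equation*}
so the matrix units are eigenvectors of $\mop{X}{\omega}$ and it suffices to check that every eigenvalue $m_{kl}$ is strictly positive. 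Since the $\{E_{kl}\}$ form an orthogonal basis of $\mset$ and $\mop{X}{\omega}$ acts as a real scalar on each, $\mop{X}{\omega}$ is automatically self-adjoint with respect to $\Inner{\cdot}{\cdot}$.

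The positivity of $m_{kl}$ is then immediate: the integrand $e^{\omega(s-1/2)} x_k^s x_l^{1-s}$ is a strictly positive continuous function of $s$ on $[0,1]$ (each factor is positive because $x_k, x_l > 0$ and exponentials are positive), so its integral over $[0,1]$ is strictly positive. Hence every eigenvalue of $\mop{X}{\omega}$ is strictly positive, which means $\mop{X}{\omega}$ is a strictly positive operator on the Hilbert space $(\mset, \Inner{\cdot}{\cdot})$. In particular it is invertible, and its inverse acts as multiplication by $m_{kl}^{-1} > 0$ on $E_{kl}$, so $\mop{X}{\omega}^{-1}$ is again a well-defined strictly positive operator; this gives the final claim.

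There is essentially no obstacle here --- the only mild point to be careful about is confirming that $\mop{X}{\omega}$ really is diagonalized by matrix units in the eigenbasis of $X$ (this uses only that $X^s E_{kl} X^{1-s} = x_k^s x_l^{1-s} E_{kl}$, which follows from functional calculus since $\ket{k},\ket{l}$ are eigenvectors of $X$) and that the resulting eigenvalues are real, so that strict positivity of the spectrum genuinely yields a strictly positive self-adjoint operator. An alternative, basis-free phrasing would note that $\mop{X}{\omega} = \int_0^1 e^{\omega(s-1/2)}\, L_{X^s} R_{X^{1-s}}\ \ud s$ where $L_X, R_X$ denote left and right multiplication by $X$; these commute, $L_{X^s} R_{X^{1-s}}$ is strictly positive for each $s$ since $X^s, X^{1-s}\succ 0$, and a positive integral (with positive weight) of strictly positive operators sharing a common eigenbasis remains strictly positive. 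Either route is short; I would present the matrix-unit computation as it makes the eigenvalues explicit, which is also convenient for later estimates such as~\eqref{eqn::fisher_bound_2}.
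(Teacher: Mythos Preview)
Your proof is correct but proceeds differently from the paper's. The paper gives a short basis-free argument: it simply computes
\[
\Inner{A}{\mop{X}{\omega} A} = \int_{0}^{1} e^{\omega(s-1/2)}\,\Inner{X^{s/2}A X^{(1-s)/2}}{X^{s/2}A X^{(1-s)/2}}\ \ud s \ge 0,
\]
and observes that equality forces $X^{s/2}A X^{(1-s)/2}=0$ for all $s$, hence $A=0$. You instead diagonalize $X$ and show that $\mop{X}{\omega}$ acts diagonally on the matrix units $E_{kl}$ with explicitly positive eigenvalues $m_{kl}$. Both arguments are valid and of comparable length; the paper's version is slightly more streamlined and avoids choosing a basis, while yours has the merit of producing the eigenvalues explicitly, which (as you note) can be handy for later spectral bounds. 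Your ``alternative, basis-free phrasing'' at the end is essentially the paper's idea, though the paper writes out the quadratic form directly rather than invoking that $L_{X^s}R_{X^{1-s}}$ is positive.
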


\begin{proof} Notice that
	\begin{align*}
	\Inner{A}{\mop{X}{\omega} A} &= \int_{0}^{1} e^{\omega(s-1/2)} \tr(A^* X^s A X^{1-s})\ \ud s \\
	&= \int_{0}^{1} e^{\omega(s-1/2)}\Inner{X^{s/2}A X^{(1-s)/2}}{X^{s/2}A X^{(1-s)/2}}\ \ud s \ge 0.
	\end{align*}
	Moreover, $\Inner{A}{\mop{X}{\omega} A} = 0$ iff $A = 0$. Thus $\mop{X}{\omega}$ is a strictly positive operator on $\mset$ for any $\omega\in \Real$. This completes the proof.
\end{proof}

More explicitly, the inverse of $\mop{X}{\omega}$ is (see \cite[Lemma 5.8]{carlen_gradient_2017})
\begin{align}
\label{eqn::inverse_mop}
\begin{split}
\mop{X}{\omega}^{-1} (A) &= \int_{0}^{\infty} \frac{1}{t + e^{\omega/2} X} A \frac{1}{t + e^{-\omega/2} X}\ \ud t\\
&\equiv \revise{\int_{0}^{1} \frac{1}{1-s + s e^{\omega/2} X} A \frac{1}{1-s + s e^{-\omega/2} X}\ \ud s,}
\end{split}
\end{align}
\revise{where we change the variable $s = \frac{1}{t+1}$ from the first to the second line above.}

By expressions \eqref{eqn::kmb_op} and \eqref{eqn::inverse_mop}, it is straightforward to verify the following Lemma.
\begin{lemma}[{\cite[Lemma 5.8]{carlen_gradient_2017}}]
\label{lem::adjoint_mop}
For any $\omega\in \Real$ and $A\in \mset$,
\begin{equation}
\left(\mop{X}{\omega} (A)\right)^{*} = \mop{X}{-\omega} (A^*),\qquad (\mop{X}{\omega}^{-1} (A))^{*} = \mop{X}{-\omega}^{-1} (A^*).
\end{equation}
\end{lemma}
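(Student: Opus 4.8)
The plan is to reduce both claims to the integral representations \eqref{eqn::kmb_op} and \eqref{eqn::inverse_mop}, together with the elementary observation that $X\in\psetplus$ is Hermitian, so that every power $X^s$ and every resolvent $(t+e^{\pm\omega/2}X)^{-1}$ is again Hermitian. In finite dimensions, Hermitian conjugation commutes with the norm-convergent matrix-valued integrals defining $\mop{X}{\omega}$ and $\mop{X}{\omega}^{-1}$, so it may be applied factor by factor inside the integrands; this is the only ``analytic'' point and it is immediate here.

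For the first identity, start from $\mop{X}{\omega}(A) = \int_{0}^{1} e^{\omega(s-1/2)}\, X^s A X^{1-s}\,\ud s$ and take the Hermitian conjugate. Since $X^s$ and $X^{1-s}$ are Hermitian, $(X^s A X^{1-s})^{*} = X^{1-s} A^{*} X^{s}$, hence $\left(\mop{X}{\omega}(A)\right)^{*} = \int_{0}^{1} e^{\omega(s-1/2)}\, X^{1-s} A^{*} X^{s}\,\ud s$. Substituting $u = 1-s$ converts the exponential $e^{\omega(s-1/2)}$ into $e^{-\omega(u-1/2)}$ and the integrand into $X^{u} A^{*} X^{1-u}$, so by definition \eqref{eqn::kmb_op} the right-hand side equals $\mop{X}{-\omega}(A^{*})$.

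For the second identity there are two equally short routes. One can repeat the computation directly on \eqref{eqn::inverse_mop}: conjugating $\int_{0}^{\infty} (t+e^{\omega/2} X)^{-1} A (t+e^{-\omega/2} X)^{-1}\,\ud t$ and using that the two resolvents are Hermitian interchanges $e^{\omega/2}$ and $e^{-\omega/2}$, yielding $\mop{X}{-\omega}^{-1}(A^{*})$; here $\mop{X}{\omega}^{-1}$ is a well-defined operator by \lemref{lem::positive_X_op}. Alternatively, the inverse statement follows formally from the first: setting $B = \mop{X}{\omega}^{-1}(A)$ so that $A = \mop{X}{\omega}(B)$, the first identity gives $A^{*} = \left(\mop{X}{\omega}(B)\right)^{*} = \mop{X}{-\omega}(B^{*})$, and applying $\mop{X}{-\omega}^{-1}$ gives $B^{*} = \mop{X}{-\omega}^{-1}(A^{*})$, which is precisely the claim.

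There is no serious obstacle in this lemma; the only items needing care are that $\mop{X}{\omega}^{-1}$ is genuinely defined (\lemref{lem::positive_X_op}) and that the change of variables $s\mapsto 1-s$ is applied to both the exponential weight and the matrix product simultaneously. Consequently I would present the argument in the compact two-line form above.
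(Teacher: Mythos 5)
Your proof is correct and follows exactly the route the paper intends: the paper simply remarks that the lemma is "straightforward to verify" from the integral representations \eqref{eqn::kmb_op} and \eqref{eqn::inverse_mop}, which is precisely the factor-by-factor conjugation and change of variables $s\mapsto 1-s$ (resp. swap of $e^{\pm\omega/2}$ in the resolvents) that you carry out. Both of your routes for the inverse identity are fine, and the formal one via $B=\mop{X}{\omega}^{-1}(A)$ correctly invokes \lemref{lem::positive_X_op} for invertibility.
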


\subsection{Noncommutative $\lpsp$ spaces}
\label{sec::noncommutative_Lp}
In this subsection, we briefly recall some concepts from noncommutative $\lpsp$ spaces and log-Sobolev constants in that setting \cite{olkiewicz_hypercontractivity_1999,kastoryano_quantum_2013,beigi_quantum_2018}, We also recall an implication of the quantum Stroock-Varopoulos inequality \cite{beigi_quantum_2018}, which compares the magnitudes of various log-Sobolev constants.

Let us first introduce the \emph{weighted $\lpsp$ norm}, defined as
\begin{align}
\label{eqn::Lp_sigma}
\Norm{A}_{\alpha,\sigma} := \left(\tr\left( \Abs{\Gamma_{\sigma}^{1/\alpha}(A)}^{\alpha}\right) \right)^{1/\alpha}.
\end{align}
Then we need to introduce the \emph{power operator}, for any $\alpha, \beta\in \Real$, $A\in \mset$,
\begin{equation*}
\powop{\beta}{\alpha}{A} := \Gamma_{\sigma}^{-\frac{1}{\beta}} \left(\Abs{\Gamma_{\sigma}^{\frac{1}{\alpha}}(A)}^{\frac{\alpha}{\beta}}\right).
\end{equation*}
When $A$ is positive and commutes with $\sigma$, $\powop{\beta}{\alpha}{A} = A^{\alpha/\beta}$.

Two most important ingredients in noncommutative $\lpsp$ spaces are entropy function and Dirichlet form.
The $\alpha$-\emph{Entropy function} for $X\in \psetplus$ is defined as
\begin{align}
\begin{split}
\entfun{\alpha}{X} :=  &\tr\left[\left(\Gamma_{\sigma}^{1/\alpha}(X)\right)^\alpha \log\left(\left(\Gamma_{\sigma}^{1/\alpha}(X)\right)^\alpha\right) \right] \\
& - \tr\left[\left(\Gamma_{\sigma}^{1/\alpha}(X)\right)^\alpha \log (\sigma)\right]
 - \Norm{X}_{\alpha,\sigma}^\alpha \log\left(\Norm{X}_{\alpha,\sigma}^\alpha\right),
 \end{split}
\end{align}
and the $\alpha$-\emph{Dirichlet form}, for $\alpha\in (0,1)\cup(1,\infty)$, is defined as
\begin{align}
\dirichlet{\alpha}{X} := \frac{\alpha \wt{\alpha}}{4} \Inner{\powop{\wt{\alpha}}{\alpha}{X}}{-\lbop(X)}_{1/2},
\end{align}
where $1/\wt{\alpha} + 1/\alpha = 1$.
When $\alpha = 1$, the Dirichlet form is defined by taking the limit $\alpha \rightarrow 1$, \ie,
\begin{align*}
\dirichlet{1}{X} := \lim_{\alpha\rightarrow 1}\dirichlet{\alpha}{X} = \frac{1}{4}\Inner{\log(\Gamma_{\sigma}(X)) - \log(\sigma)}{-\lbop(X)}_{1/2}.
\end{align*}
The $\alpha$-log Sobolev constant in this setting is defined as
\begin{equation}
\label{eqn::lsic_Lp}
\kappa_\alpha \equiv \kappa_\alpha(\lbop) := \inf_{X > 0} \frac{\dirichlet{\alpha}{X}}{\entfun{\alpha}{X}},
\end{equation}
and thus the quantum $\alpha$-LSI refers to
\begin{equation}
\label{eqn::lsi_Lp}
\kappa_\alpha \entfun{\alpha}{X} \le \dirichlet{\alpha}{X},\qquad \forall X > 0.
\end{equation}

In order to compare \eqref{eqn::lsi_alpha} and \eqref{eqn::lsi_Lp},
for any matrix $X > 0$, let us introduce $\rho :=\Gamma_{\sigma}(X)/\tr\left(\Gamma_{\sigma}(X)\right)$, which is a strictly positive density matrix.
By inverting the operator $\Gamma_{\sigma}$, $X = \tr\left(\Gamma_{\sigma}(X)\right) \Gamma_{\sigma}^{-1}(\rho)$. By the fact that both $\dirichlet{\alpha}{X}$ and $\entfun{\alpha}{X}$ are homogeneous with respect to $X$ (see, \eg, Proposition 4 (ii) and Proposition 8 (ii) in \cite{beigi_quantum_2018}), we have
\begin{align*}
\kappa_\alpha &= \inf_{\rho \in \dsetplus} \frac{\dirichlet{\alpha}{\Gamma_{\sigma}^{-1}(\rho)}}{\entfun{\alpha}{\Gamma_{\sigma}^{-1}(\rho)}} \\
&= \inf_{\rho \in \dsetplus} \frac{\frac{\alpha^2}{4(\alpha-1)}\Inner{
\Gamma_{\sigma}^{\frac{1-\alpha}{\alpha}}\left(\left(\Gamma_{\sigma}^{\frac{1-\alpha}{\alpha}}(\rho)\right)^{\alpha-1}\right)
}{-\lbop(\Gamma_{\sigma}^{-1}(\rho))}_{1/2}}{Z \relaentropy{\left(\Gamma_{\sigma}^{(1-\alpha)/\alpha}(\rho)\right)^{\alpha}/Z}{\sigma}},\\
\end{align*}
where $Z = \tr\left[\left(\Gamma_{\sigma}^{(1-\alpha)/\alpha}(\rho)\right)^{\alpha}\right]$, which turns out to be the exponent in the sandwiched \renyid{} \eqref{eqn::swch}.
Provided that \lb{} equations satisfy KMS-detailed balance, by \eqref{eqn::lbop_lbop_dag} and \lemref{lem::fd} (see below),
\begin{align}
\label{eqn::kappa_alpha_v2}
\begin{split}
\kappa_\alpha &= \inf_{\rho \in \dsetplus} \frac{\frac{\alpha^2}{4(\alpha-1)}\frac{1}{Z}\Inner{
		\Gamma_{\sigma}^{\frac{1-\alpha}{\alpha}}\left(\left(\Gamma_{\sigma}^{\frac{1-\alpha}{\alpha}}(\rho)\right)^{\alpha-1}\right)
	}{-\lbop^{\dagger}(\rho)}}{ \relaentropy{\left(\Gamma_{\sigma}^{(1-\alpha)/\alpha}(\rho)\right)^{\alpha}/Z}{\sigma}}\\
&= \inf_{\rho \in \dsetplus} \frac{\frac{\alpha}{4}\Inner{\fdrenyidivgalpha{\rho}{\sigma}{\alpha}
	}{-\lbop^{\dagger}(\rho)}}{ \relaentropy{\left(\Gamma_{\sigma}^{(1-\alpha)/\alpha}(\rho)\right)^{\alpha}/Z}{\sigma}} = \inf_{\rho \in \dsetplus} \frac{\frac{\alpha}{4}\relafisheralpha{\rho}{\sigma}{\alpha}}{ \relaentropy{\left(\Gamma_{\sigma}^{(1-\alpha)/\alpha}(\rho)\right)^{\alpha}/Z}{\sigma}}.\\
\end{split}
\end{align}
As one might observe, the $\alpha$-Dirichlet form becomes the quantum relative $\alpha$-Fisher information up to a multiplicative constant, after the above transformation. However, the denominator in the above equation is different from the sandwiched \renyi{} divergence, which makes two definitions of the quantum $\alpha$-LSI in \eqref{eqn::lsi_alpha} and \eqref{eqn::lsi_Lp} different.
When $\alpha = 1$, this log-Sobolev constant $\kappa_1$ becomes
\begin{align}
\label{eqn::kappa_1_K}
\kappa_1 = \frac{1}{4} \inf_{\rho \in \dsetplus} \frac{\relafisheralpha{\rho}{\sigma}{}}{\relaentropy{\rho}{\sigma}} =  \frac{1}{4} 2K = \frac{K}{2}.
\end{align}
Therefore, both definitions \eqref{eqn::lsi_alpha} and \eqref{eqn::lsi_Lp} coincide when $\alpha = 1$, \revise{up to a multiplicative constant $1/2$}.

The following implication of the quantum \svi{} (see \cite[Theorem 14]{beigi_quantum_2018}) characterizes the relationship between various $\kappa_\alpha$.
\begin{theorem}[{\cite[Corollary 16]{beigi_quantum_2018}}]
	\label{thm::qsvi}
	Consider primitive \lb{} equations with GNS-detailed balance. Then $\kappa_{\alpha}$ is non-increasing with respect to $\alpha\in [0,2]$. In particular,
	\begin{equation}
	\label{eqn::qsvi_1_2}
	\kappa_1 \ge \kappa_2.
	\end{equation}
\end{theorem}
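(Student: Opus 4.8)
The plan is to rewrite the infimum \eqref{eqn::lsic_Lp} so that its \emph{denominator} no longer depends on the order $\alpha$; the statement then reduces to a pointwise monotonicity of the $\alpha$-Dirichlet form in $\alpha$, which is precisely what the quantum \svi{} delivers. Since $\dirichlet{\alpha}{X}$ and $\entfun{\alpha}{X}$ are both homogeneous of degree $\alpha$ in $X$ (\cite[Propositions~4 and 8]{beigi_quantum_2018}), the ratio in \eqref{eqn::lsic_Lp} is scale-invariant, so I would restrict the infimum to $X>0$ with $\Norm{X}_{\alpha,\sigma}=1$. For such $X$ put $\tau:=\bigl(\Gamma_{\sigma}^{1/\alpha}(X)\bigr)^{\alpha}$, a strictly positive density matrix, so that $X=\Gamma_{\sigma}^{-1/\alpha}\bigl(\tau^{1/\alpha}\bigr)$ and the assignment $X\leftrightarrow\tau$ is a bijection onto $\dsetplus$. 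Because $\Norm{X}_{\alpha,\sigma}^{\alpha}=\tr(\tau)=1$, the $\alpha$-entropy function reduces to $\entfun{\alpha}{X}=\relaentropy{\tau}{\sigma}$, and since $\powop{\wt{\alpha}}{\alpha}{X}=\Gamma_{\sigma}^{-1/\wt{\alpha}}\bigl(\tau^{1/\wt{\alpha}}\bigr)$, the Dirichlet form depends on $X$ only through $\tau$. Hence $\kappa_{\alpha}=\inf_{\tau\in\dsetplus}\dirichlet{\alpha}{\Gamma_{\sigma}^{-1/\alpha}(\tau^{1/\alpha})}/\relaentropy{\tau}{\sigma}$, with the \emph{same} denominator for every order $\alpha$.

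With this reformulation in hand, to prove that $\alpha\mapsto\kappa_{\alpha}$ is non-increasing on $[0,2]$ it suffices to prove that, for each fixed state $\tau$, the map $\alpha\mapsto\dirichlet{\alpha}{\Gamma_{\sigma}^{-1/\alpha}(\tau^{1/\alpha})}$ is non-increasing on $[0,2]$; specializing to $\alpha=1$ versus $\alpha=2$ then yields $\kappa_{1}\ge\kappa_{2}$. This last monotonicity is essentially the content of the quantum Stroock--Varopoulos inequality \cite[Theorem~14]{beigi_quantum_2018}: after unfolding the definitions it compares the quantities $\tfrac{\alpha\wt{\alpha}}{4}\Inner{\Gamma_{\sigma}^{-1/\wt{\alpha}}(\tau^{1/\wt{\alpha}})}{-\lbop\bigl(\Gamma_{\sigma}^{-1/\alpha}(\tau^{1/\alpha})\bigr)}_{1/2}$ at different indices and shows that they decrease as $\alpha$ increases through $[0,2]$. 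The endpoint $\alpha=1$ is covered by the limiting definition $\dirichlet{1}{X}=\lim_{\alpha\to1}\dirichlet{\alpha}{X}$, and the endpoints $\alpha\in\{0,2\}$ by continuity of the relevant quantities in $\alpha$.

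The main obstacle is the quantum \svi{} itself, which I would not reprove: it rests on an operator-convexity / two-variable complex-interpolation argument for the Dirichlet form of $-\lbop$ under GNS-detailed balance, and here one is entitled to invoke \cite[Theorem~14]{beigi_quantum_2018} directly. The remaining work in the present reduction is routine bookkeeping — using homogeneity to normalize $\Norm{X}_{\alpha,\sigma}=1$, verifying the identifications $\entfun{\alpha}{X}=\relaentropy{\tau}{\sigma}$ and $\powop{\wt{\alpha}}{\alpha}{X}=\Gamma_{\sigma}^{-1/\wt{\alpha}}(\tau^{1/\wt{\alpha}})$ so that numerator and denominator are functions of $\tau$ alone, and checking that $X\leftrightarrow\tau$ is a bijection so that the reparametrized infimum is still taken over all strictly positive states.
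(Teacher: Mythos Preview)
The paper does not give its own proof of this statement: it is quoted verbatim as \cite[Corollary~16]{beigi_quantum_2018} and only remarked to be an implication of the quantum Stroock--Varopoulos inequality \cite[Theorem~14]{beigi_quantum_2018}. Your proposal correctly reconstructs exactly how that corollary is obtained in the cited reference --- normalize so that $\tau:=(\Gamma_{\sigma}^{1/\alpha}(X))^{\alpha}\in\dsetplus$, observe that the entropy denominator becomes the $\alpha$-independent quantity $\relaentropy{\tau}{\sigma}$, and then invoke the quantum \svi{} for the pointwise monotonicity of the Dirichlet numerator --- so there is no discrepancy to discuss.
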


\section{\lb{} equations as the gradient flow dynamics of the sandwiched \renyi{} divergence}
\label{sec::proof}

This section devotes to the proof of Theorem 2. We will focus on the case that $\alpha \neq 1$
since the case $ \alpha = 1$ has been treated previously in
\cite{carlen_gradient_2017}.
Alternatively, one may easily adapt the proof below to the case
$\alpha=1$. The proof follows from a
sequence of lemmas whose proofs are postponed to the end of this section.

\subsection{Proof of Theorem \ref{thm::renyi_gradient}}

To simplify notations,
for any density matrix $\rho\in \dset$, define
\begin{equation}
\label{eqn::rho_sigma}
	\rho_{\sigma} := \Gamma_{\sigma}^{\frac{1-\alpha}{\alpha}} (\rho) \equiv \sigma^{\frac{1-\alpha}{2\alpha}} \rho \sigma^{\frac{1-\alpha}{2\alpha}}.
\end{equation}

Then the \swch{} for $\alpha \neq 1$ can then be rewritten as
\begin{equation*}
\renyidivg{\rho}{\sigma} = \frac{1}{\alpha-1}\log\left( \tr(\rho_{\sigma}^{\alpha})\right).
\end{equation*}

The following lemma, connecting the functional derivative of \swch{} and \lb{} equation,
plays an essential role in the proof of  \thmref{thm::renyi_gradient}.
\begin{lemma}[Functional derivative]
	\label{lem::fd}
	The functional derivative of the \swch{} in the space $\dset$ is
	\begin{equation}
	\label{eqn::fdrenyi}
	\fdrenyidivg{\rho}{\sigma} = \frac{\alpha}{\alpha-1} \frac{\Gamma_{\sigma}^{\frac{1-\alpha}{\alpha}}(\rho_{\sigma}^{\alpha-1})  }{\tr\left(\rho_{\sigma}^{\alpha}\right)},
	\end{equation}
	and moreover for $\rho\in \dsetplus$,
	\begin{equation}
	\label{eqn::pdfdrenyi}
		\moprenyi{\rho}{\omega_j} \partial_j \left(\fdrenyidivg{\rho}{\sigma}\right)
		=   e^{-\omega_j/2} V_j \rho - e^{\omega_j/2} \rho V_j,
	\end{equation}
	where the operator $\moprenyi{X}{\omega}:\mset\rightarrow \mset$, for positive $X\in \psetplus$ and $\omega\in \Real$, is defined by
		\begin{equation}
		\label{eqn::moprenyi}
		\moprenyi{X}{\omega} :=
		\frac{ \tr\left(\bigl(\Gamma_{\sigma}^{\frac{1-\alpha}{\alpha}} (X)\bigr)^{\alpha} \right)}{\alpha}
		\Gamma_{\sigma}^{\frac{\alpha-1}{\alpha}}
		\circ \mop{\Gamma_{\sigma}^{\frac{1-\alpha}{\alpha}} (X)}{\omega/\alpha}
		\circ \mop{\bigl(\Gamma_{\sigma}^{\frac{1-\alpha}{\alpha}} (X)\bigr)^{\alpha-1}}{(\alpha-1)\omega/{\alpha}}^{-1}
		\circ \Gamma_{\sigma}^{\frac{\alpha-1}{\alpha}}.
		\end{equation}
\end{lemma}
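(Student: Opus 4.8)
The plan is to prove the two identities \eqref{eqn::fdrenyi} and \eqref{eqn::pdfdrenyi} in sequence: the first by directly differentiating the functional, the second by substituting \eqref{eqn::fdrenyi} and then invoking the chain rule identity \lemref{lem::chain_rule} twice, aided by the fact that each $V_j$ is an eigenvector of the modular flow.

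First, for \eqref{eqn::fdrenyi}: since $\rho\mapsto\rho_\sigma=\Gamma_\sigma^{(1-\alpha)/\alpha}(\rho)$ is linear, a Hermitian perturbation $\nu$ produces $(\rho+\eps\nu)_\sigma=\rho_\sigma+\eps\,\nu_\sigma$ with $\nu_\sigma=\Gamma_\sigma^{(1-\alpha)/\alpha}(\nu)$. Differentiating $\tr\big((\rho_\sigma+\eps\nu_\sigma)^\alpha\big)$ at $\eps=0$ gives $\alpha\,\tr(\rho_\sigma^{\alpha-1}\nu_\sigma)$ (the standard derivative-of-trace formula $\tfrac{\rd}{\rd t}\tr\big(f(A+tB)\big)=\tr\big(f'(A)B\big)$, valid by cyclicity of the trace), whence $\tfrac{\rd}{\rd\eps}\renyidivg{\rho+\eps\nu}{\sigma}\big|_{\eps=0}=\tfrac{\alpha}{\alpha-1}\,\tr(\rho_\sigma^{\alpha-1}\nu_\sigma)/\tr(\rho_\sigma^\alpha)$. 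Pushing the two factors $\sigma^{(1-\alpha)/(2\alpha)}$ from $\nu_\sigma$ back onto $\rho_\sigma^{\alpha-1}$ by cyclicity rewrites the numerator as $\Inner{\Gamma_\sigma^{(1-\alpha)/\alpha}(\rho_\sigma^{\alpha-1})}{\nu}$, which is \eqref{eqn::fdrenyi}. On $\dset$ the functional derivative is only determined up to a real multiple of $\unit$, but this is harmless for \eqref{eqn::pdfdrenyi} since $\partial_j\unit=\Comm{V_j}{\unit}=0$; note that $\rho\in\dsetplus$ guarantees $\rho_\sigma$ and $\rho_\sigma^{\alpha-1}$ are strictly positive, so all operator inverses below are well defined by \lemref{lem::positive_X_op}.

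Next, for \eqref{eqn::pdfdrenyi}: substituting \eqref{eqn::fdrenyi} yields $\partial_j\big(\fdrenyidivg{\rho}{\sigma}\big)=\tfrac{\alpha}{\alpha-1}\,\tr(\rho_\sigma^\alpha)^{-1}\,\Comm{V_j}{\Gamma_\sigma^{(1-\alpha)/\alpha}(\rho_\sigma^{\alpha-1})}$, so the scalar prefactor $\tr(\rho_\sigma^\alpha)/\alpha$ in \eqref{eqn::moprenyi} cancels, leaving the constant $\tfrac{1}{\alpha-1}$. I would then apply the four factors of $\moprenyi{\rho}{\omega_j}$ to this commutator in the order they act (rightmost first). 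The first $\Gamma_\sigma^{(\alpha-1)/\alpha}$ partly cancels the inner $\Gamma_\sigma^{(1-\alpha)/\alpha}$, and the modular eigenrelation $\sigma^t V_j\sigma^{-t}=e^{-\omega_j t}V_j$ (the analytic continuation of $\Delta_\sigma(V_j)=e^{-\omega_j}V_j$ from \thmref{thm::lindblad_detail_balance}) turns the leftover powers of $\sigma$ into scalars, producing $e^{(1-\alpha)\omega_j/(2\alpha)}V_j\rho_\sigma^{\alpha-1}-e^{-(1-\alpha)\omega_j/(2\alpha)}\rho_\sigma^{\alpha-1}V_j$ --- precisely the right-hand side of \lemref{lem::chain_rule} with $X=\rho_\sigma^{\alpha-1}$, $V=V_j$ and $\omega=(\alpha-1)\omega_j/\alpha$. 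Hence $\mop{\rho_\sigma^{\alpha-1}}{(\alpha-1)\omega_j/\alpha}^{-1}$ sends it to $V_j\log\big(e^{(1-\alpha)\omega_j/(2\alpha)}\rho_\sigma^{\alpha-1}\big)-\log\big(e^{-(1-\alpha)\omega_j/(2\alpha)}\rho_\sigma^{\alpha-1}\big)V_j$; using $\log(\rho_\sigma^{\alpha-1})=(\alpha-1)\log\rho_\sigma$ and pulling $(\alpha-1)$ out of each logarithm rewrites this as $(\alpha-1)\big(V_j\log(e^{-\omega_j/(2\alpha)}\rho_\sigma)-\log(e^{\omega_j/(2\alpha)}\rho_\sigma)V_j\big)$. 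A second application of \lemref{lem::chain_rule}, now with $X=\rho_\sigma$ and $\omega=\omega_j/\alpha$, then shows $\mop{\rho_\sigma}{\omega_j/\alpha}$ maps this to $(\alpha-1)\big(e^{-\omega_j/(2\alpha)}V_j\rho_\sigma-e^{\omega_j/(2\alpha)}\rho_\sigma V_j\big)$. Finally the outer $\Gamma_\sigma^{(\alpha-1)/\alpha}$, again using the eigenrelation to strip the powers of $\sigma$ off $\rho_\sigma=\sigma^{(1-\alpha)/(2\alpha)}\rho\,\sigma^{(1-\alpha)/(2\alpha)}$, together with the leftover $\tfrac{1}{\alpha-1}$, collapses the accumulated exponents $\mp\tfrac{\omega_j}{2\alpha}\pm\tfrac{(1-\alpha)\omega_j}{2\alpha}$ to $\mp\tfrac{\omega_j}{2}$ and delivers $e^{-\omega_j/2}V_j\rho-e^{\omega_j/2}\rho V_j$, which is \eqref{eqn::pdfdrenyi}.

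The differentiation in the first step and the exponent arithmetic in the second are routine. The one step that requires real insight is recognizing that the composite operator $\moprenyi{\rho}{\omega}$ in \eqref{eqn::moprenyi} has been reverse-engineered precisely so that its two noncommutative-multiplication factors $\mop{\cdot}{\cdot}$ get, respectively, inverted and applied through \lemref{lem::chain_rule} --- once at the power scale $\rho_\sigma^{\alpha-1}$ and once at the scale $\rho_\sigma$ --- with the auxiliary frequencies $(\alpha-1)\omega_j/\alpha$ and $\omega_j/\alpha$ and the modular exponents all conspiring to reconstruct the single frequency $\omega_j$ on the right. The only real danger is clerical: keeping every power of $\sigma$ and of $e^{\omega_j}$ consistent, for which abbreviating $\beta=(1-\alpha)/\alpha$ from the outset makes the cancellations transparent.
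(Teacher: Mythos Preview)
Your proof is correct and follows essentially the same route as the paper's: both compute the functional derivative by direct differentiation of $\tr(\rho_\sigma^\alpha)$, then establish \eqref{eqn::pdfdrenyi} by combining the modular eigenrelation $\sigma^t V_j\sigma^{-t}=e^{-\omega_j t}V_j$ with two invocations of the chain rule identity \lemref{lem::chain_rule}, once at the scale $\rho_\sigma^{\alpha-1}$ with frequency $(\alpha-1)\omega_j/\alpha$ and once at the scale $\rho_\sigma$ with frequency $\omega_j/\alpha$. The only organizational difference is that the paper computes $\partial_j$ of the functional derivative and the expression $e^{-\omega_j/2}V_j\rho-e^{\omega_j/2}\rho V_j$ separately in terms of the common logarithmic quantity, and reads off $\moprenyi{\rho}{\omega_j}^{-1}$ from the comparison, whereas you take the definition \eqref{eqn::moprenyi} as given and apply its four factors in order; the underlying calculation is identical.
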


\begin{remark}
	\begin{enumerate}[1)]
    \item When $\alpha = 1$,
      $\Gamma_{\sigma}^{\frac{1-\alpha}{\alpha}} = \id$ and
      $\rho_{\sigma} = \rho$. Then $\moprenyi{\rho}{\omega}$ reduces
      to $ [\rho]_{\omega}$. Hence, $\moprenyi{\rho}{\omega}$ is the generalization of $\mop{\rho}{\omega}$ to the case of \swch{}.

    \item When $\alpha=2$, the above operator
      $\moprenyialpha{\rho}{\omega}{2}$ has a simple form
		\begin{align}
		\moprenyialpha{\rho}{\omega}{2} = \frac{\tr(\rho_{\sigma}^2)}{2} \Gamma_{\sigma}  = \frac{\tr(\sigma^{-1/2}\rho \sigma^{-1/2} \rho)}{2} \Gamma_{\sigma}.
		\end{align}
		Thus the mapping
		\begin{align}
		(A, \rho) \rightarrow \Inner{A}{\moprenyialpha{\rho}{\omega}{2} (A)} = \frac{ \Inner{\rho}{\sigma^{-1/2}\rho \sigma^{-1/2}}\Inner{A}{\sigma^{1/2} A \sigma^{1/2}}}{2}
		\end{align}
		is the multiplication of quadratic terms with respect to both $A$ and $\rho$.
	\end{enumerate}

\end{remark}

The properties of the operator $\mop{X}{\omega}$ stated
in \lemref{lem::positive_X_op} and \lemref{lem::adjoint_mop}  generalize to the operator
 $\moprenyi{X}{\omega}$,  as we summarize in the next lemma.
\begin{lemma}
	\label{lem::moprenyid_propty}
	If $X \in \psetplus$ and $\omega\in \Real$, then
\begin{enumerate}
	\item $\moprenyi{X}{\omega}: \mset\rightarrow \mset$ is strictly positive;
	\item For any matrix $A$,
	\begin{align*}
	\left(\moprenyi{X}{\omega} (A)\right)^{*}  = \moprenyi{X}{-\omega} (A^*).
	\end{align*}
\end{enumerate}
\end{lemma}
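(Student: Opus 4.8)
The plan is to deduce both claims directly from the explicit formula \eqref{eqn::moprenyi}, using the corresponding properties of the building blocks $\mop{\cdot}{\cdot}$ from \lemref{lem::positive_X_op} and \lemref{lem::adjoint_mop}, together with the elementary facts that for every $\gamma\in\Real$ the weight operator $\Gamma_{\sigma}^{\gamma}$ is self-adjoint and strictly positive on $(\mset,\Inner{\cdot}{\cdot})$, is invertible, and commutes with the adjoint operation in the sense $\left(\Gamma_{\sigma}^{\gamma}(Z)\right)^{*}=\Gamma_{\sigma}^{\gamma}(Z^{*})$ (all immediate from $\sigma$ being Hermitian positive). Abbreviate $Y:=\Gamma_{\sigma}^{\frac{1-\alpha}{\alpha}}(X)\in\psetplus$ and $c:=\tr(Y^{\alpha})/\alpha>0$, and note that neither $Y$ nor $c$ depends on $\omega$. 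Then \eqref{eqn::moprenyi} reads
\begin{equation*}
\moprenyi{X}{\omega} = c\,\Gamma_{\sigma}^{\frac{\alpha-1}{\alpha}}\circ M\circ\Gamma_{\sigma}^{\frac{\alpha-1}{\alpha}},\qquad M:=\mop{Y}{\omega/\alpha}\circ\mop{Y^{\alpha-1}}{(\alpha-1)\omega/\alpha}^{-1}.
\end{equation*}

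For part (1), I would first reduce to showing that $M$ is self-adjoint and strictly positive on $(\mset,\Inner{\cdot}{\cdot})$: granting this, $\moprenyi{X}{\omega}=c\,\Gamma_{\sigma}^{\frac{\alpha-1}{\alpha}}M\Gamma_{\sigma}^{\frac{\alpha-1}{\alpha}}$ with $\Gamma_{\sigma}^{\frac{\alpha-1}{\alpha}}$ self-adjoint and invertible and $c>0$, so $\moprenyi{X}{\omega}^{\dagger}=\moprenyi{X}{\omega}$ and $\Inner{A}{\moprenyi{X}{\omega}A}=c\,\Inner{\Gamma_{\sigma}^{\frac{\alpha-1}{\alpha}}A}{M\,\Gamma_{\sigma}^{\frac{\alpha-1}{\alpha}}A}>0$ whenever $A\neq 0$, since $\Gamma_{\sigma}^{\frac{\alpha-1}{\alpha}}A\neq 0$. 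To see that $M$ is self-adjoint and strictly positive, diagonalize $Y=\sum_{i}\lambda_{i}\ket{i}\bra{i}$ in an orthonormal eigenbasis $\{\ket{i}\}$, so that the matrix units $\{\ket{i}\bra{j}\}$ form an orthonormal basis of $\mset$. On this basis $\mop{Y}{\mu}$ acts diagonally, $\mop{Y}{\mu}(\ket{i}\bra{j})=\left(\int_{0}^{1}e^{\mu(s-1/2)}\lambda_{i}^{s}\lambda_{j}^{1-s}\ud s\right)\ket{i}\bra{j}$, and similarly $\mop{Y^{\alpha-1}}{\nu}(\ket{i}\bra{j})=\left(\int_{0}^{1}e^{\nu(s-1/2)}\lambda_{i}^{(\alpha-1)s}\lambda_{j}^{(\alpha-1)(1-s)}\ud s\right)\ket{i}\bra{j}$, with both eigenvalues strictly positive. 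Hence the two factors of $M$ commute, and $M$ is diagonal in the same orthonormal basis with strictly positive eigenvalues, so $M=M^{\dagger}>0$. (Equivalently, one may invoke \lemref{lem::positive_X_op} for each factor and observe that both factors are functions of the commuting, self-adjoint left- and right-multiplication operators $A\mapsto YA$ and $A\mapsto AY$.)

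For part (2), I would peel the operators in \eqref{eqn::moprenyi} off one at a time, using $\left(\Gamma_{\sigma}^{\gamma}(Z)\right)^{*}=\Gamma_{\sigma}^{\gamma}(Z^{*})$ and \lemref{lem::adjoint_mop} applied to $Y$ and to $Y^{\alpha-1}$ (both in $\psetplus$); this flips the sign of every occurrence of $\omega$ while leaving $c$ and $Y$ untouched, giving
\begin{equation*}
\left(\moprenyi{X}{\omega}(A)\right)^{*} = c\,\Gamma_{\sigma}^{\frac{\alpha-1}{\alpha}}\left(\mop{Y}{-\omega/\alpha}\left(\mop{Y^{\alpha-1}}{-(\alpha-1)\omega/\alpha}^{-1}\left(\Gamma_{\sigma}^{\frac{\alpha-1}{\alpha}}(A^{*})\right)\right)\right),
\end{equation*}
and the right-hand side is precisely \eqref{eqn::moprenyi} evaluated at $-\omega$ and applied to $A^{*}$, that is, $\moprenyi{X}{-\omega}(A^{*})$. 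The only genuine subtlety in the whole argument is the self-adjointness and positivity of the composition $M$: a product of two strictly positive operators need not even be self-adjoint unless the factors commute, and what makes this work is that $Y^{\alpha-1}$ is a function of $Y$, so the two $\mop{\cdot}{\cdot}$ factors are simultaneously diagonalized by the matrix units built from an eigenbasis of $Y$; the rest is routine bookkeeping with \eqref{eqn::moprenyi}.
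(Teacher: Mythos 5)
Your proof is correct. For part (2) it coincides with the paper's argument: both peel off $\Gamma_{\sigma}^{\frac{\alpha-1}{\alpha}}$ and the two $\mop{\cdot}{\cdot}$ factors one at a time, using \lemref{lem::adjoint_mop} together with $\left(\Gamma_{\sigma}^{\gamma}(Z)\right)^{*}=\Gamma_{\sigma}^{\gamma}(Z^{*})$, so that every $\omega$ flips sign. For part (1) your route is genuinely different in execution. The paper stays basis-free: it evaluates the quadratic form $\Inner{A}{\moprenyi{X}{\omega} (A)}$ through the integral representations \eqref{eqn::kmb_op} and \eqref{eqn::inverse_mop}, moves the conjugation $B\mapsto X_{\sigma}^{s/2} B X_{\sigma}^{(1-s)/2}$ through $\mop{X_{\sigma}^{\alpha-1}}{(\alpha-1)\omega/\alpha}^{-1}$ (legitimate because the resolvent factors in \eqref{eqn::inverse_mop} are functions of $X_{\sigma}$), and then invokes the strict positivity of that inverse from \lemref{lem::positive_X_op}, settling the equality case exactly as you do. You instead diagonalize $Y$ and observe that $\mop{Y}{\omega/\alpha}$ and $\mop{Y^{\alpha-1}}{(\alpha-1)\omega/\alpha}^{-1}$ act diagonally on the matrix units with strictly positive eigenvalues, so the middle composition $M$ is self-adjoint and strictly positive, and conjugation by the self-adjoint, invertible $\Gamma_{\sigma}^{\frac{\alpha-1}{\alpha}}$ preserves this. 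Both proofs rest on the same structural fact, namely that $Y^{\alpha-1}$ is a function of $Y$ so the two noncommutative multiplications are simultaneously diagonalizable, but your version makes the self-adjointness of $\moprenyi{X}{\omega}$ explicit (the paper obtains it only implicitly, from the reality of the quadratic form on the complex space $\mset$) and correctly flags the one genuine subtlety — a product of positive operators need not be positive unless the factors commute — at the mild price of introducing an eigenbasis, whereas the paper's computation is coordinate-free and reuses \lemref{lem::positive_X_op} as a black box.
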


The operator $\moprenyi{X}{\omega}$ can be similarly extended to the space $\msetJ$.
For vector fields $\vect{A} = \begin{pmatrix}
A_1 & A_2 & \cdots & A_{\cardn} \end{pmatrix}\in \msetJ$ and $\vect{\omega} = \begin{pmatrix}
\omega_1 & \omega_2 & \cdots & \omega_{\cardn}
\end{pmatrix} \in \RealJ$, we define
\begin{equation}
\moprenyi{X}{\vect{\omega}} (\vect{A}) :=
\begin{pmatrix}
\moprenyi{X}{\omega_1} (A_1) &
\moprenyi{X}{\omega_2} (A_2) &
\cdots &
\moprenyi{X}{\omega_{\cardn}} (A_{\cardn})
\end{pmatrix},
\end{equation}
and analogously,
\begin{equation}
\moprenyi{X}{\vect{\omega}}^{-1} (\vect{A}) :=
\begin{pmatrix}
\moprenyi{X}{\omega_1}^{-1} (A_1) &
\moprenyi{X}{\omega_2}^{-1} (A_2) &
\cdots &
\moprenyi{X}{\omega_{\cardn}}^{-1} (A_{\cardn})
\end{pmatrix}.
\end{equation}

The following lemma will be useful to characterize the tangent space of density matrices.
\begin{lemma}
\label{lem::diffoptilde}
Assume that the Lindblad equation \eqref{eqn::lb} is primitive and $X \in \psetplus$. Denote the space of traceless matrices by $\mset_0$ and denote the space of traceless Hermitian matrices by $\aset_0$, that is, $\aset_0 = \mset_0\cap \aset$.
\begin{enumerate}
	\item Given $\vect{\omega}\in \RealJ$,
	define the operator $\diffoptilde{X}{\vect{\omega}}:\mset_0\rightarrow \mset_0$ by
\begin{equation*}
	\diffoptilde{X}{\vect{\omega}}(A) := - \divop\left(\moprenyi{X}{\vect{\omega}} (\nabla A)\right).
	\end{equation*}
	This operator $\diffoptilde{X}{\vect{\omega}}$ is strictly positive, and thus invertible.

\item For any $A\in \mset_0$,
\begin{align*}
(\diffoptilde{X}{\vect{\omega}} (A))^{*} = \diffoptilde{X}{\vect{\omega}} (A^{*}),
\end{align*}
and the restriction of $\diffoptilde{X}{\vect{\omega}}$ on $\aset_0$ is still strictly positive and invertible.
\end{enumerate}
\end{lemma}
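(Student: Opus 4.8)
The plan is to deduce both parts of \lemref{lem::diffoptilde} from the one-index operators $\moprenyi{X}{\omega_j}$ whose properties are recorded in \lemref{lem::moprenyid_propty}, together with the Hilbert--Schmidt adjointness of $\nabla$ and $\divop$ and the pairing structure of the jump operators $V_j$ from \thmref{thm::lindblad_detail_balance}. First observe that $\divop$ always returns traceless matrices, since each $\comm{A_j}{V_j^*}$ has vanishing trace; hence $\diffoptilde{X}{\vect{\omega}}$ is a well-defined map from $\mset_0$ to $\mset_0$. For part (1), recall from \eqref{eqn::divop} and the remark following it that $-\divop$ is the Hilbert--Schmidt adjoint of $\nabla$, so that for every $A \in \mset_0$,
\[
\Inner{A}{\diffoptilde{X}{\vect{\omega}}(A)} = \Inner{\nabla A}{\moprenyi{X}{\vect{\omega}}(\nabla A)} = \sum_{j=1}^{\cardn} \Inner{\partial_j A}{\moprenyi{X}{\omega_j}(\partial_j A)}.
\]
By \lemref{lem::moprenyid_propty} each $\moprenyi{X}{\omega_j}$ is strictly positive on $\mset$, so the right-hand side is nonnegative and vanishes exactly when $\partial_j A = 0$ for all $j$, i.e.\ when $A \in \text{Ker}(\nabla)$. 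By \lemref{lem::null_sp_comm}, $\text{Ker}(\nabla) = \text{Ker}(\lbop)$, which under primitivity equals $\text{span}\{\unit\}$; being traceless, $A$ must then be $0$. Hence $\diffoptilde{X}{\vect{\omega}}$ is strictly positive on the finite-dimensional space $\mset_0$, and therefore injective and invertible.

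For part (2), I would expand $\diffoptilde{X}{\vect{\omega}}(A) = -\divop(\moprenyi{X}{\vect{\omega}}(\nabla A)) = \sum_{j} \comm{V_j^*}{\moprenyi{X}{\omega_j}(\comm{V_j}{A})}$, take Hermitian conjugates using $\comm{P}{Q}^* = \comm{Q^*}{P^*}$ and \lemref{lem::moprenyid_propty}, which gives $(\moprenyi{X}{\omega_j}(C))^* = \moprenyi{X}{-\omega_j}(C^*)$, and obtain
\[
(\diffoptilde{X}{\vect{\omega}}(A))^* = \sum_{j=1}^{\cardn} \comm{\moprenyi{X}{-\omega_j}(\comm{A^*}{V_j^*})}{V_j}.
\]
Next I would invoke the involution $j \mapsto j'$ determined by $V_j^* = V_{j'}$, which by items 2 and 4 of \thmref{thm::lindblad_detail_balance} also satisfies $\omega_{j'} = -\omega_j$. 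Reindexing the sum by this involution, substituting $V_{j'} = V_j^*$, $V_{j'}^* = V_j$ and $-\omega_{j'} = \omega_j$, and then using linearity of $\moprenyi{X}{\omega_j}$ and the antisymmetry of the commutator in two places, the sum collapses exactly to $\sum_{j} \comm{V_j^*}{\moprenyi{X}{\omega_j}(\comm{V_j}{A^*})} = \diffoptilde{X}{\vect{\omega}}(A^*)$, which is the asserted conjugation identity. In particular $\diffoptilde{X}{\vect{\omega}}$ maps $\aset_0$ into $\aset_0$; since the Hilbert--Schmidt form is real-valued on $\aset_0$ and strict positivity is inherited from part (1), the restriction of $\diffoptilde{X}{\vect{\omega}}$ to $\aset_0$ is strictly positive, hence injective and invertible. (The same manipulation also shows that this restriction is self-adjoint, which is what will make the metric tensor of Definition~\ref{def::metric_tensor_renyi} well defined, but that is not needed here.)

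The only genuinely delicate step is the reindexing in part (2): one has to check that after the substitution $j \mapsto j'$ the two commutator sign flips, together with the relabeling $-\omega_{j'} \mapsto \omega_j$, reconstitute exactly the defining expression of $\diffoptilde{X}{\vect{\omega}}$ evaluated at $A^*$. This is pure bookkeeping, but it is the single place where the full symmetry of the GNS-detailed-balance jump operators --- not just the positivity of $\moprenyi{X}{\omega_j}$ --- is used; the remaining ingredients are immediate from \lemref{lem::moprenyid_propty}, \lemref{lem::null_sp_comm}, and primitivity.
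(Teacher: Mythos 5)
Your proposal is correct and follows essentially the same route as the paper's proof: part (1) via the adjointness of $\nabla$ and $-\divop$, strict positivity of $\moprenyi{X}{\omega_j}$, and $\text{Ker}(\nabla)=\text{Ker}(\lbop)=\text{span}\{\unit\}$ under primitivity; part (2) via the conjugation identity $(\moprenyi{X}{\omega}(C))^* = \moprenyi{X}{-\omega}(C^*)$ combined with the involution $V_j^* = V_{j'}$, $\omega_{j'}=-\omega_j$ from \thmref{thm::lindblad_detail_balance}, exactly as in the paper. The reindexing bookkeeping you flag as the delicate step is carried out in the paper in precisely the way you describe, and your concluding remark on the restriction to $\aset_0$ matches the paper's appeal to part (1).
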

In the case $\alpha = 1$, this lemma has been proved in  \cite[Lemma 3]{rouze_relating_2017}.

Before defining the metric tensor, we need to first introduce an inner product, weighted by $\moprenyi{\rho}{\vect{\omega}}$.
\begin{defn}[Inner product]
	Given $\alpha\in (0,\infty)$, $\rho\in \dsetplus$, for vector fields $\vect{A} = \begin{pmatrix}
	A_1 & A_2 & \cdots & A_{\cardn} \end{pmatrix}$ and
	$\vect{B} = \begin{pmatrix}
	B_1 & B_2 & \cdots & B_{\cardn} \end{pmatrix}$,
	an inner product on vector fields is defined as
	\begin{equation}
	\InnerAlphaLB{\vect{A}}{\vect{B}}
	:= \sum_{j=1}^{\cardn} \Inner{A_j}{\moprenyi{\rho}{\omega_j} (B_j)}.
	\end{equation}
\end{defn}

Given $\rho\in \dsetplus$, the tangent space is $\aset_0$. For $\nu_1$ and $\nu_2\in \aset_0$, by \lemref{lem::diffoptilde} part (2),
there is a unique $U_k\in \aset_0$, such that
 \begin{equation}
 \label{eqn::tang_sp_relation}
 \nu_k = \diffoptilde{\rho}{\vect{\omega}} (U_k) = -\divop\left(\moprenyi{\rho}{\vect{\omega}}(\nabla U_k)\right),\qquad k = 1, 2.
 \end{equation}

Finally, we are ready to define the metric tensor we need.
\begin{defn}[Metric tensor]\label{def::metric_tensor_renyi}
	Given $\rho\in \dsetplus$, for $\nu_{k}\in \tang_{\rho} \dset \equiv \aset_0$, define a metric tensor $g_\rhodep$ by
	\begin{equation}
	\label{eqn::metric_tensor_renyi}
	g_{\rhodep}(\nu_1, \nu_2) := \InnerAlphaLB{\nabla U_1}{\nabla U_2} \equiv \sum_{j=1}^{\cardn} \Inner{\partial_j U_1}{\moprenyi{\rho}{\omega_j}(\partial_j U_2)},
	\end{equation}
	where $U_k$ and $\nu_k$ are linked via \eqref{eqn::tang_sp_relation}.
\end{defn}

\begin{proof}[Proof of \thmref{thm::renyi_gradient}]

Up to now, the \rie{} structure $(\dsetplus, g_{\rhodep})$ has been specified, as well as the energy functional $E(\rho) \equiv \renyidivg{\rho}{\sigma}$. Then, we will verify that the gradient flow dynamics defined in  \eqref{eqn::riemannian_gradient_flow} is exactly the \lb{} equation in \eqref{eqn::lb}.
To achieve that, we need to compute the gradient at any
$\rho_0\in \dsetplus$.

Recall the notion of gradient in the \rie{} manifold \eqref{eqn::riemannian_grad}, $\gradt E\vert_{\rho_0}$ is defined as the traceless Hermitian matrix such that for any differentiable trajectory $\rho_t\in \dsetplus$ with $\rho_0$ at time $t=0$,
\begin{equation*}
\begin{split}
g_{\alpha,\rho_0,\lbop}\left(\gradt E\eva_{\rho_0}, \dot{\rho}_t\eva_{t=0}
\right) \equiv \frac{\ud }{\ud t}\eva_{t=0} E(\rho_t) \equiv \frac{\ud }{\ud t}\eva_{t=0} \renyidivg{\rho_t}{\sigma}.
\end{split}
\end{equation*}
By direct computation,
\begin{equation*}
\begin{split}
\frac{\ud }{\ud t}\eva_{t=0} \renyidivg{\rho_t}{\sigma} &= \Inner{\fdrenyidivg{\rho}{\sigma}\eva_{\rho=\rho_0}}{\dot{\rho}_t \eva_{t=0}} \\
&= \Inner{\fdrenyidivg{\rho}{\sigma}\eva_{\rho=\rho_0}}{-\divop\left(\moprenyi{\rho_0}{\vect{\omega}} (\nabla U_0) \right)} \\
&= \Inner{\nabla \fdrenyidivg{\rho}{\sigma}\eva_{\rho=\rho_0}}{\moprenyi{\rho_0}{\vect{\omega}} (\nabla U_0)}\\
&= \Inner{\nabla \fdrenyidivg{\rho}{\sigma}\eva_{\rho=\rho_0}}{\nabla U_0}_{\alpha,\rho_0, \lbop},
\end{split}
\end{equation*}
where we used the following fact that follows from \lemref{lem::diffoptilde}: there exists a
unique family of traceless Hermitian matrices $\{U_t\}_{t\geq 0}$ such that $\dot{\rho}_t = - \divop(\moprenyi{\rho_t}{\vect{\omega}}\nabla U_t)$. After combing the last two equations, we have
\begin{equation*}
g_{\alpha,\rho_0,\lbop}\left(\gradt E\eva_{\rho_0}, \dot{\rho}_t\eva_{t=0}
\right) = \Inner{\nabla \fdrenyidivg{\rho}{\sigma}\eva_{\rho=\rho_0}}{\nabla U_0}_{\alpha,\rho_0, \lbop}.
\end{equation*}
By the definition of the metric tensor $g_{\rhodep}(\cdot, \cdot)$  \eqref{eqn::metric_tensor_renyi},
\begin{align*}
\gradt E\eva_{\rho_0} &= \diffoptilde{\rho_0}{\vect{\omega}}\left(\fdrenyidivg{\rho}{\sigma}\eva_{\rho=\rho_0}  \right) \\
&=  -  \divop\left(\moprenyi{\rho_0}{\vect{\omega}} \left(\nabla \fdrenyidivg{\rho}{\sigma}\eva_{\rho=\rho_0} \right)\right).
\end{align*}
Consequently, the gradient flow dynamics \eqref{eqn::riemannian_gradient_flow} is
\begin{equation*}
\dot{\rho}_t = - \gradt E\eva_{\rho_t} = \divop\left(\moprenyi{\rho_t}{\vect{\omega}} \left(\nabla \fdrenyidivg{\rho}{\sigma}\eva_{\rho=\rho_t}\right)\right).
\end{equation*}
Let us rewrite the term on the right hand side
\begin{align}
\label{eqn::lb_renyi_var}
\begin{split}
&\divop\left(\moprenyi{\rho_t}{\vect{\omega}} \left(\nabla \fdrenyidivg{\rho}{\sigma}\eva_{\rho=\rho_t}\right)\right) \\
\myeq{\eqref{eqn::divop}}& \sum_{j=1}^{\cardn} \Comm{\moprenyi{\rho_t}{\omega_j}\left(\partial_j  \fdrenyidivg{\rho}{\sigma}\eva_{\rho=\rho_t} \right)}{V_j^*} \\
\myeq{\eqref{eqn::pdfdrenyi}}& \sum_{j=1}^{\cardn} \Comm{e^{-\omega_j/2} V_j \rho_t - e^{\omega_j/2}\rho_t V_j}{V_j^*}\\
\myeq{\eqref{eqn::sch_lb}}& \lbop^{\dagger}(\rho_t).
\end{split}
\end{align}
Thus the gradient flow dynamics is exactly the \lb{} equation in \eqref{eqn::lb}.
\end{proof}

\begin{proof}[{Proof of \coref{coro::monotonicity_renyi}}]
If $\rho_t$ is the solution of the Lindblad equation,  then $U_0 =- \fdrenyidivg{\rho}{\sigma}\eva_{\rho=\rho_0}$. Hence,
\begin{align*}
\frac{\ud }{\ud t}\eva_{t=0} \renyidivg{\rho_t}{\sigma} &= \Inner{\nabla \fdrenyidivg{\rho}{\sigma}\eva_{\rho=\rho_0}}{\nabla U_0}_{\alpha,\rho_0, \lbop}\\
&= -\Inner{\nabla \fdrenyidivg{\rho}{\sigma}\eva_{\rho=\rho_0}}{\nabla \fdrenyidivg{\rho}{\sigma}\eva_{\rho=\rho_0}}_{\alpha,\rho_0, \lbop} \le 0.
\end{align*}
\end{proof}

\subsection{Proof of Lemmas} Here we present the proofs of lemmas that we used in the previous subsection.
\begin{proof}[Proof of \lemref{lem::fd}]
	For any differentiable curve $\rho_t \in \dset$ passing through $\rho$ at time $t=0$, that is, $\rho_0 = \rho$, we have
	\begin{align*}
	\Inner{\fdrenyidivg{\rho}{\sigma}}{\dot{\rho}_0} & \equiv \frac{\ud }{\ud t}\eva_{t=0} \renyidivg{\rho_t}{\sigma} \\
	&= \frac{\alpha}{\alpha-1} \frac{\tr\left( \rho_{\sigma}^{\alpha-1} \sigmalpha\ \dot{\rho}_0\ \sigmalpha \right)}{\tr\left(\rho_{\sigma}^{\alpha}\right)} \\
	&= \frac{\alpha}{\alpha-1} \frac{\tr\left( \sigmalpha \rho_{\sigma}^{\alpha-1} \sigmalpha\ \dot{\rho}_0 \right)}{\tr\left(\rho_{\sigma}^{\alpha}\right)}.
	\end{align*}
	Then it is straightforward to obtain \eqref{eqn::fdrenyi}.

	Next, we compute the partial derivative of $\fdrenyidivg{\rho}{\sigma}$. In fact,
	\begin{align}
	\label{eqn::pdfdrenyi_part}
	\begin{split}
	& \partial_j \left(\fdrenyidivg{\rho}{\sigma}\right)\\
	=&\frac{\alpha}{(\alpha-1)\tr\left(\rho_{\sigma}^{\alpha}\right)} \Comm{V_j}{\sigmalpha \rho_{\sigma}^{\alpha-1} \sigmalpha} \\
	=& \frac{\alpha}{(\alpha-1) \tr\left(\rho_{\sigma}^{\alpha} \right)} \sigmalpha \left(\sigma^{\frac{\alpha-1}{2\alpha}} V_j \sigmalpha \rho_{\sigma}^{\alpha-1} - \rho_{\sigma}^{\alpha-1} \sigmalpha V_j \sigma^{\frac{\alpha-1}{2\alpha}} \right) \sigmalpha\\
	=& \frac{\alpha}{(\alpha-1) \tr\left(\rho_{\sigma}^{\alpha} \right)} \Gamma_{\sigma}^{\frac{1-\alpha}{\alpha}}
	\left(e^{-\frac{\alpha-1}{2\alpha}\omega_j} V_j \rho_{\sigma}^{\alpha-1} - e^{\frac{\alpha-1}{2\alpha}\omega_j}\rho_{\sigma}^{\alpha-1} V_j \right)\\
	\myeq{\eqref{eqn::chain_rule}}& \begin{aligned}
	 \frac{\alpha}{(\alpha-1) \tr\left(\rho_{\sigma}^{\alpha} \right)} \Gamma_{\sigma}^{\frac{1-\alpha}{\alpha}}
	\Big(\mop{\rho_{\sigma}^{\alpha-1}}{\frac{\alpha-1}{\alpha}\omega_j} \Big(& V_j \log( e^{-\frac{\alpha-1}{2\alpha}\omega_j} \rho_{\sigma}^{\alpha-1})
	 - \log(e^{\frac{\alpha-1}{2\alpha} \omega_j} \rho_{\sigma}^{\alpha-1}) V_j \Big) \Big) 
	\end{aligned} \\
	=& \frac{\alpha}{ \tr\left(\rho_{\sigma}^{\alpha} \right)} \left(\Gamma_{\sigma}^{\frac{1-\alpha}{\alpha}}
	\circ \mop{\rho_{\sigma}^{\alpha-1}}{\frac{\alpha-1}{\alpha}\omega_j}\right)
	\left(V_j \log(e^{-\frac{\omega_j}{2\alpha}} \rho_{\sigma}) - \log(e^{\frac{\omega_j}{2\alpha}} \rho_{\sigma}) V_j \right).
	\end{split}
	\end{align}
	To get the fifth line, we have used \eqref{eqn::chain_rule} with $X = \rho_{\sigma}^{\alpha-1}$ and $\omega = \frac{\alpha-1}{\alpha} \omega_j$ and $V = V_j$.
Note that since $\rho, \sigma\in \dsetplus$, $\rho_{\sigma}, \rho_{\sigma}^{\alpha-1}\in \psetplus$.

	Furthermore, applying \eqref{eqn::chain_rule} again with $X = \rho_{\sigma}$, $\omega = \omega_j/\alpha$ and $V = V_j$,
	\begin{equation*}
	\begin{split}
	& \mop{\rho_{\sigma}}{\omega_j/\alpha}
	\left(V_j \log\bigl(e^{-\omega_j/(2\alpha)} \rho_{\sigma}\bigr) - \log\bigl(e^{\omega_j/(2\alpha)} \rho_{\sigma}\bigr) V_j \right) \\
	=& e^{-\omega_j/(2\alpha)} V_j \rho_{\sigma} - e^{\omega_j/(2\alpha)} \rho_{\sigma} V_j  \\
	=& e^{-\omega_j/(2\alpha)} \sigmalpha \left(\sigma^{\frac{\alpha-1}{2\alpha}} V_j \sigmalpha\right) \rho \sigmalpha - e^{\omega_j/(2\alpha)} \sigmalpha \rho \left(\sigmalpha V_j \sigma^{\frac{\alpha-1}{2\alpha}}\right) \sigmalpha \\
	=&  \Gamma_{\sigma}^{\frac{1-\alpha}{\alpha}} \left( e^{-\omega_j/2} V_j \rho - e^{\omega_j/2} \rho V_j \right).
	\end{split}
	\end{equation*}
	Therefore,
	\begin{equation}
	\begin{split}
	&e^{-\omega_j/2} V_j \rho - e^{\omega_j/2} \rho V_j 	= \left(\Gamma_{\sigma}^{\frac{\alpha-1}{\alpha}} \circ \mop{\rho_{\sigma}}{\omega_j/\alpha} \right)
	\left(V_j \log\bigl(e^{-\omega_j/(2\alpha)} \rho_{\sigma}\bigr) - \log\bigl(e^{\omega_j/(2\alpha)} \rho_{\sigma}\bigr) V_j \right). \\
	\end{split}
	\end{equation}
	After plugging it back into \eqref{eqn::pdfdrenyi_part}, one could straightforwardly obtain \eqref{eqn::pdfdrenyi} after arranging a few terms and
	\begin{equation}
	\moprenyi{\rho}{\omega_j}^{-1} := \frac{\alpha}{ \tr\left(\rho_{\sigma}^{\alpha} \right)} \left(\Gamma_{\sigma}^{\frac{1-\alpha}{\alpha}}
	\circ \mop{\rho_{\sigma}^{\alpha-1}}{\frac{\alpha-1}{\alpha}\omega_j}\right) \circ \left(\Gamma_{\sigma}^{\frac{\alpha-1}{\alpha}} \circ \mop{\rho_{\sigma}}{\omega_j/\alpha} \right)^{-1}.
	\end{equation}
	This becomes \eqref{eqn::moprenyi} after we invert this operator.
\end{proof}

\begin{proof}[Proof of \lemref{lem::moprenyid_propty}]
\begin{enumerate}
\item Since $X\in \psetplus$, $\sigma\in \dsetplus$, then $X_{\sigma} := \Gamma_{\sigma}^{\frac{1-\alpha}{\alpha}}(X)$ is strictly positive. For any $A\in \mset$, let $\wt{A} := \Gamma_{\sigma}^{\frac{\alpha-1}{\alpha}} (A)$, then
\begin{align*}
& \Inner{A}{\moprenyi{X}{\omega} (A)} \\
=& \frac{\tr(X_{\sigma}^\alpha)}{\alpha} \Inner{A}{\Gamma_{\sigma}^{\frac{\alpha-1}{\alpha}} \circ \mop{X_{\sigma}}{\omega/\alpha}\circ \mop{X_{\sigma}^{\alpha-1}}{(\alpha-1)\omega/\alpha}^{-1} \circ \Gamma_{\sigma}^{\frac{\alpha-1}{\alpha}} (A)} \\
=& \frac{\tr(X_{\sigma}^\alpha)}{\alpha} \Inner{\wt{A}}{\mop{X_{\sigma}}{\omega/\alpha}\circ \mop{X_{\sigma}^{\alpha-1}}{(\alpha-1)\omega/\alpha}^{-1} (\wt{A})} \\
\myeq{\eqref{eqn::kmb_op}}& \frac{\tr(X_{\sigma}^\alpha)}{\alpha}
\int_{0}^{1} e^{\frac{\omega}{\alpha}(s-\frac{1}{2})} \tr\left(\wt{A}^{*}  X_{\sigma}^{s} \left(\mop{X_{\sigma}^{\alpha-1}}{(\alpha-1)\omega/\alpha}^{-1} (\wt{A})\right)   X_{\sigma}^{1-s}\right)\ud s \\
=& \frac{\tr(X_{\sigma}^\alpha)}{\alpha}
\int_{0}^{1} e^{\frac{\omega}{\alpha}(s-\frac{1}{2})} \tr\left(X_{\sigma}^{\frac{1-s}{2}}\wt{A}^{*} X_{\sigma}^{\frac{s}{2}} X_{\sigma}^{\frac{s}{2}} \left(\mop{X_{\sigma}^{\alpha-1}}{(\alpha-1)\omega/\alpha}^{-1} (\wt{A})\right)   X_{\sigma}^{\frac{1-s}{2}}\right)\ud s \\
\myeq{\eqref{eqn::inverse_mop}}& \frac{\tr(X_{\sigma}^\alpha)}{\alpha}
\int_{0}^{1} e^{\frac{\omega}{\alpha}(s-\frac{1}{2})} \tr\left(X_{\sigma}^{\frac{1-s}{2}}\wt{A}^{*} X_{\sigma}^{\frac{s}{2}} \left(\mop{X_{\sigma}^{\alpha-1}}{(\alpha-1)\omega/\alpha}^{-1} (X_{\sigma}^{\frac{s}{2}} \wt{A} X_{\sigma}^{\frac{1-s}{2}})\right)   \right)\ud s \\
=& \frac{\tr(X_{\sigma}^\alpha)}{\alpha}
\int_{0}^{1} e^{\frac{\omega}{\alpha}(s-\frac{1}{2})} \Inner{X_{\sigma}^{\frac{s}{2}} \wt{A} X_{\sigma}^{\frac{1-s}{2}} }{\mop{X_{\sigma}^{\alpha-1}}{(\alpha-1)\omega/\alpha}^{-1} (X_{\sigma}^{\frac{s}{2}} \wt{A} X_{\sigma}^{\frac{1-s}{2}}) }\ud s \ge 0,
\end{align*}
since $\mop{X_{\sigma}^{\alpha-1}}{(\alpha-1)\omega/\alpha}^{-1}$ is a strictly positive operator by \lemref{lem::positive_X_op}.
Additionally, $\Inner{A}{\moprenyi{X}{\omega} (A)} = 0$ if and only if $0 = X_{\sigma}^{\frac{s}{2}}\wt{A} X_{\sigma}^{\frac{1-s}{2}} \equiv X_{\sigma}^{\frac{s}{2}} \left(\Gamma_{\sigma}^{\frac{\alpha-1}{\alpha}} (A)\right) X_{\sigma}^{\frac{1-s}{2}}$, which implies that $A = 0$.

\item For any matrix $A$, by \lemref{lem::adjoint_mop},
\begin{align*}
\left(\moprenyi{X}{\omega} (A)\right)^{*}
&= \frac{\tr\left(X_{\sigma}^{\alpha} \right)}{\alpha} \Gamma_{\sigma}^{\frac{\alpha-1}{\alpha}} \left(\mop{X_{\sigma}}{\omega/\alpha}\circ \mop{X_{\sigma}^{\alpha-1}}{(\alpha-1)\omega/\alpha}^{-1} \circ \Gamma_{\sigma}^{\frac{\alpha-1}{\alpha}} (A) \right)^{*} \\
&= \frac{\tr\left(X_{\sigma}^{\alpha} \right)}{\alpha} \Gamma_{\sigma}^{\frac{\alpha-1}{\alpha}}\circ \mop{X_{\sigma}}{-\omega/\alpha} \left( \mop{X_{\sigma}^{\alpha-1}}{(\alpha-1)\omega/\alpha}^{-1} \circ \Gamma_{\sigma}^{\frac{\alpha-1}{\alpha}} (A) \right)^{*}\\
&= \frac{\tr\left(X_{\sigma}^{\alpha} \right)}{\alpha} \Gamma_{\sigma}^{\frac{\alpha-1}{\alpha}}\circ \mop{X_{\sigma}}{-\omega/\alpha} \circ \mop{X_{\sigma}^{\alpha-1}}{-(\alpha-1)\omega/\alpha}^{-1} \left(\Gamma_{\sigma}^{\frac{\alpha-1}{\alpha}} (A) \right)^{*} \\
&= \frac{\tr\left(X_{\sigma}^{\alpha} \right)}{\alpha} \Gamma_{\sigma}^{\frac{\alpha-1}{\alpha}}\circ \mop{X_{\sigma}}{-\omega/\alpha} \circ \mop{X_{\sigma}^{\alpha-1}}{-(\alpha-1)\omega/\alpha}^{-1} \circ \Gamma_{\sigma}^{\frac{\alpha-1}{\alpha}} (A^*) \\
&= \moprenyi{X}{-\omega} (A^*).
\end{align*}
\end{enumerate}
\end{proof}

\begin{proof}[Proof of \lemref{lem::diffoptilde}]
\begin{enumerate}
\item Consider a traceless matrix $A\in \mset_0$. It is easy to verify that
$\tr\left(\diffoptilde{X}{\vect{\omega} }(A)\right) = 0$.
From the definition of the divergence operator \eqref{eqn::divop}, we have
\begin{align*}
\Inner{A}{\diffoptilde{X}{\vect{\omega}} (A)} = \sum_{j=1}^{\cardn} \Inner{\partial_j A}{\moprenyi{X}{\omega_j} (\partial_j A)} \ge 0.
\end{align*}
Moreover, since $\moprenyi{X}{\omega_j}$  is strictly positive by \lemref{lem::moprenyid_propty}, we know that  $\Inner{A}{\diffoptilde{X}{\vect{\omega}}
(A)} = 0$ iff $\partial_j A = 0$ for all $1\le j\le \cardn$.
Thanks to  \lemref{lem::null_sp_comm} and the assumption that the
\lb{} equation \eqref{eqn::lb} is primitive, we have $A = 0$ since $A$ is assumed to be traceless.
Therefore, $\diffoptilde{X}{\vect{\omega}}$ is a strictly positive operator
on the space of traceless matrices $\mset_0$.

\item
We only need to prove that $(\diffoptilde{X}{\vect{\omega}} (A))^{*}
= \diffoptilde{X}{\vect{\omega}} (A^{*})$.
The rest simply follows from the first part of the lemma. Note that
\begin{align*}
 \diffoptilde{X}{\vect{\omega}}(A) &= - \divop\left(\moprenyi{X}{\vect{\omega}} (\nabla A)\right)
\myeq{\eqref{eqn::divop}} \sum_{j} \Comm{V_j^*}{\moprenyi{X}{\omega_j}(\partial_j A) } \\
&= \sum_{j} V_j^* \left(\moprenyi{X}{\omega_j}(V_j A - A V_j)\right) - \left(\moprenyi{X}{\omega_j}(V_j A - A V_j)\right)  V_j^* .
\end{align*}
Hence by \thmref{thm::lindblad_detail_balance} and \lemref{lem::moprenyid_propty}, we have
\begin{align*}
& \left(\diffoptilde{X}{\vect{\omega}}(A)\right)^* \\
=& \sum_{j} \left(\moprenyi{X}{-\omega_j}(A^* V_j^* - V_j^* A^*)\right) V_j
- V_j \left(\moprenyi{X}{-\omega_j}(A^* V_j^* - V_j^* A^*)\right) \\
=& \sum_{j'} \left(\moprenyi{X}{\omega_{j'}}(A^* V_{j'} - V_{j'} A^*)\right) V_{j'}^*
- V_{j'}^{*} \left(\moprenyi{X}{\omega_{j'}}(A^* V_{j'} - V_{j'} A^*)\right) \\
=& \diffoptilde{X}{\vect{\omega}} (A^{*}) .
\end{align*}
\end{enumerate}
\end{proof}

\section{The necessary condition for Lindblad equations to be the gradient flow dynamics of sandwiched \renyi{} divergences}
\label{sec::necessary_condition}

Recall from \thmref{thm::renyi_gradient} that Lindblad equations with GNS-detailed balance can be regarded as the gradient flow dynamics of sandwiched \renyi{} divergences, including the quantum relative entropy.
A natural and immediate following-up question is the extent that one can possibly generalize \thmref{thm::renyi_gradient} in the sense of considering a larger family of Lindblad equations.
In a recent paper \cite{carlen_non-commutative_2018}, such an issue has been briefly addressed; however, currently, there is still a gap between the class of Lindblad equations (\ie, primitive Lindblad equations with GNS detailed balance) that are known to be the gradient flow dynamics of the quantum relative entropy and the necessary condition (\ie, BKM detailed balance condition) for a primitive Lindblad equation to be possibly expressed as the gradient flow dynamics of the quantum relative entropy; this gap will be revisited and explained in more details below.

In this section, we shall explore the necessary condition for \lb{} equations to be the gradient flow dynamics of sandwiched \renyi{} divergences, summarized in  \thmref{thm::lb_necessary}, which adapts the argument from
 \cite[Theorem 2.9]{carlen_non-commutative_2018}. Next, in \secref{sec::necessity_decomposition}, we will discuss detailed balance conditions arising from \thmref{thm::lb_necessary} and in \secref{sec::necessity_relation}, we will discuss relations between various detailed balance conditions.

\subsection{Necessary condition}

 Below is the main result of this section. Recall the notation $\mop{X}{\omega}
 $ \eqref{eqn::kmb_op} and recall that $\mop{X}{} \equiv \mop{X}{0}$.
\begin{theorem}
\label{thm::lb_necessary}
Suppose that the dual QMS $\qms^{\dagger}_t = e^{t\lbop^{\dagger}}$ is primitive and its unique stationary state is $\sigma\in \dsetplus$. If there exists a continuously differentiable metric tensor $g_{\rho}(\cdot, \cdot): \aset_0\times \aset_0\rightarrow\Real$ for any $\rho \in \dsetplus$ such that the Lindblad equation $\dot{\rho}_t = \lbop^{\dagger}(\rho_t)$ is the gradient flow dynamics of the sandwiched \renyi{} divergence $\renyidivg{\rho}{\sigma}$, then $\lbop$ is self-adjoint with respect to the inner product weighted by an operator $\mathscr{W}_{\sigma, \alpha}: \mset \rightarrow \mset$ defined by
\begin{align}
\label{eqn::weight_op_w}
\begin{split}
\mathscr{W}_{\sigma, \alpha}(A) &:= \mop{\sigma^{\frac{1}{\alpha}}}{} \circ \mop{\sigma^{\frac{\alpha-1}{\alpha}}}{}^{-1} \circ \Gamma_{\sigma}^{\frac{2(\alpha-1)}{\alpha}}(A) \\
& \equiv \int_{0}^{1} \int_{0}^{1} \left(\frac{\bigl(\sigma^{\frac{1}{\alpha}}\bigr)^{1-s} \sigma^{\frac{\alpha-1}{\alpha}}}{(1-r) \unit + r \sigma^{\frac{\alpha-1}{\alpha}}}\right) A \left(\frac{\sigma^{\frac{\alpha-1}{\alpha}} \bigl(\sigma^{\frac{1}{\alpha}}\bigr)^{s} }{(1-r) \unit + r \sigma^{\frac{\alpha-1}{\alpha}}}\right) \ud r \ud s,
\end{split}
\end{align}
for all $A\in \mset$;
more specifically, we have for all $A, B\in \mset$,
\begin{align}
\label{eqn::general_db_cond}
\Inner{\lbop(A)}{B}_{\mathscr{W}_{\sigma, \alpha}} = \Inner{A}{ \lbop(B)}_{\mathscr{W}_{\sigma, \alpha}},
\end{align}
where $\Inner{A}{B}_{\mathscr{W}_{\sigma, \alpha}} := \Inner{A}{\mathscr{W}_{\sigma, \alpha}(B)}$.
\end{theorem}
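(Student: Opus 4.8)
The plan is to extract the identity from the behaviour of the gradient flow near its equilibrium, following \cite[Theorem 2.9]{carlen_non-commutative_2018}. Since $\lbop^{\dagger}$ is primitive with stationary state $\sigma$, the point $\sigma$ is the unique zero of the flow $\dot{\rho}_t=\lbop^{\dagger}(\rho_t)$; moreover, being the unique minimizer (with value $0$) of $E:=\renyidivg{\cdot}{\sigma}$ on $\dset$, it is a critical point of $E$ on $\dsetplus$, as confirmed by \lemref{lem::fd}: $\fdrenyidivg{\rho}{\sigma}\big|_{\rho=\sigma}=\frac{\alpha}{\alpha-1}\unit\perp\aset_0$. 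Writing $g_{\rho}\bigl(\lbop^{\dagger}(\rho),\nu\bigr)=-\Inner{\fdrenyidivg{\rho}{\sigma}}{\nu}$ for the defining relation of the gradient flow (valid for all $\rho\in\dsetplus$ and $\nu\in\aset_0$), I would differentiate it in $\rho$ at $\rho=\sigma$ along $\mu\in\aset_0$. The $\rho$-derivative of the metric contributes nothing, since $\lbop^{\dagger}(\sigma)=0$ and $g_{\rho}$ is bilinear, so there remains
\[
g_{\sigma}\bigl(\lbop^{\dagger}(\mu),\nu\bigr)=-\Inner{\mathcal{H}_{\sigma}(\mu)}{\nu},\qquad \mu,\nu\in\aset_0,
\]
where $\mathcal{H}_{\sigma}:=D_{\rho}\bigl(\fdrenyidivg{\rho}{\sigma}\bigr)\big|_{\rho=\sigma}$ is the Hilbert-Schmidt Hessian of $\renyidivg{\cdot}{\sigma}$. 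Being the Hessian of a scalar functional and mapping Hermitian matrices to Hermitian ones, $\mathcal{H}_{\sigma}$ is symmetric for $\Inner{\cdot}{\cdot}$, so the right-hand side is symmetric in $(\mu,\nu)$, and $\lbop^{\dagger}$ is self-adjoint for $g_{\sigma}$ on $\aset_0$. Primitivity moreover makes $\lbop^{\dagger}$ a bijection of $\aset_0$ (its kernel $\Complex\sigma$ misses $\aset_0$), which I will use below.

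Next comes the one genuine computation: identifying $\mathcal{H}_{\sigma}$. At $\rho=\sigma$ one has $\rho_{\sigma}=\sigma^{1/\alpha}$, $\tr(\rho_{\sigma}^{\alpha})=1$, and the first variation of $\tr(\rho_{\sigma}^{\alpha})$ along $\mu\in\aset_0$ vanishes; differentiating the formula \eqref{eqn::fdrenyi} then gives, for $\mu\in\aset_0$,
\[
\mathcal{H}_{\sigma}(\mu)=\tfrac{\alpha}{\alpha-1}\,\Gamma_{\sigma}^{\frac{1-\alpha}{\alpha}}\!\Bigl(\tfrac{\rd}{\rd t}\big|_{t=0}\bigl(\sigma^{1/\alpha}+t\,\Gamma_{\sigma}^{\frac{1-\alpha}{\alpha}}(\mu)\bigr)^{\alpha-1}\Bigr).
\]
The crucial elementary identity is $\tfrac{\rd}{\rd t}\big|_{t=0}(X+tH)^{p}=p\,\mop{X^{p}}{}\circ\mop{X}{}^{-1}(H)$, which one verifies by comparing the divided-difference symbols of both sides, using the integral forms \eqref{eqn::kmb_op}, \eqref{eqn::inverse_mop} and the fact that $\mop{X}{}^{-1}$ is the Fr\'echet derivative of $\log$. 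Substituting it and unravelling the definition \eqref{eqn::weight_op_w} of $\mathscr{W}_{\sigma,\alpha}$ (all factors being functions of $\sigma$, hence commuting) shows $\mathcal{H}_{\sigma}\big|_{\aset_0}=\alpha\,\mathscr{W}_{\sigma,\alpha}^{-1}\big|_{\aset_0}$, the spurious multiple of $\unit$ from the normalization being annihilated on pairing with $\aset_0$. Hence $g_{\sigma}\bigl(\lbop^{\dagger}(\mu),\nu\bigr)=-\alpha\Inner{\mathscr{W}_{\sigma,\alpha}^{-1}(\mu)}{\nu}$ for $\mu,\nu\in\aset_0$; since $g_{\sigma}$ is a genuine symmetric positive-definite metric and $\lbop^{\dagger}$ is invertible on $\aset_0$, a short manipulation using the symmetry of $g_{\sigma}$, the invertibility of $\lbop^{\dagger}$ on $\aset_0$ and the self-adjointness of $\mathscr{W}_{\sigma,\alpha}^{-1}$ yields that $\mathscr{W}_{\sigma,\alpha}^{-1}\circ\lbop^{\dagger}$ is $\Inner{\cdot}{\cdot}$-self-adjoint when restricted to $\aset_0\times\aset_0$.

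Finally I would upgrade this to all of $\mset$; by $\Real$-linearity and Hermitian conjugation it suffices to treat $\aset=\aset_0\oplus\Real\unit$, and to check $\Inner{\mathscr{W}_{\sigma,\alpha}^{-1}\lbop^{\dagger}(A)}{B}=\Inner{A}{\mathscr{W}_{\sigma,\alpha}^{-1}\lbop^{\dagger}(B)}$ there. The case $A,B\in\aset_0$ is the previous step, and the pure-$\unit$ case is trivial. For $A=\unit$, $B\in\aset_0$, the facts $\lbop(\unit)=0$, $\lbop^{\dagger}(\sigma)=0$ together with the elementary identities $\mathscr{W}_{\sigma,\alpha}(\unit)=\sigma$ and $\mathscr{W}_{\sigma,\alpha}(\sigma^{-1})=\unit$ (more generally $\mathscr{W}_{\sigma,\alpha}$ acts on any function of $\sigma$ as multiplication by $\sigma$) reduce the claim to the single identity $\lbop^{\dagger}(\unit)=\mathscr{W}_{\sigma,\alpha}\bigl(\lbop(\sigma^{-1})\bigr)$; since $\mathscr{W}_{\sigma,\alpha}^{-1}\lbop^{\dagger}(\unit)-\lbop(\sigma^{-1})$ is traceless, it is then forced to vanish. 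That last identity is the counterpart, along the scaling (trace) direction that is invisible to variations within $\dsetplus$, of the equilibrium relation above, and would be obtained from the logarithmic homogeneity $\renyidivg{c\rho}{\sigma}=\frac{\alpha}{\alpha-1}\log c+\renyidivg{\rho}{\sigma}$ of the divergence (or by direct computation from \eqref{eqn::sch_lb}). Self-adjointness of $\mathscr{W}_{\sigma,\alpha}^{-1}\circ\lbop^{\dagger}$ on $\mset$ is precisely $\Inner{\lbop(A)}{B}_{\mathscr{W}_{\sigma,\alpha}}=\Inner{A}{\lbop(B)}_{\mathscr{W}_{\sigma,\alpha}}$. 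The steps I expect to be hardest are the Hessian computation with the recognition of $\mathscr{W}_{\sigma,\alpha}^{-1}$, and this trace-direction extension; the remainder is the soft fact that a linear flow which is a gradient flow is self-adjoint for the metric at its fixed point.
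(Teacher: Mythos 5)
Your proposal is essentially the paper's own argument (both adapt \cite[Theorem 2.9]{carlen_non-commutative_2018}): linearize the gradient-flow identity at the stationary state $\sigma$, use $\lbop^{\dagger}(\sigma)=0$ to kill the derivative of the metric, identify the Hessian of $\renyidivg{\cdot}{\sigma}$ at $\sigma$ with $\alpha\,\mathscr{W}_{\sigma,\alpha}^{-1}$, and conclude from the symmetry of the metric. The bookkeeping differs only mildly: the paper phrases the flow as $\lbop^{\dagger}(\rho)=-\mathscr{K}_{\rho}(\delta \renyidivg{\rho}{\sigma}/\delta\rho)$ with $\mathscr{K}_{\rho}$ the inverse metric operator (extended by $\mathscr{K}_{\rho}(\unit):=0$) and expands in $\eps$ along $\rho_\eps=\sigma+\eps A$, while you differentiate the pairing relation directly; and for the Hessian the paper invokes the double-integral derivative formula of \cite[Proposition 7.2]{carlen_analog_2014}, whereas your identity $\tfrac{\rd}{\rd t}\big|_{t=0}(X+tH)^{p}=p\,\mop{X^{p}}{}\circ\mop{X}{}^{-1}(H)$ (checked on divided-difference symbols) is an equivalent and correct shortcut. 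Your symmetry step on $\aset_0$ is sound: applying the linearized relation to the pairs $(\mu,\lbop^{\dagger}\nu)$ and $(\nu,\lbop^{\dagger}\mu)$ and using the symmetry of $g_{\sigma}$ and of $\mathscr{W}_{\sigma,\alpha}^{-1}$ already gives it, with or without invoking invertibility of $\lbop^{\dagger}$ on $\aset_0$.

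The one place where your justification would not survive scrutiny as written is the extension along the trace direction. You may not obtain $\lbop^{\dagger}(\unit)=\mathscr{W}_{\sigma,\alpha}\bigl(\lbop(\sigma^{-1})\bigr)$ ``by direct computation from \eqref{eqn::sch_lb}'': that formula is the GNS-detailed-balance form of the generator, which is precisely \emph{not} a hypothesis of this theorem; and logarithmic homogeneity of $\renyidivg{\cdot}{\sigma}$ by itself constrains nothing, since the scaling direction is not part of the manifold on which the gradient-flow hypothesis acts. The fix, however, lies entirely inside what you have already proved: since $\lbop^{\dagger}(\sigma)=0$, one has $\lbop^{\dagger}(\unit)=\lbop^{\dagger}(\unit-n\sigma)$ with $\unit-n\sigma\in\aset_0$, so for $B\in\aset_0$ the established $\aset_0$-symmetry gives $\Inner{\mathscr{W}_{\sigma,\alpha}^{-1}\lbop^{\dagger}(\unit)}{B}=\Inner{\unit-n\sigma}{\mathscr{W}_{\sigma,\alpha}^{-1}\lbop^{\dagger}(B)}=\Inner{\sigma^{-1}}{\lbop^{\dagger}(B)}=\Inner{\lbop(\sigma^{-1})}{B}$, using $\mathscr{W}_{\sigma,\alpha}(\sigma^{-1})=\unit$, $\mathscr{W}_{\sigma,\alpha}(\unit)=\sigma$ and $\tr(\lbop^{\dagger}B)=0$; together with the trace identity you noted, this forces the cross-term relation, and the pure-$\unit$ and complex-linear cases are immediate. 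The paper sidesteps this point altogether by writing $\lbop^{\dagger}(A)=-\alpha\,\mathscr{K}_{\sigma}\bigl(\mathscr{W}_{\sigma,\alpha}^{-1}(A)\bigr)$, which with the convention $\mathscr{K}_{\sigma}(\unit)=0$ is valid on all of $\aset=\aset_0\oplus\Real\sigma$, so that self-adjointness follows directly from that of $\mathscr{K}_{\sigma}$.
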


\begin{remark}
The operator $\mathscr{W}_{\sigma, \alpha}$ is a noncommutative way to multiply $\sigma$ and the condition \eqref{eqn::general_db_cond} is equivalent to
\begin{align}
\label{eqn::general_db_cond_2}
\mathscr{W}_{\sigma, \alpha} \circ \lbop \circ \mathscr{W}_{\sigma, \alpha}^{-1} = \lbop^{\dagger}.
\end{align}
\end{remark}

Motivated by the definition of GNS and KMS detailed balance conditions, we shall define a detailed balance condition based on sandwiched \renyi{} divergences, for the convenience of discussion below.
\begin{defn}[Sandwiched \renyi{} divergence (SRD) detailed balance condition]
\label{defn::srd_dbc}
For a primitive Lindblad equation with a unique stationary state $\sigma\in \dsetplus$, it is said to satisfy the \emph{sandwiched \renyi{} divergence (SRD) detailed balance condition} if the generator $\lbop$ is self-adjoint with respect to the inner product $\Inner{\cdot}{\cdot}_{\mathscr{W}_{\sigma, \alpha}}$ for \emph{any} $\alpha\in (0,\infty)$.
\end{defn}

\begin{proof}[Proof of \thmref{thm::lb_necessary}]

We first need to introduce some notations following \cite[Sec. 2.3]{carlen_non-commutative_2018}.
Define the operator $\mathscr{G}_{\rho}: \aset_0\rightarrow \aset_0$ by $\Inner{A}{\mathscr{G}_{\rho}(B)} = g_{\rho}(A, B)$; since $g_{\rho}$ is a $\Real$-valued inner product, $\mathscr{G}_{\rho}$ must be invertible and self-adjoint.
The inverse of $\mathscr{G}_{\rho}$ is denoted by $\mathscr{K}_{\rho}$, which is also self-adjoint. From \eqref{eqn::riemannian_grad}, one could easily derive that the gradient flow dynamics can be expressed by
\begin{align}
\label{eqn::rho_K_dD}
\dot{\rho}_t = -\mathscr{K}_{\rho_t} \left(\frac{\delta \renyidivg{\rho}{\sigma}}{\delta \rho}\eva_{\rho = \rho_t} \right).
\end{align}
For convenience, notice that the domain of $\mathscr{K}_{\rho}$ can be extended from $\aset_0$ to $\aset$ by setting $\mathscr{K}_{\rho}(\unit) := 0$.

 Under the assumption that $\dot{\rho}_t = \lbop^{\dagger}(\rho_t)$ is the gradient flow dynamics of the sandwiched \renyi{} divergence $\renyidivg{\rho}{\sigma}$ and by \eqref{eqn::rho_K_dD},
\begin{align}
\label{eqn::lbop_K_op}
\lbop^{\dagger}(\rho) = -\mathscr{K}_{\rho} \left(\frac{\delta \renyidivg{\rho}{\sigma}}{\delta \rho} \right).
\end{align}
Choose $\rho_{\eps} = \sigma + \eps A$ with $A\in \aset_0$ and let $\eps > 0$ small enough such that $\rho_{\eps} \in \dsetplus$.
Then
\begin{align*}
\lbop^{\dagger}(A) &= \frac{\lbop^{\dagger}(\rho_{\eps}) - \lbop^{\dagger}(\sigma)}{\eps} = \lim_{\eps\downarrow 0^{+}} \frac{\ud }{\ud \eps} \lbop^{\dagger}(\rho_{\eps})
= - \lim_{\eps\downarrow 0^{+}} \frac{\ud}{\ud \eps} \mathscr{K}_{\rho_{\eps}} \left(\frac{\delta \renyidivg{\rho}{\sigma}}{\delta \rho} \eva_{\rho = \rho_{\eps}}\right) \\
&\myeq{\eqref{eqn::fdrenyi}} - \frac{\alpha}{\alpha-1}\lim_{\eps\downarrow 0^{+}} \frac{\ud}{\ud \eps} \left( \left(\mathscr{K}_{\sigma} + \eps (\delta \mathscr{K}_{\sigma}) + \mathcal{O}(\eps^2) \right) \left( \frac{\Gamma_{\sigma}^{\frac{1-\alpha}{\alpha}}(\rho_{\sigma}^{\alpha-1})  }{\tr\left(\rho_{\sigma}^{\alpha}\right)}\eva_{\rho = \rho_{\eps}}\right)\right) \\
&= - \frac{\alpha}{\alpha-1}\lim_{\eps\downarrow 0^{+}} \frac{\ud}{\ud \eps} \left( \left(\mathscr{K}_{\sigma} + \eps (\delta \mathscr{K}_{\sigma}) + \mathcal{O}(\eps^2) \right) \left( \frac{ \unit + \eps B + \mathcal{O}(\eps^2) }{1 + \mathcal{O}(\eps)}\right)\right) \\
&= -\frac{\alpha}{\alpha-1} \mathscr{K}_{\sigma} (B),
\end{align*}
where the term $B$ in the expansion is (see \eg, \cite[Proposition 7.2]{carlen_analog_2014})
\begin{align*}
B &= \frac{\ud}{\ud \eps}\eva_{\eps=0} \Gamma_{\sigma}^{\frac{1-\alpha}{\alpha}} \left(\left(\Gamma_{\sigma}^{\frac{1-\alpha}{\alpha}}(\rho_{\eps})\right)^{\alpha-1}\right)\\
&=\Gamma_{\sigma}^{\frac{1-\alpha}{\alpha}}\left( \int_{0}^{1} \int_{0}^{\alpha-1} \frac{\sigma^{\frac{\alpha-1-\beta}{\alpha}}}{(1-s)\unit + s \sigma^{\frac{1}{\alpha}}} \Gamma_{\sigma}^{\frac{1-\alpha}{\alpha}}(A) \frac{\sigma^{\frac{\beta}{\alpha}}}{(1-s)\unit + s \sigma^{\frac{1}{\alpha}}} \ud \beta \ud s\right) \\
&= (\alpha-1) \int_{0}^{1} \int_{0}^{1} \frac{\bigl(\sigma^{\frac{\alpha-1}{\alpha}}\bigr)^{1-\beta}}{(1-s)\unit + s \sigma^{\frac{1}{\alpha}}} \Gamma_{\sigma}^{\frac{2(1-\alpha)}{\alpha}}(A) \frac{\bigl(\sigma^{\frac{\alpha-1}{\alpha}}\bigr)^{\beta}}{(1-s)\unit + s \sigma^{\frac{1}{\alpha}}} \ud \beta \ud s \\
&= (\alpha-1) \mop{\sigma^{\frac{\alpha-1}{\alpha}}}{} \circ \mop{\sigma^{\frac{1}{\alpha}}}{}^{-1} \circ \Gamma_{\sigma}^{\frac{2(1-\alpha)}{\alpha}} (A) = (\alpha-1) \mathscr{W}_{\sigma, \alpha}^{-1}(A).
\end{align*}
To get the third line, we change the variable $\beta$ by $(\alpha-1)\beta$; in the fourth line, we use expressions \eqref{eqn::kmb_op} and \eqref{eqn::inverse_mop}. Note that all three operators in the fourth line above pairwise commute. By combining the last two equations,
\begin{align}
\lbop^{\dagger}(A) = -\alpha \mathscr{K}_{\sigma} \left(\mathscr{W}_{\sigma, \alpha}^{-1}(A)\right).
\end{align}
Therefore, for any $C, D\in \aset_0$,
\begin{align*}
\Inner{\lbop(C)}{D}_{\mathscr{W}_{\sigma, \alpha}} = \Inner{C}{\lbop^{\dagger}\circ \mathscr{W}_{\sigma, \alpha} (D)} = -\alpha \Inner{C}{\mathscr{K}_{\sigma}(D)},
\end{align*}
and similarly,
\begin{align*}
\Inner{C}{\lbop(D)}_{\mathscr{W}_{\sigma, \alpha}} = (-\alpha) \Inner{\mathscr{K}_{\sigma}(C)}{D}.
\end{align*}

As mentioned earlier at the beginning of this proof, the operator $\mathscr{K}_{\sigma}$ is self-adjoint. Thus $\lbop$ is self-adjoint with respect to the inner product $\Inner{\cdot}{\cdot}_{\mathscr{W}_{\sigma, \alpha}}$.
\end{proof}

\subsection{Decomposition of the operator $\mathscr{W}_{\sigma, \alpha}$}
\label{sec::necessity_decomposition}

To understand better the operator $\mathscr{W}_{\sigma, \alpha}$, we would like to study its decomposition and various special cases.
Suppose the eigenvalue decomposition of $\sigma$ is $\sigma = \sum_{k} \lambda_k \Ket{\psi_k} \Bra{\psi_k}$ where $\lambda_k > 0$ for all $k$.
When $\alpha\neq 1$, the expansion of \eqref{eqn::weight_op_w} in terms of eigenbasis of $\sigma$ leads into
\begin{align}
\mathscr{W}_{\sigma, \alpha}(\cdot) = \sum_{k, j} f_{k, j}^{\alpha} \Ket{\psi_k} \Bra{\psi_k} \cdot \Ket{\psi_j} \Bra{\psi_j},
\end{align}
where the coefficients $f_{k, j}^{\alpha}$ are given by
\begin{equation}
\label{eqn::coef_f_in_W}
f_{k, j}^{\alpha} = \left\{\begin{split}
 \lambda_k,  & \qquad \lambda_k = \lambda_j; \\
 (\alpha-1)  \frac{\lambda_k^{\frac{1}{\alpha}} - \lambda_j^{\frac{1}{\alpha}}}{\lambda_j^{\frac{1-\alpha}{\alpha}} - \lambda_k^{\frac{1-\alpha}{\alpha}}}, & \qquad \lambda_k \neq \lambda_j. \\
\end{split}\right.
\end{equation}

Let us consider a few special weight operators $\mathscr{W}_{\sigma, \alpha}$:
\begin{itemize}
\item ($\alpha=1$). The operator $\mathscr{W}_{\sigma, \alpha}$ reduces to $\mop{\sigma}{}(\cdot) = \int_{0}^{1} \sigma^{1-s}\cdot \sigma^{s}\ud s$ and the inner product $\Inner{\cdot}{\cdot}_{\mathscr{W}_{\sigma, \alpha}}$ is known as the \emph{BKM inner product}. As a remark, this case has been shown in \cite[Theorem 2.9]{carlen_non-commutative_2018}. 

\item ($\alpha=2$). The operator $\mathscr{W}_{\sigma, \alpha}$ reduces to $\Gamma_{\sigma}$ and the inner product $\Inner{\cdot}{\cdot}_{\mathscr{W}_{\sigma, \alpha}}$ is KMS inner product. This immediately implies that in the context of the gradient flow of sandwiched \renyi{} divergences, one cannot work on a class of Lindblad equations larger than the one with KMS detailed balance condition.
\end{itemize}

Even though $\alpha\in (0,\infty)$ in the Definition \ref{defn::srd_dbc}, we can still apply the limiting argument to define $\mathscr{W}_{\sigma, \infty}$ and $\mathscr{W}_{\sigma, 0}$, so that we could better understand the SRD detailed balance condition.

\begin{itemize}

\item ($\alpha=\infty$). As $\alpha\rightarrow\infty$, $\sigma^{\frac{1}{\alpha}}\rightarrow \unit$ and $\sigma^{\frac{\alpha-1}{\alpha}}\rightarrow \sigma$. As a result, the operator $\mathscr{W}_{\sigma, \alpha}$ converges to $\mathscr{W}_{\sigma, \infty} := \mop{\sigma}{}^{-1} \circ \Gamma_{\sigma}^2$.

\item ($\alpha = 0$). This case is slightly more subtle, since $\sigma^{\frac{\alpha-1}{\alpha}}$ will blow up.
When $\lambda_k\neq \lambda_j$, as $\alpha\downarrow 0$, one could show that $f_{k, j}^{\alpha} \rightarrow \max(\lambda_k, \lambda_j)$. Therefore,
\begin{align*}
\mathscr{W}_{\sigma, 0}(\cdot) = \sum_{k, j} \max(\lambda_k, \lambda_j)  \Ket{\psi_k} \Bra{\psi_k}\cdot \Ket{\psi_j} \Bra{\psi_j}.
\end{align*}
\end{itemize}

\subsection{Relations between GNS, SRD, KMS detailed balance conditions}
\label{sec::necessity_relation}
\begin{prop}
\label{prop::gns_srd_kms}
For a primitive Lindblad equation with the unique stationary state $\sigma\in \dsetplus$,
\begin{enumerate}
\item GNS detailed balance condition implies SRD detailed balance condition.

\item SRD detailed balance condition implies KMS detailed balance condition, while conversely it is generally not true.
\end{enumerate}
\end{prop}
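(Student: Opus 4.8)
The plan is to reduce both implications to the equivalent form \eqref{eqn::general_db_cond_2} of the general detailed balance condition and to exploit that, up to the factor $\Gamma_{\sigma}$, the weight operator $\mathscr{W}_{\sigma,\alpha}$ is a function of the modular operator $\Delta_{\sigma}$. Writing $\sigma = \sum_{k}\lambda_{k}\Ket{\psi_{k}}\Bra{\psi_{k}}$ and reading off the coefficients in \eqref{eqn::coef_f_in_W}, one checks $f^{\alpha}_{k,j} = \sqrt{\lambda_{k}\lambda_{j}}\,h_{\alpha}(\lambda_{k}/\lambda_{j})$ for a scalar multiplier $h_{\alpha}$ with $h_{\alpha}(1)=1$ and $h_{\alpha}(t)=h_{\alpha}(1/t)$, so that $\mathscr{W}_{\sigma,\alpha} = \Gamma_{\sigma}\circ h_{\alpha}(\Delta_{\sigma}) = h_{\alpha}(\Delta_{\sigma})\circ\Gamma_{\sigma}$ (both factors are diagonal in the basis $\{\Ket{\psi_{k}}\Bra{\psi_{j}}\}$, hence commute). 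From \secref{sec::necessity_decomposition} we have $h_{2}\equiv 1$, i.e.\ $\mathscr{W}_{\sigma,2}=\Gamma_{\sigma}$, and $h_{1}(t)=(\sqrt{t}-1/\sqrt{t})/\log t$, which is even under $t\mapsto 1/t$ and strictly increasing on $[1,\infty)$.

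For part (1) the quickest argument is: if the Lindblad equation has GNS-detailed balance, then for every $\alpha\in(0,\infty)$ it is, by \thmref{thm::renyi_gradient}, the gradient flow of $\renyidivg{\rho}{\sigma}$ with respect to the metric tensor of Definition~\ref{def::metric_tensor_renyi}, which is continuously differentiable in $\rho$ (since $\rho\mapsto\moprenyi{\rho}{\vect{\omega}}$ and $\rho\mapsto\diffoptilde{\rho}{\vect{\omega}}$ are smooth and the latter is invertible on $\aset_{0}$); \thmref{thm::lb_necessary} then yields that $\lbop$ is self-adjoint with respect to $\Inner{\cdot}{\cdot}_{\mathscr{W}_{\sigma,\alpha}}$ for each such $\alpha$, which is precisely Definition~\ref{defn::srd_dbc}. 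Alternatively — and this is the proof I would write out in full — one argues directly: GNS-detailed balance gives $\Comm{\lbop}{\Delta_{\sigma}}=0$ by \lemref{lem::commute_L_modular}, hence $\Comm{\lbop}{h_{\alpha}(\Delta_{\sigma})}=0$, and it also implies KMS-detailed balance, i.e.\ $\lbop^{\dagger} = \Gamma_{\sigma}\circ\lbop\circ\Gamma_{\sigma}^{-1}$ by \eqref{eqn::lbop_lbop_dag}; combining, $\mathscr{W}_{\sigma,\alpha}\circ\lbop\circ\mathscr{W}_{\sigma,\alpha}^{-1} = \Gamma_{\sigma}\circ h_{\alpha}(\Delta_{\sigma})\circ\lbop\circ h_{\alpha}(\Delta_{\sigma})^{-1}\circ\Gamma_{\sigma}^{-1} = \Gamma_{\sigma}\circ\lbop\circ\Gamma_{\sigma}^{-1} = \lbop^{\dagger}$, which is \eqref{eqn::general_db_cond_2}.

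For part (2), the forward implication is immediate: the SRD condition includes self-adjointness with respect to $\Inner{\cdot}{\cdot}_{\mathscr{W}_{\sigma,2}}$, and since $\mathscr{W}_{\sigma,2}=\Gamma_{\sigma}$ one has $\Inner{A}{B}_{\mathscr{W}_{\sigma,2}} = \tr(A^{*}\sigma^{1/2}B\sigma^{1/2}) = \Inner{A}{B}_{1/2}$, the $s=1/2$ inner product of \eqref{eqn::inner_s}; hence $\lbop$ is self-adjoint for $\Inner{\cdot}{\cdot}_{1/2}$, which is KMS-detailed balance. For the converse, I would first record the reformulation that, given KMS-detailed balance, the SRD condition is equivalent to $\Comm{\lbop}{\Delta_{\sigma}+\Delta_{\sigma}^{-1}}=0$: using $\lbop^{\dagger}=\Gamma_{\sigma}\circ\lbop\circ\Gamma_{\sigma}^{-1}$, self-adjointness with respect to $\Inner{\cdot}{\cdot}_{\mathscr{W}_{\sigma,\alpha}}$ is equivalent to $\Comm{\lbop}{h_{\alpha}(\Delta_{\sigma})}=0$, and because $h_{1}$ is even and strictly monotone on $[1,\infty)$ the single equality $\Comm{\lbop}{h_{1}(\Delta_{\sigma})}=0$ already forces commutation with $\Delta_{\sigma}+\Delta_{\sigma}^{-1}$, which conversely yields all $\Comm{\lbop}{h_{\alpha}(\Delta_{\sigma})}=0$. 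It then suffices to exhibit a primitive Lindblad equation with KMS-detailed balance whose generator does not commute with $\Delta_{\sigma}+\Delta_{\sigma}^{-1}$; the natural construction uses a three-level system with $\sigma=\mathrm{diag}(\lambda_{1},\lambda_{2},\lambda_{3})$ chosen so that the modular spectrum contains two non-conjugate ratios, together with jump operators that are not $\Delta_{\sigma}$-eigenvectors (so that the dissipator moves population between the associated modular sectors) but arranged so that $\lbop$ remains self-adjoint for $\Inner{\cdot}{\cdot}_{1/2}$ and $\mathrm{Ker}(\lbop)=\CC\unit$.

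The main obstacle is exactly this last step — producing a concrete generator that is KMS-symmetric yet breaks the stronger modular symmetry. I would handle it by a finite direct verification of $\Gamma_{\sigma}^{-1}\circ\lbop^{\dagger}\circ\Gamma_{\sigma}=\lbop$ on a matrix basis (or by invoking the structural classification of KMS-detailed-balance generators, cf.\ \cite{fagnola_generators_2007,fagnola_generators_2010}), followed by exhibiting a modular eigenmatrix, e.g.\ $\Ket{\psi_{2}}\Bra{\psi_{3}}$, whose image under $\lbop^{\dagger}$ has a nonzero component in a modular sector disjoint from $\{\lambda_{2}/\lambda_{3},\lambda_{3}/\lambda_{2}\}$, which witnesses the failure of $\Comm{\lbop}{\Delta_{\sigma}+\Delta_{\sigma}^{-1}}=0$ and hence of the SRD condition. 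Everything else is bookkeeping around \thmref{thm::lb_necessary}, \lemref{lem::commute_L_modular}, and the spectral form of $\mathscr{W}_{\sigma,\alpha}$.
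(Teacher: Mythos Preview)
Your treatment of part~(1) is correct. The paper's own proof is exactly your ``quickest'' route --- it combines \thmref{thm::renyi_gradient} with \thmref{thm::lb_necessary} in one line --- so your direct algebraic alternative (using $\Comm{\lbop}{\Delta_{\sigma}}=0$ from \lemref{lem::commute_L_modular} together with the factorization $\mathscr{W}_{\sigma,\alpha}=\Gamma_{\sigma}\circ h_{\alpha}(\Delta_{\sigma})$) is a genuinely different and arguably cleaner argument, since it bypasses the gradient-flow machinery entirely. The forward implication in part~(2) also matches the paper verbatim.

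Your reformulation for the converse in part~(2) --- that under KMS-detailed balance, the SRD condition is equivalent to $\Comm{\lbop}{\Delta_{\sigma}+\Delta_{\sigma}^{-1}}=0$ --- is correct and more structural than anything in the paper. In fact the paper explicitly leaves open whether SRD is strictly weaker than GNS (see the remark following \propref{prop::gns_srd_kms}), and your commutation criterion is the characterization one would want to settle that.

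That said, there is a real gap: you do not actually construct the counterexample, and you flag this yourself as the main obstacle. The paper does construct one, and in \emph{two} dimensions rather than three --- the primitive KMS-symmetric qubit generator from \cite[Appendix~B]{carlen_gradient_2017}, built as $\lbop=\wt{\mathscr{K}}\mathscr{K}-\id_{2}$ from the jump operators $K_{1}=\Ket{\psi}\Bra{0}$, $K_{2}=\Ket{\phi}\Bra{1}$. Your instinct that three levels are needed to obtain ``two non-conjugate modular ratios'' is unnecessary: already for $n=2$ with $\sigma$ non-degenerate, $\Delta_{\sigma}+\Delta_{\sigma}^{-1}$ has the two distinct eigenvalues $2$ (on the commutant of $\sigma$) and $r+r^{-1}$ (on the off-diagonal block), and the Carlen--Maas generator mixes those eigenspaces. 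So your proposed construction can be simplified considerably. Note also that the paper's final check is numerical (plotting $\Norm{\mathscr{W}_{\sigma,\alpha}\circ\lbop\circ\mathscr{W}_{\sigma,\alpha}^{-1}-\lbop^{\dagger}}_{\text{Tr}}$ against $\alpha$), whereas your criterion $\Comm{\lbop}{\Delta_{\sigma}+\Delta_{\sigma}^{-1}}\neq 0$ would, once applied to that explicit $\lbop$, yield an analytic verification.
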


\begin{remark}
\begin{enumerate}
\item This suggests that the class of Lindblad equations with GNS detailed balance considered in \thmref{thm::renyi_gradient} seems to be general enough, because it is impossible to generalize \thmref{thm::renyi_gradient} to the class of Lindblad equations with KMS detailed balance by \propref{prop::gns_srd_kms}. More specifically, as we shall show later in the proof of \propref{prop::gns_srd_kms}, there exists at least one primitive Lindblad equation that satisfies KMS detailed balance condition, but does not satisfy SRD detailed balance condition (in particular, $\mathscr{W}_{\sigma, \alpha} \circ \lbop \circ \mathscr{W}_{\sigma, \alpha}^{-1} \neq \lbop^{\dagger}
$ for any $\alpha\neq 2$); because SRD detailed balance condition is necessary for having a gradient flow structure, this particular Lindblad equation cannot be expressed as the gradient flow dynamics of $\renyidivg{\rho}{\sigma}$ for any $\alpha \neq 2$.

\item It is still unknown whether the SRD detailed balance condition is equivalent to the GNS detailed balance condition or not. Thus it might be interesting to characterize Lindblad equations with SRD detailed balance, though this task seems to be rather technical.
\end{enumerate}
\end{remark}

\begin{proof}[Proof of \propref{prop::gns_srd_kms}]
The first statement comes immediately by combining \thmref{thm::renyi_gradient} and \thmref{thm::lb_necessary}.
As for the second statement, since the SRD detailed balance condition contains KMS detailed balance condition as a special instance when $\alpha=2$, obviously SRD implies KMS. In the following, we provide a primitive two-level Lindblad equation with KMS detailed balance from \cite[Appendix B]{carlen_gradient_2017} to demonstrate that KMS does not imply SRD.

Let us consider $K_1 = \Ket{\psi} \Bra{0}$ and $K_2 = \Ket{\phi} \Bra{1}$ where $\Ket{\psi}= \frac{1}{\sqrt{2}} \left(\Ket{0} + \Ket{1}\right)$ and $\Ket{\phi} = \frac{1}{\sqrt{5}} \left(\Ket{0} + 2\Ket{1}\right)$.

\begin{itemize}
\item Define an operator $\mathscr{K}$ by $\mathscr{K}(A) := K_1^* A K_1 + K_2^* A K_2$. Thus $\mathscr{K}$ is CP and $\mathscr{K}(\unit_2) = \unit_2$.

\item The adjoint operator
\begin{align*}
\mathscr{K}^{\dagger}(A) = K_1 A K_1^* + K_2 A K_2^* = \Ket{\psi}\Bra{0} A \Ket{0} \Bra{\psi} + \Ket{\phi}\Bra{1} A \Ket{1}\Bra{\phi}
\end{align*}
is also CP. Note that $\mathscr{K}^{\dagger} (\Ket{0} \Bra{1}) = \mathscr{K}^{\dagger} (\Ket{1} \Bra{0}) = 0$ and $\mathscr{K}^{\dagger}(\Ket{0}\Bra{0}) = \Ket{\psi}\Bra{\psi}$ and $\mathscr{K}^{\dagger}(\Ket{1}\Bra{1}) = \Ket{\phi} \Bra{\phi}$.
It can be readily verified that there is one and only one eigenstate for $\mathscr{K}^{\dagger}$ associated with eigenvalue $1$, which is
\begin{align*}
\sigma = \frac{1}{7} \begin{bmatrix} 2 & 3 \\ 3 & 5 \end{bmatrix}.
\end{align*}
The other non-zero eigenvalue of $\mathscr{K}^{\dagger}$ is $3/10$.

\item Define $\wt{K}_j = \sigma^{1/2} K_j^* \sigma^{-1/2}$ for $j=1, 2$ and define a CP operator $\wt{\mathscr{K}}$ via $\wt{\mathscr{K}}(A) = \wt{K}^*_1 A \wt{K}_1 + \wt{K}^*_2 A \wt{K}_2$. Then $\wt{\mathscr{K}} (\unit_2) = \unit_2$ and $\wt{\mathscr{K}}^{\dagger}(\sigma) = \sigma$; furthermore, we can verify that
\begin{align*}
\Inner{\wt{\mathscr{K}} B}{A}_{1/2} = \Inner{B}{\mathscr{K} A}_{1/2}.
\end{align*}
Hence,
the QMS $\qms_t = e^{t \lbop}$ with $\lbop := \wt{\mathscr{K}} \mathscr{K} - \id_2$ can be readily verified to satisfy the KMS detailed balance condition. Moreover, it is easy to show $\lbop^{\dagger}(\sigma) = 0$ and one could verify that it is the only eigenvector of $\lbop^{\dagger}$ with eigenvalue $0$, \ie, the Lindblad equation is primitive.

\end{itemize}

Next, we numerically show that \eqref{eqn::general_db_cond_2} does not hold. \figref{fig::wopLwopinv_Ldag} plots the trace-norm of $\mathscr{W}_{\sigma, \alpha} \circ \lbop \circ \mathscr{W}_{\sigma, \alpha}^{-1} -\lbop^{\dagger}$ for various $\alpha$;
from this figure, it is clear that \eqref{eqn::general_db_cond_2} holds only when $\alpha = 2$ (\ie, KMS detailed balance condition).
\end{proof}

\begin{figure}[h!]
\includegraphics[width=0.6\textwidth]{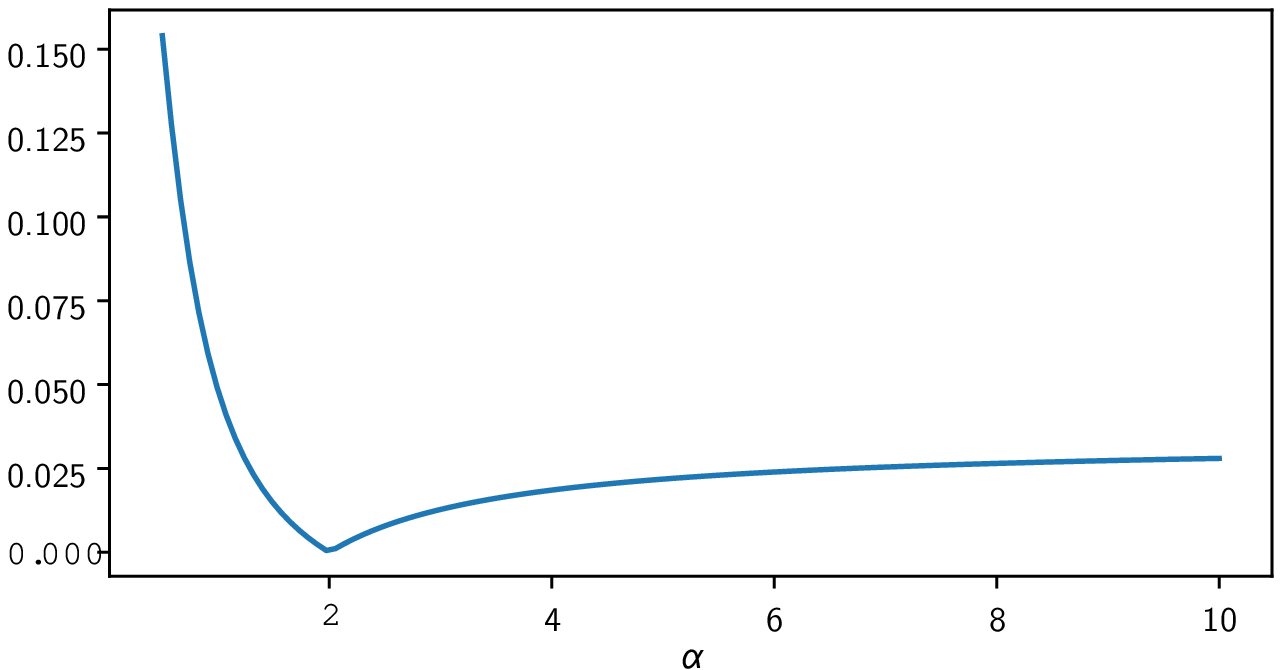}
\caption{$\Norm{ \mathscr{W}_{\sigma, \alpha} \circ \lbop \circ \mathscr{W}_{\sigma, \alpha}^{-1} -\lbop^{\dagger}}_{\text{Tr}}$ with respect to various $\alpha$}
\label{fig::wopLwopinv_Ldag}
\end{figure}

\section{Exponential decay of sandwiched \renyi{} divergences}
\label{sec::exp_decay}

This section is devoted to proving
\thmref{thm::decay_quantum_renyi}, \ie, the exponential
decay of sandwiched \renyi{} divergences for primitive Lindblad equations with GNS-detailed balance.
We start by recalling the definition of a spectral gap and then we prove a \poin{} inequality
(see \propref{prop::q_poin_2}).  With this \poin{} inequality, we
 derive a uniform lower bound of the quantum relative $2$-Fisher information, which
 immediately implies
\thmref{thm::decay_quantum_renyi} in the case $\alpha=2$.
Next we prove the \propref{prop::primitive_lsi}, which shows that for primitive \lb{} equations with GNS-detailed balance, the quantum LSI \eqref{eqn::lsi_alpha} holds for $\alpha = 1$ (\ie, there exists $K > 0$).
Then we prove a quantum comparison theorem
(see \propref{prop::comparison}), which implies that the exponential
decay rates of sandwiched \renyi{} divergences along the Lindblad equation are the same for all $\alpha > 1$. This together with the
monotonicity of sandwiched \renyi{} divergences with respect to the order
$\alpha$ concludes the proof of \thmref{thm::decay_quantum_renyi}.

\subsection{Spectral gap and \poin{} inequality}

Let us recall from \secref{sec::preliminaries} that if the \lb{} equation
satisfies GNS-detailed balance, then it also satisfies KMS-detailed balance
(\ie, $\lbop$ is self-adjoint with respect to the inner product
$\Inner{\cdot}{\cdot}_{1/2}$).
Moreover, by \lemref{lem::lb_quadratic}, $-\lbop$ is a positive semi-definite operator with respect to the inner product $\Inner{\cdot}{\cdot}_{1/2}$.
By the additional assumption that the \lb{} equation is primitive, we know $\unit$ is the only eigenvector of $-\lbop$ with respect to the eigenvalue zero. Hence, spectral theory shows that
there exists an orthonormal basis $\{L_1, L_2, \cdots, L_{n^2-1}, \unit\}$
such that
\begin{align}
\label{eqn::lbop_eig}
-\lbop (L_j) = \theta_j L_j, \qquad \theta_{1}\ge \theta_{2}\ge \cdots \ge \theta_{n^2-1} > 0;\qquad -\lbop(\unit) = 0.
\end{align}
The spectral gap of the operator $-\mathcal{L}$ is $\lbopgap := \theta_{n^2-1}$. It is worth
remarking that $\{L_1, L_2, \cdots, L_{n^2-1}, \unit\}$ is an orthonormal basis in the
Hilbert space $\mset$ equipped with the inner product $\Inner{\cdot}{\cdot}_{1/2}$, but not necessarily with other inner products.
The following \poin{} inequality follows directly from the definition of the spectral gap.

\begin{prop}[\poin{} inequality]
	\label{prop::q_poin_2}
	Assume that the primitive \lb{} equation satisfies GNS-detailed balance \eqref{eqn::lb}.
	For any $A\in \mset$ such that $\Inner{\unit}{A}_{1/2} = 0$ (or equivalently, $tr(\sigma A) = 0$),
	\begin{equation}
	\label{eqn::poin_2}
	\Inner{\nabla A}{\nabla A}_{1/2} = \Inner{A}{-\lbop (A)}_{1/2} \ge \lbopgap \Inner{A}{A}_{1/2}.
	\end{equation}
	Moreover, the equality can be achieved when $A = L_{n^2-1}$.
\end{prop}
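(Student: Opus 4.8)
The plan is to read off the inequality from the spectral decomposition of $-\lbop$ in the Hilbert space $(\mset,\Inner{\cdot}{\cdot}_{1/2})$. The first identity in \eqref{eqn::poin_2} is exactly \lemref{lem::lb_quadratic}, so it remains to establish the lower bound $\Inner{A}{-\lbop(A)}_{1/2}\ge \lbopgap\,\Inner{A}{A}_{1/2}$ for every $A$ with $\Inner{\unit}{A}_{1/2}=0$. First I would note that the two conditions in the statement are indeed equivalent: since $\unit^{*}=\unit$ and $\Gamma_{\sigma}(\unit)=\sigma$, one has $\Inner{\unit}{A}_{1/2}=\tr(\sigma^{1/2}\unit\,\sigma^{1/2}A)=\tr(\sigma A)$. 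Next, GNS-detailed balance implies KMS-detailed balance, so $-\lbop$ is self-adjoint on $(\mset,\Inner{\cdot}{\cdot}_{1/2})$ by \eqref{eqn::lbop_lbop_dag}, and by \lemref{lem::lb_quadratic} it is positive semi-definite there; primitivity forces $\unit$ to span its kernel. Hence the spectral theorem yields the orthonormal eigenbasis $\{L_1,\dots,L_{n^2-1},\unit\}$ with eigenvalues as in \eqref{eqn::lbop_eig}, which is precisely the content recorded just before the proposition.

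Given this, the proof is a one-line computation. Writing $A=\sum_{j=1}^{n^2-1}a_j L_j + a_0\unit$ with $a_j=\Inner{L_j}{A}_{1/2}$ and $a_0=\Inner{\unit}{A}_{1/2}$, the hypothesis gives $a_0=0$, so $A=\sum_{j=1}^{n^2-1}a_jL_j$. Applying $-\lbop$ and using orthonormality,
\[
\Inner{A}{-\lbop(A)}_{1/2}=\sum_{j=1}^{n^2-1}\theta_j\,\abs{a_j}^2\ \ge\ \theta_{n^2-1}\sum_{j=1}^{n^2-1}\abs{a_j}^2=\lbopgap\,\Inner{A}{A}_{1/2},
\]
which is the desired bound. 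For the equality claim, I would simply take $A=L_{n^2-1}$: it is orthogonal to $\unit$ in $\Inner{\cdot}{\cdot}_{1/2}$ (hence admissible) and satisfies $-\lbop(L_{n^2-1})=\lbopgap L_{n^2-1}$, so both sides of \eqref{eqn::poin_2} equal $\lbopgap\,\Inner{L_{n^2-1}}{L_{n^2-1}}_{1/2}$. More generally, the same computation shows equality holds precisely when $A$ is supported on the eigenspace of $-\lbop$ associated with $\lbopgap$.

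There is no serious obstacle here; the only points requiring a little care are that $\Inner{\cdot}{\cdot}_{1/2}$ is a genuine (complex) inner product on $\mset$ — so that the spectral theorem applies and the eigenbasis is orthonormal with respect to it, even for non-Hermitian $A$ — and that $-\lbop$ is self-adjoint for this inner product. Both facts are already available from \secref{sec::preliminaries} (GNS $\Rightarrow$ KMS detailed balance, together with \lemref{lem::lb_quadratic}), so the argument is entirely routine once the spectral data \eqref{eqn::lbop_eig} is in place.
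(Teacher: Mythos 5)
Your proof is correct and follows essentially the same route as the paper's: expand $A$ in the orthonormal eigenbasis $\{L_j,\unit\}$ of $-\lbop$ in $(\mset,\Inner{\cdot}{\cdot}_{1/2})$ from \eqref{eqn::lbop_eig}, use orthogonality to $\unit$ to drop the kernel component, bound by the smallest nonzero eigenvalue, and invoke \lemref{lem::lb_quadratic} for the gradient identity; your version just spells out the spectral computation and the equality case (with $L_{n^2-1}$, which also fixes a slip in the paper's proof where the equality matrix is written as $V_{n^2-1}$) more explicitly.
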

\begin{proof}
		For any $A\in \mset$ such that $\Inner{\unit}{A}_{1/2} = 0$, we know that $A\in \text{span}\{L_j\}_{j=1}^{n^2-1}$. Thus by the definition of the spectral gap,
		$
		\Inner{A}{-\lbop (A)}_{1/2} \ge \lbopgap \Inner{A}{A}_{1/2}.
		$
		 Moreover, when $A = V_{n^2-1}$, the equality holds.
		Then \eqref{eqn::poin_2} follows immediately from \eqref{eqn::lb_quadratic}.
\end{proof}

\begin{remark}
In fact, the above \poin{} inequality is a special case of a whole family of \poin{} inequalities (see  \cite[Eq. (51)]{rouze_concentration_2019}): given any function $\varphi: (0,\infty)\rightarrow (0,\infty)$, we could  consider
\begin{align*}
C_{\varphi} \Inner{A}{\wt{\varphi}(\Delta_{\sigma})(A)}_{1/2}  &\equiv C_{\varphi} \Inner{A}{\varphi(\Delta_{\sigma}) (A)}_{1} \\
&\le \Inner{A}{\varphi(\Delta_{\sigma})\circ (-\lbop) (A)}_{1}
\equiv \Inner{A}{\wt{\varphi}(\Delta_{\sigma}) \circ (-\lbop) (A)}_{1/2},
\end{align*}
for all $A\in \mset$ such that $\tr(\sigma A) = 0$, where $\wt{\varphi}(x) := x^{-1/2}\varphi(x)$.
The inequality in \eqref{eqn::poin_2} simply refers to the case $\varphi(x) = x^{1/2}$.
Because $\Comm{-\lbop}{\Delta_{\sigma}} = 0$ (see \lemref{lem::commute_L_modular}), $-\lbop$ and $\Delta_{\sigma}$ are simultaneously diagonalizable; \revise{thus we might as well choose $L_j$ \eqref{eqn::lbop_eig} in a way that all $L_j$ are eigenvectors of both $-\lbop$ and $\Delta_{\sigma}$}.
Also note that both $-\lbop$ and $\Delta_{\sigma}$ are positive semi-definite operators, so they have non-negative spectrum.
By expanding $A =\sum_{j=1}^{n^2-1} a_j L_j$, it is not hard to show that $C_{\varphi} = \lbopgap$ in our situation. Therefore, all such \poin{} inequalities have the same prefactor $C_{\varphi}$ for primitive \lb{} equations with GNS-detailed balance.
\end{remark}

\subsection{Proof of \thmref{thm::decay_quantum_renyi} for the case $\alpha=2$}
\label{sec::alpha=2}

\begin{prop}[Lower bound of $\relafisheralpha{\rho}{\sigma}{2}$]
	\label{prop::fisher_bound_2}
	Assume that the primitive \lb{} equation \eqref{eqn::lb} satisfies GNS-detailed balance.
	Then
	\begin{equation}
	\label{eqn::fisher_bound_2}
	\relafisheralpha{\rho}{\sigma}{2} \ge  2\lbopgap \left(1 - e^{-\renyidivgalpha{\rho}{\sigma}{2}} \right),\ \forall \rho \in \dset.
	\end{equation}
\end{prop}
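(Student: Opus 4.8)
The plan is to rewrite $\relafisheralpha{\rho}{\sigma}{2}$ in terms of the inner product $\Inner{\cdot}{\cdot}_{1/2}$ and then reduce the estimate to the \poin{} inequality \propref{prop::q_poin_2}. First I would specialize the formulas of \lemref{lem::fd} to $\alpha=2$: there $\rho_{\sigma}=\sigma^{-1/4}\rho\sigma^{-1/4}$, so that
\[
\renyidivgalpha{\rho}{\sigma}{2}=\log\tr(\rho_{\sigma}^{2})=\log\tr\bigl(\sigma^{-1/2}\rho\sigma^{-1/2}\rho\bigr),\qquad \fdrenyidivg{\rho}{\sigma}=\frac{2\,\Gamma_{\sigma}^{-1/2}(\rho_{\sigma})}{\tr(\rho_{\sigma}^{2})}=\frac{2\,\sigma^{-1/2}\rho\sigma^{-1/2}}{\tr(\rho_{\sigma}^{2})}.
\]
Writing $A:=\Gamma_{\sigma}^{-1}(\rho)=\sigma^{-1/2}\rho\sigma^{-1/2}$, which is Hermitian and satisfies $\tr(\sigma A)=\tr(\rho)=1$, we get $\relafisheralpha{\rho}{\sigma}{2}=-\frac{2}{\tr(\rho_{\sigma}^{2})}\Inner{A}{\lbop^{\dagger}(\rho)}$.

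Next I would convert $\Inner{A}{\lbop^{\dagger}(\rho)}$ into a quadratic form in the $\Inner{\cdot}{\cdot}_{1/2}$ inner product. Since GNS-detailed balance implies KMS-detailed balance, i.e.\ $\Gamma_{\sigma}^{-1}\circ\lbop^{\dagger}\circ\Gamma_{\sigma}=\lbop$ by \eqref{eqn::lbop_lbop_dag}, and since $\rho=\Gamma_{\sigma}(A)$, one has $\lbop^{\dagger}(\rho)=\Gamma_{\sigma}(\lbop(A))$; using the elementary identity $\Inner{X}{\Gamma_{\sigma}(Y)}=\Inner{X}{Y}_{1/2}$ this yields $\relafisheralpha{\rho}{\sigma}{2}=\frac{2}{\tr(\rho_{\sigma}^{2})}\Inner{A}{-\lbop(A)}_{1/2}$. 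Because $-\lbop(\unit)=0$ and $\lbop$ is self-adjoint with respect to $\Inner{\cdot}{\cdot}_{1/2}$, while $\Inner{\unit}{A-\unit}_{1/2}=\tr(\sigma A)-\tr(\sigma)=0$, we may replace $A$ by $A-\unit$ without changing the quadratic form and apply \propref{prop::q_poin_2} to get $\Inner{A}{-\lbop(A)}_{1/2}=\Inner{A-\unit}{-\lbop(A-\unit)}_{1/2}\ge\lbopgap\,\Inner{A-\unit}{A-\unit}_{1/2}$.

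Finally I would compute the right-hand side: expanding and using $A^{*}=A$ gives $\Inner{A-\unit}{A-\unit}_{1/2}=\Inner{A}{A}_{1/2}-2\tr(\sigma A)+\tr(\sigma)=\Inner{A}{A}_{1/2}-1$, and $\Inner{A}{A}_{1/2}=\tr(\Gamma_{\sigma}(A)\,A)=\tr(\rho A)=\tr(\rho_{\sigma}^{2})$. Hence
\[
\relafisheralpha{\rho}{\sigma}{2}\ge\frac{2\lbopgap}{\tr(\rho_{\sigma}^{2})}\bigl(\tr(\rho_{\sigma}^{2})-1\bigr)=2\lbopgap\Bigl(1-\tfrac{1}{\tr(\rho_{\sigma}^{2})}\Bigr)=2\lbopgap\bigl(1-e^{-\renyidivgalpha{\rho}{\sigma}{2}}\bigr),
\]
using $\tr(\rho_{\sigma}^{2})=e^{\renyidivgalpha{\rho}{\sigma}{2}}$. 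This is carried out for $\rho\in\dsetplus$; the bound for all $\rho\in\dset$ then follows by continuity of both sides in $\rho$ (both are finite since $\sigma$ is full-rank and $\rho\neq0$).

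I do not expect a real obstacle here: the argument is essentially a computation plus one invocation of \propref{prop::q_poin_2}. The only points requiring care are the sign/normalization bookkeeping when specializing \lemref{lem::fd} to $\alpha=2$, the correct use of the detailed-balance conjugation $\lbop^{\dagger}\circ\Gamma_{\sigma}=\Gamma_{\sigma}\circ\lbop$ together with the passage between $\Inner{\cdot}{\cdot}$ and $\Inner{\cdot}{\cdot}_{1/2}$, and the observation that $A-\unit$ is exactly the component of $A$ orthogonal to $\unit$ in $\Inner{\cdot}{\cdot}_{1/2}$, so that the \poin{} inequality is applicable.
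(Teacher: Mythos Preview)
Your proposal is correct and follows essentially the same route as the paper: specialize the functional derivative at $\alpha=2$, use KMS-detailed balance \eqref{eqn::lbop_lbop_dag} to pass from $\Inner{A}{\lbop^{\dagger}(\rho)}$ to $\Inner{A}{\lbop(A)}_{1/2}$, center $A=\Gamma_{\sigma}^{-1}(\rho)$ by subtracting $\unit=\Gamma_{\sigma}^{-1}(\sigma)$, apply the \poin{} inequality, and use $\Inner{A}{A}_{1/2}=\tr(\rho_{\sigma}^{2})=1+\Inner{A-\unit}{A-\unit}_{1/2}$ to conclude. Your explicit continuity remark to pass from $\dsetplus$ to $\dset$ is a nice addition that the paper leaves implicit.
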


\begin{proof}
  The case $\alpha = 2$ is easier to deal with as certain quantities
  become more explicit:
	\begin{align*}
	\fdrenyidivgalpha{\rho}{\sigma}{2} &=  \frac{2\sigma^{-1/2} \rho \sigma^{-1/2}}{\Inner{\rho}{\sigma^{-1/2}\rho \sigma^{-1/2}}} = \frac{2 \Gamma_{\sigma}^{-1}(\rho) }{\Inner{\Gamma_{\sigma}^{-1}(\rho)}{\Gamma_{\sigma}^{-1}(\rho)}_{1/2}}; \\
	\renyidivgalpha{\rho}{\sigma}{2} &= \log \tr\left(\sigma^{-1/2}\rho \sigma^{-1/2} \rho\right) = \log\left(\Inner{\Gamma_{\sigma}^{-1}(\rho)}{\Gamma_{\sigma}^{-1}(\rho)}_{1/2}\right).
	\end{align*}
	Then, by the definition of the quantum relative $\alpha$-Fisher information \eqref{eqn::q_relative_fisher}
	\begin{align*}
	\relafisheralpha{\rho}{\sigma}{2}
	&= -\Inner{\frac{2 \Gamma_{\sigma}^{-1}(\rho) }{\Inner{\Gamma_{\sigma}^{-1}(\rho)}{\Gamma_{\sigma}^{-1}(\rho)}_{1/2}}}{\lbop^{\dagger}(\rho)} \\
	&= \frac{-2}{\Inner{\Gamma_{\sigma}^{-1}(\rho)}{\Gamma_{\sigma}^{-1}(\rho)}_{1/2}} \Inner{\Gamma_{\sigma}^{-1}(\rho)}{\Gamma_{\sigma}^{-1}\circ \lbop^{\dagger}\circ \Gamma_{\sigma} (\Gamma_{\sigma}^{-1}(\rho))}_{1/2} \\
	&\myeq{\eqref{eqn::lbop_lbop_dag}} \frac{-2}{\Inner{\Gamma_{\sigma}^{-1}(\rho)}{\Gamma_{\sigma}^{-1}(\rho)}_{1/2}} \Inner{ \Gamma_{\sigma}^{-1}(\rho)}{\lbop (\Gamma_{\sigma}^{-1}(\rho))}_{1/2} \\
	&= \frac{-2}{\Inner{\Gamma_{\sigma}^{-1}(\rho)}{\Gamma_{\sigma}^{-1}(\rho)}_{1/2}} \Inner{ \Gamma_{\sigma}^{-1}(\rho-\sigma)}{\lbop (\Gamma_{\sigma}^{-1}(\rho-\sigma))}_{1/2} \\
	& \myge{\eqref{eqn::poin_2}} \frac{2}{\Inner{\Gamma_{\sigma}^{-1}(\rho)}{\Gamma_{\sigma}^{-1}(\rho)}_{1/2}} \lbopgap \Inner{\Gamma_{\sigma}^{-1}(\rho-\sigma)}{\Gamma_{\sigma}^{-1}(\rho-\sigma)}_{1/2}.
	\end{align*}
	It is not hard to verify that
	\begin{align}
	\label{eqn::chi_2_with_one_half}
	\Inner{\Gamma_{\sigma}^{-1}(\rho)}{\Gamma_{\sigma}^{-1}(\rho)}_{1/2} = 1 + \Inner{\Gamma_{\sigma}^{-1} (\rho-\sigma)}{\Gamma_{\sigma}^{-1} (\rho-\sigma)}_{1/2}.
	\end{align}
	It is interesting to note that the term
    $\Inner{\Gamma_{\sigma}^{-1} (\rho-\sigma)}{\Gamma_{\sigma}^{-1}
      (\rho-\sigma)}_{1/2}$
    turns out to be the quantum $\chi^2$-divergence
    $\chi^2_{1/2}(\rho,\sigma)$ studied in
    \cite{temme_2-divergence_2010}.  Then
	\begin{align*}
		\relafisheralpha{\rho}{\sigma}{2}
		& \ge \frac{2 \lbopgap}{\Inner{\Gamma_{\sigma}^{-1}(\rho)}{\Gamma_{\sigma}^{-1}(\rho)}_{1/2}} \left(\Inner{\Gamma_{\sigma}^{-1}(\rho)}{\Gamma_{\sigma}^{-1}(\rho)}_{1/2} - 1\right) \\
		&= \frac{2\lbopgap}{e^{\renyidivgalpha{\rho}{\sigma}{2}}} \left(e^{\renyidivgalpha{\rho}{\sigma}{2}} - 1\right),
	\end{align*}
	which yields \eqref{eqn::fisher_bound_2}.
\end{proof}

With \propref{prop::fisher_bound_2}, we can immediately show the exponential decay of the sandwiched \renyi{} divergence of order $\alpha=2$.
\begin{proof}[Proof of \thmref{thm::decay_quantum_renyi} for the case $\alpha=2$]
	From the definition
	of the quantum relative $2$-Fisher information in
	\eqref{eqn::q_relative_fisher} and by \propref{prop::fisher_bound_2},
	\begin{equation*}
	\frac{\ud}{\ud t} \renyidivgalpha{\rho_t}{\sigma}{2} = -\relafisheralpha{\rho_t}{\sigma}{2} \le -2\lbopgap \left(1 - e^{-\renyidivgalpha{\rho_t}{\sigma}{2}}\right).
	\end{equation*}
	Then
	$ \frac{\ud}{\ud t} \log\left(e^{\renyidivgalpha{\rho_t}{\sigma}{2}} -
	1\right) \le -2 \lbopgap$.
	After integrating it from time $0$ to $t$,
	one could find after
	some straightforward simplification that
	\begin{equation*}
	\renyidivgalpha{\rho_t}{\sigma}{2} \le \log\left(1 + (e^{\renyidivgalpha{\rho_0}{\sigma}{2}} - 1) e^{-2\lbopgap t}\right) \le (e^{\renyidivgalpha{\rho_0}{\sigma}{2}} - 1) e^{-2\lbopgap t}.
	\end{equation*}
	Thus $\renyidivgalpha{\rho_t}{\sigma}{2}$ decays exponentially fast with rate $2\lbopgap$. Apparently, the prefactor $C_{2,\eps}$ and the waiting time $\waittimealpha{2}$ could be chosen as
	\begin{equation*}
	C_{2,\eps} = \frac{1}{\renyidivgalpha{\rho_0}{\sigma}{2}}\bigl( e^{\renyidivgalpha{\rho_0}{\sigma}{2}} - 1 \bigr),\qquad \waittimealpha{2} = 0.
	\end{equation*}
\end{proof}

The lower bound of $\relafisheralpha{\rho}{\sigma}{2}$ in \propref{prop::fisher_bound_2} immediately leads into a lower bound of the quantum $2$-log Sobolev constant $K_2$ in \eqref{eqn::lsi_alpha}.
\begin{coro}[{\cite[Theorem 4.2]{muller-hermes_sandwiched_2018}}]
	\label{coro::K_2_bound}
	For primitive \lb{} equations with GNS-detailed balance,
	the quantum $2$-log Sobolev constant $K_2$ is bounded below by the spectral gap; specifically,
	\begin{equation}
	\label{eqn::K_2_specgap}
		K_2 \ge \frac{1 - \specmin{\sigma}}{\log\left(\specmin{\sigma}^{-1}\right)} \lbopgap.
	\end{equation}
\end{coro}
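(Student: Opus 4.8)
The plan is to combine the lower bound on the $2$-Fisher information from \propref{prop::fisher_bound_2} with a uniform upper bound on $\renyidivgalpha{\rho}{\sigma}{2}$ and the monotonicity of the scalar function $x\mapsto (1-e^{-x})/x$. Recall that $K_2$ is by definition the largest constant for which \eqref{eqn::lsi_alpha} holds at $\alpha=2$, so
\[
K_2 = \inf_{\rho\in\dset}\frac{\relafisheralpha{\rho}{\sigma}{2}}{2\,\renyidivgalpha{\rho}{\sigma}{2}},
\]
where the ratio is understood in the limiting sense (equal to $\lbopgap$) when $\renyidivgalpha{\rho}{\sigma}{2}=0$. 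By \propref{prop::fisher_bound_2}, for every $\rho\in\dset$ with $\renyidivgalpha{\rho}{\sigma}{2}>0$,
\[
\frac{\relafisheralpha{\rho}{\sigma}{2}}{2\,\renyidivgalpha{\rho}{\sigma}{2}}
\ge \lbopgap\,\frac{1-e^{-\renyidivgalpha{\rho}{\sigma}{2}}}{\renyidivgalpha{\rho}{\sigma}{2}}.
\]

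Next I would establish the uniform bound $\renyidivgalpha{\rho}{\sigma}{2}\le \log\bigl(\specmin{\sigma}^{-1}\bigr)$ for all $\rho\in\dset$. Indeed, $\renyidivgalpha{\rho}{\sigma}{2}=\log\tr\bigl(\sigma^{-1/2}\rho\sigma^{-1/2}\rho\bigr)$, and since $\sigma^{-1/2}\rho\sigma^{-1/2}\preceq \specmin{\sigma}^{-1}\rho$ together with $\tr(\rho^2)\le 1$, we get $\tr\bigl(\sigma^{-1/2}\rho\sigma^{-1/2}\rho\bigr)\le \specmin{\sigma}^{-1}\tr(\rho^2)\le \specmin{\sigma}^{-1}$, which gives the claimed bound. (Here I use that $\sigma^{-1/2}\rho\sigma^{-1/2}\preceq \specmin{\sigma}^{-1}\rho$ follows from congruence-conjugating $\sigma^{-1}\preceq\specmin{\sigma}^{-1}\unit$ by $\rho^{1/2}$ and cycling, or more directly from $\tr\bigl(\sigma^{-1/2}\rho\sigma^{-1/2}\rho\bigr)=\tr\bigl(\rho^{1/2}\sigma^{-1}\rho^{1/2}\cdot\rho^{1/2}\sigma^{-1}\rho^{1/2}\bigr)\le\specmin{\sigma}^{-2}\tr(\rho^2)$; either route is a routine one-line estimate and I would not belabor it.)

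Finally, the function $\phi(x):=(1-e^{-x})/x$ is strictly decreasing on $(0,\infty)$ with $\phi(0^+)=1$, so on the interval $\bigl(0,\log(\specmin{\sigma}^{-1})\bigr]$ it is bounded below by its value at the right endpoint, namely
\[
\phi\!\left(\log(\specmin{\sigma}^{-1})\right)=\frac{1-\specmin{\sigma}}{\log(\specmin{\sigma}^{-1})}.
\]
Plugging this into the previous display and taking the infimum over $\rho\in\dset$ (the limiting case $\renyidivgalpha{\rho}{\sigma}{2}=0$ gives the larger value $\lbopgap$, which does not affect the infimum) yields
\[
K_2\ \ge\ \frac{1-\specmin{\sigma}}{\log(\specmin{\sigma}^{-1})}\,\lbopgap,
\]
which is \eqref{eqn::K_2_specgap}. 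There is no real obstacle here; the only point requiring a moment of care is recording the uniform upper bound on $\renyidivgalpha{\rho}{\sigma}{2}$ and invoking monotonicity of $\phi$ correctly, since $\phi$ being decreasing is exactly what converts a pointwise bound depending on $\renyidivgalpha{\rho}{\sigma}{2}$ into a uniform constant.
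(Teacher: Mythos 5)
Your overall route is exactly the paper's: combine \propref{prop::fisher_bound_2} with the monotonicity of $x\mapsto(1-e^{-x})/x$ and the uniform bound $\renyidivgalpha{\rho}{\sigma}{2}\le\log(\specmin{\sigma}^{-1})$, then take the infimum over $\rho$. The only place where you deviate is that you try to prove the uniform bound in-line (the paper simply cites Lemma 2.1 of M\"uller-Hermes et al.), and both of the justifications you offer for it are incorrect as stated. First, the operator inequality $\sigma^{-1/2}\rho\sigma^{-1/2}\preceq\specmin{\sigma}^{-1}\rho$ is false in general: conjugating $\sigma^{-1}\preceq\specmin{\sigma}^{-1}\unit$ by $\rho^{1/2}$ gives $\rho^{1/2}\sigma^{-1}\rho^{1/2}\preceq\specmin{\sigma}^{-1}\rho$, and you cannot ``cycle'' inside a semidefinite inequality; e.g.\ for $\rho=\ket{w}\bra{w}$ with $w$ not an eigenvector of $\sigma$, the left side is a rank-one operator whose range is not contained in $\mathrm{span}\{w\}$, so it cannot be dominated by a multiple of $\rho$. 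Second, the ``more direct'' identity $\tr(\sigma^{-1/2}\rho\sigma^{-1/2}\rho)=\tr\bigl((\rho^{1/2}\sigma^{-1}\rho^{1/2})^2\bigr)$ is also false: by cyclicity the right-hand side equals $\tr(\sigma^{-1}\rho\sigma^{-1}\rho)$, and even granting it, the resulting bound $\specmin{\sigma}^{-2}$ would only give $\renyidivgalpha{\rho}{\sigma}{2}\le 2\log(\specmin{\sigma}^{-1})$, which does not yield the stated constant.

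The bound you need is nevertheless true and has a one-line correct proof, so the gap is easily repaired: with $C:=\rho^{1/2}\sigma^{-1/2}$ one has
\begin{equation*}
\tr\bigl(\sigma^{-1/2}\rho\sigma^{-1/2}\rho\bigr)=\tr\bigl(C^{*}C\,\rho\bigr)\le\tr\bigl(C^{*}C\bigr)=\tr\bigl(\sigma^{-1}\rho\bigr)\le\specmin{\sigma}^{-1},
\end{equation*}
using $\rho\le\unit$; alternatively, invoke monotonicity of $\renyidivgalpha{\rho}{\sigma}{\alpha}$ in $\alpha$ to get $\renyidivgalpha{\rho}{\sigma}{2}\le D_{\infty}(\rho||\sigma)\le\log(\specmin{\sigma}^{-1})$, or simply cite the reference as the paper does. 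With that step corrected, the rest of your argument (the reduction to the scalar function $\phi$, its monotonicity, and the treatment of the degenerate case $\renyidivgalpha{\rho}{\sigma}{2}=0$) matches the paper's proof and is fine.
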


\begin{proof}
	From \propref{prop::fisher_bound_2}, we know that
	\begin{equation*}
	\frac{\relafisheralpha{\rho}{\sigma}{2}}{\renyidivgalpha{\rho}{\sigma}{2}} \ge 2\lbopgap \frac{1-e^{-\renyidivgalpha{\rho}{\sigma}{2}}}{\renyidivgalpha{\rho}{\sigma}{2}} \ge 2\lbopgap \frac{1 - \specmin{\sigma}}{\log(\specmin{\sigma}^{-1})},
	\end{equation*}
	because $x\rightarrow \frac{1-e^{-x}}{x}$ is monotonically decreasing on $(0,\infty)$ and $0\le \renyidivgalpha{\rho}{\sigma}{2} \le \log(\specmin{\sigma}^{-1})$ \cite[Lemma 2.1]{muller-hermes_sandwiched_2018}. The above result follows immediately by taking the minimization over $\rho$ for both sides.
\end{proof}

\subsection{Proof of the quantum LSI}
\label{sec::proof_qlsi}
In this subsection, we prove the \propref{prop::primitive_lsi}, which shows the equivalence of the primitivity and the validity of quantum LSI \eqref{eqn::lsi_alpha}, for \lb{} equations with GNS-detailed balance.

\revise{One key ingredient is the $l_2$ mixing time defined as the following for any $\eps > 0$,
\begin{align}
\label{eqn::t2_time}
\begin{split}
t_2(\eps) &= \inf \{t \ge 0: \Norm{e^{t\lbop} (A) - \tr(\sigma A) \unit}_{2,\sigma} \le \eps,\ \forall A \in \mset \text{ s.t.} \Norm{A}_{1,\sigma} = 1\}\\
&\equiv \inf\{t\ge 0: \Norm{e^{t\lbop} (\cdot) - \tr(\sigma \cdot) \unit}_{1\rightarrow2, \sigma} \le \eps \}.
\end{split}
\end{align}
We shall need the following two results.
\begin{lemma}
\label{lem::t2_time_spec}
Under the same assumption as in \propref{prop::primitive_lsi}, if the Lindblad equation is primitive with the spectral gap $\lbopgap$, then
\begin{align}
\label{eqn::t2_time_spec}
t_2(\eps) \le \max\left(0, \frac{1}{2\lbopgap} \log\left(\frac{1}{\specmin{\sigma} \eps^2}\right)\right).
\end{align}
\end{lemma}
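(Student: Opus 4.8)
The plan is to reduce the claimed bound to the contractivity of $e^{t\lbop}$ on the Hilbert space $(\mset,\Inner{\cdot}{\cdot}_{1/2})$, combined with a crude comparison between the weighted norms $\Norm{\cdot}_{1,\sigma}$ and $\Norm{\cdot}_{2,\sigma}$. The starting observation is the identity $\Norm{A}_{2,\sigma}^{2}=\tr\bigl(\Abs{\Gamma_{\sigma}^{1/2}(A)}^{2}\bigr)=\Inner{A}{A}_{1/2}$, so that $\Norm{\cdot}_{2,\sigma}$ is exactly the norm induced by $\Inner{\cdot}{\cdot}_{1/2}$. Fix $A\in\mset$ with $\Norm{A}_{1,\sigma}=1$ and decompose $A=\tr(\sigma A)\unit+A_{0}$ with $A_{0}:=A-\tr(\sigma A)\unit$; then $\Inner{\unit}{A_{0}}_{1/2}=\tr(\sigma A_{0})=0$, and since $e^{t\lbop}$ is unital we have $e^{t\lbop}(A)-\tr(\sigma A)\unit=e^{t\lbop}(A_{0})$. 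Hence it suffices to estimate $\Norm{e^{t\lbop}(A_{0})}_{2,\sigma}$.

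For the contraction step I would invoke the spectral decomposition \eqref{eqn::lbop_eig}: GNS-detailed balance (hence KMS-detailed balance) makes $-\lbop$ self-adjoint and, by \lemref{lem::lb_quadratic}, positive semidefinite on $(\mset,\Inner{\cdot}{\cdot}_{1/2})$, with kernel spanned by $\unit$ and spectrum bounded below by $\lbopgap$ on the orthogonal complement $\{\unit\}^{\perp}$. Since $A_{0}\in\{\unit\}^{\perp}$ and this subspace is $e^{t\lbop}$-invariant (by self-adjointness), the spectral theorem gives $\Norm{e^{t\lbop}(A_{0})}_{2,\sigma}\le e^{-\lbopgap t}\Norm{A_{0}}_{2,\sigma}$, and moreover $\Norm{A_{0}}_{2,\sigma}^{2}=\Inner{A}{A}_{1/2}-\Abs{\tr(\sigma A)}^{2}\le\Norm{A}_{2,\sigma}^{2}$. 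It remains to bound $\Norm{A}_{2,\sigma}$ by $\Norm{A}_{1,\sigma}$: setting $\widetilde{A}:=\Gamma_{\sigma}(A)=\sigma^{1/2}A\sigma^{1/2}$, so that $\Norm{A}_{1,\sigma}=\Norm{\widetilde{A}}_{1}$ (Schatten $1$-norm) and $\Gamma_{\sigma}^{1/2}(A)=\sigma^{-1/4}\widetilde{A}\sigma^{-1/4}$, one gets $\Norm{A}_{2,\sigma}=\Norm{\sigma^{-1/4}\widetilde{A}\sigma^{-1/4}}_{2}\le\Norm{\sigma^{-1/4}}_{\infty}^{2}\Norm{\widetilde{A}}_{2}\le\specmin{\sigma}^{-1/2}\Norm{\widetilde{A}}_{1}=\specmin{\sigma}^{-1/2}\Norm{A}_{1,\sigma}$, using $\Norm{\sigma^{-1/4}}_{\infty}^{2}=\specmin{\sigma}^{-1/2}$ and $\Norm{\widetilde{A}}_{2}\le\Norm{\widetilde{A}}_{1}$.

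Putting the three estimates together, for every $A$ with $\Norm{A}_{1,\sigma}=1$ we obtain $\Norm{e^{t\lbop}(A)-\tr(\sigma A)\unit}_{2,\sigma}\le\specmin{\sigma}^{-1/2}e^{-\lbopgap t}$; requiring the right-hand side to be at most $\eps$ gives $e^{-\lbopgap t}\le\specmin{\sigma}^{1/2}\eps$, i.e.\ $t\ge\frac{1}{2\lbopgap}\log\bigl(\tfrac{1}{\specmin{\sigma}\eps^{2}}\bigr)$, and since $t_{2}(\eps)\ge 0$ by definition one may take the positive part, which is exactly \eqref{eqn::t2_time_spec}. I expect no real difficulty in this argument, which is the standard "$\ell_{2}$ mixing time is controlled by the spectral gap" computation adapted to the $\sigma$-weighted norms; the only point worth checking carefully is the weighted-norm comparison $\Norm{A}_{2,\sigma}\le\specmin{\sigma}^{-1/2}\Norm{A}_{1,\sigma}$, which is a Hölder-type inequality for the $\lpsp$-norms $\Norm{\cdot}_{\alpha,\sigma}$ (cf.\ \cite{beigi_quantum_2018}) but is cleanest to verify directly as above.
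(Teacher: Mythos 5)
Your proposal is correct and follows essentially the same route as the paper's proof: project out the $\tr(\sigma A)\unit$ component, contract the orthogonal part at rate $e^{-\lbopgap t}$ in the $\Inner{\cdot}{\cdot}_{1/2}$-norm using the spectral decomposition of $-\lbop$, and bound $\Norm{A}_{2,\sigma}\le \specmin{\sigma}^{-1/2}\Norm{A}_{1,\sigma}$ (the paper phrases this last step as $\tr(\wt{A}^2)\le (\tr\wt{A})^2=1$ together with inserting factors of $\sigma^{1/4}$, which is the same estimate as your $\Norm{\wt{A}}_2\le\Norm{\wt{A}}_1$ and $\Norm{\sigma^{-1/4}}_{\infty}^2=\specmin{\sigma}^{-1/2}$ argument).
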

Its proof will be given slightly later in this subsection.
As a remark, when $\eps \ge \sqrt{\frac{1}{\specmin{\sigma}}}$, we have $t_2(\eps) = 0$; this is intuitively reasonable since when $\eps$ is large enough, the inequality in \eqref{eqn::t2_time} trivially holds.

\begin{theorem}[{\cite[Theorem 5.3]{muller-hermes_sandwiched_2018}}]
Under the same assumption as in \propref{prop::primitive_lsi}, if the Lindblad equation is primitive, then
	\begin{align}
	\label{eqn::t_2_gamma_2}
		t_2(e^{-1}) \kappa_2 \ge \frac{1}{2}.
	\end{align}
\end{theorem}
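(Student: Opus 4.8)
The plan is to prove the equivalent lower bound $\kappa_2\ge \frac{1}{2\tau}$ with $\tau:=t_2(e^{-1})$; by the definition \eqref{eqn::lsic_Lp} of $\kappa_2$ this is the same as the $2$-log-Sobolev inequality $\dirichlet{2}{X}\ge\frac{1}{2\tau}\entfun{2}{X}$ for every $X>0$. Throughout I would use that GNS-detailed balance implies KMS-detailed balance (\secref{sec::qms_gns_detailed_balance}), so that $e^{t\lbop}$ is self-adjoint with respect to $\Inner{\cdot}{\cdot}_{1/2}$ and is a unital, positivity-preserving contraction on every weighted space $(\mset,\Norm{\cdot}_{p,\sigma})$; this symmetric-Markov structure is what makes the mixing hypothesis interact with the $L_p(\sigma)$-scale.

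The engine is the infinitesimal form of Gross's equivalence between the $2$-LSI and hypercontractivity, which rests on two derivative identities at the point $\alpha=2$. For $X>0$ the power operator satisfies $\powop{2}{2}{X}=X$, so $\dirichlet{2}{X}=\Inner{X}{-\lbop(X)}_{1/2}$, and hence $\frac{\ud}{\ud t}\Norm{e^{t\lbop}X}_{2,\sigma}^2\big|_{t=0}=-2\dirichlet{2}{X}$ by self-adjointness. On the other hand, the entropy $\entfun{2}{X}$ is by construction the exponent-derivative of the weighted norm, $\frac{\ud}{\ud p}\Norm{X}_{p,\sigma}\big|_{p=2}=\frac{\entfun{2}{X}}{4\,\Norm{X}_{2,\sigma}}$. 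Consequently, if one can show a hypercontractive contraction $\Norm{e^{t\lbop}X}_{q(t),\sigma}\le\Norm{X}_{2,\sigma}$ for $t$ near $0$ along a curve with $q(0)=2$ and $\dot q(0)=\frac{2}{\tau}$, then differentiating $t\mapsto\Norm{e^{t\lbop}X}_{q(t),\sigma}$ at $t=0$ and inserting the two identities yields $-\dirichlet{2}{X}+\frac{\dot q(0)}{4}\entfun{2}{X}\le0$, i.e. exactly $\dirichlet{2}{X}\ge\frac{1}{2\tau}\entfun{2}{X}$. Thus the whole problem reduces to producing one such hypercontractive bound with exponential rate $\dot q(0)=2/\tau$.

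To build that bound from the single mixing datum, I would first dualize it: since $e^{\tau\lbop}$ is $\Inner{\cdot}{\cdot}_{1/2}$-self-adjoint, the definition \eqref{eqn::t2_time} of $\tau$ gives $\Norm{e^{\tau\lbop}(\cdot)-\tr(\sigma\cdot)\unit}_{2\rightarrow\infty,\sigma}=\Norm{e^{\tau\lbop}(\cdot)-\tr(\sigma\cdot)\unit}_{1\rightarrow2,\sigma}\le e^{-1}$. Interpolating this endpoint against the trivial $2\rightarrow2$ contraction and propagating through the semigroup law $e^{(s+t)\lbop}=e^{s\lbop}e^{t\lbop}$ produces a one-parameter family of genuine $2\rightarrow q(t)$ contractions of the full operator $e^{t\lbop}$, whose exponent grows at rate $1/\tau$; matching constants so that $q(t)-1\ge e^{2t/\tau}$ (hence $\dot q(0)=2/\tau$) and feeding the result into the previous paragraph gives $\kappa_2\ge\frac{1}{2\tau}$, which is the claim.

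The main obstacle is precisely this last construction. The mixing hypothesis only controls the mean-zero part $e^{t\lbop}(\cdot)-\tr(\sigma\cdot)\unit$, whereas hypercontractivity is a statement about the full, unital operator; a naive triangle-inequality splitting of the stationary component is fatally lossy, since the constant term $\tr(\sigma\cdot)\unit$ contributes a piece that pushes the $2\rightarrow q$ norm back above $1$, so one is forced to use the positivity and unitality of $e^{t\lbop}$ to keep that component coherent through the interpolation. Carrying this out rigorously requires the noncommutative (Stein) interpolation theorem for the weighted $L_p(\sigma)$ norms and careful bookkeeping of the two-parameter operator norms $\Norm{\cdot}_{p\rightarrow q,\sigma}$; it is this positivity-sensitive interpolation that both legitimizes the reduction and pins down the sharp constant $\tfrac12$.
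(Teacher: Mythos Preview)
The paper does not give its own proof of this statement: it is quoted verbatim from \cite[Theorem~5.3]{muller-hermes_sandwiched_2018} and used as a black box in the proof of \propref{prop::primitive_lsi}. So there is no ``paper's proof'' to compare against; I can only evaluate whether your argument stands on its own.

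Your reduction via Gross's infinitesimal equivalence is correct: if you can exhibit genuine hypercontractivity $\Norm{e^{t\lbop}X}_{q(t),\sigma}\le\Norm{X}_{2,\sigma}$ with $q(0)=2$ and $\dot q(0)=2/\tau$, differentiating at $t=0$ does yield $\dirichlet{2}{X}\ge\frac{1}{2\tau}\entfun{2}{X}$. The gap is that you never actually produce such a contraction, and the mechanism you sketch in the third paragraph does not work. Duality gives $\Norm{e^{\tau\lbop}-E}_{2\to\infty,\sigma}\le e^{-1}$ (with $E=\tr(\sigma\,\cdot)\unit$), and Riesz--Thorin against $\Norm{e^{\tau\lbop}-E}_{2\to 2,\sigma}\le 1$ yields $\Norm{e^{\tau\lbop}-E}_{2\to q,\sigma}\le e^{-(1-2/q)}$. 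But reassembling the full operator by adding back $E$ gives only $\Norm{e^{\tau\lbop}X}_{q,\sigma}\le\bigl(1+e^{-(1-2/q)}\bigr)\Norm{X}_{2,\sigma}$, which is strictly larger than $\Norm{X}_{2,\sigma}$ for every $q>2$. Iterating via the semigroup law cannot repair this, since composing maps with $2\to q$ norm exceeding $1$ does not produce contractions. Your final paragraph acknowledges exactly this obstruction, but ``positivity plus Stein interpolation'' is not an argument: Stein interpolation still sees only the operator norms, and positivity of $e^{t\lbop}$ does not by itself reduce the $2\to q$ norm below the triangle-inequality bound. Nothing in the proposal pins down the constant $\tfrac12$.

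The route actually taken in the cited reference avoids proving hypercontractivity altogether. One uses the entropy--dissipation identity along the flow together with the monotonicity $\dirichlet{2}{e^{s\lbop}X}\le\dirichlet{2}{X}$ (spectral calculus for the self-adjoint $-\lbop\ge 0$) to get $\entfun{2}{X}\le \entfun{2}{e^{\tau\lbop}X}+2\tau\,\dirichlet{2}{X}$, and then bounds $\entfun{2}{e^{\tau\lbop}X}$ directly from the closeness $\Norm{e^{\tau\lbop}X-\tr(\sigma X)\unit}_{2,\sigma}\le e^{-1}\Norm{X}_{1,\sigma}$ by a Rothaus-type estimate for the entropy near constants. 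The specific choice $\eps=e^{-1}$ is what makes the residual entropy term absorbable and delivers the clean constant $\tfrac12$. If you want to salvage your approach, the honest target is a \emph{defective} log-Sobolev inequality from the ultracontractive bound $\Norm{e^{\tau\lbop}}_{2\to\infty,\sigma}\le 1+e^{-1}$, followed by a tightening step; the direct hypercontractivity claim as you stated it is false.
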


\begin{proof}[Proof of \propref{prop::primitive_lsi}]
	First, recall that \cite[Theorem 16]{kastoryano_quantum_2013} has proved that $K \le \lbopgap $ by linearizing the quantum LSI, thus $K > 0$ implies $\lbopgap > 0$.

	As for the other direction, assume that the \lb{} equation is primitive. Suppose $\rho_t = e^{t\lbop^{\dagger}}(\rho_0)$ is the solution of the \lb{} equation.
Combining the estimates above,  we have
\begin{align*}
K & \myeq{\eqref{eqn::kappa_1_K}} 2 \kappa_1 \myge{\eqref{eqn::qsvi_1_2}} 2\kappa_2
 \myge{\eqref{eqn::t_2_gamma_2}} \frac{1}{t_2(e^{-1})} \myge{\eqref{eqn::t2_time_spec}} \frac{2\lbopgap}{2-\log(\specmin{\sigma})} = \frac{1}{1 - \log(\sqrt{\specmin{\sigma}})} \lbopgap.
\end{align*}
\end{proof}

\begin{remark}
	In the proof above, we use the mixing time $t_2(e^{-1})$ as a bridge to connect
	$\kappa_2$ and $\lbopgap$, and use the quantum Stroock-Varopoulos inequality to connect $K$ and $\kappa_2$. To the best of our knowledge, a direct proof of the quantum LSI \eqref{eqn::lsi_alpha} has not appeared in the literature. For classical systems, Bakry-{\'E}mery \cite{bakry1985diffusions}  condition is an important criterion  for classical LSI to hold for Fokker-Planck equations.  It is an  interesting  open question to see whether there is a quantum analog.
\end{remark}

\begin{proof}[Proof of \lemref{lem::t2_time_spec}]
	Let us introduce $B = e^{t\lbop} (A) - \tr(\sigma A) \unit$ and decompose $A = a_0 \unit + \sum_{j=1}^{n^2-1} a_j L_j$, where $L_j$ are eigenvectors of the operator $\lbop$; recall \eqref{eqn::lbop_eig} for notations. Then we easily know that $B = \sum_{j=1}^{n^2-1} a_j e^{-t \theta_j} L_j$ and that \begin{align*}
		\Norm{e^{t\lbop}(A) - \tr(\sigma A) \unit}_{2,\sigma}^2  &= \tr\left(\sigma^{\frac{1}{4}} B^* \sigma^{\frac{1}{4}} \sigma^{\frac{1}{4}} B \sigma^{\frac{1}{4}}\right) = \Inner{B}{B}_{1/2} \\
	&= \sum_{j=1}^{n^2-1} e^{-2t\theta_j} \abs{a_j}^2 \le e^{-2 t \lbopgap} \sum_{j=1}^{n^2-1} \abs{a_j}^2.
	\end{align*}

Let $\wt{A} := \Abs{\sigma^{\frac{1}{2}} A \sigma^{\frac{1}{2}}}$. From the condition that $\Norm{A}_{1,\sigma} = 1$ in \eqref{eqn::t2_time}, we have $\tr(\wt{A}) = 1$.
Then
\begin{align*}
1 &\ge \tr(\wt{A}^2) = \tr\left(\sigma^{\frac{1}{2}} A^* \sigma^{\frac{1}{2}} \sigma^{\frac{1}{2}} A \sigma^{\frac{1}{2}} \right) = \Inner{\sigma^{\frac{1}{4}} \left(\sigma^{\frac{1}{4}} A \sigma^{\frac{1}{4}}\right) \sigma^{\frac{1}{4}}}{\sigma^{\frac{1}{4}}  \left(\sigma^{\frac{1}{4}} A \sigma^{\frac{1}{4}}\right) \sigma^{\frac{1}{4}} } \\
& \ge \specmin{\sigma} \Inner{\sigma^{\frac{1}{4}} A \sigma^{\frac{1}{4}}}{\sigma^{\frac{1}{4}} A \sigma^{\frac{1}{4}}} = \specmin{\sigma} \Inner{A}{A}_{1/2} \ge \specmin{\sigma} \left(\sum_{j=1}^{n^2-1} \abs{a_j}^2\right).
\end{align*}
Therefore, we know that
\begin{align*}
\Norm{e^{t\lbop}(A) - \tr(\sigma A) \unit}_{2,\sigma}^2 \le e^{-2t\lbopgap} \frac{1}{\specmin{\sigma}}.
\end{align*}
Easily, we know that whenever $t \ge \frac{1}{2\lbopgap} \log\left(\frac{1}{\specmin{\sigma} \eps^2}\right)$, we have
$\Norm{e^{t\lbop}(A) - \tr(\sigma A) \unit}_{2,\sigma} \le \eps$. Then \eqref{eqn::t2_time_spec} follows immediately.
\end{proof}
}

\subsection{Proof of \thmref{thm::decay_quantum_renyi} for $\alpha\in (0,\infty)$ via a quantum comparison theorem}
\label{sec::comparison}
As a quantum analog of \cite[Lemma 3.4]{prevpaper} for \fp{}
equations and \cite[Theorem 3.2.3]{CIT-064} for the Ornstein-Uhlenbeck process, in \propref{prop::comparison}, we will show that under primitive \lb{}
equations with GNS-detailed balance \eqref{eqn::lb}, the sandwiched \renyi{} divergence of higher
order $\alpha_1$ can be bounded above by the sandwiched \renyi{} divergence of lower order $\alpha_0$ ($1 < \alpha_0< \alpha_1$), at the expense of some time delay. This result helps to prove the exponential
decay for sandwiched \renyi{} divergences under \lb{}
equations, summarized in \propref{prop::exp_decay_renyi}.

\begin{prop}[Quantum comparison theorem]
	\label{prop::comparison}
	Let $\rho_t$ be the solution of the \lb{} equation with GNS-detailed balance \eqref{eqn::lb}.
	Assume that the
	quantum LSI \eqref{eqn::lsi_alpha} holds with constant $K$.
	Assume also that
	\begin{equation}\label{eq:ent0}
	\relaentropy{\rho_0}{\sigma} \le \epsilon  \text{ for some fixed }\epsilon \in \left(0, \frac{\specmin{\sigma}^2}{2}\right).
	\end{equation}
	Then for any two orders $1 < \alpha_0 \le \alpha_1$,
	we have
	\begin{equation}\label{eq:compare}
	\renyidivgalpha{\rho_{T}}{\sigma}{\alpha_1} \le \renyidivgalpha{\rho_0}{\sigma}{\alpha_0},
	\end{equation}
	with
	\begin{equation}
	\label{eqn::delay_T}
	T = \frac{1}{2K \eta} \log\left(\frac{\alpha_1-1}{\alpha_0 - 1}\right),
	\end{equation}
	where
	\begin{align*}
	\eta &= \min\left(\frac{1}{2}, \min_{j=1}^{\cardn} \left\{\frac{2\sqrt{e^{\omega_j} \frac{1}{\Lambda}}}{1+e^{\omega_j} \Lambda}\right\}\right),\\
	\Lambda &= \frac{\specmax{\sigma}}{\specmin{\sigma}} \exp\left(\alpha_0 \sqrt{2\eps}\frac{2\specmin{\sigma} - \sqrt{2\eps}}{\specmin{\sigma}(\specmin{\sigma} - \sqrt{2\eps})}\right).
	\end{align*}
\end{prop}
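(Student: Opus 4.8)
The plan is a Gross-type hypercontractivity argument: we interpolate the order $\alpha$ in time and show the resulting time-dependent sandwiched \renyi{} divergence is non-increasing. Fix $1<\alpha_0\le\alpha_1$ and let $\alpha(t)$ be determined by $\alpha(t)-1=(\alpha_0-1)e^{2K\eta t}$, so that $\alpha(0)=\alpha_0$, $\alpha(T)=\alpha_1$ with $T$ as in \eqref{eqn::delay_T}, and $\dot\alpha(t)=2K\eta\,(\alpha(t)-1)$. Set $F(t):=\renyidivgalpha{\rho_t}{\sigma}{\alpha(t)}$. It suffices to prove $F'(t)\le0$ on $[0,T]$; then $\renyidivgalpha{\rho_T}{\sigma}{\alpha_1}=F(T)\le F(0)=\renyidivgalpha{\rho_0}{\sigma}{\alpha_0}$, which is \eqref{eq:compare}.

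By the chain rule, $F'(t)=\bigl(\partial_t\renyidivgalpha{\rho_t}{\sigma}{\alpha}\bigr)\big|_{\alpha=\alpha(t)}+\dot\alpha(t)\,\bigl(\partial_{\alpha}\renyidivgalpha{\rho_t}{\sigma}{\alpha}\bigr)\big|_{\alpha=\alpha(t)}$. The first term equals $-\relafisheralpha{\rho_t}{\sigma}{\alpha(t)}$ by \eqref{eqn::q_relative_fisher}. For the second term, differentiating $\renyidivgalpha{\rho}{\sigma}{\alpha}=\frac{1}{\alpha-1}\log\tr(\rho_\sigma^{\alpha})$ in $\alpha$ (using $\partial_\alpha\rho_\sigma=-\frac{1}{2\alpha^2}\anticomm{\log\sigma}{\rho_\sigma}$ and cyclicity of the trace) yields the identity
\[
\partial_{\alpha}\renyidivgalpha{\rho}{\sigma}{\alpha}=\frac{\relaentropy{\widehat\rho}{\sigma}-\renyidivgalpha{\rho}{\sigma}{\alpha}}{\alpha(\alpha-1)},\qquad \widehat\rho:=\frac{\rho_\sigma^{\alpha}}{\tr(\rho_\sigma^{\alpha})},
\]
which is non-negative (recovering the monotonicity of $D_\alpha$ in $\alpha$) and, since $\renyidivgalpha{\rho}{\sigma}{\alpha}\ge0$ for $\alpha>1$, is bounded above by $\relaentropy{\widehat\rho}{\sigma}/(\alpha(\alpha-1))$. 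Substituting $\dot\alpha=2K\eta(\alpha-1)$ (so that $\dot\alpha/(\alpha(\alpha-1))=2K\eta/\alpha$), it therefore suffices to establish the pointwise inequality
\[
\relafisheralpha{\rho_t}{\sigma}{\alpha}\ \ge\ \frac{2K\eta}{\alpha}\,\relaentropy{\widehat\rho_t}{\sigma},\qquad \alpha=\alpha(t),
\]
for every $t\ge0$, where $\rho_\sigma=\Gamma_\sigma^{(1-\alpha)/\alpha}(\rho_t)$ and $\widehat\rho_t=\rho_\sigma^{\alpha}/\tr(\rho_\sigma^{\alpha})$ with $\alpha=\alpha(t)$.

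Two ingredients go into this inequality. First, an a priori spectral bound on $\rho_t$: since $\relaentropy{\rho_0}{\sigma}\le\eps$, monotonicity of the relative entropy along the flow (\coref{coro::monotonicity_renyi} with $\alpha=1$) gives $\relaentropy{\rho_t}{\sigma}\le\eps$ for all $t\ge0$, and the quantum Pinsker inequality yields $\Norm{\rho_t-\sigma}_{\mathrm{op}}\le\Norm{\rho_t-\sigma}_1\le\sqrt{2\eps}$; since $\eps<\specmin{\sigma}^2/2$ this keeps $\rho_t$ strictly positive, and writing $\rho_t=\sigma^{1/2}(\unit+E_t)\sigma^{1/2}$ with $\Norm{E_t}_{\mathrm{op}}\le\sqrt{2\eps}/\specmin{\sigma}$ one obtains (the $\sigma$-powers telescoping to $\sigma^{1/(2\alpha)}$) a two-sided bound $e^{-c_t}\sigma^{1/\alpha}\le\Gamma_\sigma^{(1-\alpha)/\alpha}(\rho_t)\le e^{c_t}\sigma^{1/\alpha}$ with $c_t\le\alpha_0\,\sqrt{2\eps}\,\tfrac{2\specmin{\sigma}-\sqrt{2\eps}}{\specmin{\sigma}(\specmin{\sigma}-\sqrt{2\eps})}$ coming from $\log\tfrac{1+\Norm{E_t}_{\mathrm{op}}}{1-\Norm{E_t}_{\mathrm{op}}}\le\Norm{E_t}_{\mathrm{op}}\tfrac{2-\Norm{E_t}_{\mathrm{op}}}{1-\Norm{E_t}_{\mathrm{op}}}$; bounding $\sigma^{1/\alpha}$ against $\unit$-multiples then produces $\Lambda$ as in the statement. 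Second — and this is the crux — a \emph{perturbative quantum \svi{}}: expressing $\relafisheralpha{\rho_t}{\sigma}{\alpha}$ through the chain-rule identity \eqref{eqn::pdfdrenyi} (so $\relafisheralpha{\rho_t}{\sigma}{\alpha}=\sum_j\Inner{\partial_j\fdrenyidivgalpha{\rho}{\sigma}{\alpha}}{\moprenyi{\rho_t}{\omega_j}(\partial_j\fdrenyidivgalpha{\rho}{\sigma}{\alpha})}$ at $\rho=\rho_t$) and $\relaentropy{\widehat\rho_t}{\sigma}$ via the $\alpha=1$ quantum LSI (\propref{prop::primitive_lsi}, $\relaentropy{\widehat\rho_t}{\sigma}\le\frac{1}{2K}\relafisher{\widehat\rho_t}{\sigma}$) together with \eqref{eqn::chain_rule}, both quantities become sums over $j$ of quadratic forms in $\partial_j$ of the respective potentials, weighted by the noncommutative multiplication operators $\moprenyi{\cdot}{\omega_j}$ and $\mop{\cdot}{\omega_j}$; the spectral bound above, through the integral representations \eqref{eqn::kmb_op}--\eqref{eqn::inverse_mop} and operator monotonicity, lets one compare these weights at the cost of exactly the factor $\eta=\min\!\bigl(\tfrac12,\min_j\tfrac{2\sqrt{e^{\omega_j}/\Lambda}}{1+e^{\omega_j}\Lambda}\bigr)$ (which reduces to $1/\cosh(\omega_j/2)$-type scalars in the limit $\rho_t\to\sigma$, $\Lambda\to1$). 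Assembling these gives the pointwise inequality, hence $F'(t)\le0$, hence the claim.

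The hard part is the perturbative \svi{}: bounding the ratio $\relafisheralpha{\rho}{\sigma}{\alpha}/\relaentropy{\widehat\rho}{\sigma}$ from below by $2K\eta/\alpha$ uniformly over all $\rho$ in the $\Lambda$-neighborhood of $\sigma$. Unlike the clean comparison $\kappa_\alpha\ge\kappa_2$ coming from the exact quantum \svi{} (\thmref{thm::qsvi}) — which would force a $\kappa_2$-dependent waiting time rather than the $K$-dependent one we want — here one must track the $\omega_j$-weighted multiplication operators $\moprenyi{\cdot}{\omega_j}$ and $\mop{\cdot}{\omega_j}$ explicitly (cf.\ \eqref{eqn::moprenyi}) and estimate them by operator-monotone arguments; the closeness of $\rho_t$ to $\sigma$ guaranteed by the first ingredient is precisely what makes these estimates lossy only by the explicit constant $\eta$ rather than by an uncontrolled amount, and it is also what forces the hypothesis $\eps<\specmin{\sigma}^2/2$. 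Everything else — the interpolation, the $\alpha$-derivative identity, and the reduction — is routine given the formulas already assembled in \secref{sec::proof} and \secref{sec::noncommutative_Lp}.
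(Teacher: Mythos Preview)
Your overall architecture is correct and coincides with the paper's: interpolate the order via $\alpha(t)-1=(\alpha_0-1)e^{2K\eta t}$, show the associated functional is non-increasing, and reduce (via the LSI applied to $\widehat\rho_t=\rho_{\sigma,t}^{\alpha(t)}/Z_t$) to the Fisher-information comparison $\relafisheralpha{\rho_t}{\sigma}{\alpha(t)}\ge\frac{\eta}{\alpha(t)}\,\relafisher{\widehat\rho_t}{\sigma}$. Your $\partial_\alpha D_\alpha$ identity is correct, and dropping the $-D_\alpha$ term is harmless (indeed the paper works with $F_t=\frac{1}{\beta_t}\log Z_t=\frac{\beta_t-1}{\beta_t}D_{\beta_t}$, whose derivative is \emph{exactly} your bound after that drop).

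However, there is a genuine gap in your a priori spectral estimate. You invoke only \emph{monotonicity} of the relative entropy, $D(\rho_t\|\sigma)\le\eps$, to get $\|E_t\|_{\mathrm{op}}\le\sqrt{2\eps}/\specmin{\sigma}$ uniformly in $t$. But the operator comparison you need (your ``perturbative SVI'') is governed by the spectral ratio of $\rho_{\sigma,t}^{\beta_t}$, not of $\rho_{\sigma,t}$: from $(1-\|E_t\|)\sigma^{1/\beta_t}\le\rho_{\sigma,t}\le(1+\|E_t\|)\sigma^{1/\beta_t}$ one only obtains
\[
\frac{\specmax{\rho_{\sigma,t}^{\beta_t}}}{\specmin{\rho_{\sigma,t}^{\beta_t}}}\ \le\ \frac{\specmax{\sigma}}{\specmin{\sigma}}\Bigl(\frac{1+\|E_t\|}{1-\|E_t\|}\Bigr)^{\beta_t},
\]
which blows up as $\beta_t\to\alpha_1$ with $\alpha_1$ large; your derivation of $c_t$ gives no mechanism for the claimed extra factor $\alpha_0$, and the resulting $\Lambda$ would depend on $\alpha_1$, destroying the logarithmic waiting time \eqref{eqn::delay_T}. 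The paper fixes this by using the \emph{exponential decay} $D(\rho_t\|\sigma)\le\eps\,e^{-2Kt}$ (itself a consequence of the LSI), so that $\|E_t\|\lesssim\sqrt{\eps}\,e^{-Kt}$ shrinks fast enough to compensate the growth $\beta_t\sim e^{2K\eta t}$; the cap $\eta\le\tfrac12$ in the definition of $\eta$ is precisely what makes $\beta_t\sqrt{\eps_t}$ bounded, and is otherwise unexplained in your sketch.

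Separately, your description of the ``perturbative SVI'' is too schematic to be a proof. The actual comparison is not a generic operator-monotonicity argument: after writing both sides as quadratic forms in the same vectors $A_j$, one must compare $\mop{\rho_{\sigma,t}^{\beta_t}}{\omega_j}$ against the composite $\mop{\rho_{\sigma,t}^{\beta_t-1}}{(\beta_t-1)\omega_j/\beta_t}\circ\mop{\rho_{\sigma,t}}{\omega_j/\beta_t}$. The paper does this by a change of variables in the double integral that produces an explicit piecewise-linear weight $f(s)$ on $[0,1]$ with $\int f=1$, symmetrizes via $G_{X,\omega}(s)(A)=e^{\omega s}X^sAX^{-s}+e^{\omega(1-s)}X^{1-s}AX^{-(1-s)}$, and then uses the elementary spectral bound $2\sqrt{e^{\omega}\specmin{X}/\specmax{X}}\,\id\le G_{X,\omega}(s)\le(1+e^{\omega}\specmax{X}/\specmin{X})\,\id$ (\lemref{lem::Gop_comparison}) with $X=\rho_{\sigma,t}^{\beta_t}$. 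This is where the precise form of $\eta$ comes from; merely invoking \eqref{eqn::kmb_op}--\eqref{eqn::inverse_mop} and ``operator monotonicity'' does not produce it.
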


The proof is postponed to \secref{sec::proof_comparison}.
Later in \secref{sec::diss_q_comparison}, we will discuss this quantum comparison theorem in details. As a reminder, we have, in fact, implicitly assumed that Lindblad equations under consideration are primitive, by \propref{prop::primitive_lsi}.
	\begin{remark}
\begin{enumerate}
	\item

	The assumption \eqref{eq:ent0}
	on the initial condition $\rho_0$ is merely a
	technical assumption allowing
	us to obtain a neat expression for the delay time $T$.
	This assumption \eqref{eq:ent0} is not essential for the inequality \eqref{eq:compare}
	to hold; one could remove the assumption at the expense of longer delay time. In fact,
	the validity of the quantum LSI \eqref{eqn::lsi_alpha} leads to the exponential
	decay of the quantum relative entropy, so that
	$D({\rho_{t_0} || \sigma} ) \leq \epsilon $ holds for some $t_0 \ge 0 $. Taking $\rho_{t_0}$ in place of $\rho_0$, one sees that this would imply \eqref{eq:compare} with a larger decay time $T$.

\item
	Another reason for imposing the assumption \eqref{eq:ent0} is to avoid technicalities. Thanks to the quantum Pinsker's inequality
	\begin{equation}
	\label{eqn::qpinsker}
	\relaentropy{\rho}{\sigma} \ge \frac{1}{2} \left(\tr\Abs{\rho-\sigma}\right)^2,
	\end{equation}
		the assumption that $\relaentropy{\rho_0}{\sigma} \le \eps$ with $0 < \eps < \frac{\specmin{\sigma}^2}{2}$ implies that $\rho_0$ is a strictly positive density matrix;
		in fact, we can even show that
		$\rho_t\in \dsetplus$, for any $t\ge 0$.
		This assumption can help avoid many technical issues arising from degenerate density matrices and it validates the usage of results in the previous two sections, \eg, properties of the operator $\mop{X}{\omega}$ (which require $X$ to be strictly positive).
\end{enumerate}
	\end{remark}

\begin{prop}
\label{prop::exp_decay_renyi}
Let $\rho_t$ be the solution of the \lb{} equation with GNS-detailed balance \eqref{eqn::lb}. Assume that the
	quantum LSI \eqref{eqn::lsi_alpha} holds with constant $K$.
	If the sandwiched \renyi{} divergence decays exponentially fast with rate $R$ for some order $\alpha_0 > 1$, then it decays exponentially fast for any $\alpha\in (0,\infty)$ with rate at least $R$.
\end{prop}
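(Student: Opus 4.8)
The plan is to treat separately the two regimes $\alpha \le \alpha_0$ and $\alpha > \alpha_0$, using the monotonicity of the sandwiched \renyi{} divergence in its order for the former and the quantum comparison theorem (\propref{prop::comparison}) for the latter. If $\rho_0 = \sigma$ then all the divergences involved vanish identically and the claim is trivial, so throughout I assume $\rho_0 \neq \sigma$, which forces $\renyidivgalpha{\rho_0}{\sigma}{\beta} > 0$ for every $\beta$. I fix once and for all a value $\eps := \specmin{\sigma}^2/4 \in \bigl(0, \specmin{\sigma}^2/2\bigr)$ and let $\eta$ and $\Lambda$ be the constants attached to this $\eps$ (and to the given order $\alpha_0$) as in \propref{prop::comparison}. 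Let $C_{\alpha_0} > 0$ and $\tau_{\alpha_0} \ge 0$ be a prefactor and a waiting time witnessing the assumed exponential decay at order $\alpha_0$, i.e. $\renyidivgalpha{\rho_t}{\sigma}{\alpha_0} \le C_{\alpha_0}\, \renyidivgalpha{\rho_0}{\sigma}{\alpha_0}\, e^{-Rt}$ for all $t \ge \tau_{\alpha_0}$.

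For the easy regime $0 < \alpha \le \alpha_0$, I would invoke the monotonicity of $\renyidivgalpha{\rho}{\sigma}{\beta}$ in $\beta$ \cite[Theorem 7]{muller-lennert_quantum_2013}: since the solution $\rho_t = e^{t\lbop^\dagger}(\rho_0)$ is a density matrix with $\rho_t \ll \sigma$ (as $\sigma$ is full-rank), one has $\renyidivgalpha{\rho_t}{\sigma}{\alpha} \le \renyidivgalpha{\rho_t}{\sigma}{\alpha_0} \le C_{\alpha_0}\, \renyidivgalpha{\rho_0}{\sigma}{\alpha_0}\, e^{-Rt}$ for $t \ge \tau_{\alpha_0}$. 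Rewriting the right-hand side as $\bigl(C_{\alpha_0}\renyidivgalpha{\rho_0}{\sigma}{\alpha_0}/\renyidivgalpha{\rho_0}{\sigma}{\alpha}\bigr)\,\renyidivgalpha{\rho_0}{\sigma}{\alpha}\,e^{-Rt}$ exhibits exponential decay at rate $R$, with prefactor $C_\alpha := C_{\alpha_0}\renyidivgalpha{\rho_0}{\sigma}{\alpha_0}/\renyidivgalpha{\rho_0}{\sigma}{\alpha}$ and waiting time $\tau_\alpha := \tau_{\alpha_0}$.

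For the regime $\alpha > \alpha_0$, the idea is to apply \propref{prop::comparison} from a sufficiently late time. First, the assumed quantum LSI with constant $K$ together with \gw{} (this is \eqref{eqn::renyi_exp_decay} at $\alpha=1$) gives $\relaentropy{\rho_t}{\sigma} \le \relaentropy{\rho_0}{\sigma}\,e^{-2Kt}$, so $\relaentropy{\rho_t}{\sigma} \le \eps$ for every $t \ge t_\eps := \max\bigl(0, \tfrac{1}{2K}\log(\relaentropy{\rho_0}{\sigma}/\eps)\bigr)$. Set $T_\alpha := \tfrac{1}{2K\eta}\log\tfrac{\alpha-1}{\alpha_0-1}$. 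Given any $t \ge t_\eps + T_\alpha + \tau_{\alpha_0}$, let $t_0 := t - T_\alpha$; then $t_0 \ge t_\eps$, hence $\relaentropy{\rho_{t_0}}{\sigma} \le \eps$, and the trajectory $s \mapsto \rho_{t_0 + s}$ solves the same \lb{} equation with initial datum $\rho_{t_0}$. Applying \propref{prop::comparison} to this trajectory with lower order $\alpha_0$ and higher order $\alpha$ yields $\renyidivgalpha{\rho_{t_0 + T_\alpha}}{\sigma}{\alpha} \le \renyidivgalpha{\rho_{t_0}}{\sigma}{\alpha_0}$, that is, $\renyidivgalpha{\rho_t}{\sigma}{\alpha} \le \renyidivgalpha{\rho_{t - T_\alpha}}{\sigma}{\alpha_0}$. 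Since $t - T_\alpha \ge \tau_{\alpha_0}$, the assumed decay at order $\alpha_0$ bounds the right-hand side by $C_{\alpha_0}\renyidivgalpha{\rho_0}{\sigma}{\alpha_0}\,e^{-R(t-T_\alpha)}$, so altogether $\renyidivgalpha{\rho_t}{\sigma}{\alpha} \le \bigl(C_{\alpha_0}\renyidivgalpha{\rho_0}{\sigma}{\alpha_0}\,e^{RT_\alpha}\bigr)e^{-Rt}$ for all such $t$; dividing by $\renyidivgalpha{\rho_0}{\sigma}{\alpha}$ puts this in the desired form, with rate $R$ (hence "at least $R$"), prefactor $C_\alpha := C_{\alpha_0}\renyidivgalpha{\rho_0}{\sigma}{\alpha_0}\,e^{RT_\alpha}/\renyidivgalpha{\rho_0}{\sigma}{\alpha}$, and waiting time $\tau_\alpha := t_\eps + T_\alpha + \tau_{\alpha_0}$. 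Since $\alpha \in (0,\infty)$ was arbitrary, this completes the argument.

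I expect the only genuine obstacle to be the mismatch between the hypothesis of \propref{prop::comparison} — which needs its initial state to be $\eps$-close to $\sigma$ in relative entropy — and a general trajectory, which need not satisfy this at $t = 0$; the resolution, as above, is to first use the quantum LSI to drive $\relaentropy{\rho_t}{\sigma}$ below $\eps$ and only then invoke the comparison theorem, folding the resulting delay $t_\eps$ into the waiting time. The remaining points (translating bounds stated relative to $\renyidivgalpha{\rho_0}{\sigma}{\alpha_0}$ into bounds relative to $\renyidivgalpha{\rho_0}{\sigma}{\alpha}$, and checking that the shifted trajectory still solves the \lb{} equation) are routine, and the reduction of the case $\alpha \le \alpha_0$ to monotonicity of $D_\beta$ in $\beta$ is immediate.
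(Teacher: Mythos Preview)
Your proposal is correct and follows essentially the same approach as the paper: both split into the cases $\alpha\le\alpha_0$ (handled by monotonicity of $\renyidivgalpha{\cdot}{\cdot}{\beta}$ in $\beta$) and $\alpha>\alpha_0$ (handled by first using the quantum LSI to drive the relative entropy below $\eps$ and then invoking \propref{prop::comparison} on the time-shifted trajectory). The only cosmetic differences are that you fix a specific $\eps$ and take $\tau_\alpha = t_\eps + T_\alpha + \tau_{\alpha_0}$, whereas the paper leaves $\eps$ free and uses the slightly smaller waiting time $T + \max\bigl(\tau_{\alpha_0}, \tfrac{1}{2K}\log(\relaentropy{\rho_0}{\sigma}/\eps)\bigr)$; neither affects the argument.
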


\begin{proof}[Proof of \propref{prop::exp_decay_renyi}]
	The exponential decay of the sandwiched \renyi{} divergence means there exists some time $t_0$ and constant $C_0$ such that
	\begin{equation*}
	\renyidivgalpha{\rho_t}{\sigma}{\alpha_0} \le C_0 \renyidivgalpha{\rho_0}{\sigma}{\alpha_0} e^{- R t},\qquad \forall t \ge t_0.
	\end{equation*}

	{\noindent  \emph{Case (\rom{1}):} $\alpha\le \alpha_0$}.
	By the monotonicity of sandwiched \renyi{} divergences with respect to the order $\alpha$ \cite[Theorem 7]{muller-lennert_quantum_2013},
	\begin{align*}
	\renyidivg{\rho_t}{\sigma} \le \renyidivgalpha{\rho_t}{\sigma}{\alpha_0} \le C_0 \renyidivgalpha{\rho_0}{\sigma}{\alpha_0} e^{- R t} = C_{\alpha} \renyidivg{\rho_0}{\sigma} e^{- R t},
	\end{align*}
	for all $t\ge t_0$, where $C_{\alpha} = \frac{C_0 \renyidivgalpha{\rho_0}{\sigma}{\alpha_0}}{\renyidivg{\rho_0}{\sigma}}$.

	{\noindent \emph{Case (\rom{2}):} $\alpha > \alpha_0$}. Pick an arbitrary $0<\epsilon <
	\frac{\specmin{\sigma}^2}{2}$.
	Notice that the quantum LSI implies the exponential decay of the quantum relative entropy,
	namely,
	\begin{equation*}
	\relaentropy{\rho_t}{\sigma} \le \relaentropy{\rho_0}{\sigma} e^{-2K t}.
	\end{equation*}
	Hence when $t\ge \frac{1}{2K} \log\left(\frac{\relaentropy{\rho_0}{\sigma}}{\epsilon}\right)$, one has $\relaentropy{\rho_t}{\sigma} \le \epsilon$.
	Moreover, if  $t\ge T + \max\left(t_0, \frac{1}{2K}
	\log\left(\frac{\relaentropy{\rho_0}{\sigma}}{\epsilon}\right)\right)$,
	where $T$ is the time delay given in \propref{prop::comparison} with the choice
	$\alpha_1 = \alpha$, then one obtains from  \propref{prop::comparison}  that
	\begin{align*}
	\renyidivg{\rho_t}{\sigma} \le \renyidivgalpha{\rho_{t-T}}{\sigma}{\alpha_0} \le C_0 \renyidivgalpha{\rho_0}{\sigma}{\alpha_0} e^{- R (t-T)} = C_{\alpha} \renyidivg{\rho_0}{\sigma} e^{-R t},
	\end{align*}
	where $C_{\alpha} = \frac{C_0 \renyidivgalpha{\rho_0}{\sigma}{\alpha_0} e^{RT}}{\renyidivg{\rho_0}{\sigma}}$.

	Combining both cases above finishes the proof of \propref{prop::exp_decay_renyi}.
\end{proof}

\begin{proof}[Proof of \thmref{thm::decay_quantum_renyi}]
	Set $\alpha_0 = 2$ and $R = 2\lbopgap$ in \propref{prop::exp_decay_renyi}.
	\thmref{thm::decay_quantum_renyi} follows immediately from \propref{prop::exp_decay_renyi} and the case $\alpha=2$ proved in \secref{sec::alpha=2};
	the validity of the quantum LSI has been shown in \propref{prop::primitive_lsi}.
	The expression of $T_{\alpha,\eps}$ follows immediately from \propref{prop::comparison}.
\end{proof}

\subsection{Proof of quantum comparison theorem}
\label{sec::proof_comparison}

We now turn to the proof of \propref{prop::comparison}.
Consider the density matrix
\begin{equation*}
\varrho = \frac{\left(\Gamma_{\sigma}^{\frac{1-\alpha}{\alpha}}(\rho)\right)^{\alpha}}{Z}  \equiv \frac{\rho_{\sigma}^{\alpha} }{Z},
\end{equation*} where the normalization constant
$Z = \tr\left(\rho_{\sigma}^{\alpha}\right)$ and $\rho\in \dsetplus$.
Then apparently, $\varrho\in \dsetplus$. The \lsi{} \eqref{eqn::lsi_alpha},
with $\rho$ in \eqref{eqn::lsi_alpha} chosen as $\varrho$ above, becomes
\begin{equation}
\label{eqn::lsi_variant}
\begin{split}
& \tr\left(\rho_{\sigma}^{\alpha} \log\bigl(\rho_{\sigma}^{\alpha}\bigr)\right) - Z \log(Z) - \tr\left(\rho_{\sigma}^{\alpha} \log(\sigma)\right) 
 \le
\frac{1}{2K} \tr\left(-\lbop^{\dagger}(\rho_{\sigma}^{\alpha}) \bigl(\log(\rho_{\sigma}^{\alpha}) - \log(\sigma)\bigr) \right). \\
\end{split}
\end{equation}
This inequality can be regarded as a variant of the \lsi{} and it will be used later.
Let us define the operator $G_{X,\omega}(s):\mset\rightarrow\mset$ by
	\begin{align}
	\label{eqn::g_symm_op}
	G_{X,\omega}(s) (A) := e^{\omega s} X^s A X^{-s} + e^{\omega (1-s)} X^{1-s} A X^{-(1-s)}.
	\end{align}
The lemma below collects some properties of $G_{X,\omega}(s)$ which will be useful in the proof of \propref{prop::comparison}.
\begin{lemma}
	\label{lem::Gop_comparison}
	Assume that $X\in \psetplus$ and $\omega\in \Real$. Then the operator $G_{X,\omega}(s)$ satisfies:
	\begin{enumerate}
		\item \label{lem::Gop_comparison_order}
		For $0\le s_1 < s_2 \le \frac{1}{2}$,
		\begin{align*}
		G_{X,\omega}(s_1 ) \ge G_{X,\omega}(s_2).
		\end{align*}
		Moreover, the equality is obtained iff $\omega = 0$ and $X = c\unit$ for some constant $c > 0$.

		\item  \label{lem::Gop_comparison_bound}
		For $s\in [0, \frac{1}{2}]$, a bound for the spectrum of $G_{X,\omega}(s)$ is
		\revise{\begin{align*}
		2  \sqrt{e^{\omega}\frac{\specmin{X}}{\specmax{X}}} \id  \le G_{X,\omega}(s) \le  \left(1+e^{\omega}\frac{\specmax{X}}{\specmin{X}}\right) \id.
		\end{align*}}
		As a consequence,
				\begin{align*}
		\eta G_{X,\omega}(s_1) \le G_{X,\omega}(s_2),\qquad \forall s_1, s_2\in \left[0,\frac{1}{2}\right]
		\end{align*}
		with
		\begin{equation*}
		\eta = \frac{2\sqrt{e^{\omega}\frac{\specmin{X}}{\specmax{X}}}}{1+e^{\omega} \frac{\specmax{X}}{\specmin{X}}}.
		\end{equation*}

	\end{enumerate}
\end{lemma}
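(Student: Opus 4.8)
The plan is to reduce every assertion to an elementary scalar inequality by simultaneously diagonalizing all the operators in sight. Write the spectral decomposition $X = \sum_{k} \mu_k \ket{k}\bra{k}$ with $\mu_k > 0$, and recall that $\{\ket{k}\bra{l}\}_{k,l}$ is an orthonormal basis of $\mset$ for the Hilbert-Schmidt inner product. A one-line computation shows that each such basis matrix is an eigenvector of $G_{X,\omega}(s)$: indeed $G_{X,\omega}(s)(\ket{k}\bra{l}) = g_{k,l}(s)\,\ket{k}\bra{l}$ with $g_{k,l}(s) = \bigl(e^{\omega}\mu_k/\mu_l\bigr)^{s} + \bigl(e^{\omega}\mu_k/\mu_l\bigr)^{1-s}$. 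In particular $G_{X,\omega}(s)$ is self-adjoint and strictly positive for every $s$, and all the operator inequalities in the statement are equivalent to the corresponding inequalities for the positive numbers $g_{k,l}(s)$, holding uniformly over the pairs $(k,l)$.

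For part (1), fix $k,l$ and set $a := e^{\omega}\mu_k/\mu_l > 0$, so that $g_{k,l}(s) = \psi(s)$ with $\psi(s) := a^{s} + a^{1-s}$. Then $\psi'(s) = (\log a)\,(a^{s} - a^{1-s})$, whose two factors always have opposite signs on $[0,\tfrac12)$ (and both vanish when $a = 1$), so $\psi$ is non-increasing on $[0,\tfrac12]$, and strictly decreasing there unless $a = 1$. Hence $g_{k,l}(s_1) \ge g_{k,l}(s_2)$ for $0 \le s_1 < s_2 \le \tfrac12$, which gives $G_{X,\omega}(s_1) \ge G_{X,\omega}(s_2)$. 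If the operators are equal then $g_{k,l}(s_1) = g_{k,l}(s_2)$ for every $k,l$, forcing $e^{\omega}\mu_k/\mu_l = 1$ for all $k,l$; taking $k = l$ yields $\omega = 0$, and then $\mu_k = \mu_l$ for all $k,l$, i.e.\ $X = c\unit$. Conversely, when $\omega = 0$ and $X = c\unit$ one has $G_{X,\omega}(s) = 2\,\id$ for every $s$, so equality holds.

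For part (2), keep $a = e^{\omega}\mu_k/\mu_l$ and note $a \in \bigl[\,e^{\omega}\specmin{X}/\specmax{X},\ e^{\omega}\specmax{X}/\specmin{X}\,\bigr]$. The lower bound is AM--GM: $g_{k,l}(s) = a^{s} + a^{1-s} \ge 2\sqrt{a^{s}a^{1-s}} = 2\sqrt{a} \ge 2\sqrt{e^{\omega}\specmin{X}/\specmax{X}}$. For the upper bound, the monotonicity from part (1) on $[0,\tfrac12]$ gives $g_{k,l}(s) \le g_{k,l}(0) = 1 + a \le 1 + e^{\omega}\specmax{X}/\specmin{X}$ for $s \in [0,\tfrac12]$. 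Translating back to operators yields the two-sided spectral bound, and the final claim follows by chaining, for $s_1,s_2 \in [0,\tfrac12]$, $G_{X,\omega}(s_2) \ge 2\sqrt{e^{\omega}\specmin{X}/\specmax{X}}\,\id = \eta\,\bigl(1 + e^{\omega}\specmax{X}/\specmin{X}\bigr)\,\id \ge \eta\, G_{X,\omega}(s_1)$ with $\eta = 2\sqrt{e^{\omega}\specmin{X}/\specmax{X}}\big/\bigl(1 + e^{\omega}\specmax{X}/\specmin{X}\bigr)$.

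There is no real obstacle here: once the simultaneous diagonalization is set up the lemma is a routine one-variable estimate. The only points that need a little care are the equality case in part (1) — one must use that the operator identity must hold on \emph{every} basis element $\ket{k}\bra{l}$, including the diagonal ones $k = l$, in order to pin down $\omega = 0$ first and then $X = c\unit$ — and the bookkeeping that the upper bound in part (2) is obtained precisely on the interval $[0,\tfrac12]$, which is exactly where the monotonicity of part (1) is available.
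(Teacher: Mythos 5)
Your proof is correct and follows essentially the same route as the paper's: diagonalize $X$, reduce each operator inequality to the scalar function $a^{s}+a^{1-s}$ on $[0,\tfrac12]$ (monotonicity for part (1), the range $[2\sqrt{a},\,1+a]$ for part (2)), and treat the equality case by forcing $e^{\omega}\mu_k/\mu_l=1$ for all $k,l$. The only cosmetic difference is that you phrase the reduction via the eigenvectors $\ket{k}\bra{l}$ of $G_{X,\omega}(s)$ and an explicit derivative computation, whereas the paper works with the quadratic form $\Inner{A}{G_{X,\omega}(s)A}$ and quotes the monotonicity of $b^{s}+b^{1-s}$.
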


\begin{proof}[Proof of \lemref{lem::Gop_comparison}]
\begin{enumerate}
	\item

	For any positive number $b$, notice that $b^s + b^{1-s}$ is a non-increasing convex function with respect to $s$ on the interval $ [0,\frac{1}{2}]$. Moreover,
	\begin{itemize}
		\item the range of the function $b^s + b^{1-s}$ on this interval is $[2\sqrt{b}, 1+b]$;
		\item $b^{s_1} + b^{1-s_1} = b^{s_2} + b^{1-s_2}$ for some $0\le s_1 < s_2 \le \frac{1}{2}$ iff $b = 1$.
	\end{itemize}

	Let us write the spectral decomposition of $X$ as $X = \sum_{k=1}^{n} \lambda_k \Ket{k}\Bra{k}$ where $\lambda_k > 0$ by assumption. Then for any matrix $A$,
	\begin{align*}
	\Inner{A}{ G_{X,\omega}(s) A} &=
	\sum_{k,l=1}^{n} \left(   \left(e^{\omega } \frac{\lambda_k}{\lambda_l}\right)^s +  \left(e^{\omega} \frac{\lambda_k}{\lambda_l}\right)^{1-s}\right) \Abs{A_{k,l}}^2.
	\end{align*}
	Since $b^s + b^{1-s}$ is non-increasing with respect to $s$, so is $G_{X,\omega}(s)$. To achieve equality, we need $e^{\omega}\frac{\lambda_k}{\lambda_l} = 1$ for all $k, l$. Hence $\omega = 0$ and there exists some $c = \lambda_k$ for all $1\le k \le n$ (\ie, $X = c\unit$).

	\item From the last equation, by using the range of $b^s + b^{1-s}$, we immediately have
	\revise{
	\begin{align*}
	& \Inner{A}{G_{X,\omega}(s)A} \le \sum_{k,l} (1+e^{\omega} \frac{\lambda_k}{\lambda_l}) \Abs{A_{k,l}}^2 \le  \left(1+e^{\omega}\frac{\specmax{X}}{\specmin{X}}\right) \Inner{A}{A}; \\
	& \Inner{A}{G_{X,\omega}(s) A} \ge  \sum_{k,l} 2\sqrt{e^{\omega}\frac{\lambda_k}{\lambda_l}} \Abs{A_{k,l}}^2 \ge
	2  \sqrt{e^{\omega}\frac{\specmin{X}}{\specmax{X}}} \Inner{A}{A},
	\end{align*}
	which proves part (2).}
\end{enumerate}
\end{proof}

\begin{proof}[Proof of \propref{prop::comparison}]

	Let us first introduce some useful notations.
	Suppose $\rho_t$ is evolving according to the \lb{} equation \eqref{eqn::lb} and $\beta_t = 1 + (\alpha_0 - 1) e^{\eta 2Kt}$ is also changing with respect to time for some positive constant $\eta$, independent of any specific initial condition $\rho_0$. The parameter $\eta\in (0,1]$ is yet to be determined later. We are interested in the time interval $[0,T]$ such that $\beta_t\vert_{t=0} = \alpha_0, \beta_t\vert_{t=T} = \alpha_1$. More specifically, $T = \frac{1}{2K \eta} \log\left(\frac{\alpha_1-1}{\alpha_0 - 1}\right)$.

	Let us define an important quantity $\rho_{\sigma, t}$ by
	\begin{equation}
	 \rho_{\sigma,t}: = \Gamma_{\sigma}^{\frac{1-\beta_t}{\beta_t}} (\rho_t).
	\end{equation}
	We also define
	\begin{equation*}
	Z_t := \tr\left(\bigl(\Gamma_{\sigma}^{\frac{1-\beta_t}{\beta_t}} (\rho_t)\bigr)^{\beta_t}\right) \equiv \tr\left(\rho_{\sigma,t}^{\beta_t}\right) \text{ and } F_t := \frac{1}{\beta_t} \log Z_t.
	\end{equation*}
	After taking the derivative of $F_t$ with respect to time $t$ and arranging terms,
	\begin{equation*}
	\beta_t^2 Z_t \dot{F}_t = \beta_t \dot{Z}_t - \dot{\beta}_t Z_t \log(Z_t).
	\end{equation*}
	We claim that to prove the inequality \eqref{eq:compare} it suffices to
	show that $F_t$ is monotonically decreasing, \ie, $\dot{F}_t \le 0$ on the interval $[0,T]$.
	 In fact, from the definition of  $F$ and the
	 sandwiched \renyi{} divergence \eqref{eqn::swch}, we immediately obtain from $F_T \leq F_0$  that
	\begin{equation}
	\renyidivgalpha{\rho_{T}}{\sigma}{\alpha_1} \le \frac{\alpha_1}{\alpha_1 - 1} \frac{\alpha_0-1}{\alpha_0} \renyidivgalpha{\rho_0}{\sigma}{\alpha_0} \le \renyidivgalpha{\rho_0}{\sigma}{\alpha_0}.
	\end{equation}
	The rest devotes to the proof of $\dot{F}_t \le 0$, which is done in the following steps.

	{\noindent {\bf Step (\rom{1})}: Simplification of  $\dot{F}_t$. }

	Let us first compute $\dot{Z}_t$.
	\begin{align*}
	\dot{Z}_t &\equiv \frac{\ud}{\ud t} \tr\left( \bigl(\sigma^{\frac{1-\beta_t}{2\beta_t}} \rho_t \sigma^{\frac{1-\beta_t}{2\beta_t}}\bigr)^{\beta_t} \right)\\
	&\begin{aligned}
	= \beta_t\tr\left( \rho_{\sigma,t}^{\beta_t-1} \left(
	\sigma^{\frac{1-\beta_t}{2\beta_t}}
	\lbop^{\dagger}(\rho_t)
	\sigma^{\frac{1-\beta_t}{2\beta_t}}
	-\frac{\dot{\beta}_t}{2\beta_t^2} \sigma^{\frac{1-\beta_t}{2\beta_t}} \Anticomm{\log(\sigma)}{\rho_t} \sigma^{\frac{1-\beta_t}{2\beta_t}}
	\right)\right)
	+ \dot{\beta}_t  \tr\left(\rho_{\sigma,t}^{\beta_t}\log(\rho_{\sigma,t})\right) \\
	\end{aligned}\\
	&\begin{aligned}
	= \beta_t\tr\left(\sigma^{\frac{1-\beta_t}{2\beta_t}} \rho_{\sigma,t}^{\beta_t-1}
	\sigma^{\frac{1-\beta_t}{2\beta_t}}
	\lbop^{\dagger}(\rho_t)\right)
	-\frac{\dot{\beta}_t}{\beta_t}\tr\left( \rho_{\sigma,t}^{\beta_t} \log(\sigma) \right)
	+ \frac{\dot{\beta}_t}{\beta_t} \tr\left(\rho_{\sigma,t}^{\beta_t}\log(\rho_{\sigma,t}^{\beta_t})\right), \\
	\end{aligned}
	\end{align*}
	where the anti-commutator $\Anticomm{\cdot}{\cdot}$ is defined as $\Anticomm{A}{B} := AB + BA$ for all matrices $A$ and $B$.

	Hence by \eqref{eqn::lsi_variant} and the fact that $\dot{\beta}_t > 0$,
	\begin{equation}
	\label{eqn::derivative_F_t_bound}
	\begin{split}
	&\beta_t^2 Z_t \dot{F}_t \\
	& \begin{aligned}=
	 \beta_t^2\tr\left(\sigma^{\frac{1-\beta_t}{2\beta_t}} \rho_{\sigma,t}^{\beta_t-1}
	\sigma^{\frac{1-\beta_t}{2\beta_t}}
	\lbop^{\dagger}(\rho_t)\right)
	- \dot{\beta}_t \tr\left( \rho_{\sigma,t}^{\beta_t} \log(\sigma) \right)
	 + \dot{\beta}_t \tr\left(\rho_{\sigma,t}^{\beta_t}\log(\rho_{\sigma,t}^{\beta_t})\right) -\dot{\beta}_t Z_t \log(Z_t)
	\end{aligned}\\
	&\begin{aligned}
	\le \beta_t^2\tr\left(\sigma^{\frac{1-\beta_t}{2\beta_t}} \rho_{\sigma,t}^{\beta_t-1}
	\sigma^{\frac{1-\beta_t}{2\beta_t}}
	\lbop^{\dagger}(\rho_t)\right)
	+ \frac{\dot{\beta}_t}{2K} \tr\left(-\lbop^{\dagger}(\rho_{\sigma,t}^{\beta_t}) \bigl(\log(\rho_{\sigma,t}^{\beta_t}) - \log(\sigma)\bigr) \right)
	\end{aligned} \\
	& =: T_1 + \frac{\dot{\beta}_t}{2K} T_2. \\
	\end{split}
	\end{equation}
	In the last line, we introduce $T_1$ and $T_2$ as short-hand notations. Next, we further simplify $T_1$ and $T_2$.

	For $T_1$,
	\begin{equation*}
	\begin{split}
	T_1 &=\beta_t^2\tr\left(\sigma^{\frac{1-\beta_t}{2\beta_t}} \rho_{\sigma,t}^{\beta_t-1}
	\sigma^{\frac{1-\beta_t}{2\beta_t}}
	\lbop^{\dagger}(\rho_t)\right) \\
	&\myeq{\eqref{eqn::fdrenyi}} \beta_t(\beta_t-1) Z_t \tr\left(\fdrenyidivgalpha{\rho}{\sigma}{\beta_t}\eva_{\rho=\rho_t} \lbop^{\dagger}
	(\rho_t)\right)\\
	&\myeq{\eqref{eqn::lb_renyi_var}} -\beta_t (\beta_t-1)Z_t \Inner{\nabla \fdrenyidivgalpha{\rho}{\sigma}{\beta_t}\eva_{\rho=\rho_t}}{\moprenyialpha{\rho_t}{\vect{\omega}}{\beta_t} \left(\nabla \fdrenyidivgalpha{\rho}{\sigma}{\beta_t}\eva_{\rho=\rho_t}\right) } \\
	&\myeq{\eqref{eqn::moprenyi}} -(\beta_t-1) Z_t^2 \sum_{j} \Inner{A_j}{ \mop{\rho_{\sigma,t}^{\beta_t-1}}{\frac{(\beta_t-1)\omega_j}{\beta_t}}\circ \mop{\rho_{\sigma,t}}{\frac{\omega_j}{\beta_t}} (A_j) },
	\end{split}
	\end{equation*}
	where $A_j = \mop{\rho_{\sigma,t}^{\beta_t-1}}{\frac{(\beta_t-1)\omega_j}{\beta_t}}^{-1} \circ \Gamma_{\sigma}^{\frac{\beta_t-1}{\beta_t}} \left(\partial_j \fdrenyidivgalpha{\rho}{\sigma}{\beta_t}\eva_{\rho=\rho_t}\right)$.

	As for $T_2$, by using \eqref{eqn::lb_renyi_var} again (for the case $\alpha=1$ in \eqref{eqn::lb_renyi_var}),
	\begin{equation*}
	\begin{split}
	T_2
	&= -\Inner{\log(\rho_{\sigma,t}^{\beta_t}) - \log(\sigma)}{\lbop^{\dagger}(\rho_{\sigma,t}^{\beta_t})} \\
	&= \Inner
	{\nabla \bigl(\beta_t \log(\rho_{\sigma,t}) - \log(\sigma) \bigr)}
	{
		\mop{\rho_{\sigma,t}^{\beta_t}}{\vect{\omega}} \nabla \bigl({\beta_t} \log(\rho_{\sigma,t}) - \log(\sigma)\bigr)
	}.
	\end{split}
	\end{equation*}
	Consider the term
	\begin{equation*}
	\begin{split}
	&\partial_j \bigl(\beta_t \log(\rho_{\sigma,t}) - \log(\sigma) \bigr) \\
	=& \beta_t \left(\partial_j \log(\rho_{\sigma,t}) - \frac{\omega_j}{\beta_t} V_j \right) \\
	=& \beta_t \left(V_j \log\left(e^{-\frac{\omega_j}{2\beta_t}} \rho_{\sigma,t}\right) - \log\left(e^{\frac{\omega_j}{2\beta_t} } \rho_{\sigma,t}\right) V_j \right) \\
	\myeq{\eqref{eqn::pdfdrenyi_part}}& Z_t \left( \mop{\rho_{\sigma,t}^{\beta_t-1}}{\frac{\beta_t-1}{\beta_t} \omega_j}^{-1} \circ \Gamma_{\sigma}^{\frac{\beta_t-1}{\beta_t}}\right) \partial_j \left(\fdrenyidivgalpha{\rho}{\sigma}{\beta_t}\eva_{\rho=\rho_t}\right) \\
	=& Z_t A_j. \\
	\end{split}
	\end{equation*}
	Here, to obtain the second line, we have used $\partial_j \log(\sigma) \equiv \Comm{V_j}{\log(\sigma)} = \partial_s\left(\Delta_{\sigma}^{-s} V_j\right)\vert_{s=0} = \partial_s\left(e^{\omega_j s} V_j\right)\vert_{s=0} = \omega_j V_j$ (see \cite[Lemma 5.9]{carlen_gradient_2017}).
	Then, it follows immediately that
	\begin{align*}
	T_2 = 	Z_t^2 \sum_{j} \Inner{A_j}{\mop{\rho_{\sigma,t}^{\beta_t}}{\omega_j} (A_j)}.
	\end{align*}

	\revise{By plugging the expressions of $T_1$ and $T_2$ back into \eqref{eqn::derivative_F_t_bound}, and using the fact that $\frac{\dot{\beta_t}}{2K} = \frac{\eta 2K (\alpha_0 - 1)e^{\eta 2K t}}{2K} = \eta (\beta_t - 1)$, we have
	\begin{align*}
	\beta_t^2 Z_t \dot{F}_t \le
	& (\beta_t - 1) Z_t^2
	\sum_{j} \left(
	\eta \Inner{A_j}{\mop{\rho_{\sigma,t}^{\beta_t}}{\omega_j} (A_j)}
	- \Inner{A_j}{ \mop{\rho_{\sigma,t}^{\beta_t-1}}{\frac{(\beta_t-1)\omega_j}{\beta_t}}\circ \mop{\rho_{\sigma,t}}{\frac{\omega_j}{\beta_t}} (A_j) }
	\right).
	\end{align*}}

	{\noindent {\bf Step (\rom{2})}: We show that $\dot{F}_t \le 0$. }

	By observing the last equation, since $\beta_t - 1>0$ and $Z_t > 0$, it is then sufficient to show that
	each term indexed by $j$ on the right hand side of the last equation is less than or equal to zero, namely,
	\begin{equation}
	\eta \Inner{A_j}{\mop{\rho_{\sigma,t}^{\beta_t}}{\omega_j} (A_j)} \le \Inner{A_j}{\mop{\rho_{\sigma,t}^{\beta_t-1}}{\frac{(\beta_t-1)\omega_j}{\beta_t}}\circ \mop{\rho_{\sigma,t}}{\frac{\omega_j}{\beta_t}} (A_j) },
	\end{equation}
	for any time $t\in [0,T]$ and index $j$.
	In the commutative case, both left and right hand sides represent multiplication of the operator $\rho_{\sigma,t}^{\beta_t}$ and the above inequality holds trivially.
	For noncommutative (quantum) systems, the above inequality is non-trivial. In this step, we will simplify the expression in the last equation and furthermore find sufficient conditions for $\eta$ so that the last inequality holds.

	By definition \eqref{eqn::kmb_op} and let $B_j = A_j \rho_{\sigma,t}^{\frac{\beta_t}{2}}$, the above inequality can be simplified to
	\begin{align}
	\label{eqn::equiv_hyper_eta}
	\begin{split}
	&\eta \Inner{B_j}{\int_{0}^{1} e^{\omega_j s} \rho_{\sigma,t}^{\beta_t s} B_j \rho_{\sigma,t}^{-\beta_t s}\ud s} \\
	\le & \Inner{B_j}{\int_{0}^{1}\int_{0}^{1} e^{\frac{(\beta_t-1)\omega_j v}{\beta_t}}e^{\frac{\omega_j u}{\beta_t}} \rho_{\sigma,t}^{(\beta_t-1)v + u} B_j \rho_{\sigma,t}^{-(\beta_t-1)v - u}\ud u\ud v} \\
	= & \Inner{B_j}{\int_{0}^{1} e^{\omega_j s} \rho_{\sigma,t}^{\beta_t s} B_j \rho_{\sigma,t}^{-\beta_t s} \left(\int_{U_s} \Abs{\frac{\partial(u,v)}{\partial(s,\wt{s})}}\ud \wt{s}\right) \ud s}\\
	= & \Inner{B_j}{\int_{0}^{1} e^{\omega_j s} \rho_{\sigma,t}^{\beta_t s} B_j \rho_{\sigma,t}^{-\beta_t s} f(s)\ud s}.
	\end{split}
	\end{align}
	From the second line to the third line, we have used the change of  variable $s = \frac{(\beta_t-1)v+u}{\beta_t}$  and $\wt{s} = \frac{(\beta_t-1)v-u}{\beta_t}$. The set $U_s$ is defined as
	$$
	U_s: = \Big\{\tilde{s}\in \Real: 0 \le \frac{\beta_t}{2(\beta_t - 1)}(\wt{s} + s)\le 1 \text{ and  } 0 \le \frac{\beta_t}{2} (s - \wt{s})\le 1\Big\}.$$
	The weight function $f(s):= \int_{U_s} \Abs{\frac{\partial(u,v)}{\partial(s,\wt{s})}} \ud \wt{s} = \int_{U_s} \frac{\beta_t^2}{2(\beta_t - 1)}\ \ud \wt{s}$ is a probability distribution on $[0,1]$, which can be expressed explicitly as
	\[f(s) = \frac{\beta_t^2}{2(\beta_t-1)} \left(\min(s, \frac{2(\beta_t-1)}{\beta_t}-s) - \max(-s, s-\frac{2}{\beta_t}) \right),\qquad s\in [0,1].
	\]
	The graphs of $f(s)$ for $\beta_t =1.5$, $2$, and $3$ are given in \figref{fig::func_f_weight} for illustration. The largest value of $f(s)$ on the interval $[0,1]$ is $\beta_t$ if $\beta_t \le 2$, and is $\frac{\beta_t}{\beta_t-1}$ if $\beta_t \ge 2$. In either case, since $\beta_t > 1$, the largest value of $f(s)$ must be strictly larger than $1$. Therefore, we can define $s_1$ and $s_2$ as the two zeros of the function $f(s) - 1$, as visualized in the figure. \revise{More explicitly, $s_1 = (\beta_t - 1)/\beta_t^2$ and by symmetry, $s_2 = 1 - s_1$.}

	\begin{figure}[h!]
		\centering
		\begin{subfigure}[b]{0.48\textwidth}
			\includegraphics[width=\textwidth]{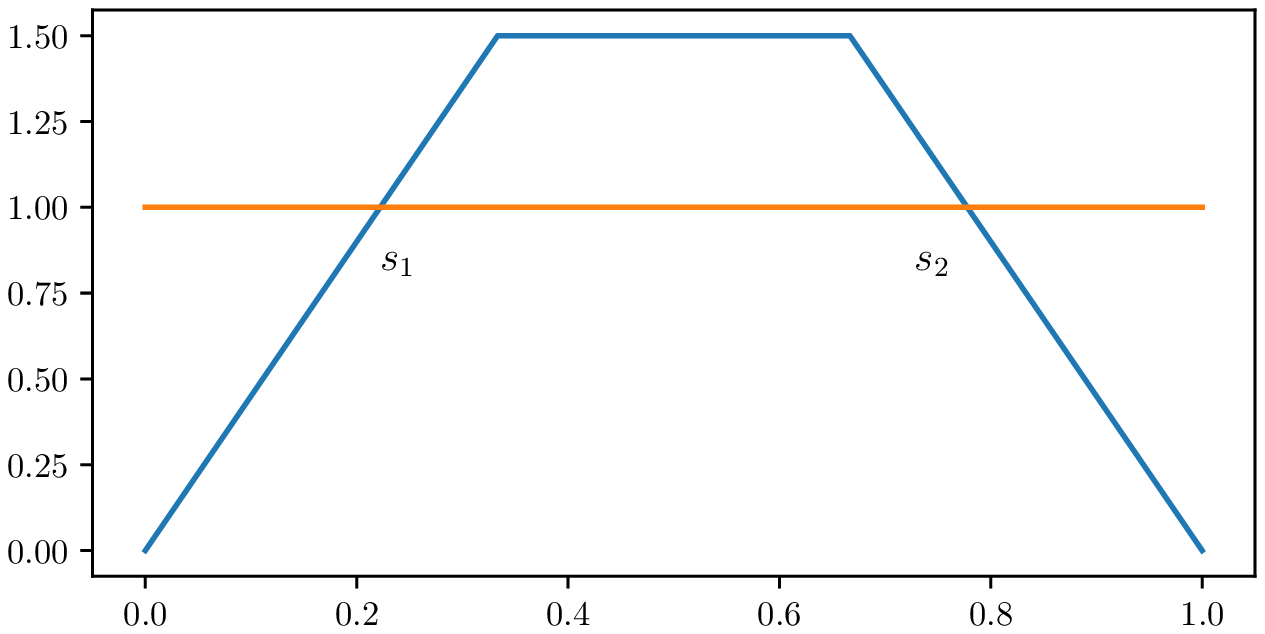}
			\caption{$\beta_t = 1.5$}
		\end{subfigure}
		~
		\begin{subfigure}[b]{0.48\textwidth}
		\includegraphics[width=\textwidth]{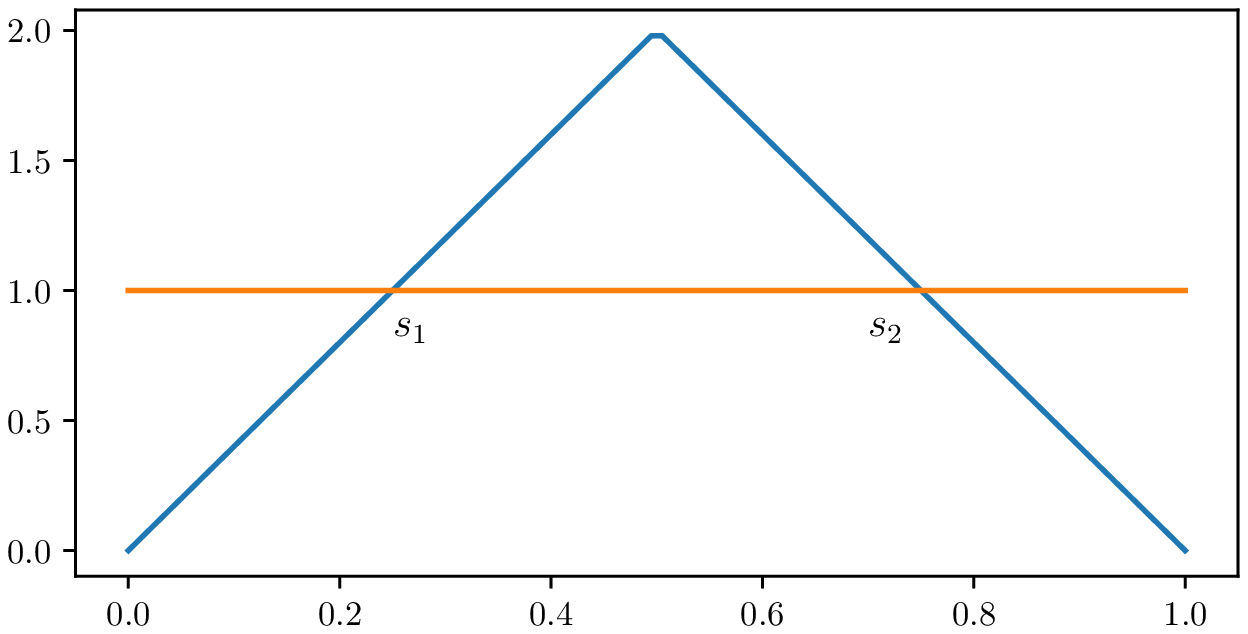}
		\caption{$\beta_t = 2$}
		\end{subfigure}
	\\
		\begin{subfigure}[b]{0.48\textwidth}
			\includegraphics[width=\textwidth]{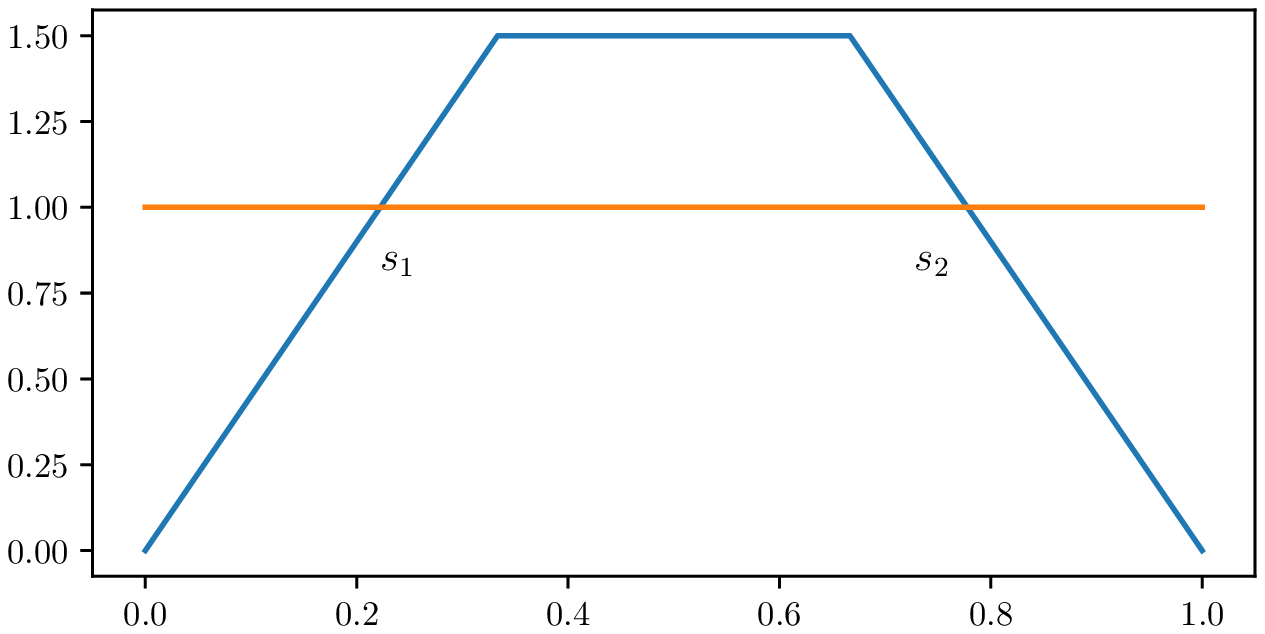}
			\caption{$\beta_t = 3$}
		\end{subfigure}
		\caption{Weight function $f(s)$ in blue color for $\beta_t=1.5$, $2$, $3$.}
		\label{fig::func_f_weight}
	\end{figure}

	To show \eqref{eqn::equiv_hyper_eta}, it is then sufficient to show that
	\begin{equation}
	\eta \int_{0}^{1} e^{\omega_j s} \rho_{\sigma,t}^{\beta_t s} (\cdot) \rho_{\sigma,t}^{-\beta_t s}\ud s \le \int_{0}^{1} e^{\omega_j s} \rho_{\sigma,t}^{\beta_t s} (\cdot) \rho_{\sigma,t}^{-\beta_t s} f(s)\ud s,
	\end{equation}
	in the sense of operators. In the above inequality, the notation $\rho_{\sigma,t}^{\beta_t s}(\cdot) \rho_{\sigma,t}^{-\beta_t s}$ is an operator mapping $A\in \mset$ to $\rho_{\sigma,t}^{\beta_t s} A \rho_{\sigma,t}^{-\beta_t s}\in \mset$.
	Notice that the function $f(s)$ is symmetric with respect to the axis $s = 1/2$ and recall the definition of $G_{X,\omega}(s)$ given by \eqref{eqn::g_symm_op}. We can rewrite the last inequality as
	\begin{equation}
	\label{eqn::opg_eta}
	\eta \int_{0}^{1/2} G_{\rho_{\sigma,t}^{\beta_t}, \omega_j}(s)\ud s \le \int_{0}^{1/2} G_{\rho_{\sigma,t}^{\beta_t}, \omega_j}(s) f(s)\ud s.
	\end{equation}
	By \lemref{lem::Gop_comparison} part (1), we have
	$$\int_{0}^{1/2} G_{\rho_{\sigma,t}^{\beta_t}, \omega_j}(s)\ud s  \ge \int_{0}^{1/2} G_{\rho_{\sigma,t}^{\beta_t}, \omega_j}(s) f(s)\ud s.$$ Apparently  we need $\eta \le 1$ for the  inequality \eqref{eqn::opg_eta} to hold. Thus from now on we shall restrict  to $\eta\le 1$.
	By subtracting the common parts in the integral \eqref{eqn::opg_eta}, in order to show \eqref{eqn::opg_eta}, it suffices to prove that
	\begin{equation}
	\label{eqn::gop_suff_cond}
	\eta \int_{0}^{s_1} G_{\rho_{\sigma,t}^{\beta_t}, \omega_j}(s) (1-f(s))\ud s \le \int_{s_1}^{\frac{1}{2}} G_{\rho_{\sigma,t}^{\beta_t}, \omega_j}(s) (f(s)-1)\ud s.
	\end{equation}
	 Note that $\int_{0}^{s_1} (1-f(s))\ud s = \int_{s_1}^{\frac{1}{2}} (f(s) - 1)\ud s$ since $f(s)$ is symmetric about the line $s=\frac{1}{2}$ and also $f(s)$ is a probability distribution on $[0,1]$. Then by \lemref{lem::Gop_comparison} part (1),
	\begin{align*}
	&\int_{s_1}^{\frac{1}{2}} G_{\rho_{\sigma,t}^{\beta_t}, \omega_j}(s) (f(s)-1)\ud s \ge \int_{s_1}^{\frac{1}{2}} G_{\rho_{\sigma,t}^{\beta_t}, \omega_j}\left(\frac{1}{2}\right) (f(s)-1)\ud s
	 = \int_{0}^{s_1} G_{\rho_{\sigma,t}^{\beta_t}, \omega_j}\left(\frac{1}{2}\right) (1-f(s))\ud s.
	\end{align*}
	 By \lemref{lem::Gop_comparison} part (2),
	 if we choose $\eta$ such that
	\begin{equation}
	\label{eqn::criterion_eta}
	\eta \le \frac{2\sqrt{e^{\omega_j}\frac{\specmin{\rho_{\sigma,t}^{\beta_t}}}{\specmax{\rho_{\sigma,t}^{\beta_t}}}}}{1+e^{\omega_j} \frac{\specmax{\rho_{\sigma,t}^{\beta_t}}}{\specmin{\rho_{\sigma,t}^{\beta,t}}}},
	\end{equation}
	then $\eta G_{\rho_{\sigma,t}^{\beta_t}, \omega_j}(s) \le G_{\rho_{\sigma,t}^{\beta_t}, \omega_j}\left(\frac{1}{2}\right)$ for all $0\le s\le \frac{1}{2}$, which implies \eqref{eqn::gop_suff_cond}.

	Hence, the remaining task is to show the existence of $\eta$ such that \eqref{eqn::criterion_eta} holds, for all $j\in \{1, 2, \cdots, \cardn\}$, all $t\in [0,T]$ and any initial condition $\rho_0$ satisfying the assumption that $\relaentropy{\rho_0}{\sigma} \le \epsilon$ for some $0 < \epsilon <  \frac{\specmin{\sigma}^2}{2}$;  step (\rom{3}) below is fully devoted into finding such an $\eta$.

	{\noindent {\bf Step (\rom{3}):} We show there exists $\eta\in (0,1]$ satisfying \eqref{eqn::criterion_eta}.}

	By observing the term on the right hand side of \eqref{eqn::criterion_eta}, it suffices to show that  $\frac{\specmax{\rho_{\sigma,t}^{\beta_t}}}{\specmin{\rho_{\sigma,t}^{\beta,t}}}$ is bounded from above.
	We first establish an upper bound for $\specmax{\rho_{\sigma,t}^{\beta_t}}$. In fact, from the quantum LSI \eqref{eqn::lsi_alpha} we have 
	\[\relaentropy{\rho_t}{\sigma} \le \relaentropy{\rho_0}{\sigma} e^{-2K t} \le \eps e^{-2K t} =: \eps_t.
	\]
	Moreover, by the quantum Pinsker's inequality \eqref{eqn::qpinsker},
	we obtain that
	\[
	\tr \Abs{\rho_t - \sigma} \le \sqrt{2\epsilon_t},
	\]
	which implies that
	\begin{equation*}
	-\sqrt{2\epsilon_t}\unit \le \rho_t - \sigma \le \sqrt{2\epsilon_t}\unit.
	\end{equation*}

	As a result,
	\begin{equation}
	\begin{aligned}\label{eq:rhotsigma}
	\sigma^{\frac{1-\beta_t}{2\beta_t}}\rho_t \sigma^{\frac{1-\beta_t}{2\beta_t}} &= \sigma^{\frac{1-\beta_t}{2\beta_t}} (\rho_t - \sigma) \sigma^{\frac{1-\beta_t}{2\beta_t}} + \sigma^{\frac{1}{\beta_t}} \\
	&\le \sigma^{\frac{1}{\beta_t}} + \sqrt{2\epsilon_t} \sigma^{\frac{1-\beta_t}{\beta_t}}\\
	& \le \left(\specmax{\sigma}^{\frac{1}{\beta_t}} + \sqrt{2\epsilon_t} \specmin{\sigma}^{\frac{1-\beta_t}{\beta_t}}\right)\unit.
	\end{aligned}
	\end{equation}
	It follows that
	\begin{align*}
	\lambda_{\max}(\rho_{\sigma,t}^{\beta_t}) &= \lambda_{\max}\left(\big(\sigma^{\frac{1-\beta_t}{2\beta_t}}\rho_t \sigma^{\frac{1-\beta_t}{2\beta_t}}\big)^{\beta_t}\right)
	\le \left(\specmax{\sigma}^{\frac{1}{\beta_t}} + \sqrt{2\epsilon_t} \specmin{\sigma}^{\frac{1-\beta_t}{\beta_t}}\right)^{\beta_t}  \\
	&= \specmax{\sigma} \left(1 + \frac{\sqrt{2\epsilon_t}}{\specmin{\sigma}}\left(\frac{\specmin{\sigma}}{\specmax{\sigma}}\right)^{\frac{1}{\beta_t}}\right)^{\beta_t}  \\
	&\le \specmax{\sigma} \exp\left(\beta_t \frac{\sqrt{2\epsilon_t}}{\specmin{\sigma}}\left(\frac{\specmin{\sigma}}{\specmax{\sigma}}\right)^{\frac{1}{\beta_t}}\right) .
	\end{align*}
	In the last inequality we used an elementary inequality $(1+x)^y = \exp(y\log(1+x))\le \exp(xy)$ for $x,y\ge 0$. By the definition of $\beta_t$,  the exponent on the right side of the above equation can be bounded above by
	\begin{align*}
	&\beta_t \frac{\sqrt{2\epsilon_t}}{\specmin{\sigma}}\left(\frac{\specmin{\sigma}}{\specmax{\sigma}}\right)^{\frac{1}{\beta_t}}\\
	=& (1 + (\alpha_0 - 1) e^{\eta 2Kt}) \frac{\sqrt{2\eps}}{\specmin{\sigma}}\exp\left(-K t -\frac{1}{\beta_t} \log \left( \frac{\specmax{\sigma}}{\specmin{\sigma}}\right)\right) \\
	 \le & \frac{\sqrt{2\eps}}{\specmin{\sigma}} + (\alpha_0 -1)\frac{\sqrt{2\eps}}{\specmin{\sigma}}\exp\left(\eta 2K t -K t -\frac{1}{\beta_t} \log \left( \frac{\specmax{\sigma}}{\specmin{\sigma}}\right)\right).
	\end{align*}
	If we restrict $\eta \le \frac{1}{2}$, then the quantity above is less than $\alpha_0 \frac{\sqrt{2\eps}}{\specmin{\sigma}}$ and thus
	\begin{equation}
	\label{eqn::spec_rho_sigma_beta_upper}
	\lambda_{\max}(\rho_{\sigma,t}^{\beta_t} ) \le \specmax{\sigma} \exp\left(\alpha_0 \frac{\sqrt{2\eps}}{\specmin{\sigma}}\right).
	\end{equation}

	Next, we prove a lower bound for $\specmin{\rho_{\sigma,t}^{\beta,t}}$. In fact, similar to \eqref{eq:rhotsigma}, one can show that
	\begin{align*}
	\sigma^{\frac{1-\beta_t}{2\beta_t}}\rho_t \sigma^{\frac{1-\beta_t}{2\beta_t}}
	&= \sigma^{\frac{1-\beta_t}{2\beta_t}} (\rho_t - \sigma) \sigma^{\frac{1-\beta_t}{2\beta_t}} + \sigma^{\frac{1}{\beta_t}}
	 \ge \sigma^{\frac{1}{\beta_t}} - \sqrt{2\epsilon_t} \sigma^{\frac{1-\beta_t}{\beta_t}}\\
	& \ge \left(\specmin{\sigma}^{\frac{1}{\beta_t}} - \sqrt{2\epsilon_t} \specmin{\sigma}^{\frac{1-\beta_t}{\beta_t}}\right)\unit
	= \specmin{\sigma}^{\frac{1}{\beta_t}} \left(1 - \frac{\sqrt{2\epsilon_t}}{\specmin{\sigma}}\right) \unit > 0,
	\end{align*}
	and hence
	\begin{align*}
	\lambda_{\min}(\rho_{\sigma,t}^{\beta_t}) &= \lambda_{\min}\left(\big(\sigma^{\frac{1-\beta_t}{2\beta_t}}\rho_t \sigma^{\frac{1-\beta_t}{2\beta_t}}\big)^{\beta_t} \right)
	\ge  \specmin{\sigma} \left(1 - \frac{\sqrt{2\epsilon_t}}{\specmin{\sigma}}\right)^{\beta_t}  \\
	& = \specmin{\sigma} \exp\left(\beta_t \log\left(1-\frac{\sqrt{2\eps}}{\specmin{\sigma}} e^{-K t}\right)\right).
	\end{align*}
	We show that the exponent on the right side of the above equation can be bounded from below. Indeed, since $1-\frac{\sqrt{2\eps}}{\specmin{\sigma}} e^{-K t} \ge 1 - \frac{\sqrt{2\eps}}{\specmin{\sigma}} > 0$ and  $\log(1-x)\ge -x/(1-x)$ for $x\in [0,1)$, if we restrict $\eta \le 1/2$, then
	\begin{align*}
	0 &> \beta_t \log\left(1-\frac{\sqrt{2\eps}}{\specmin{\sigma}} e^{-K t}\right)
	= (1 + (\alpha_0 - 1) e^{\eta 2Kt}) \log\left(1-\frac{\sqrt{2\eps}}{\specmin{\sigma}} e^{-K t}\right)\\
	&\ge \log\left(1 - \frac{\sqrt{2\eps}}{\specmin{\sigma}}\right) - (\alpha_0 - 1) e^{\eta 2K t} \frac{\frac{\sqrt{2\eps}}{\specmin{\sigma}} e^{-K t}}{1-\frac{\sqrt{2\eps}}{\specmin{\sigma}} e^{-K t}} \\
	&= \log\left(1 - \frac{\sqrt{2\eps}}{\specmin{\sigma}}\right) - (\alpha_0 - 1) e^{\eta 2K t-K t} \frac{\frac{\sqrt{2\eps}}{\specmin{\sigma}} }{1-\frac{\sqrt{2\eps}}{\specmin{\sigma}}} \\
	&\ge \log\left(1 - \frac{\sqrt{2\eps}}{\specmin{\sigma}}\right) - (\alpha_0 - 1) \frac{\frac{\sqrt{2\eps}}{\specmin{\sigma}} }{1-\frac{\sqrt{2\eps}}{\specmin{\sigma}}}
	\ge - \alpha_0 \frac{\frac{\sqrt{2\eps}}{\specmin{\sigma}} }{1-\frac{\sqrt{2\eps}}{\specmin{\sigma}}}.
	\end{align*}
	This implies that
	\begin{align}
	\label{eqn::spec_rho_sigma_beta_lower}
	\lambda_{\min}(\rho_{\sigma,t}^{\beta_t}) \ge \specmin{\sigma} \exp\left(- \alpha_0 \frac{\frac{\sqrt{2\eps}}{\specmin{\sigma}} }{1-\frac{\sqrt{2\eps}}{\specmin{\sigma}}}\right).
	\end{align}
\revise{Combining the estimations \eqref{eqn::spec_rho_sigma_beta_upper} and \eqref{eqn::spec_rho_sigma_beta_lower} yields}
	\begin{align*}
	\frac{\specmax{\rho_{\sigma,t}^{\beta_t}}}{\specmin{\rho_{\sigma,t}^{\beta_t}}} &\le
	\frac{\specmax{\sigma} \exp\left(\alpha_0 \frac{\sqrt{2\eps}}{\specmin{\sigma}}\right)}{\specmin{\sigma} \exp\left(- \alpha_0 \frac{\frac{\sqrt{2\eps}}{\specmin{\sigma}} }{1-\frac{\sqrt{2\eps}}{\specmin{\sigma}}}\right)} \\
	&= \frac{\specmax{\sigma}}{\specmin{\sigma}} \exp\left(\alpha_0 \sqrt{2\eps}\frac{2\specmin{\sigma} - \sqrt{2\eps}}{\specmin{\sigma}(\specmin{\sigma} - \sqrt{2\eps})}\right)
	=: \Lambda.
	\end{align*}
	This leads to
	\begin{align*}
	\frac{2\sqrt{e^{\omega_j}\frac{\specmin{\rho_{\sigma,t}^{\beta_t}}}{\specmax{\rho_{\sigma,t}^{\beta_t}}}}}{1+e^{\omega_j} \frac{\specmax{\rho_{\sigma,t}^{\beta_t}}}{\specmin{\rho_{\sigma,t}^{\beta_t}}}} \ge \frac{2\sqrt{e^{\omega_j} \frac{1}{\Lambda}}}{1+e^{\omega_j} \Lambda}.
	\end{align*}
	This finishes the proof of   \eqref{eqn::criterion_eta} with
	\begin{equation}
	\label{eqn::choice_eta}
	\eta := \min\left(\frac{1}{2}, \min_{j=1}^{\cardn} \left\{\frac{2\sqrt{e^{\omega_j} \frac{1}{\Lambda}}}{1+e^{\omega_j} \Lambda}\right\}\right) > 0.
	\end{equation}

\end{proof}

\subsection{Discussion on quantum comparison theorem}
\label{sec::diss_q_comparison}
Finally, we would like to comment on the quantum comparison theorem (see \propref{prop::comparison}).

\emph{Discussion on its proof and the hypercontractivity:}
The main idea of proving \propref{prop::comparison} is to verify the
\emph{hypercontractivity} in the sense of noncommutative
$\lpsp$ space (see \eg, \cite[Definition
12]{kastoryano_quantum_2013}) under the assumption of the quantum LSI.
More specifically, suppose
$\beta_t = 1 + (\alpha_0 - 1) e^{\eta 2K t}$ for some constant
$\eta > 0$ and $\rho_t$ follows the \lb{} equation, \ie,
$\dot{\rho}_t = \lbop^{\dagger}(\rho_t)$, then for the time interval
$[0,T]$, where
$T =\frac{1}{2K \eta} \log\left(\frac{\alpha_1-1}{\alpha_0 -
    1}\right)$, we need to show that
	\begin{equation}
	\label{eqn::hypercontractivity_primitive}
	\left[\tr\left(\left(\Gamma_{\sigma}^{\frac{1-\beta_t}{\beta_t}}(\rho_t)\right)^{\beta_t}\right)\right]^{\frac{1}{\beta_t}} \text{ is a non-increasing function of time } t\in [0,T].
	\end{equation}
	The function $F_t$ in the last subsection is simply the logarithm of this function.
	To see why this requirement \eqref{eqn::hypercontractivity_primitive} is closely related to the hypercontractivity in the noncommutative $\lpsp$ spaces, let us introduce the relative density $X_t = \Gamma_{\sigma}^{-1}(\rho_t)$; the time-evolution of the relative density $X_t$ has the generator $\lbop$, \ie, $X_t = e^{t\lbop} (X_0)$, which can be directly verified with the help of \eqref{eqn::lbop_lbop_dag}.
	With these notations and definitions, \eqref{eqn::hypercontractivity_primitive} means $\Norm{e^{t\lbop}(X_0)}_{\beta_t, \sigma} \le \Norm{X_0}_{\alpha_0, \sigma}$, which has the same form as \cite[Definition 12]{kastoryano_quantum_2013}. In a recent paper \cite{beigi_quantum_2018}, for primitive \lb{} equations with GNS-detailed balance, a quantum Stroock-Varopoulos inequality (see \cite[Theorem 14]{beigi_quantum_2018}) has been proved, whose implication (see \cite[Corollary 17]{beigi_quantum_2018}) shows that one can pick $\eta_{SVI} = 2\kappa_2/K$ (see \eqref{eqn::lsic_Lp} for the definition of $\kappa_2$); the subscript \emph{SVI} is used to emphasize using the quantum \svi{}. Recall that $K = 2\kappa_1$ \eqref{eqn::kappa_1_K};
	the quantum \svi{} shows that $\kappa_1 \ge \kappa_2$ (see also \thmref{thm::qsvi}), hence $\eta_{SVI}  \le 1$, which is consistent with our observation in the proof that $\eta\le 1$. We make some comments on the difference between \propref{prop::comparison} and the above estimation using the quantum \svi{} in \cite{beigi_quantum_2018}:
	\begin{enumerate}
    \item Using the estimation from the quantum \svi{}, the prefactor $\frac{1}{2K\eta}$ in \eqref{eqn::delay_T}, is simply $\frac{1}{4\kappa_2}$; in this paper, we directly use
      $K/2 \equiv \kappa_1$ and the information about the spectrum of $\sigma$.

	\item There is, however, a more stringent assumption on the initial condition $\rho_0$ in \propref{prop::comparison}. Because of such a restriction on initial conditions, even though our proof is related to verify the hypercontractivity, strictly speaking, we haven't really proved the hypercontractivity by solely assuming the quantum LSI \eqref{eqn::lsi_alpha} for $\alpha = 1$.

		\item Despite the restriction on initial conditions $\rho_0$,  our proof is probably more similar to the classical result for the \fp{} equation case in \revise{\cite[Lemma 3.4]{prevpaper}}. Moreover, our proof for the quantum comparison theorem does not require any prior knowledge about noncommutative $\lpsp$ spaces.
	\end{enumerate}

	\emph{Comparison with the classical result:}
	By comparing  \propref{prop::comparison} and a similar result for the Fokker-Planck equation (c.f. \revise{\cite[Lemma 3.4]{prevpaper}}), it appears that the  waiting time $T$ in \propref{prop::comparison}  is much larger than that for the \fp{} equation. More specifically, the parameter $\eta \le 1$ for the \lb{} equation, and $\eta = 1$ for the \fp{} equation.

\section{Conclusion and discussions}
\label{sec::outlook}

In this paper, we have extended the gradient flow structure of the quantum relative entropy in \cite{carlen_gradient_2017} to sandwiched \renyi{} divergences of any order $\alpha\in (0,\infty)$, for primitive \lb{} equations with GNS-detailed balance.
\revise{The necessary condition for the validity of such a gradient flow structure has been briefly discussed.}
Furthermore, we have proved the exponential decay of the sandwiched \renyi{} divergence of any order $\alpha\in (0,\infty)$.
We conclude with some remarks on open questions and future research directions.

\begin{enumerate}

\item \revise{(\emph{More general entropy}).} In this paper, we have considered the entropy production of the sandwiched \renyi{} divergence, which is a generalization of the quantum relative entropy. One natural question is whether the gradient flow structure and exponential decay proved in this paper for the sandwiched \renyi{} divergence can be extended to the more general $(\alpha,z)$-\renyi{} divergence \cite{audenaert_alpha-z-renyi_2015,carlen_inequalities_2018}, for certain ranges of $(\alpha,z)$.

\item \revise{(\emph{Gap between sufficient and necessary conditions for having a gradient flow structure}).
We have mentioned in \secref{sec::necessary_condition} that there is still a gap between sufficient and necessary conditions to regard Lindblad equations as the gradient flow dynamics of sandwiched \renyi{} divergences (including the quantum relative entropy \cite{carlen_non-commutative_2018}).
Characterizing the QMS with various detailed balance conditions, beyond GNS-detailed balance, might be important to resolve this issue, \ie, studying different versions of  \thmref{thm::lindblad_detail_balance} by considering various detailed balance assumptions.}

\item \revise{(\emph{Quantum Wasserstein distance}).}
In the classical optimal transport theory,
Wasserstein distance, introduced to capture the cost of transportation between two probability measures,
has been widely studied \cite{villani2009optimal}. Due to its success for classical systems, it is a natural question to explore the notion \emph{quantum Wasserstein distance}.
There are many attempts to define this concept, via the Monge formalism \cite{zyczkowski_monge_1998, zyczkowski_monge_2001}, the Kantorovich formalism \cite{golse_mean_2016,agredo_quantum_2017,golse_wave_2018} and the \bbren{} formalism \cite{carlen_gradient_2017,carlen_analog_2014,wirth_noncommutative_2018,hornshaw_L2-wasserstein_2018, chen_matrix_2017_mk,chen_matrix_2018_optimal,tempo_wasserstein_2018}.
The one that is most relevant to this paper is \bbren{} formalism \cite{benamou_computational_2000}, which offers a dynamical picture to view the Wasserstein distance between any two states $\rho_0$ and $\rho_1$ as the minimal Lagrangian along all possible paths connecting $\rho_0$ and $\rho_1$. This Lagrangian could possibly be defined via metric tensors in \rie{} manifolds.
In \cite[Sec. 8]{carlen_gradient_2017}, with the variational formalism of primitive \lb{} equations with GNS-detailed balance for the quantum relative entropy, quantum Wasserstein distance is defined via \bbren{} formalism.
With the generalization of this variational formalism to the case of the \swch{}, one could straightforwardly generalize the quantum Wasserstein distance defined in \cite{carlen_gradient_2017} to a quantum $(\alpha,q)$-Wasserstein distance.
\begin{defn}[Quantum $(\alpha,q)$-Wasserstein distance]
	For any order $\alpha\in (0,\infty)$ and power $q \ge 1$, a quantum $(\alpha,q)$-Wasserstein distance is defined in the following way: for any $\rho_0, \rho_1\in \dset$,
	\begin{equation*}
	W_{\alpha,q}(\rho_0,\rho_1) := \inf_{\gamma(s)}\ \left(\int_{0}^{1} \bigl(g_{\alpha,\gamma(s), \lbop}\left(\dot{\gamma}(s), \dot{\gamma}(s)\right)\bigr)^{\frac{q}{2}}\ud s \right)^{\frac{1}{q}},
	\end{equation*}
	where the infimum is taken over \revise{all smooth paths} $\gamma(\cdot): [0,1]\rightarrow \dsetplus$  that connect $\rho_0$ and $\rho_1$; more specifically, $\gamma(0) = \rho_0$, $\gamma(1) = \rho_1$ and $\rho(s) \in \dsetplus$ for all $s\in (0,1)$.
\end{defn}
The study of the properties of this quantum Wasserstein distance is also an interesting topic for future works.

\item \revise{(\emph{Lindblad equation with energy-conservation term}). In general, Lindblad equation has both energy-conservation term and dissipative term, \ie, it has the form 
\begin{equation}
\dot{\rho}_t = \lbop^{\dagger}(\rho_t) = -i \Comm{H}{\rho_t} + \sum_{j} c_j \left( 
\Comm{V_j \rho_t}{V_j^{*}} + \Comm{V_j}{\rho_t V_j^*}
\right),
\end{equation} 
where $c_j$ are non-negative constants.
The Hamiltonian $H$ in general does not vanish \cite{lindblad_generators_1976,gorini_completely_1976}. 
Therefore, it is also an interesting question to explore how we could generalize results in the present paper to such general Lindblad equations with non-trivial $H$. 

An exact construction of gradient flow structure in a Riemannian manifold (see \secref{sec::preliminary_gradient}) does not seem to be possible, even for classical kinetic Fokker-Planck equations. 
However, for a generalized Kramers equation (probably regarded as the classical analog of Lindblad equations with a nontrivial Hamiltonian term), some JKO schemes have already existed in \cite{duong_conservative-dissipative_2014}.
As far as we know, currently, there is no any type of JKO scheme for Lindblad equations, which is probably also an interesting question to study, due to the current active research on quantum Wasserstein distance, as we mentioned above.  

As for convergence rate to the equilibrium, there is another approach known as \emph{hypocoercivity} (see \eg{} \cite{villani2009hypocoercivity}), for kinetic Fokker-Planck equations. We are curious about whether this approach admits a quantum extension to help study the convergence of Lindblad equations with non-trivial Hamiltonian term $H$; however, this is far beyond the scope of this paper. 

}

\end{enumerate}

\bigskip
\noindent\textbf{Acknowledgment.}
This work is partially supported by the National Science Foundation
under grant DMS-1454939. We thank Iman Marvian and Henry Pfister for
helpful discussions.

\bibliography{reference}

\begin{thebibliography}{66}%
\makeatletter
\providecommand \@ifxundefined [1]{%
 \@ifx{#1\undefined}
}%
\providecommand \@ifnum [1]{%
 \ifnum #1\expandafter \@firstoftwo
 \else \expandafter \@secondoftwo
 \fi
}%
\providecommand \@ifx [1]{%
 \ifx #1\expandafter \@firstoftwo
 \else \expandafter \@secondoftwo
 \fi
}%
\providecommand \natexlab [1]{#1}%
\providecommand \enquote  [1]{``#1''}%
\providecommand \bibnamefont  [1]{#1}%
\providecommand \bibfnamefont [1]{#1}%
\providecommand \citenamefont [1]{#1}%
\providecommand \href@noop [0]{\@secondoftwo}%
\providecommand \href [0]{\begingroup \@sanitize@url \@href}%
\providecommand \@href[1]{\@@startlink{#1}\@@href}%
\providecommand \@@href[1]{\endgroup#1\@@endlink}%
\providecommand \@sanitize@url [0]{\catcode `\\12\catcode `\$12\catcode
  `\&12\catcode `\#12\catcode `\^12\catcode `\_12\catcode `\%12\relax}%
\providecommand \@@startlink[1]{}%
\providecommand \@@endlink[0]{}%
\providecommand \url  [0]{\begingroup\@sanitize@url \@url }%
\providecommand \@url [1]{\endgroup\@href {#1}{\urlprefix }}%
\providecommand \urlprefix  [0]{URL }%
\providecommand \Eprint [0]{\href }%
\providecommand \doibase [0]{http://dx.doi.org/}%
\providecommand \selectlanguage [0]{\@gobble}%
\providecommand \bibinfo  [0]{\@secondoftwo}%
\providecommand \bibfield  [0]{\@secondoftwo}%
\providecommand \translation [1]{[#1]}%
\providecommand \BibitemOpen [0]{}%
\providecommand \bibitemStop [0]{}%
\providecommand \bibitemNoStop [0]{.\EOS\space}%
\providecommand \EOS [0]{\spacefactor3000\relax}%
\providecommand \BibitemShut  [1]{\csname bibitem#1\endcsname}%
\let\auto@bib@innerbib\@empty
\bibitem [{\citenamefont {Spohn}(1978)}]{spohn_entropy_1978}%
  \BibitemOpen
  \bibfield  {author} {\bibinfo {author} {\bibfnamefont {Herbert}\ \bibnamefont
  {Spohn}},\ }\bibfield  {title} {\enquote {\bibinfo {title} {Entropy
  production for quantum dynamical semigroups},}\ }\href {\doibase
  10.1063/1.523789} {\bibfield  {journal} {\bibinfo  {journal} {Journal of
  Mathematical Physics}\ }\textbf {\bibinfo {volume} {19}},\ \bibinfo {pages}
  {1227--1230} (\bibinfo {year} {1978})}\BibitemShut {NoStop}%
\bibitem [{\citenamefont {Benatti}\ and\ \citenamefont
  {Narnhofer}(1988)}]{benatti_entropy_1988}%
  \BibitemOpen
  \bibfield  {author} {\bibinfo {author} {\bibfnamefont {F.}~\bibnamefont
  {Benatti}}\ and\ \bibinfo {author} {\bibfnamefont {H.}~\bibnamefont
  {Narnhofer}},\ }\bibfield  {title} {\enquote {\bibinfo {title} {Entropy
  behaviour under completely positive maps},}\ }\href {\doibase
  10.1007/BF00419590} {\bibfield  {journal} {\bibinfo  {journal} {Letters in
  Mathematical Physics}\ }\textbf {\bibinfo {volume} {15}},\ \bibinfo {pages}
  {325--334} (\bibinfo {year} {1988})}\BibitemShut {NoStop}%
\bibitem [{\citenamefont {Breuer}(2003)}]{breuer_quantum_2003}%
  \BibitemOpen
  \bibfield  {author} {\bibinfo {author} {\bibfnamefont {Heinz-Peter}\
  \bibnamefont {Breuer}},\ }\bibfield  {title} {\enquote {\bibinfo {title}
  {Quantum jumps and entropy production},}\ }\href {\doibase
  10.1103/PhysRevA.68.032105} {\bibfield  {journal} {\bibinfo  {journal} {Phys.
  Rev. A}\ }\textbf {\bibinfo {volume} {68}},\ \bibinfo {pages} {032105}
  (\bibinfo {year} {2003})}\BibitemShut {NoStop}%
\bibitem [{\citenamefont {Fagnola}\ and\ \citenamefont
  {Rebolledo}(2015)}]{fagnola_entropy_2015}%
  \BibitemOpen
  \bibfield  {author} {\bibinfo {author} {\bibfnamefont {Franco}\ \bibnamefont
  {Fagnola}}\ and\ \bibinfo {author} {\bibfnamefont {Rolando}\ \bibnamefont
  {Rebolledo}},\ }\bibfield  {title} {\enquote {\bibinfo {title} {Entropy
  production for quantum {Markov} semigroups},}\ }\href {\doibase
  10.1007/s00220-015-2320-1} {\bibfield  {journal} {\bibinfo  {journal}
  {Communications in Mathematical Physics}\ }\textbf {\bibinfo {volume}
  {335}},\ \bibinfo {pages} {547--570} (\bibinfo {year} {2015})}\BibitemShut
  {NoStop}%
\bibitem [{\citenamefont {M{\"u}ller-Hermes}\ \emph
  {et~al.}(2016{\natexlab{a}})\citenamefont {M{\"u}ller-Hermes}, \citenamefont
  {Stilck~Fran{\c c}a},\ and\ \citenamefont
  {Wolf}}]{muller-hermes_entropy_2016}%
  \BibitemOpen
  \bibfield  {author} {\bibinfo {author} {\bibfnamefont {Alexander}\
  \bibnamefont {M{\"u}ller-Hermes}}, \bibinfo {author} {\bibfnamefont {Daniel}\
  \bibnamefont {Stilck~Fran{\c c}a}}, \ and\ \bibinfo {author} {\bibfnamefont
  {Michael~M.}\ \bibnamefont {Wolf}},\ }\bibfield  {title} {\enquote {\bibinfo
  {title} {Entropy production of doubly stochastic quantum channels},}\ }\href
  {\doibase 10.1063/1.4941136} {\bibfield  {journal} {\bibinfo  {journal}
  {Journal of Mathematical Physics}\ }\textbf {\bibinfo {volume} {57}},\
  \bibinfo {pages} {022203} (\bibinfo {year} {2016}{\natexlab{a}})}\BibitemShut
  {NoStop}%
\bibitem [{\citenamefont {Arnold}\ \emph {et~al.}(2004)\citenamefont {Arnold},
  \citenamefont {Carrillo}, \citenamefont {Desvillettes}, \citenamefont
  {Dolbeault}, \citenamefont {J{\"u}ngel}, \citenamefont {Lederman},
  \citenamefont {Markowich}, \citenamefont {Toscani},\ and\ \citenamefont
  {Villani}}]{arnold_entropies_2004}%
  \BibitemOpen
  \bibfield  {author} {\bibinfo {author} {\bibfnamefont {A.}~\bibnamefont
  {Arnold}}, \bibinfo {author} {\bibfnamefont {J.~A.}\ \bibnamefont
  {Carrillo}}, \bibinfo {author} {\bibfnamefont {L.}~\bibnamefont
  {Desvillettes}}, \bibinfo {author} {\bibfnamefont {J.}~\bibnamefont
  {Dolbeault}}, \bibinfo {author} {\bibfnamefont {A.}~\bibnamefont
  {J{\"u}ngel}}, \bibinfo {author} {\bibfnamefont {C.}~\bibnamefont
  {Lederman}}, \bibinfo {author} {\bibfnamefont {P.~A.}\ \bibnamefont
  {Markowich}}, \bibinfo {author} {\bibfnamefont {G.}~\bibnamefont {Toscani}},
  \ and\ \bibinfo {author} {\bibfnamefont {C.}~\bibnamefont {Villani}},\
  }\bibfield  {title} {\enquote {\bibinfo {title} {Entropies and equilibria of
  many-particle systems: An essay on recent research},}\ }\href {\doibase
  10.1007/s00605-004-0239-2} {\bibfield  {journal} {\bibinfo  {journal}
  {Monatsh. Math.}\ }\textbf {\bibinfo {volume} {142}},\ \bibinfo {pages}
  {35--43} (\bibinfo {year} {2004})}\BibitemShut {NoStop}%
\bibitem [{\citenamefont {Lindblad}(1975)}]{lindblad_completely_1975}%
  \BibitemOpen
  \bibfield  {author} {\bibinfo {author} {\bibfnamefont {G{\"o}ran}\
  \bibnamefont {Lindblad}},\ }\bibfield  {title} {\enquote {\bibinfo {title}
  {Completely positive maps and entropy inequalities},}\ }\href
  {https://projecteuclid.org/euclid.cmp/1103860462} {\bibfield  {journal}
  {\bibinfo  {journal} {Communications in Mathematical Physics}\ }\textbf
  {\bibinfo {volume} {40}},\ \bibinfo {pages} {147--151} (\bibinfo {year}
  {1975})}\BibitemShut {NoStop}%
\bibitem [{\citenamefont {Frank}\ and\ \citenamefont
  {Lieb}(2013)}]{frank_monotonicity_2013}%
  \BibitemOpen
  \bibfield  {author} {\bibinfo {author} {\bibfnamefont {Rupert~L.}\
  \bibnamefont {Frank}}\ and\ \bibinfo {author} {\bibfnamefont {Elliott~H.}\
  \bibnamefont {Lieb}},\ }\bibfield  {title} {\enquote {\bibinfo {title}
  {Monotonicity of a relative {R{\'e}nyi} entropy},}\ }\href {\doibase
  10.1063/1.4838835} {\bibfield  {journal} {\bibinfo  {journal} {Journal of
  Mathematical Physics}\ }\textbf {\bibinfo {volume} {54}},\ \bibinfo {pages}
  {122201} (\bibinfo {year} {2013})}\BibitemShut {NoStop}%
\bibitem [{\citenamefont {M{\"u}ller-Hermes}\ and\ \citenamefont
  {Reeb}(2017)}]{muller-hermes_monotonicity_2017}%
  \BibitemOpen
  \bibfield  {author} {\bibinfo {author} {\bibfnamefont {Alexander}\
  \bibnamefont {M{\"u}ller-Hermes}}\ and\ \bibinfo {author} {\bibfnamefont
  {David}\ \bibnamefont {Reeb}},\ }\bibfield  {title} {\enquote {\bibinfo
  {title} {Monotonicity of the quantum relative entropy under positive maps},}\
  }\href {\doibase 10.1007/s00023-017-0550-9} {\bibfield  {journal} {\bibinfo
  {journal} {Annales Henri Poincar{\'e}}\ }\textbf {\bibinfo {volume} {18}},\
  \bibinfo {pages} {1777--1788} (\bibinfo {year} {2017})}\BibitemShut {NoStop}%
\bibitem [{\citenamefont {Cover}\ and\ \citenamefont
  {Thomas}(2012)}]{cover2012elements}%
  \BibitemOpen
  \bibfield  {author} {\bibinfo {author} {\bibfnamefont {Thomas~M}\
  \bibnamefont {Cover}}\ and\ \bibinfo {author} {\bibfnamefont {Joy~A}\
  \bibnamefont {Thomas}},\ }\href@noop {} {\emph {\bibinfo {title} {Elements of
  information theory}}}\ (\bibinfo  {publisher} {John Wiley \& Sons},\ \bibinfo
  {year} {2012})\BibitemShut {NoStop}%
\bibitem [{\citenamefont {Brand{\~a}o}\ \emph {et~al.}(2015)\citenamefont
  {Brand{\~a}o}, \citenamefont {Horodecki}, \citenamefont {Ng}, \citenamefont
  {Oppenheim},\ and\ \citenamefont {Wehner}}]{brandao_second_2015}%
  \BibitemOpen
  \bibfield  {author} {\bibinfo {author} {\bibfnamefont {Fernando}\
  \bibnamefont {Brand{\~a}o}}, \bibinfo {author} {\bibfnamefont {Micha{\l}}\
  \bibnamefont {Horodecki}}, \bibinfo {author} {\bibfnamefont {Nelly}\
  \bibnamefont {Ng}}, \bibinfo {author} {\bibfnamefont {Jonathan}\ \bibnamefont
  {Oppenheim}}, \ and\ \bibinfo {author} {\bibfnamefont {Stephanie}\
  \bibnamefont {Wehner}},\ }\bibfield  {title} {\enquote {\bibinfo {title} {The
  second laws of quantum thermodynamics},}\ }\href {\doibase
  10.1073/pnas.1411728112} {\bibfield  {journal} {\bibinfo  {journal}
  {Proceedings of the National Academy of Sciences}\ }\textbf {\bibinfo
  {volume} {112}},\ \bibinfo {pages} {3275--3279} (\bibinfo {year}
  {2015})}\BibitemShut {NoStop}%
\bibitem [{\citenamefont {Jordan}\ \emph {et~al.}(1998)\citenamefont {Jordan},
  \citenamefont {Kinderlehrer},\ and\ \citenamefont {Otto}}]{JKO}%
  \BibitemOpen
  \bibfield  {author} {\bibinfo {author} {\bibfnamefont {R.}~\bibnamefont
  {Jordan}}, \bibinfo {author} {\bibfnamefont {D.}~\bibnamefont
  {Kinderlehrer}}, \ and\ \bibinfo {author} {\bibfnamefont {F.}~\bibnamefont
  {Otto}},\ }\bibfield  {title} {\enquote {\bibinfo {title} {The variational
  formulation of the {Fokker--Planck} equation},}\ }\href {\doibase
  10.1137/S0036141096303359} {\bibfield  {journal} {\bibinfo  {journal} {SIAM
  Journal on Mathematical Analysis}\ }\textbf {\bibinfo {volume} {29}},\
  \bibinfo {pages} {1--17} (\bibinfo {year} {1998})}\BibitemShut {NoStop}%
\bibitem [{\citenamefont {Otto}(2001)}]{Otto01_porous}%
  \BibitemOpen
  \bibfield  {author} {\bibinfo {author} {\bibfnamefont {Felix}\ \bibnamefont
  {Otto}},\ }\bibfield  {title} {\enquote {\bibinfo {title} {The geometry of
  dissipative evolution equations: The porous medium equation},}\ }\href
  {\doibase 10.1081/PDE-100002243} {\bibfield  {journal} {\bibinfo  {journal}
  {Communications in Partial Differential Equations}\ }\textbf {\bibinfo
  {volume} {26}},\ \bibinfo {pages} {101--174} (\bibinfo {year}
  {2001})}\BibitemShut {NoStop}%
\bibitem [{\citenamefont {Erbar}(2010)}]{erbar_heat_2010}%
  \BibitemOpen
  \bibfield  {author} {\bibinfo {author} {\bibfnamefont {Matthias}\
  \bibnamefont {Erbar}},\ }\bibfield  {title} {\enquote {\bibinfo {title} {The
  heat equation on manifolds as a gradient flow in the {Wasserstein} space},}\
  }\href {\doibase 10.1214/08-AIHP306} {\bibfield  {journal} {\bibinfo
  {journal} {Annales de l'Institut Henri Poincar{\'e}, Probabilit{\'e}s et
  Statistiques}\ }\textbf {\bibinfo {volume} {46}},\ \bibinfo {pages} {1--23}
  (\bibinfo {year} {2010})}\BibitemShut {NoStop}%
\bibitem [{\citenamefont {Maas}(2011)}]{MAAS20112250}%
  \BibitemOpen
  \bibfield  {author} {\bibinfo {author} {\bibfnamefont {Jan}\ \bibnamefont
  {Maas}},\ }\bibfield  {title} {\enquote {\bibinfo {title} {{Gradient flows of
  the entropy for finite Markov chains}},}\ }\href {\doibase
  10.1016/j.jfa.2011.06.009} {\bibfield  {journal} {\bibinfo  {journal}
  {Journal of Functional Analysis}\ }\textbf {\bibinfo {volume} {261}},\
  \bibinfo {pages} {2250 -- 2292} (\bibinfo {year} {2011})}\BibitemShut
  {NoStop}%
\bibitem [{\citenamefont {Erbar}\ and\ \citenamefont
  {Maas}(2014)}]{erbar_gradient_2014}%
  \BibitemOpen
  \bibfield  {author} {\bibinfo {author} {\bibfnamefont {Matthias}\
  \bibnamefont {Erbar}}\ and\ \bibinfo {author} {\bibfnamefont {Jan}\
  \bibnamefont {Maas}},\ }\bibfield  {title} {\enquote {\bibinfo {title}
  {Gradient flow structures for discrete porous medium equations},}\ }\href
  {\doibase 10.3934/dcds.2014.34.1355} {\bibfield  {journal} {\bibinfo
  {journal} {Discrete \& Continuous Dynamical Systems - A}\ }\textbf {\bibinfo
  {volume} {34}},\ \bibinfo {pages} {1355--1374} (\bibinfo {year}
  {2014})}\BibitemShut {NoStop}%
\bibitem [{\citenamefont {Carlen}\ and\ \citenamefont
  {Maas}(2014)}]{carlen_analog_2014}%
  \BibitemOpen
  \bibfield  {author} {\bibinfo {author} {\bibfnamefont {Eric~A.}\ \bibnamefont
  {Carlen}}\ and\ \bibinfo {author} {\bibfnamefont {Jan}\ \bibnamefont
  {Maas}},\ }\bibfield  {title} {\enquote {\bibinfo {title} {An analog of the
  2-{Wasserstein} metric in non-commutative probability under which the
  {Fermionic} {Fokker}–{Planck} equation is gradient flow for the entropy},}\
  }\href {\doibase 10.1007/s00220-014-2124-8} {\bibfield  {journal} {\bibinfo
  {journal} {Communications in Mathematical Physics}\ }\textbf {\bibinfo
  {volume} {331}},\ \bibinfo {pages} {887--926} (\bibinfo {year}
  {2014})}\BibitemShut {NoStop}%
\bibitem [{\citenamefont {Carlen}\ and\ \citenamefont
  {Maas}(2017)}]{carlen_gradient_2017}%
  \BibitemOpen
  \bibfield  {author} {\bibinfo {author} {\bibfnamefont {Eric~A.}\ \bibnamefont
  {Carlen}}\ and\ \bibinfo {author} {\bibfnamefont {Jan}\ \bibnamefont
  {Maas}},\ }\bibfield  {title} {\enquote {\bibinfo {title} {Gradient flow and
  entropy inequalities for quantum {Markov} semigroups with detailed
  balance},}\ }\href {\doibase 10.1016/j.jfa.2017.05.003} {\bibfield  {journal}
  {\bibinfo  {journal} {Journal of Functional Analysis}\ }\textbf {\bibinfo
  {volume} {273}},\ \bibinfo {pages} {1810 -- 1869} (\bibinfo {year}
  {2017})}\BibitemShut {NoStop}%
\bibitem [{\citenamefont {{Carlen}}\ and\ \citenamefont
  {{Maas}}(2018)}]{carlen_non-commutative_2018}%
  \BibitemOpen
  \bibfield  {author} {\bibinfo {author} {\bibfnamefont {Eric~A.}\ \bibnamefont
  {{Carlen}}}\ and\ \bibinfo {author} {\bibfnamefont {Jan}\ \bibnamefont
  {{Maas}}},\ }\href@noop {} {\enquote {\bibinfo {title} {Non-commutative
  calculus, optimal transport and functional inequalities in dissipative
  quantum systems},}\ } (\bibinfo {year} {2018}),\ \Eprint
  {http://arxiv.org/abs/1811.04572} {arXiv:1811.04572 [math.OA]} \BibitemShut
  {NoStop}%
\bibitem [{\citenamefont {Gross}(1975{\natexlab{a}})}]{gross_logarithmic_1975}%
  \BibitemOpen
  \bibfield  {author} {\bibinfo {author} {\bibfnamefont {Leonard}\ \bibnamefont
  {Gross}},\ }\bibfield  {title} {\enquote {\bibinfo {title} {Logarithmic
  {Sobolev} inequalities},}\ }\href {\doibase 10.2307/2373688} {\bibfield
  {journal} {\bibinfo  {journal} {American Journal of Mathematics}\ }\textbf
  {\bibinfo {volume} {97}},\ \bibinfo {pages} {1061} (\bibinfo {year}
  {1975}{\natexlab{a}})}\BibitemShut {NoStop}%
\bibitem [{\citenamefont
  {Gross}(1975{\natexlab{b}})}]{gross_hypercontractivity_1975}%
  \BibitemOpen
  \bibfield  {author} {\bibinfo {author} {\bibfnamefont {Leonard}\ \bibnamefont
  {Gross}},\ }\bibfield  {title} {\enquote {\bibinfo {title}
  {Hypercontractivity and logarithmic {S}obolev inequalities for the
  {Clifford-Dirichlet} form},}\ }\href {\doibase 10.1215/S0012-7094-75-04237-4}
  {\bibfield  {journal} {\bibinfo  {journal} {Duke Mathematical Journal}\
  }\textbf {\bibinfo {volume} {42}},\ \bibinfo {pages} {383--396} (\bibinfo
  {year} {1975}{\natexlab{b}})}\BibitemShut {NoStop}%
\bibitem [{\citenamefont {Diaconis}\ and\ \citenamefont
  {Saloff-Coste}(1996)}]{diaconis_logarithmic_1996}%
  \BibitemOpen
  \bibfield  {author} {\bibinfo {author} {\bibfnamefont {P.}~\bibnamefont
  {Diaconis}}\ and\ \bibinfo {author} {\bibfnamefont {L.}~\bibnamefont
  {Saloff-Coste}},\ }\bibfield  {title} {\enquote {\bibinfo {title}
  {Logarithmic {Sobolev} inequalities for finite {Markov} chains},}\ }\href
  {\doibase 10.1214/aoap/1034968224} {\bibfield  {journal} {\bibinfo  {journal}
  {The Annals of Applied Probability}\ }\textbf {\bibinfo {volume} {6}},\
  \bibinfo {pages} {695--750} (\bibinfo {year} {1996})}\BibitemShut {NoStop}%
\bibitem [{\citenamefont {Otto}\ and\ \citenamefont
  {Villani}(2000)}]{otto_generalization_2000}%
  \BibitemOpen
  \bibfield  {author} {\bibinfo {author} {\bibfnamefont {F.}~\bibnamefont
  {Otto}}\ and\ \bibinfo {author} {\bibfnamefont {C.}~\bibnamefont {Villani}},\
  }\bibfield  {title} {\enquote {\bibinfo {title} {Generalization of an
  inequality by {Talagrand} and links with the logarithmic {Sobolev}
  inequality},}\ }\href {\doibase 10.1006/jfan.1999.3557} {\bibfield  {journal}
  {\bibinfo  {journal} {Journal of Functional Analysis}\ }\textbf {\bibinfo
  {volume} {173}},\ \bibinfo {pages} {361 -- 400} (\bibinfo {year}
  {2000})}\BibitemShut {NoStop}%
\bibitem [{\citenamefont {Guionnet}\ and\ \citenamefont
  {Zegarlinksi}(2003)}]{guionnet2003lectures}%
  \BibitemOpen
  \bibfield  {author} {\bibinfo {author} {\bibfnamefont {Alice}\ \bibnamefont
  {Guionnet}}\ and\ \bibinfo {author} {\bibfnamefont {B}~\bibnamefont
  {Zegarlinksi}},\ }\bibfield  {title} {\enquote {\bibinfo {title} {Lectures on
  logarithmic {S}obolev inequalities},}\ }in\ \href@noop {} {\emph {\bibinfo
  {booktitle} {S{\'e}minaire de probabilit{\'e}s XXXVI}}}\ (\bibinfo
  {publisher} {Springer},\ \bibinfo {year} {2003})\ pp.\ \bibinfo {pages}
  {1--134}\BibitemShut {NoStop}%
\bibitem [{\citenamefont {Raginsky}\ and\ \citenamefont
  {Sason}(2013)}]{CIT-064}%
  \BibitemOpen
  \bibfield  {author} {\bibinfo {author} {\bibfnamefont {Maxim}\ \bibnamefont
  {Raginsky}}\ and\ \bibinfo {author} {\bibfnamefont {Igal}\ \bibnamefont
  {Sason}},\ }\bibfield  {title} {\enquote {\bibinfo {title} {Concentration of
  measure inequalities in information theory, communications, and coding},}\
  }\href {\doibase 10.1561/0100000064} {\bibfield  {journal} {\bibinfo
  {journal} {Foundations and Trends in Communications and Information Theory}\
  }\textbf {\bibinfo {volume} {10}},\ \bibinfo {pages} {1--246} (\bibinfo
  {year} {2013})},\ \bibinfo {note} {arXiv 1212.4663}\BibitemShut {NoStop}%
\bibitem [{\citenamefont {Bakry}\ \emph {et~al.}(2014)\citenamefont {Bakry},
  \citenamefont {Gentil},\ and\ \citenamefont {Ledoux}}]{bakry2014}%
  \BibitemOpen
  \bibfield  {author} {\bibinfo {author} {\bibfnamefont {Dominique}\
  \bibnamefont {Bakry}}, \bibinfo {author} {\bibfnamefont {Ivan}\ \bibnamefont
  {Gentil}}, \ and\ \bibinfo {author} {\bibfnamefont {Michel}\ \bibnamefont
  {Ledoux}},\ }\href@noop {} {\emph {\bibinfo {title} {Analysis and geometry of
  {Markov} diffusion operators}}}\ (\bibinfo  {publisher} {Springer},\ \bibinfo
  {address} {Cham; New York},\ \bibinfo {year} {2014})\BibitemShut {NoStop}%
\bibitem [{\citenamefont {Burgarth}\ \emph {et~al.}(2013)\citenamefont
  {Burgarth}, \citenamefont {Chiribella}, \citenamefont {Giovannetti},
  \citenamefont {Perinotti},\ and\ \citenamefont
  {Yuasa}}]{burgarth_ergodic_2013}%
  \BibitemOpen
  \bibfield  {author} {\bibinfo {author} {\bibfnamefont {D.}~\bibnamefont
  {Burgarth}}, \bibinfo {author} {\bibfnamefont {G.}~\bibnamefont
  {Chiribella}}, \bibinfo {author} {\bibfnamefont {V.}~\bibnamefont
  {Giovannetti}}, \bibinfo {author} {\bibfnamefont {P.}~\bibnamefont
  {Perinotti}}, \ and\ \bibinfo {author} {\bibfnamefont {K.}~\bibnamefont
  {Yuasa}},\ }\bibfield  {title} {\enquote {\bibinfo {title} {Ergodic and
  mixing quantum channels in finite dimensions},}\ }\href {\doibase
  10.1088/1367-2630/15/7/073045} {\bibfield  {journal} {\bibinfo  {journal}
  {New Journal of Physics}\ }\textbf {\bibinfo {volume} {15}},\ \bibinfo
  {pages} {073045} (\bibinfo {year} {2013})}\BibitemShut {NoStop}%
\bibitem [{\citenamefont {{Cao}}\ \emph {et~al.}(2018)\citenamefont {{Cao}},
  \citenamefont {{Lu}},\ and\ \citenamefont {{Lu}}}]{prevpaper}%
  \BibitemOpen
  \bibfield  {author} {\bibinfo {author} {\bibfnamefont {Yu}~\bibnamefont
  {{Cao}}}, \bibinfo {author} {\bibfnamefont {Jianfeng}\ \bibnamefont {{Lu}}},
  \ and\ \bibinfo {author} {\bibfnamefont {Yulong}\ \bibnamefont {{Lu}}},\
  }\href@noop {} {\enquote {\bibinfo {title} {{Exponential decay of {R{\'e}nyi}
  divergence under {Fokker}-{Planck} equations}},}\ } (\bibinfo {year}
  {2018}),\ \Eprint {http://arxiv.org/abs/1805.06554} {arXiv:1805.06554
  [math.AP]} \BibitemShut {NoStop}%
\bibitem [{\citenamefont {Lindblad}(1976)}]{lindblad_generators_1976}%
  \BibitemOpen
  \bibfield  {author} {\bibinfo {author} {\bibfnamefont {G.}~\bibnamefont
  {Lindblad}},\ }\bibfield  {title} {\enquote {\bibinfo {title} {On the
  generators of quantum dynamical semigroups},}\ }\href {\doibase
  10.1007/BF01608499} {\bibfield  {journal} {\bibinfo  {journal}
  {Communications in Mathematical Physics}\ }\textbf {\bibinfo {volume} {48}},\
  \bibinfo {pages} {119--130} (\bibinfo {year} {1976})}\BibitemShut {NoStop}%
\bibitem [{\citenamefont {Gorini}\ \emph {et~al.}(1976)\citenamefont {Gorini},
  \citenamefont {Kossakowski},\ and\ \citenamefont
  {Sudarshan}}]{gorini_completely_1976}%
  \BibitemOpen
  \bibfield  {author} {\bibinfo {author} {\bibfnamefont {Vittorio}\
  \bibnamefont {Gorini}}, \bibinfo {author} {\bibfnamefont {Andrzej}\
  \bibnamefont {Kossakowski}}, \ and\ \bibinfo {author} {\bibfnamefont
  {E.~C.~G.}\ \bibnamefont {Sudarshan}},\ }\bibfield  {title} {\enquote
  {\bibinfo {title} {Completely positive dynamical semigroups of {N-level
  systems}},}\ }\href {\doibase 10.1063/1.522979} {\bibfield  {journal}
  {\bibinfo  {journal} {Journal of Mathematical Physics}\ }\textbf {\bibinfo
  {volume} {17}},\ \bibinfo {pages} {821--825} (\bibinfo {year}
  {1976})}\BibitemShut {NoStop}%
\bibitem [{\citenamefont {Davies}(1974)}]{davies_markovian_1974}%
  \BibitemOpen
  \bibfield  {author} {\bibinfo {author} {\bibfnamefont {E.~B.}\ \bibnamefont
  {Davies}},\ }\bibfield  {title} {\enquote {\bibinfo {title} {Markovian master
  equations},}\ }\href {https://projecteuclid.org/euclid.cmp/1103860160}
  {\bibfield  {journal} {\bibinfo  {journal} {Communications in Mathematical
  Physics}\ }\textbf {\bibinfo {volume} {39}},\ \bibinfo {pages} {91--110}
  (\bibinfo {year} {1974})}\BibitemShut {NoStop}%
\bibitem [{\citenamefont {Breuer}\ and\ \citenamefont
  {Petruccione}(2007)}]{breuer_book}%
  \BibitemOpen
  \bibfield  {author} {\bibinfo {author} {\bibfnamefont {Heinz-Peter}\
  \bibnamefont {Breuer}}\ and\ \bibinfo {author} {\bibfnamefont
  {F.}~\bibnamefont {Petruccione}},\ }\href@noop {} {\emph {\bibinfo {title}
  {The Theory of Open Quantum Systems}}}\ (\bibinfo  {publisher} {Oxford
  University Press},\ \bibinfo {address} {New York},\ \bibinfo {year}
  {2007})\BibitemShut {NoStop}%
\bibitem [{\citenamefont {{Wirth}}(2018)}]{wirth_noncommutative_2018}%
  \BibitemOpen
  \bibfield  {author} {\bibinfo {author} {\bibfnamefont {Melchior}\
  \bibnamefont {{Wirth}}},\ }\href@noop {} {\enquote {\bibinfo {title} {A
  noncommutative transport metric and symmetric quantum {Markov} semigroups as
  gradient flows of the entropy},}\ } (\bibinfo {year} {2018}),\ \Eprint
  {http://arxiv.org/abs/1808.05419} {arXiv:1808.05419 [math.OA]} \BibitemShut
  {NoStop}%
\bibitem [{\citenamefont {M{\"u}ller-Lennert}\ \emph
  {et~al.}(2013)\citenamefont {M{\"u}ller-Lennert}, \citenamefont {Dupuis},
  \citenamefont {Szehr}, \citenamefont {Fehr},\ and\ \citenamefont
  {Tomamichel}}]{muller-lennert_quantum_2013}%
  \BibitemOpen
  \bibfield  {author} {\bibinfo {author} {\bibfnamefont {Martin}\ \bibnamefont
  {M{\"u}ller-Lennert}}, \bibinfo {author} {\bibfnamefont {Fr{\'e}d{\'e}ric}\
  \bibnamefont {Dupuis}}, \bibinfo {author} {\bibfnamefont {Oleg}\ \bibnamefont
  {Szehr}}, \bibinfo {author} {\bibfnamefont {Serge}\ \bibnamefont {Fehr}}, \
  and\ \bibinfo {author} {\bibfnamefont {Marco}\ \bibnamefont {Tomamichel}},\
  }\bibfield  {title} {\enquote {\bibinfo {title} {On quantum {R{\'e}nyi}
  entropies: {A} new generalization and some properties},}\ }\href {\doibase
  10.1063/1.4838856} {\bibfield  {journal} {\bibinfo  {journal} {Journal of
  Mathematical Physics}\ }\textbf {\bibinfo {volume} {54}},\ \bibinfo {pages}
  {122203} (\bibinfo {year} {2013})}\BibitemShut {NoStop}%
\bibitem [{\citenamefont {Wilde}\ \emph {et~al.}(2014)\citenamefont {Wilde},
  \citenamefont {Winter},\ and\ \citenamefont {Yang}}]{wilde_strong_2014}%
  \BibitemOpen
  \bibfield  {author} {\bibinfo {author} {\bibfnamefont {Mark~M.}\ \bibnamefont
  {Wilde}}, \bibinfo {author} {\bibfnamefont {Andreas}\ \bibnamefont {Winter}},
  \ and\ \bibinfo {author} {\bibfnamefont {Dong}\ \bibnamefont {Yang}},\
  }\bibfield  {title} {\enquote {\bibinfo {title} {Strong converse for the
  classical capacity of entanglement-breaking and {Hadamard} channels via a
  sandwiched {R}{\'e}nyi relative entropy},}\ }\href {\doibase
  10.1007/s00220-014-2122-x} {\bibfield  {journal} {\bibinfo  {journal}
  {Communications in Mathematical Physics}\ }\textbf {\bibinfo {volume}
  {331}},\ \bibinfo {pages} {593--622} (\bibinfo {year} {2014})}\BibitemShut
  {NoStop}%
\bibitem [{\citenamefont {Mosonyi}\ and\ \citenamefont
  {Ogawa}(2015)}]{mosonyi_quantum_2015}%
  \BibitemOpen
  \bibfield  {author} {\bibinfo {author} {\bibfnamefont {Mil{\'a}n}\
  \bibnamefont {Mosonyi}}\ and\ \bibinfo {author} {\bibfnamefont {Tomohiro}\
  \bibnamefont {Ogawa}},\ }\bibfield  {title} {\enquote {\bibinfo {title}
  {Quantum hypothesis testing and the operational interpretation of the quantum
  {R}{\'e}nyi relative entropies},}\ }\href {\doibase
  10.1007/s00220-014-2248-x} {\bibfield  {journal} {\bibinfo  {journal}
  {Communications in Mathematical Physics}\ }\textbf {\bibinfo {volume}
  {334}},\ \bibinfo {pages} {1617--1648} (\bibinfo {year} {2015})}\BibitemShut
  {NoStop}%
\bibitem [{\citenamefont {Temme}\ \emph {et~al.}(2010)\citenamefont {Temme},
  \citenamefont {Kastoryano}, \citenamefont {Ruskai}, \citenamefont {Wolf},\
  and\ \citenamefont {Verstraete}}]{temme_2-divergence_2010}%
  \BibitemOpen
  \bibfield  {author} {\bibinfo {author} {\bibfnamefont {K.}~\bibnamefont
  {Temme}}, \bibinfo {author} {\bibfnamefont {M.~J.}\ \bibnamefont
  {Kastoryano}}, \bibinfo {author} {\bibfnamefont {M.~B.}\ \bibnamefont
  {Ruskai}}, \bibinfo {author} {\bibfnamefont {M.~M.}\ \bibnamefont {Wolf}}, \
  and\ \bibinfo {author} {\bibfnamefont {F.}~\bibnamefont {Verstraete}},\
  }\bibfield  {title} {\enquote {\bibinfo {title} {The $\chi^2$-divergence and
  mixing times of quantum {Markov} processes},}\ }\href {\doibase
  10.1063/1.3511335} {\bibfield  {journal} {\bibinfo  {journal} {Journal of
  Mathematical Physics}\ }\textbf {\bibinfo {volume} {51}},\ \bibinfo {pages}
  {122201} (\bibinfo {year} {2010})}\BibitemShut {NoStop}%
\bibitem [{\citenamefont {Beigi}(2013)}]{beigi_sandwiched_2013}%
  \BibitemOpen
  \bibfield  {author} {\bibinfo {author} {\bibfnamefont {Salman}\ \bibnamefont
  {Beigi}},\ }\bibfield  {title} {\enquote {\bibinfo {title} {Sandwiched
  {R{\'e}nyi} divergence satisfies data processing inequality},}\ }\href
  {\doibase 10.1063/1.4838855} {\bibfield  {journal} {\bibinfo  {journal}
  {Journal of Mathematical Physics}\ }\textbf {\bibinfo {volume} {54}},\
  \bibinfo {pages} {122202} (\bibinfo {year} {2013})}\BibitemShut {NoStop}%
\bibitem [{\citenamefont {M{\"u}ller-Hermes}\ and\ \citenamefont
  {Franca}(2018)}]{muller-hermes_sandwiched_2018}%
  \BibitemOpen
  \bibfield  {author} {\bibinfo {author} {\bibfnamefont {Alexander}\
  \bibnamefont {M{\"u}ller-Hermes}}\ and\ \bibinfo {author} {\bibfnamefont
  {Daniel~Stilck}\ \bibnamefont {Franca}},\ }\bibfield  {title} {\enquote
  {\bibinfo {title} {Sandwiched {R{\'e}nyi} convergence for quantum
  evolutions},}\ }\href {\doibase 10.22331/q-2018-02-27-55} {\bibfield
  {journal} {\bibinfo  {journal} {Quantum}\ }\textbf {\bibinfo {volume} {2}},\
  \bibinfo {pages} {55} (\bibinfo {year} {2018})}\BibitemShut {NoStop}%
\bibitem [{\citenamefont {Petz}(1986)}]{petz_quasi-entropies_1986}%
  \BibitemOpen
  \bibfield  {author} {\bibinfo {author} {\bibfnamefont {Dénes}\ \bibnamefont
  {Petz}},\ }\bibfield  {title} {\enquote {\bibinfo {title} {Quasi-entropies
  for finite quantum systems},}\ }\href
  {http://www.sciencedirect.com/science/article/pii/0034487786900674}
  {\bibfield  {journal} {\bibinfo  {journal} {Reports on Mathematical Physics}\
  }\textbf {\bibinfo {volume} {23}},\ \bibinfo {pages} {57--65} (\bibinfo
  {year} {1986})}\BibitemShut {NoStop}%
\bibitem [{\citenamefont {Audenaert}\ and\ \citenamefont
  {Datta}(2015)}]{audenaert_alpha-z-renyi_2015}%
  \BibitemOpen
  \bibfield  {author} {\bibinfo {author} {\bibfnamefont {Koenraad M.~R.}\
  \bibnamefont {Audenaert}}\ and\ \bibinfo {author} {\bibfnamefont {Nilanjana}\
  \bibnamefont {Datta}},\ }\bibfield  {title} {\enquote {\bibinfo {title}
  {$\alpha$-$z$-{R{\'e}nyi} relative entropies},}\ }\href {\doibase
  10.1063/1.4906367} {\bibfield  {journal} {\bibinfo  {journal} {Journal of
  Mathematical Physics}\ }\textbf {\bibinfo {volume} {56}},\ \bibinfo {pages}
  {022202} (\bibinfo {year} {2015})}\BibitemShut {NoStop}%
\bibitem [{\citenamefont {Carlen}\ \emph {et~al.}(2018)\citenamefont {Carlen},
  \citenamefont {Frank},\ and\ \citenamefont
  {Lieb}}]{carlen_inequalities_2018}%
  \BibitemOpen
  \bibfield  {author} {\bibinfo {author} {\bibfnamefont {Eric~A}\ \bibnamefont
  {Carlen}}, \bibinfo {author} {\bibfnamefont {Rupert~L}\ \bibnamefont
  {Frank}}, \ and\ \bibinfo {author} {\bibfnamefont {Elliott~H}\ \bibnamefont
  {Lieb}},\ }\bibfield  {title} {\enquote {\bibinfo {title} {Inequalities for
  quantum divergences and the {Audenaert}–{Datta} conjecture},}\ }\href
  {http://stacks.iop.org/1751-8121/51/i=48/a=483001} {\bibfield  {journal}
  {\bibinfo  {journal} {Journal of Physics A: Mathematical and Theoretical}\
  }\textbf {\bibinfo {volume} {51}},\ \bibinfo {pages} {483001} (\bibinfo
  {year} {2018})}\BibitemShut {NoStop}%
\bibitem [{\citenamefont {Berta}\ \emph {et~al.}(2017)\citenamefont {Berta},
  \citenamefont {Fawzi},\ and\ \citenamefont
  {Tomamichel}}]{berta_variational_2017}%
  \BibitemOpen
  \bibfield  {author} {\bibinfo {author} {\bibfnamefont {Mario}\ \bibnamefont
  {Berta}}, \bibinfo {author} {\bibfnamefont {Omar}\ \bibnamefont {Fawzi}}, \
  and\ \bibinfo {author} {\bibfnamefont {Marco}\ \bibnamefont {Tomamichel}},\
  }\bibfield  {title} {\enquote {\bibinfo {title} {On variational expressions
  for quantum relative entropies},}\ }\href {\doibase
  10.1007/s11005-017-0990-7} {\bibfield  {journal} {\bibinfo  {journal}
  {Letters in Mathematical Physics}\ }\textbf {\bibinfo {volume} {107}},\
  \bibinfo {pages} {2239--2265} (\bibinfo {year} {2017})}\BibitemShut {NoStop}%
\bibitem [{\citenamefont {Olkiewicz}\ and\ \citenamefont
  {Zegarlinski}(1999)}]{olkiewicz_hypercontractivity_1999}%
  \BibitemOpen
  \bibfield  {author} {\bibinfo {author} {\bibfnamefont {Robert}\ \bibnamefont
  {Olkiewicz}}\ and\ \bibinfo {author} {\bibfnamefont {Bogusław}\ \bibnamefont
  {Zegarlinski}},\ }\bibfield  {title} {\enquote {\bibinfo {title}
  {Hypercontractivity in noncommutative {$L_p$} spaces},}\ }\href {\doibase
  10.1006/jfan.1998.3342} {\bibfield  {journal} {\bibinfo  {journal} {Journal
  of Functional Analysis}\ }\textbf {\bibinfo {volume} {161}},\ \bibinfo
  {pages} {246--285} (\bibinfo {year} {1999})}\BibitemShut {NoStop}%
\bibitem [{\citenamefont {Kastoryano}\ and\ \citenamefont
  {Temme}(2013)}]{kastoryano_quantum_2013}%
  \BibitemOpen
  \bibfield  {author} {\bibinfo {author} {\bibfnamefont {Michael~J.}\
  \bibnamefont {Kastoryano}}\ and\ \bibinfo {author} {\bibfnamefont {Kristan}\
  \bibnamefont {Temme}},\ }\bibfield  {title} {\enquote {\bibinfo {title}
  {Quantum logarithmic {Sobolev} inequalities and rapid mixing},}\ }\href
  {\doibase 10.1063/1.4804995} {\bibfield  {journal} {\bibinfo  {journal}
  {Journal of Mathematical Physics}\ }\textbf {\bibinfo {volume} {54}},\
  \bibinfo {pages} {052202} (\bibinfo {year} {2013})}\BibitemShut {NoStop}%
\bibitem [{\citenamefont {{Beigi}}\ \emph {et~al.}(2018)\citenamefont
  {{Beigi}}, \citenamefont {{Datta}},\ and\ \citenamefont
  {{Rouz{\'e}}}}]{beigi_quantum_2018}%
  \BibitemOpen
  \bibfield  {author} {\bibinfo {author} {\bibfnamefont {Salman}\ \bibnamefont
  {{Beigi}}}, \bibinfo {author} {\bibfnamefont {Nilanjana}\ \bibnamefont
  {{Datta}}}, \ and\ \bibinfo {author} {\bibfnamefont {Cambyse}\ \bibnamefont
  {{Rouz{\'e}}}},\ }\href@noop {} {\enquote {\bibinfo {title} {{Quantum reverse
  hypercontractivity: its tensorization and application to strong
  converses}},}\ } (\bibinfo {year} {2018}),\ \Eprint
  {http://arxiv.org/abs/1804.10100} {arXiv:1804.10100 [quant-ph]} \BibitemShut
  {NoStop}%
\bibitem [{\citenamefont {M{\"u}ller-Hermes}\ \emph
  {et~al.}(2016{\natexlab{b}})\citenamefont {M{\"u}ller-Hermes}, \citenamefont
  {Stilck~Fran{\c c}a},\ and\ \citenamefont
  {Wolf}}]{muller-hermes_relative_2016}%
  \BibitemOpen
  \bibfield  {author} {\bibinfo {author} {\bibfnamefont {Alexander}\
  \bibnamefont {M{\"u}ller-Hermes}}, \bibinfo {author} {\bibfnamefont {Daniel}\
  \bibnamefont {Stilck~Fran{\c c}a}}, \ and\ \bibinfo {author} {\bibfnamefont
  {Michael~M.}\ \bibnamefont {Wolf}},\ }\bibfield  {title} {\enquote {\bibinfo
  {title} {Relative entropy convergence for depolarizing channels},}\ }\href
  {\doibase 10.1063/1.4939560} {\bibfield  {journal} {\bibinfo  {journal}
  {Journal of Mathematical Physics}\ }\textbf {\bibinfo {volume} {57}},\
  \bibinfo {pages} {022202} (\bibinfo {year} {2016}{\natexlab{b}})}\BibitemShut
  {NoStop}%
\bibitem [{\citenamefont {Alicki}(1976)}]{alicki_detailed_1976}%
  \BibitemOpen
  \bibfield  {author} {\bibinfo {author} {\bibfnamefont {Robert}\ \bibnamefont
  {Alicki}},\ }\bibfield  {title} {\enquote {\bibinfo {title} {On the detailed
  balance condition for non-hamiltonian systems},}\ }\href {\doibase
  10.1016/0034-4877(76)90046-X} {\bibfield  {journal} {\bibinfo  {journal}
  {Reports on Mathematical Physics}\ }\textbf {\bibinfo {volume} {10}},\
  \bibinfo {pages} {249--258} (\bibinfo {year} {1976})}\BibitemShut {NoStop}%
\bibitem [{\citenamefont {Fagnola}\ and\ \citenamefont
  {Umanit{\`a}}(2007)}]{fagnola_generators_2007}%
  \BibitemOpen
  \bibfield  {author} {\bibinfo {author} {\bibfnamefont {Franco}\ \bibnamefont
  {Fagnola}}\ and\ \bibinfo {author} {\bibfnamefont {Veronica}\ \bibnamefont
  {Umanit{\`a}}},\ }\bibfield  {title} {\enquote {\bibinfo {title} {Generators
  of detailed balance quantum {Markov} semigroups},}\ }\href {\doibase
  10.1142/S0219025707002762} {\bibfield  {journal} {\bibinfo  {journal} {Infin.
  Dimens. Anal. Quantum. Probab. Relat. Top.}\ }\textbf {\bibinfo {volume}
  {10}},\ \bibinfo {pages} {335--363} (\bibinfo {year} {2007})}\BibitemShut
  {NoStop}%
\bibitem [{\citenamefont {Fagnola}\ and\ \citenamefont
  {Umanit{\`a}}(2010)}]{fagnola_generators_2010}%
  \BibitemOpen
  \bibfield  {author} {\bibinfo {author} {\bibfnamefont {Franco}\ \bibnamefont
  {Fagnola}}\ and\ \bibinfo {author} {\bibfnamefont {Veronica}\ \bibnamefont
  {Umanit{\`a}}},\ }\bibfield  {title} {\enquote {\bibinfo {title} {Generators
  of {KMS} symmetric {Markov} semigroups on $\mathcal{B}(\rm h)$ symmetry and
  quantum detailed balance},}\ }\href {\doibase 10.1007/s00220-010-1011-1}
  {\bibfield  {journal} {\bibinfo  {journal} {Commun. Math. Phys.}\ }\textbf
  {\bibinfo {volume} {298}},\ \bibinfo {pages} {523--547} (\bibinfo {year}
  {2010})}\BibitemShut {NoStop}%
\bibitem [{\citenamefont {{Rouz{\'e}}}\ and\ \citenamefont
  {{Datta}}(2017)}]{rouze_relating_2017}%
  \BibitemOpen
  \bibfield  {author} {\bibinfo {author} {\bibfnamefont {Cambyse}\ \bibnamefont
  {{Rouz{\'e}}}}\ and\ \bibinfo {author} {\bibfnamefont {Nilanjana}\
  \bibnamefont {{Datta}}},\ }\href@noop {} {\enquote {\bibinfo {title}
  {Relating relative entropy, optimal transport and {Fisher} information: a
  quantum {HWI} inequality},}\ } (\bibinfo {year} {2017}),\ \Eprint
  {http://arxiv.org/abs/1709.07437} {arXiv:1709.07437 [quant-ph]} \BibitemShut
  {NoStop}%
\bibitem [{\citenamefont {Rouz{\'e}}\ and\ \citenamefont
  {Datta}(2019)}]{rouze_concentration_2019}%
  \BibitemOpen
  \bibfield  {author} {\bibinfo {author} {\bibfnamefont {Cambyse}\ \bibnamefont
  {Rouz{\'e}}}\ and\ \bibinfo {author} {\bibfnamefont {Nilanjana}\ \bibnamefont
  {Datta}},\ }\bibfield  {title} {\enquote {\bibinfo {title} {Concentration of
  quantum states from quantum functional and transportation cost
  inequalities},}\ }\href {\doibase 10.1063/1.5023210} {\bibfield  {journal}
  {\bibinfo  {journal} {Journal of Mathematical Physics}\ }\textbf {\bibinfo
  {volume} {60}},\ \bibinfo {pages} {012202} (\bibinfo {year}
  {2019})}\BibitemShut {NoStop}%
\bibitem [{\citenamefont {Bakry}\ and\ \citenamefont
  {{\'E}mery}(1985)}]{bakry1985diffusions}%
  \BibitemOpen
  \bibfield  {author} {\bibinfo {author} {\bibfnamefont {Dominique}\
  \bibnamefont {Bakry}}\ and\ \bibinfo {author} {\bibfnamefont {Michel}\
  \bibnamefont {{\'E}mery}},\ }\bibfield  {title} {\enquote {\bibinfo {title}
  {Diffusions hypercontractives},}\ }in\ \href@noop {} {\emph {\bibinfo
  {booktitle} {S{\'e}minaire de Probabilit{\'e}s XIX 1983/84}}}\ (\bibinfo
  {publisher} {Springer},\ \bibinfo {year} {1985})\ pp.\ \bibinfo {pages}
  {177--206}\BibitemShut {NoStop}%
\bibitem [{\citenamefont {Villani}(2009{\natexlab{a}})}]{villani2009optimal}%
  \BibitemOpen
  \bibfield  {author} {\bibinfo {author} {\bibfnamefont {C{\'e}dric}\
  \bibnamefont {Villani}},\ }\href@noop {} {\emph {\bibinfo {title} {Optimal
  transport: old and new}}}\ (\bibinfo  {publisher} {Springer},\ \bibinfo
  {address} {Berlin},\ \bibinfo {year} {2009})\BibitemShut {NoStop}%
\bibitem [{\citenamefont {Zyczkowski}\ and\ \citenamefont
  {Slomczynski}(1998)}]{zyczkowski_monge_1998}%
  \BibitemOpen
  \bibfield  {author} {\bibinfo {author} {\bibfnamefont {Karol}\ \bibnamefont
  {Zyczkowski}}\ and\ \bibinfo {author} {\bibfnamefont {Wojeciech}\
  \bibnamefont {Slomczynski}},\ }\bibfield  {title} {\enquote {\bibinfo {title}
  {The {Monge} distance between quantum states},}\ }\href {\doibase
  10.1088/0305-4470/31/45/009} {\bibfield  {journal} {\bibinfo  {journal}
  {Journal of Physics A: Mathematical and General}\ }\textbf {\bibinfo {volume}
  {31}},\ \bibinfo {pages} {9095} (\bibinfo {year} {1998})}\BibitemShut
  {NoStop}%
\bibitem [{\citenamefont {Zyczkowski}\ and\ \citenamefont
  {Slomczynski}(2001)}]{zyczkowski_monge_2001}%
  \BibitemOpen
  \bibfield  {author} {\bibinfo {author} {\bibfnamefont {Karol}\ \bibnamefont
  {Zyczkowski}}\ and\ \bibinfo {author} {\bibfnamefont {Wojciech}\ \bibnamefont
  {Slomczynski}},\ }\bibfield  {title} {\enquote {\bibinfo {title} {The {Monge}
  metric on the sphere and geometry of quantum states},}\ }\href {\doibase
  10.1088/0305-4470/34/34/311} {\bibfield  {journal} {\bibinfo  {journal}
  {Journal of Physics A: Mathematical and General}\ }\textbf {\bibinfo {volume}
  {34}},\ \bibinfo {pages} {6689} (\bibinfo {year} {2001})}\BibitemShut
  {NoStop}%
\bibitem [{\citenamefont {Golse}\ \emph {et~al.}(2016)\citenamefont {Golse},
  \citenamefont {Mouhot},\ and\ \citenamefont {Paul}}]{golse_mean_2016}%
  \BibitemOpen
  \bibfield  {author} {\bibinfo {author} {\bibfnamefont {Fran{\c c}ois}\
  \bibnamefont {Golse}}, \bibinfo {author} {\bibfnamefont {Cl{\'e}ment}\
  \bibnamefont {Mouhot}}, \ and\ \bibinfo {author} {\bibfnamefont {Thierry}\
  \bibnamefont {Paul}},\ }\bibfield  {title} {\enquote {\bibinfo {title} {On
  the mean field and classical limits of quantum mechanics},}\ }\href {\doibase
  10.1007/s00220-015-2485-7} {\bibfield  {journal} {\bibinfo  {journal}
  {Communications in Mathematical Physics}\ }\textbf {\bibinfo {volume}
  {343}},\ \bibinfo {pages} {165--205} (\bibinfo {year} {2016})}\BibitemShut
  {NoStop}%
\bibitem [{\citenamefont {Agredo}\ and\ \citenamefont
  {Fagnola}(2017)}]{agredo_quantum_2017}%
  \BibitemOpen
  \bibfield  {author} {\bibinfo {author} {\bibfnamefont {J.}~\bibnamefont
  {Agredo}}\ and\ \bibinfo {author} {\bibfnamefont {F.}~\bibnamefont
  {Fagnola}},\ }\bibfield  {title} {\enquote {\bibinfo {title} {On quantum
  versions of the classical {Wasserstein} distance},}\ }\href {\doibase
  10.1080/17442508.2016.1276914} {\bibfield  {journal} {\bibinfo  {journal}
  {Stochastics}\ }\textbf {\bibinfo {volume} {89}},\ \bibinfo {pages}
  {910--922} (\bibinfo {year} {2017})}\BibitemShut {NoStop}%
\bibitem [{\citenamefont {Golse}\ and\ \citenamefont
  {Paul}(2018)}]{golse_wave_2018}%
  \BibitemOpen
  \bibfield  {author} {\bibinfo {author} {\bibfnamefont {Fran{\c c}ois}\
  \bibnamefont {Golse}}\ and\ \bibinfo {author} {\bibfnamefont {Thierry}\
  \bibnamefont {Paul}},\ }\bibfield  {title} {\enquote {\bibinfo {title} {Wave
  packets and the quadratic {Monge}-{Kantorovich} distance in quantum
  mechanics},}\ }\href {\doibase 10.1016/j.crma.2017.12.007} {\bibfield
  {journal} {\bibinfo  {journal} {Comptes Rendus Mathematique}\ }\textbf
  {\bibinfo {volume} {356}},\ \bibinfo {pages} {177--197} (\bibinfo {year}
  {2018})}\BibitemShut {NoStop}%
\bibitem [{\citenamefont {{Hornshaw}}(2018)}]{hornshaw_L2-wasserstein_2018}%
  \BibitemOpen
  \bibfield  {author} {\bibinfo {author} {\bibfnamefont {David~F.}\
  \bibnamefont {{Hornshaw}}},\ }\href@noop {} {\enquote {\bibinfo {title}
  {{$L^{2}$}-{Wasserstein} distances of tracial {$W^{*}$}-algebras and their
  disintegration problem},}\ } (\bibinfo {year} {2018}),\ \Eprint
  {http://arxiv.org/abs/1806.01073} {arXiv:1806.01073 [math.OA]} \BibitemShut
  {NoStop}%
\bibitem [{\citenamefont {{Chen}}\ \emph {et~al.}(2017)\citenamefont {{Chen}},
  \citenamefont {{Gangbo}}, \citenamefont {{Georgiou}},\ and\ \citenamefont
  {{Tannenbaum}}}]{chen_matrix_2017_mk}%
  \BibitemOpen
  \bibfield  {author} {\bibinfo {author} {\bibfnamefont {Yongxin}\ \bibnamefont
  {{Chen}}}, \bibinfo {author} {\bibfnamefont {Wilfrid}\ \bibnamefont
  {{Gangbo}}}, \bibinfo {author} {\bibfnamefont {Tryphon~T.}\ \bibnamefont
  {{Georgiou}}}, \ and\ \bibinfo {author} {\bibfnamefont {Allen}\ \bibnamefont
  {{Tannenbaum}}},\ }\href@noop {} {\enquote {\bibinfo {title} {On the matrix
  {Monge-Kantorovich} problem},}\ } (\bibinfo {year} {2017}),\ \Eprint
  {http://arxiv.org/abs/1701.02826} {arXiv:1701.02826 [math.FA]} \BibitemShut
  {NoStop}%
\bibitem [{\citenamefont {Chen}\ \emph
  {et~al.}(2018{\natexlab{a}})\citenamefont {Chen}, \citenamefont {Georgiou},\
  and\ \citenamefont {Tannenbaum}}]{chen_matrix_2018_optimal}%
  \BibitemOpen
  \bibfield  {author} {\bibinfo {author} {\bibfnamefont {Y.}~\bibnamefont
  {Chen}}, \bibinfo {author} {\bibfnamefont {T.~T.}\ \bibnamefont {Georgiou}},
  \ and\ \bibinfo {author} {\bibfnamefont {A.}~\bibnamefont {Tannenbaum}},\
  }\bibfield  {title} {\enquote {\bibinfo {title} {Matrix optimal mass
  transport: A quantum mechanical approach},}\ }\href {\doibase
  10.1109/TAC.2017.2767707} {\bibfield  {journal} {\bibinfo  {journal} {IEEE
  Transactions on Automatic Control}\ }\textbf {\bibinfo {volume} {63}},\
  \bibinfo {pages} {2612--2619} (\bibinfo {year}
  {2018}{\natexlab{a}})}\BibitemShut {NoStop}%
\bibitem [{\citenamefont {Chen}\ \emph
  {et~al.}(2018{\natexlab{b}})\citenamefont {Chen}, \citenamefont {Georgiou},\
  and\ \citenamefont {Tannenbaum}}]{tempo_wasserstein_2018}%
  \BibitemOpen
  \bibfield  {author} {\bibinfo {author} {\bibfnamefont {Yongxin}\ \bibnamefont
  {Chen}}, \bibinfo {author} {\bibfnamefont {Tryphon~T.}\ \bibnamefont
  {Georgiou}}, \ and\ \bibinfo {author} {\bibfnamefont {Allen}\ \bibnamefont
  {Tannenbaum}},\ }\bibfield  {title} {\enquote {\bibinfo {title} {Wasserstein
  geometry of quantum states and optimal transport of matrix-valued
  measures},}\ }in\ \href {\doibase 10.1007/978-3-319-67068-3_10} {\emph
  {\bibinfo {booktitle} {Emerging {Applications} of {Control} and {Systems}
  {Theory}}}},\ \bibinfo {editor} {edited by\ \bibinfo {editor} {\bibfnamefont
  {Roberto}\ \bibnamefont {Tempo}}, \bibinfo {editor} {\bibfnamefont {Stephen}\
  \bibnamefont {Yurkovich}}, \ and\ \bibinfo {editor} {\bibfnamefont {Pradeep}\
  \bibnamefont {Misra}}}\ (\bibinfo  {publisher} {Springer International
  Publishing},\ \bibinfo {address} {Cham},\ \bibinfo {year} {2018})\ pp.\
  \bibinfo {pages} {139--150}\BibitemShut {NoStop}%
\bibitem [{\citenamefont {Benamou}\ and\ \citenamefont
  {Brenier}(2000)}]{benamou_computational_2000}%
  \BibitemOpen
  \bibfield  {author} {\bibinfo {author} {\bibfnamefont {Jean-David}\
  \bibnamefont {Benamou}}\ and\ \bibinfo {author} {\bibfnamefont {Yann}\
  \bibnamefont {Brenier}},\ }\bibfield  {title} {\enquote {\bibinfo {title} {A
  computational fluid mechanics solution to the {Monge}-{Kantorovich} mass
  transfer problem},}\ }\href {\doibase 10.1007/s002110050002} {\bibfield
  {journal} {\bibinfo  {journal} {Numerische Mathematik}\ }\textbf {\bibinfo
  {volume} {84}},\ \bibinfo {pages} {375--393} (\bibinfo {year}
  {2000})}\BibitemShut {NoStop}%
\bibitem [{\citenamefont {Duong}\ \emph {et~al.}(2014)\citenamefont {Duong},
  \citenamefont {Peletier},\ and\ \citenamefont
  {Zimmer}}]{duong_conservative-dissipative_2014}%
  \BibitemOpen
  \bibfield  {author} {\bibinfo {author} {\bibfnamefont {Manh~Hong}\
  \bibnamefont {Duong}}, \bibinfo {author} {\bibfnamefont {Mark~A.}\
  \bibnamefont {Peletier}}, \ and\ \bibinfo {author} {\bibfnamefont {Johannes}\
  \bibnamefont {Zimmer}},\ }\bibfield  {title} {\enquote {\bibinfo {title}
  {Conservative-dissipative approximation schemes for a generalized {Kramers}
  equation},}\ }\href {\doibase 10.1002/mma.2994} {\bibfield  {journal}
  {\bibinfo  {journal} {Mathematical Methods in the Applied Sciences}\ }\textbf
  {\bibinfo {volume} {37}},\ \bibinfo {pages} {2517--2540} (\bibinfo {year}
  {2014})}\BibitemShut {NoStop}%
\bibitem [{\citenamefont
  {Villani}(2009{\natexlab{b}})}]{villani2009hypocoercivity}%
  \BibitemOpen
  \bibfield  {author} {\bibinfo {author} {\bibfnamefont {C{\'e}dric}\
  \bibnamefont {Villani}},\ }\bibfield  {title} {\enquote {\bibinfo {title}
  {Hypocoercivity},}\ }\href@noop {} {\bibfield  {journal} {\bibinfo  {journal}
  {Memoirs of the American Mathematical Society}\ }\textbf {\bibinfo {volume}
  {202}} (\bibinfo {year} {2009}{\natexlab{b}})}\BibitemShut {NoStop}%
\end{thebibliography}%

\end{document}